\documentclass[reqno]{amsart}
\usepackage{amssymb}
\usepackage{amsmath}
\usepackage{amsthm}
\usepackage{pgfplots} 
\usepackage{graphicx}
\usepackage[utf8x]{inputenc}
\usepackage{tikz-cd}
\usepackage{bbm}
\usepackage{bm}
\usepackage{subcaption}
\usepackage[boxruled]{algorithm2e}
\usepackage{mathtools}
\usepackage{lipsum}
\usepackage{tabularx}
\usepackage[title,titletoc]{appendix}
\usepackage{booktabs}
\usepackage{calc}
\usepackage{array}
\usepackage{tikz}
\usepackage{boites}
\usepackage{here}
\usepackage[]{hyperref}
\usepackage{natbib,enumerate}
\usepackage{enumitem}
\usepackage{cleveref}
\usepackage{adjustbox}
\setlist[itemize]{itemsep=2pt, topsep=2pt, leftmargin=*}
\setlist[enumerate]{itemsep=2pt, topsep=2pt, leftmargin =*}
\usetikzlibrary{arrows}
\usepackage{extarrows}
\usepackage{ stmaryrd }
\usepackage{ dsfont }
\usepackage{ longtable }
\usepackage{comment}
\usepackage{nameref}
\usepackage{hyperref}
\pgfplotsset{compat=1.17} %%suggested by the compiler

\usepackage{etoolbox}
\usepackage[margin=1in]{geometry}
\definecolor{BrickRed}{rgb}{0.6, 0.1875, 0.2475}
\definecolor{Cerulean}{rgb}{0.0, 0.36, 0.4875}
\definecolor{MyGreen}{rgb}{0.1946, 0.3887, 0.0551}

\newcounter{introcounter}

\newtheoremstyle{introtheorem}
  {1\baselineskip\@plus.2\baselineskip\@minus.2\baselineskip}% Space above
  {1\baselineskip\@plus.2\baselineskip\@minus.2\baselineskip}% Space below
  {\slshape}% Body font
  {}%Indent amount (empty = no indent, \parindent = para indent)
  {\bfseries}%  Thm head font
  {.}%       Punctuation after thm head
  { }%      Space after thm head: " " = normal interword space;
  {}%       Thm head spec

\theoremstyle{introtheorem}
\newtheorem{introthm}{Theorem}

\newtheorem{introprop}[introthm]{Proposition}

\theoremstyle{plain}
\newtheorem{thm}{Theorem}[section]
\newtheorem{Prop}[thm]{Proposition}
\newtheorem{Lemma}[thm]{Lemma}

\theoremstyle{definition}
\newtheorem{Def}[thm]{Definition}

\theoremstyle{remark}

\newtheorem{Rmk}[thm]{Remark}
\newtheorem{Question}[thm]{Question}

\crefname{introthm}{Theorem}{Theorems}
\crefname{introprop}{Proposition}{Propositions}
\crefname{Lemma}{Lemma}{Lemmas}
\crefname{Def}{Definition}{Definitions}
\crefname{Question}{Question}{Questions}
\crefname{Prop}{Proposition}{Propositions}
\crefname{thm}{Theorem}{Theorems}
\crefname{Construction}{Construction}{Constructions}
\crefname{Question}{Question}{Questions}
\crefname{Rmk}{Remark}{Remarks}
\crefname{Notation}{Notation}{Notations}
\crefname{section}{Section}{Sections}
\crefname{subsection}{Subsection}{Subsections}
\crefname{Ex}{Example}{Examples}

\makeatletter
\newlength{\temp@boite}
\newlength{\saveparindent}
\def\encadrement#1#2{%
  \def\bkvz@before@breakbox{\ifhmode\par\fi\vskip5pt\vskip\breakboxskip\relax}%
  \fboxrule=0.4pt
  \fboxsep=5pt
  \def\bkvz@set@linewidth{\advance\linewidth -2\fboxrule
                          \advance\linewidth -2\fboxsep}%
  \def\bkvz@left{\color{#2}\vrule \@width\fboxrule\hskip\fboxsep\color{black}}%
  \def\bkvz@right{\color{#2}\hskip\fboxsep\vrule \@width\fboxrule\color{black}}%
  \def\bkvz@top{\hbox to \hsize{%
      \color{#2}%
      \setlength{\temp@boite}{\fboxrule+0.5ex}%
      \vrule\@width\fboxrule\@height \temp@boite %
      \rule[0.5ex]{2em}{\fboxrule}%
      {#1}%
      \setlength{\temp@boite}{\textwidth-2em-\widthof{#1}-2\fboxrule}%
      \rule[0.5ex]{\temp@boite}{\fboxrule}%
      \setlength{\temp@boite}{\fboxrule+0.5ex}%
      \vrule\@width\fboxrule\@height \temp@boite}}%
  \def\bkvz@bottom{\color{#2}\hrule\@height\fboxrule}%
  \breakbox\vspace{0pt}}

\makeatother

\makeatletter
\def\encadrementombre#1#2{%
  \fboxsep=5pt
  \fboxrule=0.4pt
  \def\bkvz@before@breakbox{%
    \ifhmode\par\fi\vskip5pt\vskip\breakboxskip\relax}%
  \def\bkvz@set@linewidth{%
    \advance\linewidth -2\fboxrule 
    \setlength{\fboxsep}{5pt}\advance\linewidth -2\fboxsep
  }%
  \def\bk@line{\hbox to \linewidth{%
      \ifbkcount\smash{\llap{\the\bk@lcnt\ }}\fi
      \setlength{\fboxsep}{0pt}\colorbox{#2!15}{%
        \setlength{\fboxsep}{5pt}%
        {\color{#2}\vrule width \fboxrule}\hskip\fboxsep
        \box\bk@bxa
        \hskip\fboxsep{\color{#2}\vrule width\fboxrule}%
        }%
      }}%
  \def\bkvz@top{\hbox to \hsize{%
      \setlength{\temp@boite}{\fboxrule+0.5ex}%
      \color{#2!15}\vrule\@width\textwidth\@height\temp@boite %
      \hspace{-\textwidth}%
      \color{#2}%
      \vrule\@width\fboxrule\@height \temp@boite %
      \rule[0.5ex]{2em}{\fboxrule}%
      {\fboxsep=2pt\colorbox{white}{#1}}%
      \setlength{\temp@boite}{\textwidth-2em-\widthof{\fboxsep=2pt\colorbox{white}{#1}}-2\fboxrule}%
      \rule[0.5ex]{\temp@boite}{\fboxrule}%
      \setlength{\temp@boite}{\fboxrule+0.5ex}%
      \vrule\@width\fboxrule\@height \temp@boite %
}}%
  \def\bkvz@bottom{{\color{teal}\hrule\@height\fboxrule}}%
  \color{black}\breakbox}%

\makeatother

\makeatletter
\def\encadrementombrevar#1#2{%
  \fboxsep=5pt
  \fboxrule=0.4pt
  \def\bkvz@before@breakbox{%
    \ifhmode\par\fi\vskip5pt\vskip\breakboxskip\relax}%
  \def\bkvz@set@linewidth{%
    \advance\linewidth -2\fboxrule 
    \setlength{\fboxsep}{5pt}\advance\linewidth -2\fboxsep
  }%
  \def\bk@line{\hbox to \linewidth{%
      \ifbkcount\smash{\llap{\the\bk@lcnt\ }}\fi
      \setlength{\fboxsep}{0pt}\colorbox{#2!15}{%
        \setlength{\fboxsep}{5pt}%
        {\color{#2}\vrule width \fboxrule}\hskip\fboxsep
        \box\bk@bxa
        \hskip\fboxsep{\color{#2}\vrule width\fboxrule}%
        }%
      }}%
  \def\bkvz@top{\hbox to \hsize{%
      \setlength{\temp@boite}{\fboxrule+0.5ex}%
      \color{#2!15}\vrule\@width\textwidth\@height\temp@boite %
      \hspace{-\textwidth}%
      \color{#2}%
      \vrule\@width\fboxrule\@height \temp@boite %
      \rule[0.5ex]{2em}{\fboxrule}%
      {\fboxsep=2pt\fcolorbox{#2}{white}{#1}}%
      \setlength{\temp@boite}{\textwidth-2em-\widthof{\fboxsep=2pt\fcolorbox{#2}{#2!15}{#1}}-2\fboxrule}%
      \rule[0.5ex]{\temp@boite}{\fboxrule}%
      \setlength{\temp@boite}{\fboxrule+0.5ex}%
      \vrule\@width\fboxrule\@height \temp@boite %
}}%
  \def\bkvz@bottom{{\color{#2}\hrule\@height\fboxrule}}%
  \color{black}\breakbox}%

\makeatother

\newcommand{\N}{\mathbb{N}}
\newcommand{\Z}{\mathbb{Z}}
\newcommand{\R}{\mathbb{R}}
\newcommand{\C}{\mathbb{C}}
\newcommand{\Q}{\mathbb{Q}}
\newcommand{\id}[1]{\ensuremath{\mathsf{id}_{#1}}}

\newcommand{\colim}[0]{\ensuremath{\mathsf{colim}}}

\newcommand{\unit}[0]{\ensuremath{\mathds{1}}}
\newcommand{\E}[1]{\ensuremath{\mathcal{E}_{#1}}}
\newcommand{\Einfty}[0]{\ensuremath{\E{\infty}}}

\newcommand{\factor}[2]{\left. \raise 2pt\hbox{\ensuremath{#1}} \right/
 	  \hskip -2pt\raise -2pt\hbox{\ensuremath{#2}}}

\newcommand{\Affl}{{\mathbb{A}^1} }%%affine line
\newcommand{\Gm}{\mathbb{G}_m}
\newcommand{\Sph}{\mathbb{S}}
\newcommand{\calS}{\mathcal{S}}
\newcommand{\Spec}[0]{ \mathsf{Spec} }

\newcommand{\Smash}{\ensuremath{\wedge}}
\newcommand{\ko}[0]{\ensuremath{\mathsf{ko}}}

\newcommand{\kw}[0]{\ensuremath{\mathsf{kw}}}
\newcommand{\KO}[0]{\ensuremath{\mathsf{KO}}}
\newcommand{\kgl}[0]{\ensuremath{\mathsf{kgl}}}
\newcommand{\KGL}[0]{\ensuremath{\mathsf{KGL}}}
\newcommand{\KU}[0]{\ensuremath{\mathsf{KU}}}
\newcommand{\KW}[0]{\ensuremath{\mathsf{KW}}}
\newcommand{\MSL}[0]{\ensuremath{\mathsf{MSL}}}
\newcommand{\MO}[0]{\ensuremath{\mathsf{MO}}}
\newcommand{\MU}[0]{\ensuremath{\mathsf{MU}}}
\newcommand{\MSp}[0]{\ensuremath{\mathsf{MSp}}}
\newcommand{\MGL}[0]{\ensuremath{\mathsf{MGL}}}
\newcommand{\MSO}[0]{\ensuremath{\mathsf{MSO}}}

\newcommand{\kotop}[0]{\ensuremath{\ko^\mathsf{top}}}
\newcommand{\Mtop}[0]{\ensuremath{M_\mathsf{top}}}
\newcommand{\KOtop}[0]{\ensuremath{\KO^\mathsf{top}}}
\newcommand{\rR}[0]{\ensuremath{r_{\R}}}
\newcommand{\rC}[0]{\ensuremath{r_{\mathbb{C}}}}
\newcommand{\HZ}[0]{\ensuremath{\mathsf{H}\Z}}
\newcommand{\HQ}[0]{\ensuremath{\mathsf{H}\Q}}
\newcommand{\HA}[0]{\ensuremath{\mathsf{HA}}}
\newcommand{\Zmod}[0]{\ensuremath{\Z /2}}
\newcommand{\HZmod}[0]{\ensuremath{\mathsf{H}\Z/2}}
\newcommand{\HZtilde}[0]{\ensuremath{\widetilde{\HZ}}}
\newcommand{\homsh}[3]{\ensuremath{\underline{\pi}_{#1}(#3)_{#2}}}
\newcommand{\calR}[0]{\ensuremath{\mathcal{R}}}

\newcommand{\calC}[0]{\ensuremath{\mathcal{C}}}

\newcommand{\calF}[0]{\ensuremath{\mathcal{F}}}

\newcommand{\Sq}[0]{\ensuremath{\mathsf{Sq}}}

\newcommand{\Sp}{\mathsf{Sp}}

\newcommand{\map}[0]{\ensuremath{\mathsf{Map}}}
\newcommand{\Sm}{\ensuremath{\mathsf{Sm}}}
\newcommand{\Spc}[0]{\ensuremath{ \mathsf{Spc}}}
\newcommand{\spaces}[0]{\Spc}
\newcommand{\Pre}{\ensuremath{\mathcal{P}}}
\newcommand{\SHR}[0]{\ensuremath{\mathsf{SH}(\R)}}
\newcommand{\SH}[0]{\ensuremath{\mathsf{SH}}}
\newcommand{\SHk}[0]{\ensuremath{\mathsf{SH}(k)}}
\newcommand{\eff}[0]{\ensuremath{\mathsf{eff}}}
\newcommand{\veff}[0]{\ensuremath{\mathsf{veff}}}

\newcommand{\Alg}[0]{\ensuremath{\mathsf{Alg}}}
\newcommand{\CAlg}[0]{\ensuremath{\mathsf{CAlg}}}
\newcommand{\Mod}[0]{\ensuremath{\mathsf{Mod}}}

\newcommand{\Cat}[0]{\ensuremath{\mathsf{Cat}_\infty}}
\newcommand{\wgpd}[0]{\ensuremath{{\omega,\simeq}}}
\newcommand{\kgpd}[0]{\ensuremath{{\kappa,\simeq}}}

\newcommand{\lcofib}[1]{\ensuremath{\xhookrightarrow{\ #1\ }}}

\newcommand{\lsimeq}[1]{\ensuremath{\xrightarrow[\ #1\ ]{\raisebox{-0.4ex}{\tiny $\simeq$}}}}

\tikzset{
    weq/.style={anchor=south, rotate=90, inner sep=.5mm}
}

\usetikzlibrary{calc}
\usetikzlibrary{decorations.pathmorphing}

\tikzset{curve/.style={settings={#1},to path={(\tikztostart)
    .. controls ($(\tikztostart)!\pv{pos}!(\tikztotarget)!\pv{height}!270:(\tikztotarget)$)
    and ($(\tikztostart)!1-\pv{pos}!(\tikztotarget)!\pv{height}!270:(\tikztotarget)$)
    .. (\tikztotarget)\tikztonodes}},
    settings/.code={\tikzset{quiver/.cd,#1}
        \def\pv##1{\pgfkeysvalueof{/tikz/quiver/##1}}},
    quiver/.cd,pos/.initial=0.35,height/.initial=0}

\tikzset{tail reversed/.code={\pgfsetarrowsstart{tikzcd to}}}
\tikzset{2tail/.code={\pgfsetarrowsstart{Implies[reversed]}}}
\tikzset{2tail reversed/.code={\pgfsetarrowsstart{Implies}}}
\tikzset{no body/.style={/tikz/dash pattern=on 0 off 1mm}}

\usepackage{subfiles}

\makeatletter
\def\@tocline#1#2#3#4#5#6#7{\relax
  \ifnum #1>\c@tocdepth % then omit
  \else
    \par \addpenalty\@secpenalty\addvspace{#2}%
    \begingroup \hyphenpenalty\@M
    \@ifempty{#4}{%
      \@tempdima\csname r@tocindent\number#1\endcsname\relax
    }{%
      \@tempdima#4\relax
    }%
    \parindent\z@ \leftskip#3\relax \advance\leftskip\@tempdima\relax
    \rightskip\@pnumwidth plus4em \parfillskip-\@pnumwidth
    #5\leavevmode\hskip-\@tempdima
      \ifcase #1
       \or\or \hskip 2.5em \or \hskip 5em \else \hskip 7.5em \fi%
      #6\nobreak\relax
    \dotfill\hbox to\@pnumwidth{\@tocpagenum{#7}}\par% <---- \dotfill -> \hfill
    \nobreak
    \endgroup
  \fi}
\makeatother

\begin{document}

\title[The real Betti realization of very effective Hermitian K-theory]{The real Betti realization of very effective Hermitian K-theory}
\date{\today}

\author{Julie Bannwart}
\address{Institut für Mathematik, JGU Mainz, Germany}
\email{\href{bannwart.julie@gmail.com}{bannwart.julie@gmail.com}}

\begin{abstract}
    Real Betti realization is a symmetric monoidal functor from the category of motivic spectra to that of topological spectra, extending the functor that associates to a scheme over $\R$ the space of its real points. In this article, we identify the $\E{1}$-ring structure on the real realization of $\ko$, the very effective cover of Hermitian K-theory $\KO$, by an explicit 2-local fracture square, as being equivalent to $\mathsf{L}(\R)_{\geq 0}$, the connective L-theory spectrum of $\R$. Along the way, we also compute the real realization of other motivic ring spectra: motivic cohomology $\HZ$ and $\HZmod$, the variant $\widetilde{\HZ}$, $\KO$, the very effective cover $\kgl$ of algebraic K-theory $\KGL$.
\end{abstract}
\keywords{Real Betti realization, very effective Hermitian K-theory, $\KO$ (or $\mathsf{KQ}$), $\ko$ (or $\mathsf{kq}$).}

\maketitle
\tableofcontents

%\newpage
%\makeatletter
%\providecommand\@dotsep{5}
%\makeatother
%\listoftodos\relax

\pagenumbering{arabic}

\newpage
\section*{Introduction}\label{Sect:introduction}

Motivic homotopy theory, mainly introduced by Morel and Voevodsky \cite{MV}, provides a setting to study algebraic geometry and cohomology theories for schemes using the methods of homotopy theory. We work in the ($\infty$-)categories of motivic spaces $\Spc(S)$ and of motivic spectra $\SH(S)$ over a base scheme $S$. The former is obtained by considering the subcategory of presheaves of spaces on $\Sm_S$ (smooth schemes of finite type over $S$) consisting of those presheaves that are $\Affl$-invariant and are sheaves in the Nisnevich topology. Namely, we impose Nisnevich descent and contractibility of the affine line via a Bousfield localization. In a formula, $\Spc(S)  = \mathsf{L}_{\Affl, \mathsf{Nis}}(\Pre(\Sm_S))$. The category of motivic spectra $\SH(S)  = \Spc(S)_\ast[(\mathbb{P}^1)^{-1}]$ is then obtained from that of pointed motivic spaces by $\mathbb{P}^1$-stabilization, i.e., by making the projective line invertible with respect to the smash product in the category of pointed motivic spaces. We obtain a presentably symmetric monoidal, stable category.\\

Several classical examples of cohomology theories for schemes become representable in the category of motivic spectra: for instance, motivic cohomology with integral coefficients, represented by $\HZ\in\SH(S)$, motivic cohomology with $\Zmod$-coefficients $\HZmod$, ($\Affl$-invariant) algebraic K-theory $\KGL$, ($\Affl$-invariant) Hermitian K-theory $\KO$, algebraic cobordism $\MGL$, and so on. All these examples are not only motivic spectra, but they are also endowed with multiplicative structures, which the corresponding cohomology theories inherit. More precisely, they all admit the structure of commutative algebra objects in $\SH(S)$, i.e., they can be made into motivic $\Einfty$-rings. \\

To investigate motivic phenomena, it is sometimes useful to bring them back into the topological setting. The real Betti realization functor $\rR$ is one of the tools to do so. This functor from the category of motivic spaces over $\R$ (or any subfield $k\lcofib{} \R$) to that of spaces extends the functor of real points (from the category of smooth schemes over $\R$ to that of topological spaces). It stabilizes to a functor between the category of motivic spectra and that of topological spectra. Similarly, one can define a complex Betti realization functor $\rC$ from motivic spaces over $\mathbb{C}$ (or any subfield) to spaces, which has a stable analog from motivic spectra over $\mathbb{C}$ to topological spectra. For example, the complex realization of $\KGL$ is equivalent to the topological spectrum $\KU$ representing complex topological K-theory \cite[Lem.\ 2.2]{PPR}, while its real realization vanishes \cite[Lem.\ 3.9]{BH-eta}. The motivic spectrum $\KO$ complex realizes to $\KOtop$ \cite[Lem.\ 13]{ARO}, representing real topological K-theory, but its real realization is $\KOtop[1/2]$ \cite[Lem.\ 3.9]{BH-eta}. The motivic spectra $\MGL$, $\MSL$, and $\MSp$, representing variants of algebraic cobordism, have complex realizations the spectra $\MU$, $\mathsf{MSU}$, and $\MSp^\mathsf{top}$, respectively, and real realizations $\MO$, $\MSO$, and $\MU$, respectively, all of which represent variants of topological cobordism \cite[Cor.\ 4.7]{BH-eta}. \\

The latter examples of topological spectra also admit the structure of commutative algebra objects, namely they are topological $\Einfty$-rings. Since the Betti realization functor is symmetric monoidal with respect to the smash products on motivic and topological spectra, the realization of any motivic $\Einfty$-ring inherits a structure of topological $\Einfty$-ring. It is therefore natural to wonder whether one can obtain some multiplicative generalizations of the aforementioned results, i.e., identify the induced $\E{n}$-structures on their real realizations, for some $1\leq n \leq \infty$. \\

This paper aims at answering this question for a refinement of the Hermitian $K$-theory spectrum $\KO$, namely its very effective cover $\ko$. Very effectiveness is a certain notion of ``connectivity'' for motivic spectra. The very effective cover functor being lax symmetric monoidal, $\ko$ inherits an $\Einfty$-structure from that of $\KO$, and so does its real realization. Our main result identifies the $\E{1}$-structure on the real realization of $\ko$. Along the way, we will compute the real realization of some other classical examples of motivic spectra, as we discuss below.

We will consider the $\Einfty$-structure on $\KO$ constructed in \cite[\S7, p.\ 129]{HJNY} or \cite[Def.\ 8.1.1]{CHN}, see \Cref{Def:KO} and \Cref{Rmk:Einfty-stc-KO}.

\begin{introthm}[\Cref{Prop:rRko}]\label{Prop:introthmfracsq} There is a Cartesian square of $\E{1}$-rings (and thus also of spectra)
    \[\begin{tikzcd}[row sep = 3em, column sep = 3em]
        {\rR\ko} & {\KOtop_{\geq 0}[1/2]} \\
        {\HZ_{(2)}[t^4]} & {\mathsf{H}\Q[t^4].}
        \arrow["x\mapsto \beta_4/2", from=1-1, to=1-2]
        \arrow["x\mapsto t^4"', from=1-1, to=2-1]
        \arrow["\lrcorner"{anchor=center, pos=0.125}, draw=none, from=1-1, to=2-2]
        \arrow["\mathsf{ch}\,"', "\,\beta_4/2\mapsto t^4", from=1-2, to=2-2]
        \arrow["{t^4\mapsto t^4}"', from=2-1, to=2-2]
    \end{tikzcd}\]
    
    In particular, there is an isomorphism of graded rings $\pi_\ast(\rR\ko) \cong \Z[x]$ where $x$ has degree 4.
    \end{introthm}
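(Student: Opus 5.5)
The plan is to use the arithmetic fracture square for the $\E{1}$-ring $R = \rR\ko$: the square with corners $R$, $R[1/2]$, $R_{(2)}$ and $R_{\Q}$ is Cartesian in $\E{1}$-rings, since localizations of $\E{1}$-rings are smashing and the usual fracture square is a pullback of spectra. Replacing $R$ by its $2$-localization if necessary, it therefore suffices to carry out three identifications and then check one compatibility:
\begin{enumerate}
\item identify $R[1/2]$ with $\KOtop_{\geq 0}[1/2]$;
\item identify $R_{(2)}$, or more precisely the localization $R_{(2)}$ relevant to the bottom-left corner, with $\HZ_{(2)}[t^4]$;
\item identify $R_\Q$ with $\HQ[t^4]$;
\item check that the induced maps into the rational corner are the Chern character and the evident inclusion.
\end{enumerate}

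\emph{Inverting $2$.} Real realization inverts $\rho$, and $\eta$ becomes $2$ (up to sign) after $\rho$-inversion. The plan is to use the known statement $\rR\KO\simeq \KOtop[1/2]$ together with a computation of $\rR$ of the very effective cover, via the slice-type cofiber sequences for $\ko$ (in particular $\ko/\eta\simeq \kgl$). Since $\rR\kgl=0$, multiplication by $\eta$ becomes invertible on $\rR\ko$. I would then show that $\rR(\ko)\to\rR(\KO)=\KOtop[1/2]$ induces an equivalence onto the connective cover. To do this, compare $\ko[\eta^{-1}]$ with $\KO[\eta^{-1}]=\KW$, whose real realization is known, and use that $\ko[\eta^{-1}]=\kw$ with $\pi_*\kw=W(\R)[\beta]=\Z[\beta]$ for the Bott-like class $\beta$ of degree $4$. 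After $\rho$-inversion one has $\rR\ko\simeq\rR\kw$ (because $\rR\eta$ is already a unit up to $2$-torsion considerations). This gives $\pi_*\rR\ko\cong\Z[x]$ with $|x|=4$, and the map to $\KOtop[1/2]$ sends $x\mapsto\beta_4/2$. The factor $1/2$ comes from the comparison of the Witt-theoretic Bott class with the topological class $\beta_4$, which differ by the $2$ in $\pi_4\KOtop$.

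\emph{The $2$-local and rational corners.} For the bottom-left corner I would show that $\rR\ko_{(2)}$ splits as the generalized Eilenberg--MacLane spectrum $\bigvee_k \Sigma^{4k}\HZ_{(2)}$. The input is that the real realization of $\HZ$ and $\HZmod$ (computed earlier) is a polynomial Eilenberg--MacLane object, together with the effective/very effective slice tower of $\kw$, whose slices are sums of $\widetilde{\HZ}$- and $\HZmod$-pieces. Realizing these slices gives $\Sigma^{4k}\HZ$-type layers, and the slice filtration collapses after realization, which yields a polynomial $\HZ_{(2)}[t^4]$. To upgrade this to an $\E{1}$-equivalence I would construct an $\E{1}$-map from the free $\E{1}$-$\HZ_{(2)}$-algebra on a degree-$4$ class. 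Concretely, $\rR\ko_{(2)}$ is an $\HZ_{(2)}$-algebra, using an $\E{1}$-map $\HZ_{(2)}\to \rR\ko_{(2)}$ obtained from realizing the unit of the slice tower; a polynomial $\HZ$-algebra on one even generator is free as $\E{1}$-$\HZ$-algebra, so the map $t^4\mapsto x$ extends, and it is a $\pi_*$-isomorphism.

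Rationally, $\KOtop_{\geq0}\otimes\Q\simeq\HQ[t^4]$ via the Chern character, and both sides are free $\E{1}$-$\HQ$-algebras on a degree-$4$ class. Matching the generators ($\beta_4/2\mapsto t^4$ versus $x\mapsto t^4$) shows that the two maps into the rational corner agree as $\E{1}$-maps, because the space of $\E{1}$-maps out of a free algebra is determined by the image of the generator. The graded ring statement then follows from the Mayer--Vietoris sequence, since $\Z_{(2)}\times_{\Q}\Z[1/2]=\Z$ in each degree $4k$.

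The main obstacle I expect is the $2$-local identification. Showing that $\rR\ko_{(2)}$ is an $\HZ_{(2)}$-module with no $2$-torsion (rather than, say, connective $\KOtop$-like behavior) requires controlling the realization of $\eta$ and the $\HZmod$-slices. Here I would first try to prove $\rR(\eta)=0$ $2$-locally on $\rR\ko$ after showing $\rR\ko\to\rR\kw$ is an equivalence, and then use $\rR\HZmod\simeq\HZmod[t]$ to read off the homotopy.
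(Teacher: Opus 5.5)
Your overall frame is the paper's: the arithmetic fracture square, followed by identifying the three corners and the maps. However, three load-bearing steps fail.

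\textbf{The $2$-inverted corner and the homotopy ring.} $\rR\kgl$ is not zero. It is $\rR\KGL$ that vanishes, whereas $\rR\kgl\simeq\HZmod[t^4]$. Since $\rR(\eta)=-2$, the realized Wood sequence is $\rR\ko\xrightarrow{-2}\rR\ko\to\HZmod[t^4]$. So $\eta$ does not become invertible on $\rR\ko$, and $\rR\ko\not\simeq\rR\kw$. Only $\rR\ko[1/2]\simeq\rR\kw\simeq\kotop[1/2]$ holds, and that spectrum has homotopy $\Z[1/2][\beta_4]$, so it cannot give you $\pi_\ast(\rR\ko)\cong\Z[x]$.

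The integral homotopy ring has to be computed separately, and you need it as input for the $2$-local corner. The paper does this in four steps:
\begin{enumerate}
\item finite generation, via the very effective slices;
\item absence of $2$-torsion, from exactly the cofiber sequence above;
\item rank one, from the computation away from $2$;
\item multiplicativity of $\ko\to\kgl$, to see that $x^m$ generates.
\end{enumerate}
Your substitute for this is that the realized slices are $\Sigma^{4k}\HZ$-type layers and the slice spectral sequence collapses. That is false. The realized slices are shifts of $\HZmod[t]$, $\HZmod[t^2]$ and $\rR\tilde{s}_0\ko$, which are almost entirely $2$-torsion, and there are many nonzero $d^1$'s.

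\textbf{The $\HZ_{(2)}$-algebra structure.} This is the key missing idea. Nothing in the slice tower provides a map $\HZ_{(2)}\to(\rR\ko)_{(2)}$: the slice maps go out of $\ko$, to $\tilde{s}_0\ko$ and $\HZtilde$. An $\E{1}$-map would not suffice anyway. To get a map of $\E{1}$-$\HZ_{(2)}$-algebras out of the free algebra $\HZ_{(2)}[t^4]$, you need an $\E{2}$-map.

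The paper obtains it as the composite $\HZ_{(2)}\to\MSO_{(2)}\simeq(\rR\MSL)_{(2)}\to(\rR\ko)_{(2)}$. This uses the $\E{2}$-map of Hebestreit--Land--Nikolaus, the comparison $\rR\MSL\simeq\MSO$, and the $\Einfty$-map $\MSL\to\ko$. This input is genuinely $2$-local, since integrally $\rR\ko$ is not an $\HZ$-module.

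\textbf{The labels on the maps.} The labels $x\mapsto\beta_4/2$ and $t^4\mapsto t^4$ require an actual computation. A priori the identification sends $x$ to $\pm2^k\beta_4$, which forces the bottom map to be $t^4\mapsto\pm2^{k+1}t^4$. Your remark about ``the $2$ in $\pi_4\KOtop$'' does not determine $k$.

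The paper proves $\rR(\beta_\ko)=\pm16x$ by two spectral sequence arguments, using the multiplicative structure on the slice tower. It combines this with $\tau_\R(\beta_4)=8b$ and $b=\pm\eta^{-4}\beta_\KO$ to get $g(x)=\pm\beta_4/8$. It then twists by the Adams operation $\psi^2$ to reach $\beta_4/2$.
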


    Here, the assignments of elements labeling the arrows describe the action of the maps on the fourth homotopy groups; $\KOtop$ is the topological spectrum representing real topological K-theory; and $\mathsf{H}\mathbb{Q}[t^4]$ is the free $\mathcal{E}_1$-$\mathsf{H}\mathbb{Q}$-algebra on one generator $t^4$ in degree 4. It is an $\HQ$-module, with homotopy ring a polynomial ring over $\Q$ with a single generator in degree 4. The definition of $\HZ_{(2)}[t^4]$ is similar.\\

Hebestreit, Land, and Nikolaus exhibited an identical fracture square for the connective L-theory of the real numbers \cite[p.\ 3]{HLN}. Comparing the two results proves:

\begin{introthm}[\Cref{Prop:compwithLtheory}]\label{Prop:introthmcompLth} There is an equivalence of $\E{1}$-rings
    $$\rR\ko \simeq \mathsf{L}(\R)_{\geq 0}$$
between the real realization of the very effective Hermitian K-theory spectrum and the connective cover of the L-theory spectrum of $\R$.
\end{introthm}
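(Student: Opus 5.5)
The plan is to deduce the statement from \Cref{Prop:introthmfracsq} together with the fracture square of Hebestreit--Land--Nikolaus \cite[p.\ 3]{HLN}. Their square exhibits $\mathsf{L}(\R)_{\geq 0}$ as the pullback of $\E{1}$-rings
\[\KOtop_{\geq 0}[1/2] \to \mathsf{H}\Q[t^4] \leftarrow \HZ_{(2)}[t^4].\]
Since a pullback in $\Alg_{\E{1}}(\Sp)$ is determined by its cospan, it suffices to show that the two cospans are equivalent as diagrams of $\E{1}$-rings. The conclusion $\rR\ko\simeq \mathsf{L}(\R)_{\geq 0}$ then follows because $\E{1}$-pullbacks are computed underlying (the forgetful functor preserves limits).

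First I compare the corners. For $\KOtop_{\geq 0}[1/2]$ the two squares use literally the same $\E{1}$-ring, so nothing needs to be done. For the free $\E{1}$-algebras $\mathsf{H}\Q[t^4]$ and $\HZ_{(2)}[t^4]$ on a degree $4$ class, the universal property shows that an $\E{1}$-map out of such an algebra is determined up to homotopy by where the generator goes in $\pi_4$. The bottom maps $t^4\mapsto t^4$ therefore agree, after possibly rescaling the generators so that both squares have the same normalization.

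The key step concerns the right-hand vertical map $\mathsf{ch}\colon \KOtop_{\geq 0}[1/2]\to \mathsf{H}\Q[t^4]$. I need to show that the $\E{1}$-maps in the two squares are homotopic, or at least differ by an automorphism of $\mathsf{H}\Q[t^4]$ that fixes the image of $\HZ_{(2)}[t^4]$. Rationally $\KOtop_{\geq0}[1/2]_\Q\simeq \mathsf{H}\Q[t^4]$, since $\KOtop_{\geq 0}\otimes\Q$ is the free $\E{1}$-$\HQ$-algebra on $\beta_4$: its homotopy ring is polynomial on one class, and a map out of the free algebra that induces an isomorphism on homotopy is an equivalence. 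Hence $\E{1}$-maps $\KOtop_{\geq0}[1/2]\to\mathsf{H}\Q[t^4]$ correspond to $\E{1}$-$\HQ$-endomorphisms of $\mathsf{H}\Q[t^4]$. These are classified by the image of $t^4$ in $\pi_4=\Q$.

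Both squares send $\beta_4/2\mapsto t^4$ on $\pi_4$, so the right vertical maps agree up to homotopy. The left vertical maps need no separate comparison, since the pullback depends only on the cospan. Strictly speaking, one must choose homotopies making the cospans equivalent as diagrams, but a cospan has no higher coherence beyond the two maps, so homotopies of the individual maps suffice.

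The main obstacle is verifying that the normalizations truly coincide, namely that HLN's generator and their map on $\pi_4$ match $\beta_4/2\mapsto t^4$ exactly and not, for instance, $\beta_4\mapsto t^4$. If the factors differed by a unit of $\Z_{(2)}$, I would absorb the difference with an automorphism $t^4\mapsto ut^4$ of $\HZ_{(2)}[t^4]$, which extends compatibly to $\mathsf{H}\Q[t^4]$. A non-unit discrepancy would change the pullback, detectable through $\pi_4\cong\Z$ generated by $x$, and so cannot occur since both sides have $\pi_4\cong\Z$.

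As a consistency check, both pullbacks have $\pi_*\cong\Z[x]$ with $|x|=4$. This matches the known values $L_{4k}(\R)=\Z$ given by the signature and $L_{j}(\R)=0$ for $4\nmid j$.
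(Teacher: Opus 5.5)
Your overall strategy is the paper's: compare the two 2-local fracture squares, and use that a pullback of $\E{1}$-rings depends only on its cospan. Your rigidity step is also sound, and it makes explicit something the paper leaves implicit. Since $\HQ[t^4]$ is rational, an $\E{1}$-ring map into it from $\kotop[1/2]$ or from $\HZ_{(2)}[t^4]$ factors uniquely through the rationalization of the source. That rationalization is $\HQ[t^4]$ in both cases, so the homotopy class of the map is determined by its effect on $\pi_4$. For $\HZ_{(2)}[t^4]$ this, and not ``the universal property'', is the right justification: that universal property holds in $\E{1}$-$\HZ_{(2)}$-algebras, while the maps in a fracture square are only $\E{1}$-ring maps.

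The gap is exactly the step you flag as the main obstacle, and your way of disposing of it does not work. You never determine the maps on $\pi_4$ in the L-theory square; ``both squares send $\beta_4/2\mapsto t^4$'' is asserted, not shown. Your fallback claim, that a non-unit discrepancy would be ``detectable through $\pi_4$'', is false. Fix $\mathsf{ch}$ and take the bottom map to be $t^4\mapsto rt^4$ for an arbitrary $r\in\Q^\times$. Mayer--Vietoris gives $\pi_{4k}$ of the pullback $\cong \Z[1/2]\cap r^k\Z_{(2)}\cong\Z$, and $\pi_{4k-1}=0$ because $\Z[1/2]+r^k\Z_{(2)}=\Q$. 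The homotopy ring is $\Z[x]$ for every $r$, so $\pi_4\cong\Z$ detects nothing. The evident automorphisms of the corners absorb $r$ only up to units of $\Z_{(2)}$ (via $t^4\mapsto ut^4$) and powers of $4$ (via $\psi^2$ on $\kotop[1/2]$). A discrepancy by a single factor of $2$ is not handled by anything you wrote.

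What is missing is the actual computation, which is the content of the paper's proof: it rebuilds the L-theory square with explicit identifications.
\begin{itemize}
\item Away from $2$, take the $\Einfty$-map $\tau_\R\colon\kotop\to\mathsf{L}(\R)_{\geq0}$ of Land--Nikolaus--Schlichting. It is an equivalence after inverting $2$ and sends $\beta_4\mapsto 8b$. Composing its inverse with $\psi^2$ gives $b\mapsto\beta_4/2$, hence $b\mapsto t^4$ after $\mathsf{ch}$.
\item At $(2)$, the $\E{2}$-map $\HZ_{(2)}\to\MSO_{(2)}\to\mathsf{L}(\R)_{(2)}$ yields an $\E{1}$-map $\HZ_{(2)}[t^4]\to(\mathsf{L}(\R)_{\geq 0})_{(2)}$ with $t^4\mapsto b$. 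It is an equivalence because $\pi_\ast\mathsf{L}(\R)\cong\Z[b]$.
\end{itemize}
The induced bottom map then sends $t^4\mapsto t^4$, and your rigidity argument finishes the proof.
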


As mentioned above, the proof of \Cref{Prop:introthmfracsq} uses the relation of $\ko$ to other motivic spectra whose real realization is easier to compute (in particular to determine the homotopy ring of $\rR\ko$, and to analyze the maps appearing in the fracture square). Namely, $\ko$ is related to $\KO$, of course, but also to $\kgl$, the very effective cover of algebraic K-theory $\KGL$, via a cofiber sequence $ \Sigma^{1,1}\ko \xrightarrow{\ \eta\ } \ko \longrightarrow \kgl$ (see \cite[Prop.\ 11]{ARO}), where $\eta$ is the motivic Hopf map. We will also use the decomposition of $\ko$ into easier pieces, that is, its very effective slices, which have been computed in \cite{Bachmann-genslices}, and involve in particular the motivic cohomology spectra $\HZ$ and $\HZmod$, and their variant $\HZtilde$. We state the relevant results below. 

\begin{introprop}[\protect{\Cref{Prop:rRKO,Prop:rRko1/2}}]\label{Prop:intromini3} Let $\KW := \KO[\eta^{-1}]$ be the motivic spectrum representing Balmer--Witt K-theory (where $\eta$ is 
    the motivic Hopf map). There is an $\Einfty$-map of spectra
    $$\gamma: \mathsf{L}(\R) \longrightarrow \rR\KW$$
    inducing an equivalence $\mathsf{L}(\R)[1/2] \simeq \rR\KW$. In particular, there are equivalences of $\Einfty$-rings 
    $$\KOtop[1/2] \simeq \rR\KW \simeq \rR\KO.$$
\end{introprop}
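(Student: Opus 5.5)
The argument has four steps: realize $\KO \to \KW$, identify $\rR\KW$ via $\mathsf{L}(\R)$ and Karoubi periodicity, then compare with $\KOtop[1/2]$ by a uniqueness argument.

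\textbf{Step 1: $\rR\KO\simeq\rR\KW$.} Real Betti realization sends the motivic Hopf map $\eta$ to $2$ up to a unit (the real points of $\mathbb{A}^2\setminus 0\to\mathbb{P}^1$ give the degree-$2$ map $S^1\to S^1$). Since $\rR$ is symmetric monoidal and preserves colimits, it carries the localization $\KO\to\KO[\eta^{-1}]=\KW$ to the localization $\rR\KO\to\rR\KO[1/2]$. So $\rR\KW\simeq \rR\KO[1/2]$ as $\Einfty$-rings. By the earlier cited computation \cite[Lem.\ 3.9]{BH-eta}, $\rR\KO$ is already $2$-inverted, so $\rR\KO\simeq\rR\KW$.

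\textbf{Step 2: the map $\gamma$.} I would use the $\Einfty$-description of $\KW$ from \cite{HJNY, CHN}: on smooth affine $\R$-schemes it is the ($\eta$-periodized) genuine Grothendieck--Witt/L-theory of the scheme, receiving a lax symmetric monoidal map from L-theory of the ring of functions. Evaluating at $\Spec\R$ gives an $\Einfty$-map $\mathsf{L}(\R)\to \KW(\R)$. We then need a natural $\Einfty$-map $E(\R)\to\rR E$ for motivic ring spectra $E$ over $\R$. This is the composite of the global-sections map $\Gamma(\Spec\R,E)$ with realization of the unit: the lax monoidal transformation $\Gamma\Rightarrow \rR$ adjoint to $\rR$ applied to global sections. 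Composing gives $\gamma$.

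\textbf{Step 3: $\gamma[1/2]$ is an equivalence.} It suffices to check on homotopy groups. On the source, $\pi_\ast\mathsf{L}(\R)[1/2]\cong\Z[1/2][t^{\pm4}]$, with $\pi_0=W(\R)[1/2]=\Z[1/2]$ via the signature. On the target, $\rR\KW$ is $\eta$-periodic with $\eta\mapsto 2$. Its homotopy can be computed using $\pi_\ast\rR E\cong \pi_\ast (E[\rho^{-1}])(\R)$ for $\eta$-periodic $E$ (Bachmann's theorem $\SH(\R)[\rho^{-1}]\simeq\SH$). Then $\pi_\ast \KW[\rho^{-1}](\R)$ follows from Balmer--Witt groups of $\R$: $W(\R)=\Z$ and $4$-periodicity, with $\rho$-inversion killing torsion. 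This gives $\Z[1/2][t^{\pm4}]$, with $\gamma$ sending generators to generators. The $\pi_0$ match comes from the signature, and periodicity is matched by a $4$-periodicity class on both sides, compatible under $\gamma$ by multiplicativity. This identification, especially the degree-$4$ generator, is the main obstacle. I would attempt it via Karoubi periodicity of $\KW$ and its counterpart in $\mathsf{L}(\R)$.

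\textbf{Step 4: $\KOtop[1/2]\simeq\mathsf{L}(\R)[1/2]$.} Both are even, $2$-inverted, $4$-periodic $\Einfty$-rings with homotopy $\Z[1/2][t^{\pm4}]$. Sullivan's classical equivalence $\mathsf{L}(\R)[1/2]\simeq\KOtop[1/2]$ holds as $\Einfty$-rings. Alternatively, one can use the obstruction-theoretic uniqueness of $\Einfty$-structures on $\KOtop[1/2]$: it is the $C_2$-fixed points of $\KU[1/2]$, and the Goerss--Hopkins--Miller uniqueness applies. Combining the three steps gives the stated chain of $\Einfty$-equivalences.
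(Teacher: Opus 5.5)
Your Steps 1 and 2 are sound and essentially match the paper. Using $\rR(\eta)=\pm2$ from the real points of the Hopf map is a fine substitute for the paper's relation $\rho\eta=-2$ after $\rho$-inversion. Your $\gamma$, built from the transformation $\Gamma(\R,-)\Rightarrow\rR$, is the paper's composite $\mathsf{L}(\R)\simeq\Gamma(\R,\KW)\to\Gamma(\R,\KW[\rho^{-1}])\simeq\rR\KW$. You do implicitly use that the \cite{CHN} model of $\KW$ is $\KO[\eta^{-1}]$ with $\KO\to\KW$ the localization; the paper checks this in \Cref{Rmk:map-KO-KW}. For the $2$-invertibility of $\rR\KO$, the paper does not quote \cite{BH-eta} but reproves it from the Wood sequence and $\rR\KGL\simeq0$.

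In Step 3 your ``main obstacle'' goes away without Karoubi periodicity. The map $\gamma$ is unital and multiplicative on homotopy, and $b\in\pi_4\mathsf{L}(\R)$ is a unit. Hence $\gamma(b)$ is a degree-$4$ unit in $\pi_\ast\rR\KW$, and multiplication by it is an isomorphism $\pi_0\rR\KW\cong\Z[1/2]\to\pi_4\rR\KW$. So $\gamma(b)$ generates $\pi_4$, and $\gamma[1/2]$ is an isomorphism in every degree.

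Your description of the target is off, though. On $\KW$, $\rho$-inversion does not ``kill torsion''. Since $\eta$ is invertible, $\eta h=0$ forces $h=2+\rho\eta=0$, so $\KW[\rho^{-1}]\simeq\KW[1/2]$. This is the paper's key observation, and it makes the homotopy computation unnecessary. By \cite[Cor.\ 8.1.8]{CHN}, $\mathsf{L}(\R)\to\Gamma(\R,\KW)$ is already an $\Einfty$-equivalence. Then $\rR\KW\simeq\Gamma(\R,\KW[\rho^{-1}])\simeq\Gamma(\R,\KW)[1/2]$, because $\Gamma(\R,-)$ preserves sequential colimits. Your route instead needs $\pi_{\ast,0}\KW(\R)$ to be the Balmer--Witt groups, which for this $\Einfty$-model is essentially the same input.

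Step 4 is the one whose justification does not hold up as written. Having the same homotopy ring does not determine an $\Einfty$-ring, or even the underlying spectrum. For example, $\KOtop[1/2]$ and $\bigvee_{k}\Sigma^{4k}\HZ[1/2]$ have the same homotopy groups, but the former is not $K(1)$-acyclic at $p=3$ (cf.\ \Cref{Rmk:kinvariants}).

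The Goerss--Hopkins--Miller alternative does not apply. That theorem concerns $p$-complete, $K(1)$-local K-theory, not $\KOtop[1/2]$. Moreover, any uniqueness statement for $\Einfty$-structures on $\KOtop[1/2]$ would only help once you already know that $\mathsf{L}(\R)[1/2]$ has underlying spectrum $\KOtop[1/2]$.

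Sullivan's classical equivalence is not an $\Einfty$-statement. The multiplicative refinement you need is the theorem of Land--Nikolaus--Schlichting \cite[Ex.\ 9.2]{LNS}: there is an $\Einfty$-map $\tau_\R\colon\kotop\to\mathsf{L}(\R)_{\geq0}$ that is an equivalence after inverting $2$. You must still pass to the periodic theories. The paper does this by noting $\tau_\R(\lambda_8)=16b^2$, so that inverting $\lambda_8$ and $b$ yields $\KOtop[1/2]\simeq\mathsf{L}(\R)[1/2]$ as $\Einfty$-rings. With that citation in place of your Step 4, the argument is complete.
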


The proof of \Cref{Prop:intromini3} relies crucially on the description of the real realization functor as a localization away from $\rho : \calS^0 \to \Gm$ (\cite[Thm.\ 35]{Bachmann-rho}, see \Cref{Prop:rRinvertingrho}), and the relation $\rho\eta \simeq -2$ in $\SH(\R)[\rho^{-1}]$.\\

Although the next results may be familiar to specialists (particularly in the case of $\HZ$), we are unaware of any explicit reference in the literature.

\begin{introprop}[\Cref{Prop:rRHZHZmod}]\label{Prop:intromini1} Let $\HZ$ and $\HZmod$ be the motivic spectra representing motivic cohomology with $\Z$-coefficients and $\Zmod$-coefficients, respectively. Then, there is an equivalence of $\E{1}$-maps
    $$\rR(\HZ \to \HZmod) \simeq (\HZmod[t^2] \to \HZmod[t])$$ 
    where $\HZ \to \HZmod$ is the projection, and $\HZmod[t^2] \to \HZmod[t]$ is the map of free $\E{1}$-$\HZmod$-algebras (see \Cref{Def:freeE1HA}) inducing the inclusion $\Zmod[t^2]\subseteq \Zmod[t]$ in homotopy. 
\end{introprop}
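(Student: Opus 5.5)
We compute $\rR\HZmod$ first and then deduce $\rR\HZ$ from it. The key input is the description of real realization as $\rho$-inversion (\Cref{Prop:rRinvertingrho}): $\rR E \simeq \pi_{0,\ast}$-data of $E[\rho^{-1}]$. In particular, $\pi_n(\rR E) \cong \colim_k \pi_{n-k,-k}(E)$ along multiplication by $\rho$. For $E = \HZmod$ over $\R$, Voevodsky's Milnor conjecture gives $\pi_{-\ast,-\ast}$-coefficients $\HZmod^{\ast,\ast}(\R) \cong \Zmod[\tau,\rho]$, with $\tau$ in weight $(0,-1)$ and $\rho$ in $(-1,-1)$.

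Inverting $\rho$, the degree-$n$ part of the colimit is generated by $\tau^n\rho^{-n}$. Hence $\pi_\ast(\rR\HZmod) \cong \Zmod[t]$ with $t = \tau\rho^{-1}$ in degree $1$, and this is a ring isomorphism because realization is symmetric monoidal. For $\HZ$, the motivic cohomology of $\R$ with $\Z$-coefficients in the relevant range is $\Z[\tau^2,\rho]/(2\rho)$ (plus $\Z$ in weight 0). After inverting $\rho$, the $\Z$ becomes $\Zmod$, and we get $\pi_\ast(\rR\HZ)\cong\Zmod[t^2]$. Moreover, the projection $\HZ\to\HZmod$ sends $\tau^2\mapsto\tau^2$, so the induced map on homotopy is the inclusion $\Zmod[t^2]\subseteq\Zmod[t]$.

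Next we identify the $\E1$-structures. Since $\HZ$ receives a unit map from $\Sph$ and $2=0$ after realization (indeed $\rho\eta=-2$ and $2\cdot\rR\HZ$ is zero on homotopy), we must produce an $\E1$-$\HZmod$-algebra structure. The plan is as follows.

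\begin{enumerate}
\item The motivic map $\HZ\to\HZmod$ is $\Einfty$, so $\rR\HZ\to\rR\HZmod$ is an $\Einfty$-map. Both source and target are connective with $\pi_0=\Zmod$, so the truncations $\rR\HZ\to\HZmod$ and $\rR\HZmod\to\HZmod$ are $\Einfty$-maps to $\tau_{\le0}$.
\item To get $\HZmod$-algebra structures, use that $\HZ$ is itself an $\Einfty$-$\HZ$-algebra. Then $\rR\HZ$ is an $\Einfty$-ring with $\pi_0 = \Zmod$, hence receives an $\Einfty$-map from $\HZ$ (topological). We must upgrade this to $\HZmod$; since a general $\Einfty$-ring with $2=0$ need not be an $\HZmod$-algebra, we instead work with $\E1$. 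An $\E1$-ring $R$ with $2=0$ in $\pi_0$ admits an $\E1$-map from $\HZmod$ by Hopkins–Mahowald ($\HZmod$ is the free $\E2$-ring with $2=0$). Since $\rR\HZmod$ and $\rR\HZ$ are $\Einfty$, hence $\E2$, the Hopkins–Mahowald theorem gives compatible $\E2$-maps $\HZmod\to\rR\HZ\to\rR\HZmod$.
\item Choose $t\in\pi_1\rR\HZmod$ and $s\in\pi_2\rR\HZ$ mapping to $t^2$. By freeness of $\HZmod[t^2]$ (\Cref{Def:freeE1HA}), we get an $\E1$-$\HZmod$-algebra map $\HZmod[t^2]\to\rR\HZ$, which is a $\pi_\ast$-iso and hence an equivalence. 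The same holds for $\HZmod[t]\to\rR\HZmod$.
\item The composite $\HZmod[t^2]\to\rR\HZmod$ and the map $\HZmod[t^2]\to\HZmod[t]\to\rR\HZmod$ are two $\E1$-$\HZmod$-algebra maps out of a free algebra that agree on the generator. They are therefore homotopic, which gives the equivalence of arrows.
\end{enumerate}

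The main obstacle is step 2. The maps $\HZmod\to\rR(-)$ must be genuinely compatible $\E1$-$\HZmod$-algebra structures, so that the square in step 4 lives in $\E1$-$\HZmod$-algebras. Hopkins–Mahowald applied to the $\E2$-map $\rR\HZ\to\rR\HZmod$ handles this, because the space of $\E2$-maps from $\HZmod$ into an $\E2$-ring with $2=0$ is contractible or at least connected. A secondary point is the homotopy computation of $\HZ$ over $\R$ (the $\Z$ versus $\Zmod$ torsion in negative weights), which we take from the Milnor conjecture and the Bockstein sequence.
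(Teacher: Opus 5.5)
Your overall route matches the paper's. You use the $\rho$-colimit and the Milnor conjecture for $\pi_\ast(\rR\HZmod)$, Hopkins--Mahowald for $\E{2}$-maps out of $\HZmod$, and freeness of $\HZmod[t^n]$. Your Step 4 is even a little more explicit than the paper about why this gives an equivalence of arrows. Two steps, however, rest on false claims.

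\textbf{The computation of $\pi_\ast(\rR\HZ)$.} Your input is wrong: $H^{\ast,\ast}(\R;\Z)$ is not $\Z[\tau^2,\rho]/(2\rho)$ in the range that matters. Already $H^{1,1}(\R;\Z)\cong K^M_1(\R)=\R^\times\cong\Zmod\oplus\R_{>0}$ has a uniquely divisible summand. You also give no reason why $\tau^2$ lifts integrally.

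These extra pieces do die in the $\rho$-colimit, but that is exactly what has to be shown. The integral Bockstein sequence is awkward to use before inverting $\rho$, where multiplication by $2$ is far from zero.

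The paper's argument uses an ingredient you already mention:
\begin{itemize}
\item Since $2\rho=[(-1)^2]=0$ in Milnor K-theory, $\rR(\cdot 2)=0$ on $\rR\HZ$.
\item So the realized sequence $\rR\HZ\xrightarrow{2}\rR\HZ\to\rR\HZmod$ splits into short exact sequences $0\to\pi_n(\rR\HZ)\to\pi_n(\rR\HZmod)\to\pi_{n-1}(\rR\HZ)\to 0$.
\item Connectivity of $\rR\HZ$ (from very effectiveness) starts an induction giving $\Zmod$ in even non-negative degrees and $0$ otherwise.
\item Injectivity of the ring map $\pi_\ast(\rR\HZ)\to\pi_\ast(\rR\HZmod)$ then gives $\Zmod[t^2]$ and identifies the map with the inclusion.
\end{itemize}

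\textbf{The compatibility argument in your Step 2.} By Hopkins--Mahowald, $\map_{\E{2}}(\HZmod,R)$ is the space of paths from $2$ to $0$ in $\Omega^\infty R$. Its set of components is therefore a torsor under $\pi_1(R)$. Since $\pi_1(\rR\HZmod)\cong\Zmod$, there are two homotopy classes of $\E{2}$-maps $\HZmod\to\rR\HZmod$, so this space is not connected.

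You do not need uniqueness. Choose an $\E{2}$-map $\HZmod\to\rR\HZ$ and \emph{define} the one to $\rR\HZmod$ as its composite with $\rR(\HZ\to\HZmod)$. The realized map is then a map of $\E{2}$-rings under $\HZmod$, hence of $\E{1}$-$\HZmod$-algebras, and your Step 4 goes through. This is how the paper proceeds; see the end of the proof of \Cref{Prop:rRkgl}.

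\textbf{Two claims to discard.} Neither is used, and both are false.
\begin{itemize}
\item An $\E{1}$-ring with $2=0$ need not receive an $\E{1}$-map from $\HZmod$; Hopkins--Mahowald genuinely needs $\E{2}$.
\item An $\Einfty$-ring with $\pi_0=\Zmod$ need not receive an $\Einfty$-map from $\HZ$, since $\HZ$ is not initial. For connective such rings, truncation only gives a map \emph{to} $\HZmod$.
\end{itemize}
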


\begin{introprop}[\Cref{Prop:rRkgl}]\label{Prop:intromini2} Let $\KGL$ be the motivic spectrum representing algebraic K-theory, and $\kgl$ be its very effective cover. Then, there is an equivalence of $\E{1}$-maps
    $$\rR(\kgl \to \HZ) \simeq (\HZmod[t^4] \to \HZmod[t^2])$$
where $\kgl \to \kgl/\beta_\KGL \simeq \HZ$ is the projection (with $\beta_\KGL$ the periodicity generator for $\KGL$), and $\HZmod[t^4] \to \HZmod[t^2]$ is the map of free $\E{1}$-$\HZmod$-algebras (see \Cref{Def:freeE1HA}) inducing the inclusion $\Zmod[t^4]\subseteq \Zmod[t^2]$ in homotopy. 
\end{introprop}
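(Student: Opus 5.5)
Since $\rR$ is symmetric monoidal and $\kgl\to\HZ$ is a map of motivic $\Einfty$-rings, $\rR(\kgl\to\HZ)$ is an $\Einfty$-map. The plan is first to compute its source and target additively, and then to identify it as a map of $\E{1}$-$\HZmod$-algebras using the freeness of $\HZmod[t^4]$.

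\textbf{Underlying spectra.} By \Cref{Prop:intromini1}, $\rR\HZ\simeq\HZmod[t^2]$, with $\pi_\ast = \Zmod[t^2]$. For $\rR\kgl$ I would use the cofiber sequence $\Sigma^{2,1}\kgl\xrightarrow{\beta}\kgl\to\HZ$, which comes from very effective covers of the periodicity $\Sigma^{2,1}\KGL\simeq\KGL$. Since $\rR$ sends $\Sigma^{p,q}$ to $\Sigma^{p-q}$, this becomes a cofiber sequence $\Sigma\rR\kgl\to\rR\kgl\to\HZmod[t^2]$.

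Using $\rR\KGL=0$ (\cite[Lem.\ 3.9]{BH-eta}), I would argue that $\rR\kgl$ is connective: $\kgl$ is very effective, and $\rR$ of very effective spectra lands in connective spectra. Writing $\kgl$ as the limit of its very effective slice tower, with slices $\Sigma^{2n,n}\HZ$ realizing to $\Sigma^n\HZmod[t^2]$, one then gets convergence, since the connectivity of the layers grows. I would then run the long exact sequence inductively in degree.

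The parity of $\pi_\ast\HZmod[t^2]$ forces the realized map $\Sigma\rR\kgl\to\rR\kgl$ to vanish on homotopy. Indeed, by the slice filtration the realization of $\beta$ raises filtration, which yields short exact sequences $0\to\pi_{n-1}\rR\kgl\to\pi_n\rR\kgl\to\pi_n\HZmod[t^2]\to0$. This gives $|\pi_n|=2^{\lfloor n/4\rfloor+1}$ or so, which is inconsistent. So I instead expect the correct picture to be that $\beta$ realizes to the map $\Sigma\rR\kgl\to\rR\kgl$ which is, in homotopy, the connecting map of $\HZmod[t^2]$ shifting degree by one. Concretely, the boundary $\pi_{n}\HZmod[t^2]\to\pi_{n-2}\rR\kgl$ should be an isomorphism from degree $4k+2$ onto degree $4k$.

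The consistent answer is $\pi_\ast\rR\kgl=\Zmod[x]$ with $|x|=4$, and $\pi_\ast\rR\kgl\to\pi_\ast\rR\HZ$ injective with image $\Zmod[t^4]$. I would verify this by comparing with the analogous computation for $\rR\ko$ through the $\eta$-cofiber sequence, and via the slice spectral sequence. I expect this homotopy computation, and in particular pinning down the differential linking $t^{4k+2}$ to $x^k$, to be the \emph{main obstacle}. My first attempt would use $\rho$-inversion (\Cref{Prop:rRinvertingrho}) together with the known Bockstein structure of $\HZ[\rho^{-1}]$.

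\textbf{Multiplicative identification.} Since $\rR\HZ$ is an $\HZmod$-algebra, $\rR\kgl$ is $2$-torsion in homotopy. To make $\rR\kgl$ an $\HZmod$-algebra, I would use the unit $\rR\HZ\to$ …; that unit goes the wrong way, however. Instead I would use that $\rR\mathbb{S}\to\rR\kgl$ factors through $\rR\MGL=\MO$, which is an $\HZmod$-algebra as an $\Einfty$-ring ($\MO\simeq\HZmod\wedge(\text{polynomial})$ with $\E{2}$ Thom structure). This gives an $\E{1}$-$\HZmod$-algebra structure on $\rR\kgl$ compatible with that on $\rR\HZ$. Choosing a class $x\in\pi_4$ mapping to $t^4$, freeness gives an $\E{1}$-$\HZmod$-map $\HZmod[t^4]\to\rR\kgl$. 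It is an equivalence by the homotopy computation, and the composite to $\HZmod[t^2]$ agrees with the standard inclusion by freeness, since both send $t^4\mapsto t^4$ in homotopy.
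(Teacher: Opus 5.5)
\textbf{There is a genuine gap in the additive computation.} This computation is the core of the statement, and you never prove $\pi_\ast(\rR\kgl)\cong\Zmod[x]$ with $|x|=4$.

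What is needed is that $\rR(\beta_\kgl)\cdot 1=0$ in $\pi_1(\rR\kgl)$. Equivalently, the boundary map $\pi_2(\rR\HZ)\to\pi_0(\rR\kgl)\cong\Zmod$ must be surjective. Once this holds, the long exact sequence of $\Sigma\rR\kgl\to\rR\kgl\to\rR\HZ$ splits into $0\to\pi_n(\rR\kgl)\to\pi_n(\rR\HZ)\to\pi_{n-2}(\rR\kgl)\to 0$, and your induction from connectivity goes through.

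Parity does not force this. The short exact sequence you display is in fact the opposite case, where the boundary vanishes. More to the point, the homotopy groups of the cofiber cannot decide the question. The sequence $\Sigma\kotop/2\xrightarrow{\eta}\kotop/2\to\mathsf{ku}/2$ has $\eta\cdot 1\neq 0$, yet $\pi_\ast(\mathsf{ku}/2)\cong\Zmod[v]$ with $|v|=2$, exactly as for $\rR\HZ$. So ``consistency'' does not single out the answer.

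Your suggested remedies do not close the gap:
\begin{itemize}
\item Comparing with $\rR\ko$ through the $\eta$-sequence is circular here. The paper's computation of $\pi_\ast(\rR\ko)$, including $\pi_1(\rR\ko)=0$, itself uses $\rR\kgl\simeq\HZmod[t^4]$.
\item The realized slice spectral sequence just reduces to the $d^1$ induced by the first $k$-invariant $d\colon\HZ\to\Sigma^{3,1}\HZ$ of $\kgl$. That is the same problem.
\item The Bockstein structure of $\HZ[\rho^{-1}]$ only records $\Sq^1\tau=\rho$. It says nothing about what $d$ is.
\end{itemize}

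\textbf{The missing idea is the paper's Steenrod-square computation.} By the octahedral axiom, compare $\beta$ with the boundary of the cofiber sequence $\Sigma^{2,1}\HZ\to\kgl/\beta^2\to\HZ$. By connectivity, $\pi_0(\rR\kgl)\to\pi_0(\rR\HZ)$ and $\pi_1(\rR\kgl)\to\pi_1(\rR(\kgl/\beta^2))$ are isomorphisms. So it suffices that this boundary sends $t^2$ to $1$.

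Next, $[\HZ,\Sigma^{3,1}\HZmod]\cong\Zmod\{\Sq^3\circ(\mathsf{mod}\,2)\}$. Complex realization turns $d$ into the first $k$-invariant $Q_1$ of $\mathsf{ku}$, whose mod $2$ reduction is nonzero. Hence $(\mathsf{mod}\,2)\circ d=\Sq^3\circ(\mathsf{mod}\,2)$. The motivic Cartan formula gives $\Sq^3(\tau^2)=\rho^3$. So after inverting $\rho$, the class $t^2=\tau^2\rho^{-2}$ hits $1$, and $\rR(\beta_\kgl)=0$.

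\textbf{On the multiplicative part, your conclusion is right but the route is shaky.}
\begin{itemize}
\item ``Since $\rR\HZ$ is an $\HZmod$-algebra, $\rR\kgl$ is $2$-torsion'' is a non sequitur. It follows instead from $\pi_0(\rR\kgl)\cong\Zmod$.
\item The detour through $\MGL\to\kgl$ and $\rR\MGL\simeq\MO$ needs an $\E{2}$-map $\MGL\to\kgl$ and a multiplicative identification of $\rR\MGL$. You justify neither.
\end{itemize}
The detour is also unnecessary. $2=0$ in $\pi_0(\rR\kgl)$ already yields an $\E{2}$-map $\HZmod\to\rR\kgl$ by the Hopkins--Mahowald theorem, as in the computation of $\rR\HZmod$. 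This makes $\rR(\kgl\to\HZ)$ a map of $\E{2}$-rings under $\HZmod$. Your freeness argument then finishes exactly as in the paper.
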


\begin{introprop}[\Cref{Prop:rRHZtilde}]\label{Prop:intromini4} On the homotopy rings, the real realization of the quotient map $$\HZtilde := f_0\underline{K}^{MW}_\ast \longrightarrow f_0(\underline{K}^{MW}_\ast/\eta) = f_0\underline{K}^{M}_\ast = \HZ$$ identifies with the quotient map $\Z[t^2]/(2t^2) \to \Z[t^2]/2$.
\end{introprop}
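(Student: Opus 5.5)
The plan is to exploit the cofiber sequence defining the quotient, rather than computing $\rR\HZtilde$ from scratch. Since $\underline{K}^{MW}_\ast/\eta \simeq \underline{K}^M_\ast$ and $f_0$ is exact, we get a cofiber sequence
$$\Sigma^{1,1}\HZtilde \xrightarrow{\ \eta\ } \HZtilde \longrightarrow \HZ$$
in $\SH(\R)$. We then apply $\rR$. Recall that $\rR$ is exact, that it realizes $\Gm$ to $S^0$ (so $\Sigma^{1,1}$ realizes to the identity up to equivalence), and that it factors through $\SH(\R)[\rho^{-1}]$ by \Cref{Prop:rRinvertingrho}. Since $\rho\eta\simeq -2$ there, we obtain a cofiber sequence of spectra
$$A \xrightarrow{\ \pm 2\ } A \longrightarrow \rR\HZ, \qquad A:=\rR\HZtilde .$$
By \Cref{Prop:rRHZHZmod}, $\pi_\ast\rR\HZ \cong \Zmod[t^2]$. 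Moreover, the map $A\to\rR\HZ$ is a map of ($\E{1}$-)ring spectra, being the realization of a ring map.

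Next I compute $\pi_0 A$ and the connectivity of $A$. $\HZtilde$ is very effective, and $\rR$ sends very effective spectra to connective spectra, since $\rR$ of $\Sigma^{2n,n}\Sigma^\infty_+X$ is connective for $n\ge 0$. Hence $A$ is connective. For $\pi_0$, I use the description of $\rR$ as $\rho$-localization followed by taking real points: $\pi_0 A$ is the colimit along multiplication by $\rho$ of $\underline{K}^{MW}_{-n}(\R)$. For $n>0$ these groups are $W(\R)\cong\Z$, and $\rho$ acts by the identification $W\to W$ (equivalently, via $I^n(\R)\cong 2^n\Z$ on the positive-weight side). This gives $\pi_0A\cong \Z$, with the unit mapping to the generator.

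Now I read off the long exact sequence $\pi_nA\xrightarrow{2}\pi_nA\to\pi_n\rR\HZ\to\pi_{n-1}A\xrightarrow{2}\pi_{n-1}A$.
\begin{itemize}
\item In degree $0$: $\Z/2\to\Zmod$ is an isomorphism, and $\pi_{-1}A=0$ is consistent with this.
\item In degree $1$: $\pi_1\rR\HZ=0$ forces $\pi_1A/2=0$, and $\pi_0A=\Z$ has no $2$-torsion. Since $\pi_1A$ is finitely generated, it must vanish. (Finite generation should follow from the slice/colimit description; alternatively one argues with $2$-divisibility directly.)
\item By induction upward: in odd degrees $\pi_{2k+1}A/2=0$ and $\pi_{2k+1}A[2]=0$, so odd groups vanish. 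In even degrees $2k>0$ we get $0\to\pi_{2k}A/2\to\Zmod\to\pi_{2k-1}A[2]=0$, so $\pi_{2k}A/2\cong\Zmod$, and $\pi_{2k}A[2]$ is the image of $\pi_{2k+1}\rR\HZ=0$.
\end{itemize}
The main obstacle is to pin down $\pi_{2k}A$ for $k>0$ exactly as $\Zmod$ rather than, say, $\Z$ or $\Z/4$. I would settle this with an independent input. The first option is to compute $\pi_{2k}A$ via the same $\rho$-colimit, now of the homotopy sheaves of $\HZtilde$ in positive weight, using Bachmann's computation of $\HZtilde$ as the pullback $\HZ\times_{\HZmod} f_0\underline{K}^W_\ast$ and the fact that its $\rho$-inverted higher homotopy is $2$-torsion coming from the $\HZ$-part. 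The second option is to show that $2\cdot t^2=0$ by realizing the relation $\eta\cdot(\text{generator})=0$ in the corresponding weight. Once the groups are known to be $\Z,0,\Zmod,0,\Zmod,\dots$, multiplicativity of $A\to\rR\HZ$ finishes the identification. The map is an isomorphism mod $2$ in each even degree, so a lift $t^2\in\pi_2A$ of the generator of $\pi_2\rR\HZ$ satisfies $(t^2)^k\mapsto t^{2k}\neq0$. Hence $(t^2)^k$ generates $\pi_{2k}A$, and $\pi_\ast A\cong\Z[t^2]/(2t^2)$, with the realized quotient map being the reduction $\Z[t^2]/(2t^2)\to\Z[t^2]/2$.
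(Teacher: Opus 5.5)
\textbf{Gap: the cofiber sequence in your first step does not exist.} There is no cofiber sequence $\Sigma^{1,1}\HZtilde\xrightarrow{\eta}\HZtilde\to\HZ$, for two reasons.
\begin{enumerate}
\item The identification $\underline{K}^{MW}_\ast/\eta\cong\underline{K}^M_\ast$ is a cokernel in the abelian heart, and $\eta$ is not a monomorphism there. Already $\eta\colon\underline{K}^{MW}_0=\underline{GW}\to\underline{K}^{MW}_{-1}=\underline{W}$ kills the hyperbolic form $h$, since $\eta h=0$. So the cofiber of $\eta$ on $\underline{K}^{MW}_\ast$ in $\SH(k)$ has a nonzero $\underline{\pi}_1$, namely $\ker\eta$.
\item Exactness of $f_0$ does not let you move $\Sigma^{1,1}=\Gm\Smash(-)$ past it: one has $f_0(\Sigma^{1,1}E)\simeq\Sigma^{1,1}f_{-1}E$, not $\Sigma^{1,1}f_0E$.
\end{enumerate}
Your own computation shows the sequence cannot exist. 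For $k\geq 1$, your long exact sequence gives $\pi_{2k}A/2\cong\Zmod$ and $\pi_{2k}A[2]=\mathrm{im}(\pi_{2k+1}\rR\HZ\to\pi_{2k}A)=0$. A group with no $2$-torsion is neither $\Zmod$ nor $\Z/4$. So the ``main obstacle'' is not a missing input; it is a contradiction with the statement you are trying to prove. Conversely, if $\pi_\ast A\cong\Z[t^2]/(2t^2)$, then $\mathsf{cof}(2\colon A\to A)$ has $\pi_{2k+1}\cong\pi_{2k}A[2]\cong\Zmod$ for $k\geq 1$, whereas $\pi_{2k+1}\rR\HZ=0$ by \Cref{Prop:rRHZHZmod}. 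Hence either of your two proposed ``options'' would clash with the long exact sequence rather than complete the argument.

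\textbf{What is missing: a direct comparison of homotopy sheaves.} This is how the paper argues. By \cite[Lem.\ 17]{Bachmann-genslices}, the map $\HZtilde\to\HZ$ induces isomorphisms on $\underline{\pi}_n(-)_\ast$ for $n\neq 0$, and the unit $\unit\to\HZtilde$ induces isomorphisms on $\underline{\pi}_0(-)_\ast$. Feeding this into the $\rho$-colimit formula of \Cref{Prop:homotopyofrR} gives $\pi_n\rR\HZtilde\cong\pi_n\rR\HZ$ for $n\neq 0$ and $\pi_0\rR\HZtilde\cong\pi_0\rR\calS\cong\Z$. Connectivity then handles negative degrees, and the ring-map argument finishes.

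Your $\pi_0$ computation needs this same input, and as written it is garbled in two ways.
\begin{itemize}
\item The $\rho$-colimit is governed by large positive Milnor--Witt degree. These are exactly the non-effective weights where $f_0$ may change homotopy sheaves, so you must know $\underline{\pi}_0(\HZtilde)_\ast\cong\underline{K}^{MW}_\ast$ there.
\item On the $W$-range, $\rho=[-1]$ acts by $\eta[-1]=\langle-1\rangle-1$, i.e.\ by $-2$ on $W(\R)\cong\Z$, not by an identification.
\end{itemize}
Your connectivity argument and your final multiplicativity step are fine and match the paper's endgame.
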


\textbf{Strategy for the proof of \cref{Prop:introthmfracsq}. } We first compute the homotopy ring of $\rR\ko$, by relating $\ko$ to other motivic spectra whose real realizations can be computed more easily, namely the ones in Propositions \ref{Prop:intromini1} to \ref{Prop:intromini4}. This involves spectral sequence computations, for the filtration on $\rR\ko$ induced by its very effective slices. This homotopy ring is a polynomial ring over $\Z$ with a single generator $x$ in degree 4, $\Z[x]$.

We will then identify the underlying $\E{1}$-ring, by describing it as an explicit pullback square. We will use a 2-local fracture square. Indeed, the situation is simpler when $2$ is inverted, and we will then rely on \Cref{Prop:intromini3}. The localization of $\rR\ko$ at $(2)$ will be identified with a free $\E{1}$-algebra in the category of $\HZ_{(2)}$-modules. In particular, this will require us to produce an $\E{1}$-map of $\HZ_{(2)}$-modules $\HZ_{(2)} \to (\rR\ko)_{(2)}$, and we will also use our computation of the homotopy ring of $\rR\ko$. To produce such a map, we will use the Thom spectrum $\MSO$ as an intermediate object. Indeed, the existence of $\E{2}$-maps $\HZ_{(2)} \to \MSO_{(2)}$ in $\Sp$ and $\MSL \to \ko$ in $\SH(\R)$ has been shown in the past (\cite[Cor.\ 3.7]{HLN} and \cite[Thm.\ 10.1]{HJNY} respectively). Moreover, there is an equivalence of $\Einfty$-rings $\rR(\MSL) \simeq \MSO$, relying on a general comparison of the topological and motivic Thom spectrum functors, which we conducted in \cite{rRThom-arXiv}. The latter article originated from here and can be viewed as a second half to the present article. 

Combining these results with a careful analysis of the equivalences we have chosen and the maps in the fracture square yields the pullback square of \cref{Prop:introthmfracsq}.\\

\textbf{Organization of the article.} In \Cref{Sect:preliminaries}, we make some recollections about the notions of (very) effectiveness for motivic spectra, the real realization functor, and free $\E{1}$-$\HA$-algebras in topological spectra. It serves as a toolbox whose results will be used throughout the paper. Readers already familiar with the material may safely skip this section and return to it as needed. In \Cref{Sect:examplesrealization}, we compute the real realizations of several classical motivic spectra, in particular we prove Propositions \ref{Prop:intromini3} to \ref{Prop:intromini4}. These computations will come in handy in \Cref{Sect:rRko}, where we compute $\rR\ko$ and prove \Cref{Prop:introthmfracsq,Prop:introthmcompLth}. At the very end is an \nameref{Sect:indexofnotation}, to which the reader may refer for any unexplained notation.\\

\textbf{Conventions and prerequisites}\hfill
\begin{itemize}
    \item For introductory material regarding the topics discussed in this article, the reader may refer to \cite{Mastersthesis}.
    \item We work in the $\infty$-categorical setting, following \cite{Lurie-HTT,Lurie-HA}. We simply say ``category'' for ``$\infty$-category''. Let $\Cat$ be the $\infty$-category of small $\infty$-categories.
    \item We denote by $\Spc$ the category of spaces and by $\Sp$ the category of spectra.
    \item Unless mentioned otherwise, $k$ denotes a fixed \emph{perfect} base field, and $S$ a fixed qcqs base scheme. When working with Hermitian K-theory, we assume $\mathsf{char}(k)\neq 2$. Since we will eventually only work with subfields of $\R$ or $\C$, these assumptions will be automatically satisfied. Let $\Sm_S$ be the category of smooth schemes of finite type over $S$. We denote by $\Spc(S)$ and $\SH(S)$ the categories of motivic spaces and motivic spectra, respectively, over $S$ (constructed in the setting of model categories in \cite{MV}, and with $\infty$-categories in \cite{Robalo} for example). For a smooth scheme $X$ over $S$ or a space $Y$, we will often also denote by $X$ and $Y$ the corresponding motivic spaces or motivic spectra. Let $T$ be the infinite suspension spectrum of the pointed motivic space $(\mathbb{P}^1,\infty)$.
   \item We denote by $\rR : \SH(\R) \to \Sp$ the real realization functor (see \cite[\S 10]{Bachmann-rho} for example), and also the corresponding functors $\Pre(\Sm_\R) \to \Spc$ and $\Spc(\R) \to \Spc$, or their pointed analogs, by $\rR$.
   \item We denote by $\HZ$, $\HZmod$ both the motivic and topological Eilenberg--MacLane spectra (representing motivic and singular cohomology, respectively); the context should suffice to distinguish them.
    \item For $p$ a prime, \emph{localization at $(p)$} means inverting every prime different from $p$, whereas \emph{localization away from $p$} means inverting $p$.
\end{itemize}

\vspace{0.5cm}
\section{A few preliminaries}\label{Sect:preliminaries}

This introductory section contains some preliminaries about the notions of effectiveness and very effectiveness for motivic spectra (\Cref{Subsect:eff}), the real realization functor (\Cref{Subsect:rR}), and free $\E{1}$-$\HA$-algebras (in topological spectra), for $\mathsf{A}$ a discrete commutative ring and $\HA$ its Eilenberg-Mac Lane spectrum (\Cref{Subsect:freeE1HA}). The latter will appear in our computations in \Cref{Sect:examplesrealization,Sect:rRko}.

\vspace{0.2cm}
\subsection{Recollections about (very) effectiveness of motivic spectra}\label{Subsect:eff}\hfill\vspace{0.2cm}

In this subsection, we recall the definitions of the notions of effectiveness \cite[\S 2]{Voevodsky-openproblems} and very effectiveness \cite[Def.\ 5.5]{SO}, essentially to fix notation. The upshot is that these are two different notions of connectivity for motivic spectra. For some justification of these concepts, we refer to the introduction of \cite{Bachmann-genslices}.

\begin{Def}[\protect{\cite[\S 13]{BH-norms}}]\label{Def:SHkeff} The \emph{category of effective motivic spectra} $\SHk^\eff$ is the subcategory of $\SHk$ generated under colimits by all $\calS^n \Smash \Sigma^\infty_+ X$ for $X\in\Sm_k$ and $n\in\Z$. The \emph{category of very effective motivic spectra} $\SHk^\veff$ is the subcategory of $\SHk$ generated under colimits by all $\calS^n \Smash \Sigma^\infty_+ X$ for $X\in\Sm_k$ and $n\geq 0$. For $n\geq 0$, let $\SHk^\eff(n) := T^{\Smash n} \Smash \SHk^\eff$ be the category of \emph{$n$-effective motivic spectra} and $\SHk^\veff(n) := T^{\Smash n} \Smash \SHk^\veff$ be the category of \emph{$n$-very effective motivic spectra}.
\end{Def}

\begin{Def}[\protect{\cite[Thm. 5.2.3]{Morel-A1intro}}, \protect{\cite[App.\ B]{BH-norms}}]\label{Def:homotopytstc} The \emph{homotopy t-structure} on $\SHk$ is defined by
    \begin{align*}
        \SHk_{\geq 0} &= \{E\in\SHk \mid \homsh{i}{\ast}{E} = 0 \ \forall i<0\},\\
        \SHk_{\leq 0} &= \{E\in\SHk \mid \homsh{i}{\ast}{E} = 0 \ \forall i>0\}.
    \end{align*}
    Equivalently, $\SHk_{\geq 0}$ is generated under colimits and extensions by $\{\Sigma^{p,q} \Sigma^\infty_+ X \mid X\in\Sm_k, p-q \geq 0\}$.
    \end{Def}

    \begin{Rmk}\label{Rmk:smstconheart} The smash product symmetric monoidal structure on $\SHk$ is compatible with the homotopy t-structure in the sense of \cite[Def.\ A.10]{AN}. To see this, we use the equivalent characterization of the connective part in \Cref{Def:homotopytstc}. Then the unit is one of the generators of the connective part. Moreover, the tensor product functor preserves colimits in each variable; since $\SHk$ is stable, this functor is in particular exact in both variables. Therefore, to check that $\SHk_{\geq 0}$ is stable under tensor products we only have to show that the aforementioned collection of generators also is. For any $X,Y\in\Sm_k$ and $p,q,p',q'\in\Z$ with $p-q\geq 0$ and $p'-q'\geq 0$, we have $\Sigma^{p,q} \Sigma^\infty_+ X \Smash \Sigma^{p',q'} \Sigma^\infty_+ Y \simeq \Sigma^{p+p',q+q'}\Sigma^\infty_+ (X\times Y)$ with $p+p' - (q+q') \geq 0$, which is again in the connective part. In particular, $\SHk_{\geq 0}$ is a symmetric monoidal subcategory of $\SHk$, and there is a unique symmetric monoidal structure on the heart $\SHk^\heartsuit$ such that the truncation $\SHk_{\geq 0} \to \SHk^\heartsuit$ is symmetric monoidal \cite[Lem.\ A.12]{AN}.
    \end{Rmk}

\begin{Def}[e.g. \protect{\cite[before Prop.\ 4]{Bachmann-genslices}}]\label{Def:effectivehomotopytstc} The \emph{effective homotopy t-structure} on $\SHk^\eff$ is defined by
\begin{align*}
    \SHk^\eff_{\geq 0} &= \{E\in\SHk^\eff \mid \homsh{i}{0}{E} = 0 \ \forall i<0\},\\
    \SHk^\eff_{\leq 0} &= \{E\in\SHk^\eff \mid \homsh{i}{0}{E} = 0 \ \forall i>0\}.
\end{align*}
For $n\in\Z$, let $\tau_{\geq_e n}$ and $\tau_{\leq_e n}$ be the truncation functors with respect to this t-structure.
\end{Def}

\begin{Prop}\label{Prop:propertiesofSHkeff} \phantom{j}
\begin{enumerate}[label = (\roman*)]
    \item We have $\SHk^\eff_{\geq 0} = \SHk^\veff$.
    \item The smash product symmetric monoidal structure on $\SHk$ restricts to symmetric monoidal structures on both $\SHk^\eff$ and $\SHk^\veff$, such that the inclusion functors are symmetric monoidal.
\end{enumerate}  
\end{Prop}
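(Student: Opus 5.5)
For (i), I will compare generators. By \Cref{Def:SHkeff}, $\SHk^\veff$ is generated under colimits by the $\calS^n\Smash\Sigma^\infty_+X$ with $n\ge 0$. I will first show that each of these lies in $\SHk^\eff_{\geq 0}$. Each such object is effective, so it remains to check $\homsh{i}{0}{-}=0$ for $i<0$. For $\Sigma^\infty_+X$ this is Morel's stable $\Affl$-connectivity theorem, which says $\Sigma^\infty_+X\in\SHk_{\geq 0}$ (the generators in \Cref{Def:homotopytstc} with $p=q=0$); in particular all $\homsh{i}{\ast}{-}$, hence $\homsh{i}{0}{-}$, vanish for $i<0$. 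Suspending by $\calS^n$ with $n\ge0$ preserves this. The class $\SHk^\eff_{\geq 0}$ is the connective part of a t-structure on $\SHk^\eff$, so it is closed under colimits and extensions. Since $\SHk^\eff\subseteq \SHk$ is closed under colimits, this gives $\SHk^\veff\subseteq \SHk^\eff_{\geq 0}$.

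For the reverse inclusion, I will use the standard description of the effective homotopy t-structure (the one in \cite{Bachmann-genslices}): it is the t-structure on the presentable stable category $\SHk^\eff$ whose connective part is generated under colimits and extensions by the compact generators $\Sigma^\infty_+X$, $X\in\Sm_k$. Its coconnective part is then detected by the mapping spectra out of the $\Sigma^\infty_+X$, and $[\Sigma^\infty_+X[i],E]=\homsh{i}{0}{E}(X)$. So this t-structure coincides with the one in \Cref{Def:effectivehomotopytstc}. The connective part generated this way is exactly $\SHk^\veff$, because $\SHk^\veff$ is already closed under extensions: the cofiber of a map between objects of $\SHk^\veff$ is a colimit of generators with $n\ge0$. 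Identifying the two t-structures is the main obstacle. Concretely, one has to verify that an effective $E$ with $\homsh{i}{0}{E}=0$ for all $i>0$ is right orthogonal to $\SHk^\veff[1]$; this holds by adjunction, since $\operatorname{Map}(\Sigma^\infty_+X[i],E)$ computes $\homsh{i}{0}{E}(X)$ and Nisnevich sections are determined by stalks. The other inclusion, $\SHk^\veff\subseteq\SHk^\eff_{\geq 0}$, is the one shown above.

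For (ii), the argument is the same as in \Cref{Rmk:smstconheart}. Both subcategories are closed under colimits and the smash product preserves colimits in each variable, so it suffices to check generators. The unit $\Sigma^\infty_+\Spec k$ lies in both. For the generators we have $(\calS^n\Smash\Sigma^\infty_+X)\Smash(\calS^m\Smash\Sigma^\infty_+Y)\simeq \calS^{n+m}\Smash\Sigma^\infty_+(X\times Y)$ with $X\times Y\in\Sm_k$, and $n+m\ge0$ whenever $n,m\ge0$. Hence both subcategories are closed under the unit and tensor products, so they inherit symmetric monoidal structures for which the inclusions are symmetric monoidal (\cite[Prop.\ 2.2.1.1]{Lurie-HA}).
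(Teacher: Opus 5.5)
Your part (ii) is essentially the paper's argument. For part (i) the paper gives no proof and simply cites \cite[Remark after Prop.\ 4]{Bachmann-genslices}. Your direct argument has a genuine gap at the claim that $\SHk^\veff$ is already closed under extensions.

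With \Cref{Def:SHkeff} as stated (closure under colimits only), this does not follow from closure under cofibers. An extension $A\to B\to C$ presents $B$ as $\mathsf{cof}(\Sigma^{-1}C\to A)$ or as $\mathsf{fib}(C\to\Sigma A)$. Neither description stays inside $\SHk^\veff$: $\Sigma^{-1}C$ need not be very effective, and $\SHk^\veff$ is not closed under fibers. In general a colimit-closed subcategory of a stable category is not extension-closed. For instance, in $\Sp$ the colimit closure of $\HZmod$ lies in the class of connective $X$ with $2\pi_0(X)=0$, since that class is closed under sums, cofibers and filtered colimits. So it misses $\mathsf{H}(\Z/4)$, which is an extension of $\HZmod$ by itself.

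This is not a side issue. The t-structure generated by the $\Sigma^\infty_+X$ has as connective part the closure under colimits \emph{and} extensions. So once it is identified with \Cref{Def:effectivehomotopytstc}, statement (i) is equivalent to exactly the extension-closedness you assert. You must either work with the definition of \cite[Def.\ 5.5]{SO}, which builds in closure under extensions, or actually prove it. One way is cellular approximation:
\begin{itemize}
\item For $E\in\SHk^\eff_{\geq 0}$, the tautological map $\bigoplus_{f}\Sigma^\infty_+X\to E$ (over all $X\in\Sm_k$ and all maps $f$) is surjective on $\homsh{0}{0}{-}$. By Morel's connectivity its cofiber $E_1$ lies in $\SHk^\eff_{\geq 1}$.
\item Iterating gives $E\to E_1\to E_2\to\cdots$ with $E_k\in\SHk^\eff_{\geq k}$. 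The fibers $F_k$ of $E\to E_k$ satisfy $F_k\simeq\mathsf{cof}(\bigoplus\Sigma^{k-2}\Sigma^\infty_+X\to F_{k-1})$ for $k\geq 2$, hence lie in the colimit closure.
\item Finally $E\simeq\colim_k F_k$, because $\colim_k E_k$ has vanishing $\homsh{i}{0}{-}$ for all $i$ and is therefore zero by Nisnevich hypercompleteness.
\end{itemize}

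The comparison of t-structures is also muddled. The identity $[\Sigma^\infty_+X[i],E]\cong\homsh{i}{0}{E}(X)$ is false for general $X$. The left-hand side is the presheaf whose Nisnevich sheafification is $\homsh{i}{0}{E}$; the two agree on fields, which is all \Cref{Prop:homotopyofrR} uses. Passing from vanishing sheaves to vanishing presheaves needs a descent or hypercompleteness argument, not just stalks.

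Moreover, the condition you single out, that $E$ with $\homsh{i}{0}{E}=0$ for $i>0$ is right orthogonal to $\SHk^\veff[1]$, is equivalent after passing to left orthogonals to $\SHk^\veff\subseteq\SHk^\eff_{\geq 0}$. That is the inclusion you had already proved, so you have argued the same inclusion twice. The reverse inclusion needs the opposite, easy direction. If $F$ is right orthogonal to $\SHk^\veff$, then the presheaves $X\mapsto[\Sigma^\infty_+X[i],F]$ vanish for $i\geq 0$, hence so do their sheafifications, so $F\in\SHk^\eff_{\leq -1}$. This yields $\SHk^\eff_{\geq 0}\subseteq{}^{\perp}\big((\SHk^\veff)^{\perp}\big)$. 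The right-hand side is precisely the closure under colimits and extensions, which brings you back to the gap above.
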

\begin{proof}
Part $(i)$ is \cite[Remark after Prop.\ 4]{Bachmann-genslices}. For part $(ii)$, note that since these two subcategories are full and contain the unit, by \cite[Prop.\ 2.2.1.1 and Rmk.\ 2.2.1.2]{Lurie-HA} we only have to show that they are stable under the smash product. The latter preserves colimits in both variables, so we only have to check the statement for generators. For all $m,n\in\Z$ and $X,Y\in\Sm_k$, we have
$$(\calS^n \Smash \Sigma^\infty_+ X) \Smash (\calS^m \Smash \Sigma^\infty_+ Y) \simeq \calS^{m+n} \Smash \Sigma^{\infty}(X_+ \Smash Y_+) \simeq \calS^{m+n} \Smash \Sigma^{\infty}_+(X\times Y),$$
which is by definition effective, and even very effective when $n,m\in\N$.
\end{proof}

\begin{Lemma}\label{Prop:rn}
    The inclusion functors $\iota_n : \SHk^\eff(n) \xhookrightarrow{} \SHk$ and  $\tilde{\iota}_n : \SHk^\veff(n) \xhookrightarrow{} \SHk$ admit right adjoints $r_n$ and $\tilde{r}_n$ respectively, for all $n\geq 0$. 
\end{Lemma}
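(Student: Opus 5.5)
The plan is to apply the adjoint functor theorem for presentable $\infty$-categories, \cite[Cor.\ 5.5.2.9]{Lurie-HTT}: a colimit-preserving functor between presentable categories admits a right adjoint. We therefore need two facts. First, $\SHk^\eff(n)$ and $\SHk^\veff(n)$ are presentable. Second, the inclusions $\iota_n$ and $\tilde\iota_n$ preserve colimits.

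For presentability, we start from the fact that $\SHk$ is presentable. By \Cref{Def:SHkeff}, $\SHk^\eff$ is the full subcategory generated under colimits by the set $G=\{\calS^m\Smash\Sigma^\infty_+X \mid X\in\Sm_k,\ m\in\Z\}$. This is an essentially small set, since $\Sm_k$ is essentially small. The same holds for $\SHk^\veff$, using the generators with $m\geq 0$. A full subcategory of a presentable category generated under colimits by a small set of objects is presentable, with colimits computed in the ambient category \cite[Prop.\ 5.5.3.12, Cor.\ 5.5.2.9]{Lurie-HTT}. By construction this subcategory is closed under colimits, so the inclusion preserves colimits.

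For the $n$-twisted versions, we note that $T^{\Smash n}\Smash(-)$ is an autoequivalence of $\SHk$, because $T$ is invertible. Hence $\SHk^\eff(n)=T^{\Smash n}\Smash\SHk^\eff$ is the full subcategory generated under colimits by the small set $T^{\Smash n}\Smash G$. The same argument then applies. Equivalently, the right adjoint can be written as $r_n = T^{\Smash n}\Smash r_0(T^{\Smash -n}\Smash -)$, and similarly for $\tilde r_n$.

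The only delicate point is making precise that ``generated under colimits'' yields a presentable full subcategory closed under colimits in $\SHk$. We handle this by taking the smallest full subcategory containing the generators and closed under small colimits. It is accessible because it is generated by a small set of objects that are $\kappa$-compact for some regular cardinal $\kappa$. Lurie's result then gives the right adjoint. Everything else is formal.
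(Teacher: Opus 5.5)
Your proposal is correct and follows the paper's proof: both apply the adjoint functor theorem \cite[Cor.\ 5.5.2.9]{Lurie-HTT} to the colimit-preserving inclusions of presentable subcategories. The only difference is that you justify presentability of $\SHk^\eff(n)$ and $\SHk^\veff(n)$ directly, by taking the colimit closure of a small set of $\kappa$-compact generators and transporting it along the autoequivalence $T^{\Smash n}\Smash(-)$, whereas the paper simply cites \cite[Prop.\ C.12.(2)]{Hoyois-traceformula} for this.
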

\begin{proof} This follows from the adjoint functor theorem \cite[Cor.\ 5.5.2.9]{Lurie-HTT} and presentability of the categories involved \cite[Prop.\ C.12.(2)]{Hoyois-traceformula}.
\end{proof}

\begin{Def}\label{Def:effcover} We call the compositions $\iota_n\circ r_n =: f_n$ and $\tilde{\iota}_n\circ \tilde{r}_n =: \tilde{f}_n$ the \emph{$n$-effective cover} and \emph{very $n$-effective cover} functors respectively. For all $E\in \SHk$, we obtain by adjunction natural maps $f_{n+1}E \to f_nE$ and $\tilde{f}_{n+1}E \to \tilde{f}_nE$ for all $n\geq 0$. Their cofibers $s_nE := \mathsf{cof}(f_{n+1}E \to f_nE)$ and $\tilde{s}_nE := \tilde{f}_{n+1} \to \tilde{f}_nE$ are called the \emph{$n$-effective slice} and \emph{very $n$-effective slice} of $E$, respectively. By \emph{(very) effective cover}, we mean the (very) $0$-effective cover.
\end{Def}

\begin{Prop}[\protect{\cite{Pelaez}}]\label{Prop:smheart} The inclusion functor $\iota_0: \SHk^\eff \to \SHk$ is right t-exact. The functor $r_0: \SHk \to \SHk^\eff$ is t-exact, lax symmetric monoidal, and restricts to a lax symmetric monoidal functor $r_0^\heartsuit: \SHk^\heartsuit \to \SHk^{\eff,\heartsuit}$.
\end{Prop}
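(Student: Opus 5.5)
The plan is to verify the four claims in the order (a) right t-exactness of $\iota_0$, (b) t-exactness of $r_0$, (c) lax symmetric monoidality of $r_0$, and (d) its passage to the hearts. Each step uses only the adjunction $\iota_0 \dashv r_0$ from \Cref{Prop:rn} and the descriptions of the two t-structures in \Cref{Def:homotopytstc} and \Cref{Def:effectivehomotopytstc}.

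For (a), take $E \in \SHk^\eff_{\geq 0}$. By \Cref{Prop:propertiesofSHkeff}(i), $E$ lies in $\SHk^\veff$, so it is generated under colimits by the spectra $\calS^n \Smash \Sigma^\infty_+ X$ with $n\geq 0$. These generators have weight $p-q = n \geq 0$, so they belong to $\SHk_{\geq 0}$ by \Cref{Def:homotopytstc}. Since $\SHk_{\geq 0}$ is closed under colimits, $\iota_0(E) \in \SHk_{\geq 0}$.

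For (b), right t-exactness of $\iota_0$ already gives, by adjunction, that $r_0$ is left t-exact: it sends $\SHk_{\leq 0}$ into $\SHk^\eff_{\leq 0}$. For right t-exactness, recall that $\homsh{i}{0}{F}$ is computed by maps out of the objects $\calS^i\Smash\Sigma^\infty_+X$, and these objects are effective. Hence adjunction gives $\homsh{i}{0}{r_0F} \cong \homsh{i}{0}{F}$ for every $F$. In particular, if $F\in\SHk_{\geq 0}$ then $\homsh{i}{0}{F}=0$ for $i<0$, so $r_0F \in \SHk^\eff_{\geq 0}$. This isomorphism is also the key point for left exactness, and it shows that $r_0$ commutes with truncations.

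For (c), \Cref{Prop:propertiesofSHkeff}(ii) says that $\iota_0$ is symmetric monoidal. The right adjoint of a symmetric monoidal functor inherits a canonical lax symmetric monoidal structure (doctrinal adjunction, \cite[Cor.\ 7.3.2.7]{Lurie-HA}), which gives the claim for $r_0$.

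For (d), t-exactness means that $r_0$ restricts to a functor $r_0^\heartsuit$ between the hearts, given by $\tau_{\leq_e 0}\circ r_0$ on $\SHk^{\heartsuit}$. It remains to check that this restriction is lax symmetric monoidal, which I expect to be the main obstacle. The heart monoidal structures are obtained from the truncations $\tau_{\leq 0}$ of \Cref{Rmk:smstconheart} and its effective analogue, which makes sense because $\SHk^\eff_{\geq 0}=\SHk^\veff$ is monoidal. The first step is to restrict $r_0$ to a lax symmetric monoidal functor on the connective parts $\SHk_{\geq 0}\to\SHk^\eff_{\geq 0}$; this works because these are full monoidal subcategories preserved by $r_0$. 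The second step is to show that the composite $\tau_{\leq_e 0}\circ r_0$ carries $\tau_{\leq 0}$-equivalences in $\SHk_{\geq 0}$ to equivalences. This holds because, by (b), $r_0$ commutes with truncations. The functor therefore factors through the localization $\SHk_{\geq 0}\to\SHk^\heartsuit$, and by the universal property of symmetric monoidal localizations \cite[Prop.\ 4.1.7.4]{Lurie-HA} this factorization $r_0^\heartsuit$ is lax symmetric monoidal. Alternatively, one could simply cite \cite{Pelaez} for (d).
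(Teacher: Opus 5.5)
Your proof is correct. For the lax monoidality of $r_0$ it is the same as the paper's: doctrinal adjunction applied to the symmetric monoidal inclusion of \Cref{Prop:propertiesofSHkeff}$(ii)$.

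It differs from the paper in two places.

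First, the paper does not prove the t-exactness claims but cites \cite[Prop.\ 4.(3)]{Bachmann-genslices}. You prove them directly. Right t-exactness of $\iota_0$ comes from the generators of $\SHk^\veff=\SHk^\eff_{\geq 0}$. t-exactness of $r_0$ comes from the observation that the counit $f_0F\to F$ induces isomorphisms on $\homsh{i}{0}{-}$, because the objects $\calS^i\Smash\Sigma^\infty_+X$ are effective. This is essentially Bachmann's own argument, and your step (a) still relies on \Cref{Prop:propertiesofSHkeff}$(i)$, which is taken from that same paper.

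Second, the treatment of the heart differs. The paper writes $r_0^\heartsuit$ as the composite $\SHk^\heartsuit\hookrightarrow\SHk_{\geq 0}\xrightarrow{r_0}\SHk^\eff_{\geq 0}\xrightarrow{\pi_0}\SHk^{\eff,\heartsuit}$. The first functor is lax symmetric monoidal as the right adjoint of the symmetric monoidal truncation (doctrinal adjunction again), and the last is symmetric monoidal. t-exactness is then only used to identify this composite with the restriction of $r_0$.

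You instead descend the lax functor $\tau_{\leq_e 0}\circ r_0$ along the localization $\tau_{\leq 0}$ using \cite[Prop.\ 4.1.7.4]{Lurie-HA*}. In your route, commutation of $r_0$ with truncations does real work. The route also implicitly requires two facts, both true by compatibility of the homotopy t-structure (\Cref{Rmk:smstconheart}, \cite[Lem.\ A.12]{AN}), and you should state them:
\begin{itemize}
\item the class of $\tau_{\leq 0}$-equivalences in $\SHk_{\geq 0}$ is stable under smashing with arbitrary objects;
\item the symmetric monoidal structure obtained on the localization is the one of \Cref{Rmk:smstconheart}.
\end{itemize}

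The paper's version is shorter and needs only doctrinal adjunction and composition. Yours exhibits $r_0^\heartsuit$ as the functor induced by $r_0$ on the localizations. The two lax structures agree, via the unit $\mathrm{id}\to\mathrm{incl}\circ\tau_{\leq 0}$.
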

\begin{proof} The claims about t-exactness are proven in \cite[Prop.\ 4.(3)]{Bachmann-genslices}. As a right adjoint to the inclusion, which is symmetric monoidal (\Cref{Prop:propertiesofSHkeff}$(ii)$), $r_0$ is also lax symmetric monoidal by the doctrinal adjunction principle (follows from of \cite[Cor.\ 7.3.2.7]{Lurie-HA}). As a t-exact functor, $r_0$ carries the non-negative part to the non-negative part of the t-structures, and the heart to the heart. Its (co)restriction to the symmetric monoidal subcategories $\SH(k)_{\geq 0}$ and $\SHk^\eff_{\geq 0}$ is then also lax symmetric monoidal. Its (co)restriction to the hearts $r_0^\heartsuit$ can be written as the composition 
$$\SHk^\heartsuit \xhookrightarrow{} \SHk_{\geq 0} \xrightarrow{r_0} \SHk^\eff_{\geq 0} \xrightarrow{\pi_0} \SHk^{\eff,\heartsuit}.$$
Here, $\pi_0$ is symmetric monoidal by definition of the tensor product on the heart \cite[Lem.\ A.12]{AN}. The first inclusion is lax symmetric monoidal as a right adjoint to the truncation, which is symmetric monoidal by \Cref{Rmk:smstconheart}. Thus, $r_0^\heartsuit$ is lax symmetric monoidal.
\end{proof}
\vspace{0.2cm}

\subsection{Recollections about the real realization functor}\label{Subsect:rR}\hfill\vspace{0.2cm}

We now state several results about the real realization functor, recalling its construction in the process (one original reference in the $C_2$-equivariant case is \cite{DI}; we give $\infty$-categorical references below). In this subsection, we assume that the field $k$ comes with a fixed embedding $k\xhookrightarrow{} \R$. Considering the case $k=\R$ is not really a loss of generality; the real realization functor over the base $k$ is equivalent to the precomposition of the real realization functor over the base $\R$ by the pullback along $\Spec(\R) \to \Spec(k)$. 

\begin{Prop}\label{Prop:rR} The real realization functors $\rR: \Spc(k)_\ast \to \Spc_\ast$ and $\rR:\SH(k) \to \Sp$ are colimit-preserving, symmetric monoidal functors with respect to the smash products.
\end{Prop}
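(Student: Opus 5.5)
I plan to build $\rR$ in stages, checking colimit preservation and symmetric monoidality at each one. The first stage is the real-points functor $\Sm_k \to \Spc$, $X \mapsto X(\R)$, where $X(\R)$ carries the analytic topology and the embedding $k\hookrightarrow\R$ is used. This functor preserves finite products, and the terminal object goes to a point. So it is symmetric monoidal for the Cartesian structures.

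Left Kan extension along the Yoneda embedding then gives a colimit-preserving functor $\Pre(\Sm_k)\to\Spc$. By the universal property of presheaves with the Day convolution (Cartesian) structure \cite{Lurie-HA}, it is symmetric monoidal.

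Next I would check that it factors through the localization $\mathsf{L}_{\Affl,\mathsf{Nis}}$. Two things need verifying.
\begin{itemize}
\item \textbf{Homotopy invariance.} $(X\times\Affl)(\R)=X(\R)\times\R\to X(\R)$ is a homotopy equivalence.
\item \textbf{Nisnevich descent.} An elementary distinguished square goes to a pushout of spaces. On real points, an étale map that is an isomorphism over a closed complement gives an open cover $U(\R)\cup V(\R)$ of $X(\R)$, up to the étale neighbourhood. The standard excision argument, using that étale maps are local homeomorphisms on real points, shows the square is homotopy cocartesian.
\end{itemize}
Since the localization is compatible with products (the generating local equivalences are stable under $-\times Y$), the induced functor $\Spc(k)\to\Spc$ is colimit-preserving and symmetric monoidal. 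Passing to pointed objects with the smash products is formal, via $\Spc(k)_\ast\simeq\Spc(k)\otimes\Spc_\ast$.

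For the stable statement, I would use Robalo's universal property of $\SH(k)=\Spc(k)_\ast[(\mathbb{P}^1)^{-1}]$. A symmetric monoidal colimit-preserving functor out of $\Spc(k)_\ast$ extends uniquely to $\SH(k)$ as soon as the image of $\mathbb{P}^1$ is invertible. Here $\mathbb{P}^1(\R)\simeq S^1$, which is invertible in $\Sp$. The cyclic permutation condition on $(S^1)^{\wedge 3}$ holds automatically, but it is not even needed in Robalo's $\infty$-categorical formulation. This gives $\rR:\SH(k)\to\Sp$ with the required properties.

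I expect the main obstacle to be the Nisnevich descent step: making the excision argument rigorous for real points of étale neighbourhoods. The alternative route is to cite existing $\infty$-categorical constructions, namely Bachmann's \cite[\S 10]{Bachmann-rho} or the $C_2$-equivariant realization of \cite{DI} followed by taking fixed points. I would ultimately cite these for the descent property and present the remaining steps as the formal argument above.
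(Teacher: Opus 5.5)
Your proposal is correct and follows essentially the same route as the paper. Both left Kan extend the finite-product-preserving real-points functor, pass to the $\Affl$-Nisnevich localization and to pointed objects, and then invoke the universal property of inverting $\mathbb{P}^1$ after postcomposing with $\Sigma^\infty$, using $\rR(\mathbb{P}^1)\simeq S^1$. The paper likewise does not reprove $\Affl$-invariance or Nisnevich descent but relies on the construction in \cite[\S 10]{Bachmann-rho}, so your choice to cite that reference for the descent step matches what it does.
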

\begin{proof} This follows from the construction of $\rR$, as we now recall (see e.g.\ \cite[\S 10]{Bachmann-rho} or \cite[\S 7.2]{ret2}). The functor of real points $\Sm_k \to \Spc$ preserves finite products and thus left Kan extends to colimit-preserving, symmetric monoidal functors $\Pre(\Sm_k) \to \Spc$ for the Cartesian structures, and $\Pre(\Sm_k)_\ast \to \Spc_\ast$ for the smash products. Passing to the localization, we obtain a colimit preserving, symmetric monoidal functor $\rR: \Spc(k)_\ast \to \Spc_\ast$. The postcomposition $\Spc(k)_\ast \to \Spc_\ast \to \Sp$ by the infinite suspension functor inverts $\mathbb{P}^1$, and thus lifts to a colimit preserving, symmetric monoidal functor $\rR: \SH(k) \to \Sp$.
\end{proof}

Here is one example of why very effectiveness is a reasonable notion of connectivity in $\SH(k)$.

\begin{Lemma}\label{Prop:rRconnective}
    For all $m\geq 0$, the restriction of $\rR$ to $\SHk^\veff(m)$ takes values in $m$-connective spectra. 
\end{Lemma}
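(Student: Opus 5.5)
The plan is to reduce to generators and use that $\rR$ preserves colimits (\Cref{Prop:rR}). Write $\Sp_{\geq m}$ for the full subcategory of $m$-connective spectra. It is closed under all colimits in $\Sp$: it is the connective part of the standard t-structure shifted by $m$, and connective parts are always closed under colimits (and extensions).

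By \Cref{Def:SHkeff}, $\SHk^\veff(m) = T^{\Smash m}\Smash \SHk^\veff$ is generated under colimits by the objects $T^{\Smash m}\Smash \calS^n \Smash \Sigma^\infty_+ X$ with $X\in\Sm_k$ and $n\geq 0$. To see this, note that $T^{\Smash m}\Smash -$ is an equivalence of $\SHk$ and so preserves colimits. Since $\rR$ preserves colimits, its preimage of $\Sp_{\geq m}$ is closed under colimits. It therefore suffices to show that $\rR$ sends each generator to an $m$-connective spectrum.

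For a generator, symmetric monoidality (\Cref{Prop:rR}) gives
$$\rR(T^{\Smash m}\Smash \calS^n\Smash \Sigma^\infty_+X)\simeq \rR(T)^{\Smash m}\Smash \Sigma^n \Sigma^\infty_+ X(\R).$$
Here $\rR(\calS^n)\simeq \Sigma^n\Sph$, because $\rR$ is exact and preserves the unit. Also $\rR(\Sigma^\infty_+X)\simeq\Sigma^\infty_+X(\R)$ by construction, since the real realization of a representable is its space of real points.

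Next we compute $\rR(T)$. The space $\mathbb{P}^1(\R)$ pointed at $\infty$ is the circle $S^1$, so $\rR(T)\simeq \Sigma^\infty S^1 = \Sigma\Sph$. (Equivalently, $T\simeq \calS^1\Smash\Gm$ and $\Gm(\R)=\R^\times\simeq S^0$.) Hence the realization of the generator is $\Sigma^{m+n}\Sigma^\infty_+X(\R)$. The suspension spectrum of a space is connective, so this spectrum is $(m+n)$-connective, and in particular $m$-connective since $n\geq 0$.

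The only point needing care is the identification $\rR(T)\simeq\Sigma\Sph$, which must be compatible with the symmetric monoidal structure. This is immediate from the construction of $\rR$ recalled in the proof of \Cref{Prop:rR}: on pointed motivic spaces, $\rR$ is the left Kan extension of the real-points functor. The remaining steps are formal.
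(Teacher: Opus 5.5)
Your proof is correct and follows essentially the same route as the paper: by colimit-preservation of $\rR$ and closure of $m$-connective spectra under colimits, it reduces to the generators $T^{\Smash m}\Smash\calS^n\Smash\Sigma^\infty_+X$, whose realizations are $\Sigma^{m+n}\Sigma^\infty_+X(\R)$. The only cosmetic difference is that the paper first reduces to $m=0$ using $\rR(T^{\Smash m})\simeq\Sph^m$, whereas you carry the factor $T^{\Smash m}$ along on each generator.
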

\begin{proof}
    Since $\SHk^\veff(m) = T^{\Smash m} \Smash \SHk^\veff$ by definition and $\rR(T^{\Smash m }) = \Sph^m$, it suffices to prove the case $m=0$. By definition, $\SHk^\veff$ is generated under colimits by all objects of the form $\calS^n \Smash \Sigma^\infty_+ X$ for $n\geq 0$ and $X\in \Sm_k$. These all realize to connective spectra because $\rR(\calS^n \Smash \Sigma^\infty_+ X) = \Sph^n \Smash \Sigma^\infty_+ \rR(X)$ is connective for all $n\geq 0$ and $X\in\Sm_k$. This concludes because $\rR$ commutes with colimits, and a colimit of connective spectra is connective (this holds for the non-negative part of any t-structure).  
\end{proof}

The category of spectra $\Sp$ is obtained as a localization of a category of \emph{prespectra}: if one views a spectrum as a sequence of pointed spaces $(c_n)_{n\in\N}$ with equivalences $c_n \to \Omega c_{n+1}$, then prespectra are defined as similar sequences for which the structure maps are not required to be equivalences. More precisely, the category of prespectra is that of reduced functors $\Spc_\ast^\mathsf{fin} \to \Spc$, and $\Sp$ is the full subcategory of reduced excisive functors \cite[Prop.\ 1.4.2.13.]{Lurie-HA}. The latter is in particular a localization of the category of prespectra \cite[Rmk.\ 1.4.2.4.]{Lurie-HA}, and the localization functor is called \emph{spectrification}.

\begin{Lemma}\label{Prop:levelwiserR} Let $E = (E_0,E_1, \dots) \in \SHk$. Then, the real realization of $E$ is computed as the spectrification of the prespectrum $(\rR(E_n))_{n\in\N}$. More explicitly, for all $n\in\Z$, $\rR(E)_n \simeq \colim_m \Omega^m \rR(E_{n+m})$.
    
    In particular, $\pi_n(\rR(E)) \cong \colim_m \pi_{n+m}(\rR(E_{m}))$.
    \end{Lemma}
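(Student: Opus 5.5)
The plan is to write $E$ as a colimit of shifted suspension spectra of its levels, and then use that $\rR$ preserves colimits and commutes with suspension (\Cref{Prop:rR}). Here a motivic spectrum $E\in\SHk$ is viewed as a $T$-spectrum, i.e.\ a sequence of pointed motivic spaces $E_n$ with structure maps $T\Smash E_n \to E_{n+1}$ whose adjoints $E_n\to \Omega_T E_{n+1}$ are equivalences.

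\textbf{Step 1: the colimit presentation.} There is a canonical equivalence in $\SHk$
$$E \simeq \colim_n \, T^{\Smash -n}\Smash \Sigma^\infty_T E_n,$$
where the transition maps $T^{\Smash -n}\Smash\Sigma^\infty_T E_n \to T^{\Smash -n-1}\Smash \Sigma^\infty_T E_{n+1}$ are induced by the structure maps $T\Smash E_n\to E_{n+1}$. This is the standard description of the stabilization $\SHk=\Spc(k)_\ast[T^{-1}]$ as a colimit. It holds because $\Sigma^\infty_T$ and $T^{-1}\Smash-$ are left adjoints, and mapping out of the colimit recovers $\lim_n \map(E_n,\Omega^{\infty}_T T^{n}\Smash -)$, which is the mapping space of $T$-spectra.

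\textbf{Step 2: realize.} Apply $\rR$, which is colimit-preserving and symmetric monoidal, commutes with $\Sigma^\infty$ by construction, and satisfies $\rR(T)\simeq \Sph^1$ (as $\rR(\mathbb{P}^1)\simeq S^1$). This gives
$$\rR E\simeq \colim_n \Sigma^{-n}\Sigma^\infty \rR(E_n),$$
with transition maps induced by the realized structure maps $S^1\Smash \rR(E_n)=\rR(T\Smash E_n)\to \rR(E_{n+1})$. So $(\rR(E_n))_n$ is a prespectrum, and in $\Sp$ the displayed colimit is exactly its spectrification, because the latter is the left adjoint to the inclusion of spectra into prespectra and is computed by this formula.

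\textbf{Step 3: the explicit formula.} Spectrification of a prespectrum $(c_n)$ is given levelwise by $\colim_m \Omega^m c_{n+m}$; this is the usual formula, valid since $\Omega$ commutes with filtered colimits of spaces. Taking homotopy groups and using that $\pi_\ast$ commutes with filtered colimits gives
$$\pi_n(\rR E)\cong\colim_m \pi_{n+m}(\rR(E_m)).$$

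The main obstacle is Step 1 in the $\infty$-categorical setting, namely justifying the colimit formula for an inverted-object stabilization. I would cite \cite{Robalo} or \cite[Lurie-HA]{Lurie-HA}, and use that $T$ is symmetric (its cyclic permutation is homotopic to the identity), so the colimit of $T$-spectra agrees with the formal inversion.
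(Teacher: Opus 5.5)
Your proposal is correct and follows essentially the same route as the paper. Both write $E\simeq\colim_n T^{\Smash(-n)}\Smash\Sigma^\infty_T E_n$, apply the colimit-preserving symmetric monoidal functor $\rR$ with $\rR(T)\simeq\Sph^1$, recognize the result as the spectrification of $(\rR(E_n))_n$, and read off homotopy groups levelwise. The only difference is that you justify the colimit presentation more explicitly, via mapping spaces and the symmetry of $T$, whereas the paper simply invokes the motivic analogue of the spectrification formula.
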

    \begin{proof}
    First note that $(\rR(E_n))_{n\in\N}$ is a prespectrum via the structure maps obtained by real realization of those of $E$. Indeed, a map $T \Smash E_n \to E_{n+1}$ realizes to a map $\Sph^1 \Smash \rR(E_n) \to \rR(E_{n+1})$. The spectrification functor for topological prespectra sends a sequence $(E_n)_{n\in\N}$ to $\colim_n \Sigma^{-n}\Sigma^\infty E_n$ where the maps in the colimit are induced by the structure maps $\Sigma E_n \to E_{n+1}$ \cite[Lem.\ 2.1.3]{Schwede-spectra}. Similarly, for motivic prespectra, spectrification sends a sequence $(E_n)_{n\in\N}$ to $\colim_n \Sigma_T^{-n}\Sigma_T^\infty E_n$ with maps induced by $\Sigma_T E_n \to E_{n+1}$. If $E$ as in the statement is already a spectrum, it is its own spectrification, and therefore we have
    $$\rR(E) \simeq \rR(\colim_n (T^{\Smash (-n)}\Smash \Sigma^\infty_T E_n)) \simeq \colim_n (\Sph^{\Smash (-n)} \Smash \Sigma^\infty_{\Sph^1} \rR(E_n)).$$
    The right-hand side is exactly the spectrification of the sequence $(\rR(E_n))_{n\in\N}$ with structure maps given by the real realizations of those of $E$. For the last claim, we have
    \begin{align*}
        \pi_n(\rR(E)) &\cong \colim_m \pi_{n+m} (\rR(E)_m) \cong \colim_m \pi_{n+m} (\colim_k \Omega^k \rR(E_{m+k})) \\
        &\cong \colim_m\colim_k \pi_{n+m+k} (\rR(E_{m+k})) \cong \colim_\ell \pi_{\ell+n} (\rR(E_{\ell})).
    \end{align*}
    \end{proof}
    
\begin{thm}[\protect{\cite[Thm.\ 35]{Bachmann-rho}}]\label{Prop:rRinvertingrho} Let $S$ be a qcqs scheme, and $\rho: \calS^0 \to \Gm$ in $\SH(S)$ be the inclusion on the points $1$ and $-1$. There is an equivalence of categories
    $$\mathsf{SH}(S)[\rho^{-1}] \lsimeq{} \mathsf{SH}(\mathsf{Shv}(RS))$$
    between the category of $\rho$-local motivic spectra over $S$, and the category of spectra associated with the topos of sheaves of spaces on $RS$, the real spectrum of $S$ \cite[\S 0.4.2]{Scheiderer-ret}. In particular, there is an equivalence $\SHR[\rho^{-1}] \lsimeq{} \Sp$ such that the composite 
    $\SHR \longrightarrow  \SHR[\rho^{-1}] \lsimeq{} \Sp$
    is naturally equivalent to the real realization functor $\rR$. 
    \end{thm}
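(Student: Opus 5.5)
This is Bachmann's theorem, cited from \cite[Thm.\ 35]{Bachmann-rho}; the paper does not reprove it, and I do not expect to reproduce the full argument. What follows is how I would reconstruct it in outline.

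\textbf{Step 1: reduce to sheaves on the real étale site.} I would first show that in $\SH(S)[\rho^{-1}]$ the relevant topology can be changed. Nisnevich descent combined with $\rho$-invertibility should force descent for the real étale topology, whose points are the points of the real spectrum $RS$. Concretely, I would prove that after inverting $\rho$, real étale covers become equivalences. The mechanism is that $\rho$ kills objects supported on loci with no real points: for such loci, $-1$ is a sum of squares, which forces $\rho$ to be nilpotent there. This reduces everything to a category of real étale sheaves of spectra on $\Sm_S$.

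\textbf{Step 2: remove $\Affl$-invariance and $\mathbb{P}^1$-stabilization.} On the real étale site, $\Affl$-invariance and inverting $\mathbb{P}^1$ need to become automatic or trivial on $RS$. The key inputs I would use are:
\begin{itemize}
\item the real points of $\Gm$ form $\R^\times \simeq S^0$, so inverting $\rho$ amounts to identifying $\Gm$ with $S^0$;
\item $\mathbb{P}^1$ then becomes $S^1$, which is already invertible in spectra.
\end{itemize}
With these in hand, I would compare real étale sheaves on $\Sm_S$ with sheaves on $RS$. Here I would use Scheiderer's comparison of the real étale topos of $S$ with $\mathsf{Shv}(RS)$, and show that $\Affl$-invariant sheaves on smooth $S$-schemes are determined by their restriction to the small site.

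\textbf{Step 3: the case $S=\Spec\R$.} Here $R\Spec(\R)$ is a point, so $\SH(\mathsf{Shv}(RS))\simeq\Sp$. To see that the composite $\SHR\to\Sp$ is $\rR$, I would argue as follows. Both functors are symmetric monoidal and colimit-preserving, by \Cref{Prop:rR}. On generators $\Sigma^\infty_+X$, the localization functor is identified with $\Sigma^\infty_+X(\R)$. The reason is that the real étale sheafification of $X$, viewed over $\Spec \R$ and made $\Affl$-invariant, recovers the homotopy type of $X(\R)$ via semialgebraic triangulation. The two functors therefore agree on generators compatibly, which gives the natural equivalence.

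The \textbf{main obstacle} is Step 2: proving that the $\Affl$-localized real étale theory on the big site reduces to the topos of $RS$. I expect this to require Scheiderer's real étale theory together with a delicate homotopy invariance argument in the semialgebraic setting.
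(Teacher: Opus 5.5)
\paragraph{Verdict.} The paper gives no proof of this statement; it is imported from Bachmann, so only your outline can be judged. The overall architecture is sensible: compare $\rho$-local objects with real-étale-local ones, collapse to $\mathsf{Shv}(RS)$ using $\Affl$-invariance and $\rho\colon \calS^0\simeq\Gm$, then compare with $\rR$ over $\R$. Step 3 is fine in outline. It is cleanest via right adjoints: a spectrum $F$ corresponds to $X\mapsto \Gamma(X_r,F)$. Comparing sheaf cohomology on the real spectrum with singular cohomology of $X(\R)$ identifies this with $\map(\Sigma^\infty_+X(\R),F)$ naturally in $X\in\Sm_\R$, which suffices by the universal property of $\SH(\R)$.

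\paragraph{The gap is in Step 1.} Nilpotence of $\rho$ over schemes with empty real spectrum only gives descent for covers that become Nisnevich after discarding a locus without real points. Real étale covers are not of that form. For example, over $\R$ the family $\{\Gm[\sqrt{t}]\to\Gm,\ \Gm[\sqrt{-t}]\to\Gm\}$ is jointly surjective on real spectra. However, over the generic point of $\Gm$, whose residue field $\R(t)$ is formally real, neither fibre has a rational point. So this is not a cover in the topology generated by Nisnevich covers and complements of non-real closed subsets. The same holds for $\Spec\Q(\sqrt[3]{2})\to\Spec\Q$.

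For a unit $a$ on $X$, the mixed fibre products of $X[\sqrt{a}]$ and $X[\sqrt{-a}]$ contain $\sqrt{-1}$ and are indeed killed by your mechanism. Descent along such a cover then says exactly that $E(X)\simeq E(X[\sqrt{a}])^{hC_2}\times E(X[\sqrt{-a}])^{hC_2}$ for $\rho$-local $E$. This is the real content of the theorem, and it does not follow from killing non-real loci.

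\paragraph{What you need instead.} You need an actual computation of what $\rho$-local objects see at real points. For instance:
\begin{itemize}
\item identify $\underline{\pi}_0(\unit[\rho^{-1}])_0\cong\colim_n\underline{I}^n$, via signatures, with the sheaf of locally constant $\Z$-valued functions on the real spectrum (a statement about powers of the fundamental ideal; cf.\ Jacobson);
\item deduce that the homotopy sheaves of $\rho$-local spectra are real étale sheaves;
\item use finiteness of real étale cohomological dimension to pass from sheaves to descent.
\end{itemize}
So Step 1, not only Step 2, is where the real work lies, and as written it would fail.
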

    
 \begin{Lemma}\label{Prop:homotopyofrR}
        For any $E\in \SHR$, the homotopy groups of its real realization can be computed as
        $$\forall m\in\Z, \ \ \pi_m(\rR(E)) = \colim_n (\cdots \longrightarrow \underline{\pi}_m(E)_n(\Spec(\R)) \xrightarrow{\,\ \rho\,\ } \underline{\pi}_m(E)_{n+1}(\Spec(\R)) \longrightarrow \cdots).$$
    \end{Lemma}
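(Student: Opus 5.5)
By \Cref{Prop:rRinvertingrho}, $\rR$ factors as the localization $\SHR \to \SHR[\rho^{-1}]$ followed by an equivalence $\SHR[\rho^{-1}] \simeq \Sp$. Since equivalences preserve homotopy groups, it suffices to compute $\pi_m$ of $E[\rho^{-1}]$, regarded as an object of $\Sp$, in terms of maps in $\SHR$. Inverting $\rho:\calS^0 \to \Gm$ is a smashing localization, explicitly
$$E[\rho^{-1}] \simeq \colim\bigl(E \xrightarrow{\rho} \Gm^{\Smash -1}\Smash E \cdots\bigr),$$
more precisely $\colim_n \Gm^{\Smash -n}\Smash E$ with transition maps given by $\rho$. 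I would first record this formula, which follows because the telescope is $\rho$-local and the map from $E$ to it is a $\rho$-equivalence.

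Next, I need to identify which motivic sphere corresponds to $\Sph^m$ under the equivalence. The unit goes to the unit and $\rR(\calS^m)=\Sph^m$, so $\pi_m(\rR E) = [\Sph^m, \rR E] = [\calS^m[\rho^{-1}], E[\rho^{-1}]]_{\SHR[\rho^{-1}]}$. By adjunction this equals $[\calS^m, E[\rho^{-1}]]_{\SHR}$.

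Since $\calS^m$ is compact in $\SHR$, this becomes
$$\colim_n [\calS^m, \Gm^{\Smash -n}\Smash E] = \colim_n [\calS^m\Smash \Gm^{\Smash n}, E].$$
This is the group of maps $\Sigma^{m+n,n}\unit \to E$. The homotopy sheaves, evaluated on $\Spec(\R)$ with the indexing convention $\underline{\pi}_m(E)_n(X) = [\Sigma^\infty_+X \Smash \calS^m\Smash\Gm^{\Smash n}, E]$, are exactly the terms $\underline{\pi}_m(E)_n(\Spec(\R))$. The transition maps are precomposition with $\rho\Smash\mathrm{id}$, which is the map labelled $\rho$ in the statement. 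Sheafification does not matter at $\Spec(\R)$, since it is a field and hence henselian local (a Nisnevich point), so the sheaf value agrees with the presheaf value.

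The main obstacle is bookkeeping rather than substance. I must check that the paper's convention for $\underline{\pi}_m(E)_n$ (sign of $n$, $\Gm$ versus $\Gm^{-1}$) matches the direction of the colimit. I must also verify that the equivalence of \Cref{Prop:rRinvertingrho} sends $\Sph^m$ to the localization of $\calS^m$, which holds by symmetric monoidality and exactness. If the convention uses $\underline{\pi}_m(E)_n = [\calS^m \Smash \Gm^{\Smash n}, E]$, the argument goes through directly. Otherwise I would reindex $n \mapsto -n$ accordingly.
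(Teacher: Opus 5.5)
Your route is the paper's: factor $\rR$ through $\rho$-localization, use the adjunction to reach $[\calS^m, E[\rho^{-1}]]_{\SHR}$, write $E[\rho^{-1}]$ as a telescope, pass $[\calS^m,-]$ through the colimit by compactness, and use that $\Spec(\R)$ is a Nisnevich point.

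There is, however, a direction error, and it is not a matter of convention. Smashing $\rho\colon\calS^0\to\Gm$ with $\Gm^{\Smash n}\Smash E$ gives $\Gm^{\Smash n}\Smash E\to\Gm^{\Smash n+1}\Smash E$, so $E[\rho^{-1}]\simeq\colim_n \Gm^{\Smash n}\Smash E$. There is no $\rho$-induced map $E\to\Gm^{\Smash -1}\Smash E$: smashing $\rho$ with $\Gm^{\Smash -1}$ gives $\Gm^{\Smash -1}\Smash E\to E$. The error propagates. With your convention $\underline{\pi}_m(E)_n=[\calS^m\Smash\Gm^{\Smash n},E]$, precomposition with $\rho$ goes from $\underline{\pi}_m(E)_{n+1}$ to $\underline{\pi}_m(E)_n$. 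That is opposite to the arrows in the statement, so your final identification does not match as written.

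The fix is to use the corrected telescope together with the paper's convention $\homsh{m}{n}{E}=\pi_{m-n,-n}(E)=[\calS^m,\Gm^{\Smash n}\Smash E]$, the one used in Step 1 of \Cref{Prop:rRHZHZmod}. Compactness then gives $\colim_n[\calS^m,\Gm^{\Smash n}\Smash E]$ with transition maps given by post-composition with $\rho$. This is exactly the stated colimit, and no reindexing is needed.
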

    \begin{proof}
        Call $F$ the equivalence $\SHR[\rho^{-1}] \to \Sp$ above, and $L_\rho: \SHR \to \SHR[\rho^{-1}]$ the canonical localization functor. Then, for all $m \in\Z$ and $E\in \SHR$, we have
        \begin{align*}
        \pi_m(\rR E) &\cong [\Sph^m,\rR E] \cong [\rR(\calS^m),\rR E] \cong [F(L_\rho(\calS^m)),F(L_\rho(E))] \cong [L_\rho(\calS^m),L_\rho(E)] \\
            & \cong [\calS^m,\colim_n\, E\Smash \Gm^{\Smash n}] \tag{\text{since $L_\rho$ is a left-adjoint}}\\
            & \cong \colim_n\, [\calS^m, E\Smash \Gm^{\Smash n}] \tag{by compactness of $\calS^m$}\\
            & \cong \colim_n\, \homsh{m}{n}{E}(\Spec(\R))
        \end{align*}
        where the last isomorphism follows from the fact that the sections of the homotopy sheaves and presheaves of $E$ agree on $\Spec(\R)$. This holds because sheafification preserves stalks, and the functor of sections over $\Spec(\R)$ is a stalk functor of the Nisnevich site on $\Sm_\R$ (by the description of the points of the Nisnevich site in \cite[p99]{MV}).
    \end{proof}

\begin{Lemma}\label{Prop:rRcommuteswithloc} Let $p$ be a prime. The real realization functor commutes with localization away from $p$, localization at $(p)$, and rationalization.
\end{Lemma}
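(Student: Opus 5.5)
All three localizations are instances of one construction, and the plan is to use only that $\rR$ is exact and colimit-preserving (\Cref{Prop:rR}). Let $P$ be a set of primes: the primes different from $p$ for localization at $(p)$, the set $\{p\}$ for localization away from $p$, and all primes for rationalization. In a stable presentable category $\calC$ (either $\SHR$ or $\Sp$), the $P$-localization of an object $E$ can be written as the sequential colimit
$$E[P^{-1}] \simeq \colim\big(E \xrightarrow{\ n_1\ } E \xrightarrow{\ n_2\ } E \xrightarrow{\ n_3\ } \cdots\big),$$
where $(n_i)$ is an enumeration of the positive integers that are products of primes in $P$, arranged so that every such integer divides some partial product $n_1\cdots n_k$. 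Here multiplication by $n$ means the endomorphism $n\cdot\id{E}$ in the additive structure of $\calC$. The localization functor is $E\mapsto \unit[P^{-1}]\Smash E$, and the colimit above is equivalent to $\unit[P^{-1}]\Smash E$. We should show that this colimit is indeed the localization in both categories, or cite it as standard.

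The key steps are as follows.
\begin{enumerate}
\item Since $\rR$ is an additive (indeed exact) functor between stable categories, it sends $n\cdot\id{E}$ to $n\cdot\id{\rR E}$. This holds because additive functors preserve sums of morphisms.
\item Since $\rR$ preserves colimits, it sends the telescope for $E$ to the telescope for $\rR E$. This gives a natural equivalence $\rR(E[P^{-1}])\simeq (\rR E)[P^{-1}]$.
\item Alternatively, by symmetric monoidality, $\rR(\unit[P^{-1}]\Smash E)\simeq \rR(\unit[P^{-1}])\Smash \rR E$. Applying step 2 to the unit $\unit$ gives $\rR(\unit[P^{-1}])\simeq \Sph[P^{-1}]$. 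This version also shows that the equivalence is compatible with multiplicative structures, should they be needed later.
\end{enumerate}

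As a sanity check on homotopy groups, combine \Cref{Prop:homotopyofrR} with the fact that filtered colimits commute with each other and with $-\otimes\Z[P^{-1}]$. This gives $\pi_m(\rR(E[P^{-1}]))\cong \pi_m(\rR E)\otimes \Z[P^{-1}]$, which is consistent with step 2.

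The main point needing care is the identification of $P$-localization with the telescope (or with $\unit[P^{-1}]\Smash -$) in $\SHR$. In $\Sp$ this is classical. In $\SHR$ it holds because the local objects are exactly those on which every $n$ with prime factors in $P$ acts invertibly. The telescope is such an object, since each multiplication-by-$n$ map becomes invertible on the colimit. Moreover, the map from $E$ to the telescope is a local equivalence, because its cofiber is a colimit of objects killed by inverting $P$. I would argue this formally, using only stability and presentability.
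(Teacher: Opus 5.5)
Your proposal is correct and follows the paper's proof: both write each localization as a telescope of multiplication-by-integer maps, and use that $\rR$ preserves colimits and, being exact, preserves $n\cdot\id{}$ (the paper phrases this via the diagonal and codiagonal maps $X\to X^{\oplus n}\to X$). Your extra remarks are sound additions that the paper leaves implicit: the symmetric monoidal formulation $\rR(\unit[P^{-1}]\Smash E)\simeq \Sph[P^{-1}]\Smash\rR E$, which also handles multiplicative structures, and the check that the telescope really computes the localization in $\SHR$.
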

\begin{proof} These operations are described by colimits of maps given by multiplication by an integer; both colimits and such maps are preserved by $\rR$. Indeed, multiplication by an integer on some object $X$ of a stable category is defined using the diagonal and codiagonal maps $X \to X^{\oplus p} \to X$. Since real realization is an exact functor, it preserves the latter.
\end{proof}

\vspace{0.2cm}

\subsection{Free \texorpdfstring{$\E{1}$-$\HA$-}{E1-HA-}algebras}\label{Subsect:freeE1HA}\hfill\vspace{0.2cm}

In \Cref{Sect:examplesrealization,Sect:rRko}, we will encounter $\Einfty$-rings whose homotopy rings are polynomial in one variable. We will identify the underlying $\E{1}$-structures as free $\E{1}$-algebras in the category of $\HA$-modules, where $\HA$ is the Eilenberg-Mac Lane spectrum associated with a discrete commutative ring $\mathsf{A}$.

\begin{Def} Let $R \in \CAlg(\Sp)$. The \emph{category of $\E{k}$-$R$-algebras in $\Sp$} is $\Alg_{\E{k}}(\Mod_R(\Sp))$, the category of $\E{k}$-algebras in $R$-modules.
\end{Def}

\begin{Prop}\label{Prop:freeE1Ralgebra} The forgetful functor $U: \Alg_{\E{1}}(\Mod_R(\Sp)) \to \Sp$ admits a left adjoint $F_{\E{1},R}$, the \emph{free $\E{1}$-$R$-algebra functor}, given on the underlying spectra by $E\mapsto \bigvee_{n\geq 0} E^{\Smash n} \Smash R$. 
\end{Prop}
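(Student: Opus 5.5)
The plan is to factor $U$ and take left adjoints of each factor separately. The forgetful functor $U$ is the composite
$$\Alg_{\E{1}}(\Mod_R(\Sp)) \xrightarrow{\ U_1\ } \Mod_R(\Sp) \xrightarrow{\ U_2\ } \Sp.$$
I will produce a left adjoint for each of $U_1$ and $U_2$, compose them to get $F_{\E{1},R}$, and then compute the underlying spectrum of the composite.

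\textbf{Step 1: left adjoint of $U_2$.} I will show that $U_2$ has left adjoint $E\mapsto R\Smash E$, the free $R$-module functor. This is standard and holds for modules over any $\E{1}$-algebra in a presentably monoidal category (\cite[Cor.\ 4.2.4.8]{Lurie-HA}). Here $\Sp$ is presentably symmetric monoidal, so the hypotheses are met.

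\textbf{Step 2: left adjoint of $U_1$.} Since $R$ is an $\Einfty$-ring, $\Mod_R(\Sp)$ is presentably symmetric monoidal, with tensor product $\Smash_R$ preserving colimits separately in each variable. In this setting the free $\E{1}$-algebra functor exists and is computed by the free associative algebra formula (\cite[Prop.\ 4.1.1.18]{Lurie-HA}, or \cite[\S 3.1.3]{Lurie-HA}):
$$M\mapsto \bigvee_{n\geq 0} M^{\Smash_R n},$$
where $M^{\Smash_R 0}=R$ is the unit. The only input needed is that $\Smash_R$ commutes with colimits in each variable, which is what makes this coproduct formula valid.

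\textbf{Step 3: composing.} Left adjoints compose, so $F_{\E{1},R}(E) = \bigvee_{n\ge0}(R\Smash E)^{\Smash_R n}$. It remains to identify the underlying spectrum. The free module functor $R\Smash -:\Sp\to\Mod_R(\Sp)$ is symmetric monoidal, which gives
$$(R\Smash E)^{\Smash_R n}\simeq R\Smash E^{\Smash n}.$$
For $n=0$ this reads $R\Smash \Sph\simeq R$, consistent with the unit term. The forgetful functor $\Mod_R(\Sp)\to\Sp$ preserves colimits, in particular coproducts. Together these give the underlying spectrum
$$\bigvee_{n\geq 0} E^{\Smash n}\Smash R,$$
as stated.

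\textbf{Main obstacle.} The step that needs care is Step 2: correctly citing the existence of the free $\E{1}$-algebra functor and its explicit coproduct formula in $\Mod_R(\Sp)$. This requires checking that the monoidal structure on $R$-modules is compatible with colimits, which holds because $\Mod_R(\Sp)$ is presentably symmetric monoidal for $R\in\CAlg(\Sp)$. The rest of the argument is formal.
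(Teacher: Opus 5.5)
Your proposal is correct and follows essentially the same route as the paper. Both factor $U$ through $\Mod_R(\Sp)$, compose the free $\E{1}$-algebra and free $R$-module left adjoints from \cite{Lurie-HA}, and read off the formula; you additionally make explicit the identification $(R\wedge E)^{\wedge_R n}\simeq R\wedge E^{\wedge n}$ via symmetric monoidality of base change, which the paper leaves implicit.
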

    \begin{proof} The functor $U$ is the composition of two forgetful functors $\mathsf{Alg}_{\E{1}}(\mathsf{Mod}_R(\Sp)) \to \mathsf{Mod}_R(\Sp)$ and $\mathsf{Mod}_R(\Sp) \to \Sp$. Both admit left adjoints by \cite[Cor.\ 3.1.3.4]{Lurie-HA} and \cite[Prop.\ 4.2.4.2]{Lurie-HA}, respectively; explicit expressions are given in \cite[Cor.\ 4.1.1.18]{Lurie-HA} and \cite[Cor.\ 4.2.4.8]{Lurie-HA}, respectively.
    \end{proof}
    
    \begin{Def}\label{Def:freeE1HA} Let $n\geq 0$, and let $\mathsf{A}$ be a discrete commutative ring. Then $\HA \in \CAlg(\Sp)$ and thus we may define $\HA[t^n] := F_{\E{1},\HA}(\Sigma^n\Sph)$ the \emph{free $\E{1}$-$\HA$-algebra over one generator in degree $n$}.
    \end{Def}
    
    \begin{Prop}\label{Prop:freeHA} Let $n\geq 0$, and let $\mathsf{A}$ be a discrete commutative ring. The free $\E{1}$-$\HA$-algebra $\HA[t^n]$ has the following properties:
    \begin{enumerate}[label = (\roman*)]
        \item The underlying spectrum is given by $\bigvee_{j\geq 0} \Sigma^{nj}\HA$.
        \item The homotopy ring is a polynomial ring on one generator in degree $n$, i.e., $\pi_\ast(\HA[t^n]) \cong A[t^n]$.
        \item Let $E$ be an $\E{2}$-ring spectrum with an $\E{2}$-map $\HA \to E$. Then $E$ can be viewed as an {$\E{1}$-$\HA$-algebra} in a canonical way, and any element $\alpha\in \pi_n(E)$ determines a map $\HA[t^n] \to E$ in $\Alg_{\E{1}}(\Mod_R(\Sp))$, sending $t^n$ to $\alpha$ in homotopy. 
    \end{enumerate}
    \end{Prop}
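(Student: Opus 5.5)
We use the explicit description of the free algebra from \Cref{Prop:freeE1Ralgebra}, taking $R = \HA$ and $E = \Sigma^n\Sph$.

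For (i), that description identifies the underlying spectrum of $\HA[t^n]$ with $\bigvee_{j\geq 0} (\Sigma^n\Sph)^{\Smash j}\Smash \HA$. Since $(\Sigma^n\Sph)^{\Smash j}\simeq \Sigma^{nj}\Sph$, this is $\bigvee_{j\geq 0}\Sigma^{nj}\HA$.

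For (ii), homotopy commutes with wedges, so (i) gives $\pi_\ast(\HA[t^n]) \cong \bigoplus_{j\geq 0} \pi_\ast(\Sigma^{nj}\HA)$. This is a free $A$-module with one generator $t_j$ in each degree $nj$. We must still identify the ring structure. The multiplication on the free $\E{1}$-algebra $\bigvee_j E^{\Smash j}\Smash R$ is induced by concatenation $E^{\Smash i}\Smash E^{\Smash j}\to E^{\Smash (i+j)}$, together with the multiplication of $R$ (see \cite[Cor.\ 4.1.1.18]{Lurie-HA} and the tensor product of $R$-modules). So the product of the fundamental classes $t_i$ and $t_j$ is $t_{i+j}$, up to the sign coming from permuting spheres. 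Since concatenation involves no permutation of factors, the sign is $+1$. Hence $t_j = t_1^j$, and the ring is $A[t_1]$ with $t_1=t^n$ in degree $n$. This sign and concatenation bookkeeping is the one step needing care, and it can be made precise by working in $\Mod_{\HA}$, where the free algebra on $M=\Sigma^n\HA$ is $\bigoplus_j M^{\otimes_{\HA} j}$.

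For (iii), an $\E{2}$-map $\HA\to E$ makes $E$ an $\E{1}$-algebra in $\Mod_{\HA}(\Sp)$. This follows from \cite[Cor.\ 7.1.3.4]{Lurie-HA}, or the general fact that $\E{k}$-algebras under an $\E{k}$-algebra $R$ are $\E{k-1}$-algebras in $\Mod_R$. An element $\alpha\in\pi_n(E)$ is a map of spectra $\Sigma^n\Sph\to UE$. By the adjunction $F_{\E{1},\HA}\dashv U$ of \Cref{Prop:freeE1Ralgebra}, $\alpha$ corresponds to an $\E{1}$-$\HA$-algebra map $\HA[t^n]\to E$. The unit of the adjunction is the inclusion of the $j=1$ summand $\Sigma^n\Sph\to\Sigma^n\HA$, which carries the generator to $t^n$. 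So the induced map sends $t^n$ to $\alpha$ on $\pi_n$.
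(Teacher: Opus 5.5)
Your proposal is correct and follows essentially the same route as the paper: (i) comes from the explicit formula for $F_{\E{1},\HA}$; (ii) comes from the multiplication on $\bigvee_j (\Sigma^n\Sph)^{\Smash j}\Smash\HA$ being concatenation of summands; and (iii) comes from the adjunction. In (ii) the paper only checks that left multiplication by $t^n$ shifts the $j$-th summand identically onto the $(j+1)$-st, which is the case $i=1$ of your $t_it_j=t_{i+j}$, and a sign there would be a unit anyway. In (iii), phrase the input as a forgetful functor $\Alg_{\E{2}}(\Sp)_{\HA/}\to\Alg_{\E{1}}(\Mod_{\HA}(\Sp))$, not an equivalence of categories, which is how the paper uses \cite[Rmk.\ 3.7]{ACB}.
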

    \begin{proof} Item $(i)$ is given by \Cref{Prop:freeE1Ralgebra}. In particular, the computation of the homotopy groups follows. To finish the proof of item $(ii)$, we must consider the ring structure. We have to show that multiplication by a generator of the $n$-th homotopy group induces isomorphisms in all degrees, i.e., that the composition
    $$\Sph^n \Smash \HA[t^n] \xrightarrow{t^n \Smash \id{}} \HA[t^n] \Smash \HA[t^n] \longrightarrow \HA[t^n]$$ 
    induces isomorphisms in homotopy in positive degrees. This map is given by the identification 
    $$\Sph^n \Smash \bigvee_{j\geq 0} \Sigma^{nj}\HA \simeq \bigvee_{j\geq 0} (\Sigma^n\Sph \Smash \Sigma^{nj}\HA) \longrightarrow \bigvee_{j\geq 0} \Sigma^{nj}\HA$$
    mapping $\Sigma^n\Sph \Smash \Sigma^{nj}\HA$ to $\Sigma^{n(j+1)}\HA$ identically, so our claim follows. Item $(iii)$, using that there is a forgetful functor $\Alg_{\E{2}}(\Sp)_{\HA/} \to \Alg_{\E{1}}(\Mod_{\HA}(\Sp))$ \cite[Rmk.\ 3.7]{ACB}, is simply rephrasing the fact that $F_{\E{1},\HA}$ is left adjoint to the forgetful functor. 
\end{proof}

\begin{Rmk} \Cref{Prop:freeHA}$(iii)$ may seem a bit artificial. The point is that $\HA[t^n]$ is \emph{not} a free object over $\Sph^n \in \Sp$ in the category $\Alg_{\E{1}}(\Sp)_{\HA/}$; the latter category is a priori \emph{not} equivalent to $\Alg_{\E{1}}(\Mod_{\HA}(\Sp))$ (see \cite[Warning 7.1.3.9]{Lurie-HA} and \cite[Remark 3.7]{ACB}). There is indeed a forgetful functor $\Alg_{\E{1}}(\Sp)_{\HA/} \to \Mod_{\HA}(\Sp)$, but the $\E{1}$-structure of an object on the left-hand side is not compatible with the $\HA$-module structure obtained on the right-hand side. A higher commutativity (for instance $E$ belonging to $\Alg_{\E{2}}(\Sp)_{\HA/}$ as in item $(iii)$) ensures this compatibility. 
\end{Rmk}

\vspace{0.5em}
\section{The real realizations of some classical motivic spectra}\label{Sect:examplesrealization}

In this section, we compute the real realizations of several classical motivic spectra, either at the level of spectra, $\E{1}$- or $\Einfty$-rings : algebraic K-theory $\KGL$ and its very effective cover $\kgl$, motivic cohomology $\HZ$ and $\HZmod$, their variant $\HZtilde$, and Hermitian K-theory $\KO$. All of them will appear in our computations with $\ko$ in \Cref{Subsect:rRKO}. The complex realizations of these motivic spectra behave ``as expected'': for example, $\rC(\KGL) \simeq \KU$ and $\rC(\kgl) \simeq \KU_{\geq 0}$ \cite[Lem.\ 3]{ARO}, $\rC(\HZ) \simeq \HZ$ \cite[in particular Thm.\ 5.5]{Levine-comparison}, and $\rC(\KO) \simeq \KOtop$ \cite[Lem.\ 13]{ARO}. As we will see, real realization holds more surprises for us. In the whole section, let $k \xhookrightarrow{} \R$ be a fixed subfield (in order to define a real realization functor on $\SH(k)$).

\vspace{0.2cm}
\subsection{The real realizations of \texorpdfstring{$\HZmod$}{HZ/2} and \texorpdfstring{$\HZ$}{HZ}}\label{Subsect:rRHZHZmod}\hfill\vspace{0.2cm}

The motivic spectra $\HZ$ and $\HZmod$ appear as very effective slices of $\ko$, whose real realizations we will need in \Cref{Sect:rRko}.

\begin{Def}\label{Def:HZ} The \emph{motivic Eilenberg-Mac Lane spectrum} $\HZ\in\SH(k)$, is equivalently defined as:
\begin{enumerate}[label = (\roman*)]
    \item The motivic spectrum representing the motivic cohomology theory in $\SHk$, i.e., for all $X\in\Sm_k$ and $p,q\in\Z$, we have $H^{p,q}(X,\Z) \cong [\Sigma^{\infty}_+X,\Sigma^{p,q}\HZ]$.
    \item The zeroth effective slice of the sphere spectrum in $\SHk$.
    \item The effective cover of \emph{Milnor K-theory}: $\HZ = f_0\underline{K}^M_\ast$ \cite[\S 6.3]{Morel-A1intro}.
\end{enumerate}
Definitions $(i)$ and $(ii)$ are equivalent by \cite{Levine-coniveau}, and definition $(iii)$ is equivalent to $(i)$ and $(ii)$ by \cite[Lem.\ 12]{Bachmann-genslices}. The \emph{motivic Eilenberg-Mac Lane spectrum} $\HZmod$ is defined as the cofiber of the multiplication by 2 map on $\HZ$ (it represents mod 2 motivic cohomology and is equivalent to $f_0(\underline{K}^M_\ast/2)$).
\end{Def}

\begin{Rmk}\label{Rmk:HZHZmodEinfty} Definition $(iii)$ allows us to define an $\Einfty$-structure on $\HZ$ and $\HZmod$. Indeed, since the functor $f_0 : \SH(k)^\heartsuit \to \SH(k)^\eff$ is lax symmetric monoidal by \Cref{Prop:smheart}, we only have to show $\underline{K}^M_\ast, \underline{K}^M_\ast/2 \in \mathsf{CAlg}(\SHk^\heartsuit)$. By \cite[Rmk.\ 1.2.1.12]{Lurie-HA}, the heart of a $t$-structure is a 1-category, and therefore its commutative algebras are the commutative algebras in the 1-categorical sense. By \cite[after Thm.\ 6.4.7]{Morel-A1intro}, we have $\underline{K}^{MW}_\ast \simeq \homsh{0}{\ast}{\calS}\in\SHk^\heartsuit$. Since the truncation functor $\SHk_{\geq 0} \to \SHk^\heartsuit$ is symmetric monoidal \cite[Lem.\ A.12]{AN}, the image $\homsh{0}{\ast}{\calS}$ of $\calS \in \CAlg(\SHk_{\geq 0})$ inherits the structure of a commutative algebra object. Because $\underline{K}^{M}_\ast$ and $\underline{K}^{M}_\ast/2$ are quotients of $\underline{K}^{MW}_\ast$, they also define commutative algebras.

	Definition $(ii)$ also provides an $\Einfty$-structure on $\HZ$, since the zeroth effective slice functor is symmetric monoidal \cite[Thm.\ 3.6.13]{Pelaez}. It is proved in \cite[Thm.\ B.4]{BH-norms} that this $\Einfty$-structure agrees with the one constructed by Spitzweck in \cite{Spitzweck-HZ}. The proof of the theorem also shows the uniqueness of such an $\Einfty$-structure when the unit is prescribed. 
\end{Rmk}

\begin{Prop}\label{Prop:HZHZmodveryeff}
    The motivic spectra $\HZ$, $\HZmod$, and $\HZtilde := f_0\underline{K}_\ast^{MW}$ are very effective. 
\end{Prop}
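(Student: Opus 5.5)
The plan is to use \Cref{Prop:propertiesofSHkeff}$(i)$, which identifies $\SHk^\veff$ with $\SHk^\eff_{\geq 0}$, the non-negative part of the effective homotopy t-structure. It therefore suffices to show, for each $M \in \{\underline{K}^M_\ast, \underline{K}^M_\ast/2, \underline{K}^{MW}_\ast\}$, two things: $f_0 M$ is effective, and $\homsh{i}{0}{f_0 M} = 0$ for $i<0$. Effectiveness is automatic, since $f_0 = \iota_0 \circ r_0$ lands in $\SHk^\eff$ by definition (\Cref{Def:effcover}). The real content is the vanishing of the negative homotopy sheaves.

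For this, each of the three objects lies in the heart $\SHk^\heartsuit$ of the homotopy t-structure, so in particular in $\SHk_{\geq 0}$. For $\underline{K}^{MW}_\ast \simeq \homsh{0}{\ast}{\calS}$ this is recalled in \Cref{Rmk:HZHZmodEinfty}; for the Milnor quotients it holds because the heart is abelian and closed under cokernels. By \Cref{Prop:smheart}, the functor $r_0 : \SHk \to \SHk^\eff$ is t-exact for the homotopy t-structure on the source and the effective homotopy t-structure on the target. Hence $r_0 M \in \SHk^\eff_{\geq 0}$, and even $r_0 M \in \SHk^{\eff,\heartsuit}$. Combining this with $(i)$ gives $f_0 M = \iota_0 r_0 M \in \SHk^\veff$.

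For $\HZ$ and $\HZmod$ one can either apply this argument directly to $\underline{K}^M_\ast$ and $\underline{K}^M_\ast/2$, using \Cref{Def:HZ}$(iii)$, or argue more economically. Since $\SHk^\veff$ is closed under colimits, and in particular under cofibers, $\HZmod = \mathsf{cof}(2:\HZ\to\HZ)$ is very effective as soon as $\HZ$ is. This sidesteps any need to identify $f_0(\underline{K}^M_\ast/2)$ with the cofiber.

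I expect the main obstacle to be bookkeeping rather than mathematics. One must make sure the t-exactness statement in \Cref{Prop:smheart} (from \cite{Bachmann-genslices}) is stated for $r_0$ from the homotopy t-structure to the \emph{effective} homotopy t-structure, and that $\homsh{i}{0}{-}$ of $\iota_0 r_0 M$ is computed in $\SHk$ as the definition of $\SHk^\eff_{\geq 0}$ requires. Once that is confirmed, the statement follows formally. If needed, one can also check the defining condition $\homsh{i}{0}{r_0M}=0$ for $i<0$ directly: $r_0$ is right adjoint to $\iota_0$, and the generators $\calS^n\Smash\Sigma^\infty_+X$ are effective, so the weight-zero homotopy sheaves of $r_0M$ agree with those of $M$.
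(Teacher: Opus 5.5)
Your proposal is correct and takes essentially the same route as the paper. You use t-exactness of $r_0$ (\Cref{Prop:smheart}) together with $\SHk^\eff_{\geq 0} = \SHk^\veff$ to reduce to $\underline{K}^{MW}_\ast$, $\underline{K}^M_\ast$, $\underline{K}^M_\ast/2$ lying in $\SHk_{\geq 0}$, indeed in the heart. The differences are minor: you justify heart membership via $\underline{K}^{MW}_\ast \simeq \homsh{0}{\ast}{\calS}$ and closure of the abelian heart under cokernels, where the paper cites \cite[Lem.\ 6]{Bachmann-genslices}; your alternative for $\HZmod$, via closure of $\SHk^\veff$ under cofibers and \Cref{Def:HZ}, is also valid.
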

\begin{proof} Since the functor $r_0 : \SHk \to \SHk^\eff$ is t-exact (\Cref{Prop:smheart}), and $\SHk^\eff_{\geq 0} = \SHk^\veff$ by \Cref{Prop:propertiesofSHkeff}$(i)$, it suffices to show that $\underline{K}_\ast^{MW}$, $\underline{K}^{M}_\ast$, and $\underline{K}^{M}_\ast/2$ belong to the connective part in the homotopy t-structure on $\SHk$. Actually, they even belong to the heart: by \cite[Lem.\ 6 and below]{Bachmann-genslices}, they are effective homotopy modules (i.e., they belong to the image of $\iota_0^\heartsuit: \SH(k)^{\eff,\heartsuit} \to \SH(k)^\heartsuit$).
\end{proof}

When trying to compute real realizations, the case of $\HZmod$ is easier to deal with than that of $\HZ$, because the global sections of the homotopy sheaves of $\HZmod$ are known.

\begin{Def} Let $\tau \in H^{0,1}(k,\Zmod) \cong \mu_2(k) \cong \Zmod$ be a generator ($\mu_2$ denotes the square roots of unity), i.e., corresponding to $-1\in\mu_2(k)$.
\end{Def}

\begin{thm}[\cite{Voevodsky-Milnor,Voevodsky-Milnor-published}]\label{Prop:Milnorsconjecture} There are isomorphisms of graded rings $$\underline{K}^M_\ast(k)/2 \cong H^\ast_\text{ét} (k, \Zmod) \cong \homsh{0}{\ast}{\HZmod}(k).$$ 
Moreover, $\underline{\pi}_{\ast,\ast}(\HZmod)(\R) \simeq \Zmod[\tau,\rho]$, where $\tau$ has bidegree $(0,-1)$ and $\rho$ has bidegree $(-1,-1)$.
\end{thm}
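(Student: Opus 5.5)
This result is cited from Voevodsky's proof of the Milnor conjecture, so the plan is to assemble the statement from known inputs rather than prove the Milnor conjecture itself.

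\textbf{First isomorphism.} The isomorphism $K^M_\ast(k)/2 \cong H^\ast_{\text{ét}}(k,\Zmod)$ is the norm residue (Milnor) conjecture of \cite{Voevodsky-Milnor,Voevodsky-Milnor-published}. The norm residue map sends the symbol $\{a\}$ to the Kummer class of $a$; multiplicativity then gives a ring map, and Voevodsky's theorem says it is an isomorphism.

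\textbf{Second isomorphism.} For $\homsh{0}{n}{\HZmod}(k) \cong H^{n,n}(k,\Zmod)$, I would use the representability in \Cref{Def:HZ}$(i)$ together with Nesterenko--Suslin/Totaro, which gives $H^{n,n}(k,\Z)\cong K^M_n(k)$. Reducing mod 2 uses the vanishing $H^{n+1,n}(k,\Z)=0$ for a field, so that $H^{n,n}(k,\Zmod)\cong K^M_n(k)/2$. Alternatively, this follows directly from $\HZmod = f_0(\underline{K}^M_\ast/2)$: the section over $k$ of $\homsh{0}{\ast}{f_0 \underline K^M_\ast/2}$ agrees with that of $\underline K^M_\ast/2$, by t-exactness of $r_0$ and because $\underline{K}^M_\ast/2$ is effective.

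\textbf{Bigraded ring over $\R$.} Here the key input is the Beilinson--Lichtenbaum consequence of the Milnor conjecture: for $p\le q$, $H^{p,q}(k,\Zmod)\cong H^p_{\text{ét}}(k,\mu_2^{\otimes q})$, while $H^{p,q}=0$ for $p>q$ over a field. For $k=\R$, $H^\ast_{\text{ét}}(\R,\Zmod) = H^\ast(C_2,\Zmod)=\Zmod[\rho]$ with $\rho=[-1]$ in degree $1$. Twisting by $\mu_2$ is trivial, and multiplication by $\tau\in H^{0,1}$ implements the identification $\mu_2^{\otimes q}\cong \mu_2^{\otimes q+1}$. This gives
\[H^{\ast,\ast}(\R,\Zmod)=\Zmod[\tau,\rho],\qquad \tau\in H^{0,1},\ \rho\in H^{1,1}.\]
In the homotopy bigrading $\pi_{p,q} = H^{-p,-q}$ these become bidegrees $(0,-1)$ and $(-1,-1)$. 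Negative weights vanish for the homotopy sheaves of $\HZmod$ evaluated at $\R$, since $H^{p,q}=0$ for $q<0$.

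\textbf{Main obstacle.} The deep input, Beilinson--Lichtenbaum / Milnor, is simply cited. What needs care is the bookkeeping:
\begin{itemize}
\item matching the indexing conventions $\homsh{m}{n}{E}$ with cohomological $H^{p,q}$;
\item checking that the ring structures agree, which holds because cup product corresponds to the $\Einfty$ product of \Cref{Rmk:HZHZmodEinfty};
\item checking that no relations arise between $\tau$ and $\rho$, which follows because étale cohomology of $\R$ is polynomial on $\rho$ and $\tau$ acts injectively in each degree.
\end{itemize}
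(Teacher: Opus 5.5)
Your proposal is correct and follows essentially the same route as the paper: cite the norm residue theorem, identify the diagonal $H^{n,n}(k,\Zmod)\cong K^M_n(k)/2$, compute $K^M_\ast(\R)/2\cong H^\ast(C_2;\Zmod)\cong\Zmod[\rho]$, and move off the diagonal by multiplication with $\tau$. The differences are minor: you derive the diagonal from Nesterenko--Suslin/Totaro together with $H^{n+1,n}(k,\Z)=0$ rather than attributing it to Voevodsky's proof, you phrase the $\tau$-periodicity through Beilinson--Lichtenbaum instead of the paper's cited isomorphisms $H^{p,q}\cong H^{p,q+1}$ for $0\le p\le q$, and you make explicit the vanishing of $H^{p,q}(\R,\Zmod)$ outside $0\le p\le q$, which the paper leaves implicit.
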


\begin{proof} Milnor's conjecture, proven by Voevodsky \cite{Voevodsky-Milnor,Voevodsky-Milnor-published}, states in particular that for a field $k'$ of characteristic not 2, we have $\underline{K}^M_\ast(k')/2 \cong H^\ast_\text{ét} (k', \Zmod)$. In words, the Milnor K-theory $\mathsf{mod}\, 2$ of $k'$ is equal to the étale cohomology $\mathsf{mod}\, 2$ of $k'$, or equivalently the group cohomology of the absolute Galois group of $k'$ with coefficients in $\Zmod$. By Voevodsky's proof we also have $\underline{K}^M_n(k')/2 \cong H^{n,n}(k',\Zmod)$ (where the right hand-side is motivic cohomology mod 2, so it is $\underline{\pi}_{-n,-n}(\HZmod)(k)$ if $k=k'$), and that cup product with $\tau\in H^{0,1}(k,\Zmod)$, induces isomorphisms $H^{p,q}(k',\Zmod) \cong H^{p,q+1}(k',\Zmod)$ when $0\leq p \leq q$ (see for example \cite[Lem.\ 6.1 and the end of the proof of Lemma 6.9]{RO}, or \cite[(7.1)]{KRO}). 
	
In particular, if $k=k'$, we get $\underline{K}^M_n(k)/2 \cong [\Spec(k),\Sigma^{n,n}\HZmod] \cong \underline{\pi}_{-n,-n}(\HZmod)(k)$. If $k =\R$, by the above $\underline{K}^M_\ast(\R)/2$ is the group cohomology $\mathsf{mod}\, 2$ of $\Zmod$ (for the trivial action). By \cite[\S III.1, Ex.\ 2]{Brown}, it is thus equal to $\Zmod$ in every non-negative degree (and trivial otherwise), and as a ring it is polynomial on one generator, namely $\rho$. Finally, the last claim follows from the statement about multiplication by $\tau \in \underline{\pi}_{0,-1}(\HZmod)(\R)$.
\end{proof}

\begin{Prop}\label{Prop:rRHZHZmod} There is an equivalence of $\E{1}$-maps
    $$\rR(\HZ \to \HZmod) \simeq (\HZmod[t^2] \to \HZmod[t])$$ 
    where $\HZ \to \HZmod$ is the projection, and $\HZmod[t^2] \to \HZmod[t]$ is the map of free $\E{1}$-$\HZmod$-algebras (see \Cref{Def:freeE1HA}) inducing the inclusion $\Zmod[t^2]\subseteq \Zmod[t]$ in homotopy.
\end{Prop}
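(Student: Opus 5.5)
First compute homotopy groups via \Cref{Prop:homotopyofrR}, then use $\E{2}$-structures and \Cref{Prop:freeHA}$(iii)$ to build the $\E{1}$-maps. For $\HZmod$, \Cref{Prop:Milnorsconjecture} gives $\underline{\pi}_{\ast,\ast}(\HZmod)(\R)\cong \Zmod[\tau,\rho]$ with $|\tau|=(0,-1)$ and $|\rho|=(-1,-1)$. Inverting $\rho$ and taking the colimit over weights, \Cref{Prop:homotopyofrR} yields $\pi_\ast(\rR\HZmod)\cong \Zmod[\tau,\rho^{\pm1}]_{\text{weight }0\text{ part after }\rho\text{-inversion}}$. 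Concretely, the element $t:=\tau\rho^{-1}$ lives in stem degree $1$, and $\pi_\ast(\rR\HZmod)\cong\Zmod[t]$ with $|t|=1$, as a ring, since real realization is symmetric monoidal and the colimit is multiplicative.

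For $\HZ$, I would use the cofiber sequence $\HZ\xrightarrow{2}\HZ\to\HZmod$, which $\rR$ preserves. Rather than knowing $\underline{\pi}_{\ast,\ast}(\HZ)(\R)$ fully, I would argue as follows. Since $\rho\eta\simeq -2$ after inverting $\rho$ and $\eta$ acts by zero on $\HZ$ (as $\HZ=f_0\underline K^M_\ast$ and $\eta$ kills Milnor K-theory), $2=0$ on $\rR\HZ$. Hence $\rR\HZ$ is $2$-torsion, so $\rR\HZmod\simeq\rR\HZ\oplus\Sigma\rR\HZ$ as spectra, with the connecting map zero. This forces $\pi_\ast(\rR\HZ)$ to be a direct summand with Poincaré series $1/(1+x)\cdot 1/(1-x)=1/(1-x^2)$, so each $\pi_n$ has order $\le 2$ and is $\Zmod$ exactly in even degrees. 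More carefully, the long exact sequence shows $\pi_n\rR\HZ\to\pi_n\rR\HZmod$ is injective with image the kernel of the Bockstein. Computing $\beta(\tau)=\rho$ motivically (the standard integral Bockstein), we get $\beta(t)=1$ and $\beta(t^2)=0$ by the derivation property, so the image is $\Zmod[t^2]$. The ring map $\pi_\ast\rR\HZ\to\Zmod[t]$ is thus the inclusion of $\Zmod[t^2]$.

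For the structured statement: $\rR\HZ$ is an $\Einfty$-ring with $2=0$ in $\pi_0$, so $\pi_0=\Zmod$. I then need an $\E{2}$-map $\HZmod\to\rR\HZ$. This is the main obstacle, since $\HZmod$ is not the $\Einfty$-quotient of $\Sph$ by $2$. I would try the Hopkins--Mahowald theorem, which says $\HZmod$ is the free $\E{2}$-ring with $2=0$ (the Thom spectrum of $\Omega^2S^3\to BGL_1\Sph_2^\wedge$). Since $2=0$ in the $\Einfty$-ring $\rR\HZ$ (and also in $\rR\HZmod$), this gives compatible $\E{2}$-maps $\HZmod\to\rR\HZ\to\rR\HZmod$. \Cref{Prop:freeHA}$(iii)$ then produces $\E{1}$-$\HZmod$-algebra maps $\HZmod[t^2]\to\rR\HZ$ and $\HZmod[t]\to\rR\HZmod$ hitting $t^2$ and $t$. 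These are $\pi_\ast$-isomorphisms by the computation above and \Cref{Prop:freeHA}$(ii)$. For the square to commute, I choose the map $\HZmod[t^2]\to\HZmod[t]$ as the one classified by $t^2$. Both composites $\HZmod[t^2]\to\rR\HZmod$ are then $\E{1}$-$\HZmod$-algebra maps sending $t^2$ to the image of $t^2$. By freeness they agree up to homotopy, provided the $\HZmod$-algebra structures on $\rR\HZmod$ induced via $\rR\HZ$ and directly coincide. That coincidence holds because the $\E{2}$-maps out of $\HZmod$ are unique up to homotopy (the mapping space is contractible, again by Hopkins--Mahowald universality).
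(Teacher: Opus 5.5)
Your argument follows the paper's proof closely. You compute $\pi_\ast(\rR\HZmod)$ as a $\rho$-colimit and show $2=0$ on $\rR\HZ$; you do this via $\rho\eta=-2$ and $\eta=0$ on $\HZ$, while the paper uses $2\rho=0$ in $K^M_1(\R)$, and both work. You then read off $\pi_\ast(\rR\HZ)$ from the cofiber sequence and finish with Hopkins--Mahowald plus freeness. The additive and ring-level computations are correct. Two small slips. In the cofiber sequence it is multiplication by $2$ that vanishes, not the connecting map, which is the surjection onto the summand $\Sigma\rR\HZ$. And the Bockstein computation is unnecessary, since an injection $\Zmod\to\Zmod$ is already onto.

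The genuine error is the final justification: $\E{2}$-maps out of $\HZmod$ are \emph{not} unique up to homotopy. The Hopkins--Mahowald pushout gives $\map_{\Alg_{\E{2}}(\Sp)}(\HZmod,R)\simeq \ast\times_{\Omega^\infty R}\ast$, the space of paths from $2$ to $0$ in $\Omega^\infty R$. When nonempty, this is a torsor under $\Omega^{\infty+1}R$, so its set of components is a torsor under $\pi_1(R)$. For $R=\rR\HZmod$ one has $\pi_1(R)\cong\Zmod\{t\}$, so there are two homotopy classes of $\E{2}$-maps $\HZmod\to\rR\HZmod$. An independently chosen one need not agree with the composite through $\rR\HZ$. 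In that case your two composites $\HZmod[t^2]\to\rR\HZmod$ are algebra maps for possibly different $\HZmod$-module structures, and freeness says nothing about whether they agree.

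The repair is to avoid the independent choice. Let $q\colon\HZ\to\HZmod$ be the projection, fix $\phi\colon\HZmod\to\rR\HZ$, and equip $\rR\HZmod$ with the $\E{2}$-map $\rR(q)\circ\phi$. Then $\rR(q)$ is a morphism in $\Alg_{\E{2}}(\Sp)_{\HZmod/}$, hence a morphism of $\E{1}$-$\HZmod$-algebras. The map $\HZmod[t]\to\rR\HZmod$ classified by $t$ for this structure is still a $\pi_\ast$-isomorphism by your computation. Freeness of $\HZmod[t^2]$ now gives the commuting square. This is how the paper handles it; compare the phrase ``map of $\E{2}$-rings under $\HZmod$'' in the proof of \Cref{Prop:rRkgl}.
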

\begin{proof}
    We proceed in three steps. We first identify the homotopy groups of the real realization of $\HZmod$ and their structure as a graded ring. Then, using this and a theorem of Hopkins and Mahowald, we identify $\rR(\HZmod)$ as a free $\E{1}$-$\HZmod$-algebra. Finally, we deduce the homotopy ring and the $\E{1}$-structure of $\rR(\HZ)$ from the cofiber sequence $\HZ \xrightarrow{\cdot 2} \HZ \to \HZmod$.\\

    \textbf{Step 1:} \emph{Computation of the homotopy ring of $\rR(\HZmod)$.} By \Cref{Prop:homotopyofrR}, $\pi_k(\rR\HZmod)$ is obtained as the colimit $\colim_n\ \homsh{k}{n}{\HZmod}(\R) = \colim_n\ \pi_{k-n,-n}(\HZmod)(\R)$. By \Cref{Prop:Milnorsconjecture}, for $k\geq 0$ and $n\geq k$ this can be rewritten as
    $$\colim_n H^{n-k,n}(\R,\Zmod) \cong \colim_n \tau^k H^{n-k,n-k}(\R,\Zmod) \cong \colim_n \tau^k\underline{K}^M_{n-k}(\R) \cong \colim_n \Zmod\{\tau^k\rho^{n-k}\}.$$
    
    This colimit is $\Zmod$ with generator $(\tau\rho^{-1})^k \in \homsh{k}{0}{\HZmod}(\R)$. It follows that $\pi_\ast(\rR\HZmod) \cong \Zmod[t]$ is a polynomial ring on a single generator $t:=\rR(\tau\rho^{-1})$ in degree 1.\\
    
     \textbf{Step 2:} \emph{Structure of $\rR(\HZmod)$ as an $\E{1}$-ring.} We will show that there exists a map of $\E{2}$-rings $\HZmod \to \rR(\HZmod)$. Here, on the left-hand side, $\HZmod$ is the topological Eilenberg-Mac Lane spectrum, whereas on the right-hand side it is the motivic Eilenberg-Mac Lane spectrum from \Cref{Def:HZ}. By \Cref{Prop:freeHA}$(iii)$, we obtain a map $\HZmod[t] \to \rR(\HZmod)$ sending $t$ on the left hand-side to the element $t\in \pi_1(\rR(\HZmod))$ we had above. Then, since by \Cref{Prop:freeHA}$(ii)$, $\pi_\ast(\HZmod[t]) \cong \Zmod[t]$ as rings, and $\pi_\ast(\HZmod[t]) \to \pi_\ast(\rR(\HZ))$ is a ring homomorphism, we deduce that the latter is an isomorphism, so $\HZmod[t] \to \rR(\HZmod)$ is an equivalence of $\E{1}$-rings. 
    
    To produce the aforementioned $\E{2}$-map $\HZmod \to \rR(\HZmod)$, we appeal to the so-called Hopkins--Mahowald theorem (\cite{Mahowald}, reformulated in \cite[Thm.\ 4.18]{MNN}): the free $\E{2}$-ring with $2=0$ is $\HZmod$, i.e., there is a pushout diagram of $\E{2}$-rings
    % https://q.uiver.app/#q=WzAsNCxbMCwwLCJGX3tcXEV7Mn19KFxcU3BoXjApIl0sWzEsMCwiXFxTcGgiXSxbMCwxLCJcXFNwaCJdLFsxLDEsIlxcSFptb2QiXSxbMCwyLCIwIiwyXSxbMCwxLCJcXGNkb3QgMiJdLFsyLDNdLFsxLDNdLFszLDAsIiIsMSx7InN0eWxlIjp7Im5hbWUiOiJjb3JuZXIifX1dXQ==
\[\begin{tikzcd}
	{F_{\E{2}}(\Sph^0)} & \Sph \\
	\Sph & \HZmod .
	\arrow["{2}", from=1-1, to=1-2]
	\arrow["0"', from=1-1, to=2-1]
	\arrow[from=1-2, to=2-2]
	\arrow[from=2-1, to=2-2]
	\arrow["\lrcorner"{anchor=center, pos=0.125, rotate=180}, draw=none, from=2-2, to=1-1]
\end{tikzcd}\]

Since $2 = 0 \in \pi_\ast(\rR\HZmod)$ by the previous computation, we obtain an $\E{2}$-map $\HZmod \to \rR(\HZmod)$.\\

    \textbf{Step 3:} \emph{Realization of $\HZ$, and of the quotient map to $\HZmod$.} Consider once more the cofiber sequence $\HZ \xrightarrow{\cdot 2} \HZ \to \HZmod$. We claim that $\rR(\cdot 2) = \cdot 2 = 0$ on $\rR\HZ$. By definition, we have $\HZ = f_0\underline{K}^M_\ast$. However, in Milnor K-theory, $2\rho = 0$. Indeed, $\rho \in [\calS^0,\Gm] \cong \homsh{0}{1}{\calS}(k) \cong K^{MW}_1(k)$ corresponds to $[-1]$ in the Milnor-Witt K-theory of $k$ \cite[Thm.\ 6.3.3 and 6.4.1]{Morel-A1intro}. But then in Milnor K-theory, we obtain $2\rho = [-1] + [-1] = [(-1)^2] = [1] = 0$. Thus, after inverting $\rho$, the multiplication by 2 map becomes the zero map. We obtain a long exact sequence (here $n\geq 0$)
% https://q.uiver.app/#q=WzAsOCxbMywxLCJcXHBpX24oXFxyUlxcSFptb2QpIFxcY29uZyBcXFptb2QiXSxbNCwxLCJcXGRvdHMiXSxbMiwxLCJcXHBpX24oXFxyUlxcSFopIl0sWzEsMSwiXFxwaV9uKFxcclJcXEhaKSJdLFswLDAsIlxcZG90cyJdLFsxLDAsIlxccGlfe24rMX0oXFxyUlxcSFopIl0sWzIsMCwiXFxwaV97bisxfShcXHJSXFxIWikiXSxbMywwLCJcXHBpX3tuKzF9KFxcclJcXEhabW9kKSBcXGNvbmcgXFxabW9kIl0sWzAsMV0sWzMsMiwiXFxjZG90IDIgPSAwIiwyXSxbMiwwLCJxX24iLDIseyJzdHlsZSI6eyJ0YWlsIjp7Im5hbWUiOiJob29rIiwic2lkZSI6InRvcCJ9fX1dLFs0LDVdLFs1LDYsIlxcY2RvdCAyID0gMCJdLFs2LDcsInFfe24rMX0iLDAseyJzdHlsZSI6eyJ0YWlsIjp7Im5hbWUiOiJob29rIiwic2lkZSI6InRvcCJ9fX1dLFs3LDMsIlxccGFydGlhbF9uIiwxLHsic3R5bGUiOnsiaGVhZCI6eyJuYW1lIjoiZXBpIn19fV1d
\[\begin{tikzcd}
	\cdots & {\pi_{n+1}(\rR\HZ)} & {\pi_{n+1}(\rR\HZ)} & {\pi_{n+1}(\rR\HZmod) \cong \Zmod} \\
	& {\pi_n(\rR\HZ)} & {\pi_n(\rR\HZ)} & {\pi_n(\rR\HZmod) \cong \Zmod} & \cdots .
	\arrow[from=1-1, to=1-2]
	\arrow["{\cdot 2 = 0}", from=1-2, to=1-3]
	\arrow["{q_{n+1}}", hook, from=1-3, to=1-4]
	\arrow["{\partial_n}"{description}, two heads, from=1-4, to=2-2]
	\arrow["{\cdot 2 = 0}"', from=2-2, to=2-3]
	\arrow["{q_n}"', hook, from=2-3, to=2-4]
	\arrow[from=2-4, to=2-5]
\end{tikzcd}\]
By \Cref{Prop:rRconnective}, all spectra involved are connective (since $\HZ$ and $\HZmod$ are very effective by \Cref{Prop:HZHZmodveryeff}). Therefore, $q_0$ is also surjective; it is an isomorphism. Then $\pi_0(\rR \HZ) \cong \Zmod$ and $\partial_0$ is a surjection $\Zmod \to \Zmod$, namely an isomorphism. Thus, the injective map $q_1$ has trivial image, and so $\pi_1(\rR \HZ) = 0$. We are back to the original situation: the long exact sequence repeats in exactly the same way. Therefore, $\pi_n(\rR \HZ)$ is $\Zmod$ if $n$ is even and non-negative and $0$ else. Let $u$ be a generator of $\pi_2(\rR \HZ)\cong \Zmod$. Since $q_2$ is an isomorphism, the real realization of the quotient map sends $u$ to the generator $t^2$ of $\pi_2(\rR \HZmod)$. Since this quotient map induces after real realization a ring homomorphism on homotopy, and an isomorphism in even degrees, we obtain $q_{2n}(u^n)=t^{2n}$, so $u^n$ generates $\pi_{2n}(\rR \HZ)$. This proves that $\pi_\ast(\rR\HZ) \cong \Zmod[t^2]$ as rings. The identification with $\HZmod[t^2]$ as $\E{1}$-rings follows as in Step 2 from the Hopkins--Mahowald theorem.
\end{proof}
\vspace{0.2cm}
\subsection{The real realizations of \texorpdfstring{$\KGL$}{KGL} and \texorpdfstring{$\kgl$}{kgl}}\label{Subsect:rRKGLkgl}\hfill\vspace{0.2cm}

The algebraic K-theory spectrum $\KGL$ (as defined in \cite[\S 6.2]{Voevodsky-ICM}, also see \cite[\S 3.2]{BH-eta} for example) appears in our computations with $\KO$ because the two of them are related by the Wood cofiber sequence ${\Sigma^{1,1}\KO \xrightarrow{\eta}\KO \xrightarrow{c} \KGL}$ where $\eta: \Gm \to \unit$ is the motivic Hopf map and $c$ is the forgetful map (Hermitian K-theory is defined using bundles with a symmetric form, and algebraic K-theory all algebraic bundles, the forgetful map ignores the form) \cite[Thm.\ 3.4]{RO}.

\begin{Lemma}[Morel; \protect{\cite[Lem.\ 3.9]{BH-eta}}]\label{Prop:rRKGL} We have $\rR\KGL\simeq 0$.
\end{Lemma}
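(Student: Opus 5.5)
The plan is to use \Cref{Prop:rRinvertingrho}: real realization factors through $\rho$-localization, so it suffices to show that $\KGL[\rho^{-1}] \simeq 0$. Since $\rR$ is symmetric monoidal and $\rR\KGL$ is a ring spectrum, it is in fact enough to show that $1 = 0$ in $\pi_0(\rR\KGL)$. By \Cref{Prop:homotopyofrR}, this amounts to showing that the unit of $\KGL$ becomes zero after multiplication by some power of $\rho$ in $\underline{\pi}_{0}(\KGL)_\ast(\R)$.

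The key input is the behaviour of $\eta$ on $\KGL$, together with the relation between $\rho$ and $\eta$.

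\begin{enumerate}[label=(\arabic*)]
    \item First, I would recall that $\eta$ acts by zero on $\KGL$. The Hopf map $\eta \colon \Gm \to \unit$ is the map $\mathbb{A}^2\setminus 0 \to \mathbb{P}^1$. On $\KGL$ it induces zero because $\KGL$ is oriented, i.e.\ receives a ring map from $\MGL$, and $\eta$ is zero in $\pi_{1,1}\MGL$ (the projective bundle formula kills it). Equivalently, the Wood cofiber sequence $\Sigma^{1,1}\KO \xrightarrow{\eta} \KO \to \KGL$ exhibits $\KGL \simeq \KO/\eta$, and $\eta$ acts nullhomotopically on a cofiber of $\eta$ modulo the usual caveat; it is cleaner to use the orientation.
    \item Next, I would use the relation in $\pi_{0,0}(\Sph)(\R) = GW(\R)$: $\rho\eta = -(1+\langle -1\rangle) = -h$, up to sign conventions, since $\rho = [-1]$ in Milnor--Witt K-theory and $\eta[-1] = \langle -1\rangle - 1$. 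After inverting $\rho$, $\langle -1\rangle$ becomes $-1$; indeed $\rho h = 0$ because $[-1]h = 0$ in $K^{MW}_\ast$. Hence $\rho\eta \simeq -2$ in $\SHR[\rho^{-1}]$, as stated in the introduction, and $\eta$ becomes $-2\rho^{-1}$.
    \item Combining the two, the unit $\unit \to \KGL$ satisfies $2 = -\rho\eta = 0$ after $\rho$-inversion, since $\eta$ acts by zero on $\KGL$. Thus $\pi_\ast(\rR\KGL)$ is a ring in which $2 = 0$.
\end{enumerate}

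Inverting $\rho$ alone is therefore not enough; we need more than $2 = 0$. Here I would use $h = 1+\langle -1\rangle$ directly. In $\KGL$, $\langle -1\rangle$ acts as the identity, because $\KGL$ is oriented and the action of $GW$ on $\KGL^{0,0}$ factors through rank. So $h$ acts as $2$ on $\KGL$. On the other hand, $\rho h = 0$ in $K^{MW}_1$, so after inverting $\rho$ we get $h = 0$, hence $2 = 0$ once more. To kill $1$ itself, I would instead use that $\eta$ becomes invertible after $\rho$-inversion: from $\rho\eta = -2$ and $\rho h = 0$ one gets $\rho^2\eta = -2\rho$, which alone is not decisive. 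The cleaner route is the standard fact that $\eta$ acts invertibly on $\SHR[\rho^{-1}] \simeq \Sp$; more precisely, $\eta\rho = -2$ together with $\rho\eta\rho = -2\rho$ need not force this.

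So the honest path I would take is Morel's: in $\SHR[\rho^{-1}]$, $\langle -1\rangle = -1$, while on $\KGL$ the element $\langle -1\rangle$ acts as $+1$. Hence $2 = 0$ on $\rR\KGL$. In addition, $\eta$ is zero on $\KGL$, and $\rho\eta = \langle -1\rangle - 1 = -2$ must then vanish; that is consistent but not yet enough. The main obstacle is to upgrade $2 = 0$ to $1 = 0$. For this I would use the Bott element: $\KGL$ is $\mathbb{P}^1$-periodic with $\beta \in \pi_{2,1}$, so $\rR\KGL$ is $1$-periodic. A $1$-periodic ring spectrum whose $\pi_0$ satisfies $2 = 0$, with unit $\pi_0 \ni t$ invertible in degree $1$, would make the Hopf element $\eta_{\mathrm{top}} = \rR(\eta)$ act compatibly. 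Concretely, I expect $\rR(\beta)$ to be the image of $\eta$ in some form, i.e.\ $\beta$ relates to $\eta$ via the Wood sequence, and then $\eta \mapsto 0$ forces $\rR\beta = 0$ while being a unit. Thus $1 = 0$ and $\rR\KGL \simeq 0$. Verifying this relation between $\beta$ and $\eta$ (or that $\rR(\beta)$ is nilpotent, e.g.\ via $\rR(\beta)^2$ factoring through $\eta$) is the step I expect to be hardest.
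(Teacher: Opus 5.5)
\textbf{There is a genuine gap: the step from $2=0$ to $1=0$ is never proved, and your ingredients cannot prove it.}

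Your preliminary steps are sound. $\eta$ acts by zero on the oriented spectrum $\KGL$, and $\rho h=0$ gives $\rho\eta=-2$ after inverting $\rho$. (The identity in $K^{MW}_\ast$ is $\rho\eta=\langle -1\rangle-1=h-2$, not $-h$, but your conclusion is right.) Hence $2=0$ in $\pi_0(\rR\KGL)$, and $\rR(\beta)$ is a unit in $\pi_1(\rR\KGL)$. The decisive step, from $2=0$ to $1=0$, you only state as an expectation.

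The ingredients you have assembled cannot close it. $\HZmod[\tau^{-1}]$ is also oriented, so $\eta=0$ on it and $2=0$, and $\tau\in\pi_{0,-1}$ is a unit that realizes to degree $1$. Yet $\rR$ commutes with colimits, so \Cref{Prop:rRHZHZmod} gives $\rR(\HZmod[\tau^{-1}])\simeq\HZmod[t^{\pm 1}]\neq 0$. So neither ``$1$-periodic with $2=0$'' nor $\rho\eta=-2$ forces vanishing. Nor can a motivic relation such as $\beta^2\in\eta\cdot\pi_{\ast,\ast}\KGL$ hold: $\eta$ is zero on $\KGL\neq 0$ while $\beta$ is invertible. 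What is missing is an input specific to $\KGL$ that only becomes visible after realization (equivalently, after inverting $\rho$).

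The paper supplies this input through the geometric model. $\KGL$ is levelwise $\Z\times BGL$, whose real realization is $\Z\times BO$. By \Cref{Prop:levelwiserR}, $\pi_n(\rR\KGL)\cong\colim_m\pi_{n+m}(\Z\times BO)$ along the realized Bott maps. This colimit vanishes because it passes cofinally through the groups $\pi_i(BO)=0$ for $i\equiv 3,5,6,7 \pmod 8$.

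If you want to keep your $\beta$-based strategy, one way to finish is the multiplicative formal group law of $\KGL$:
\begin{itemize}
\item On $\mathbb{P}^2$ one has $c_1(\mathcal{O}(2))=2c_1(\mathcal{O}(1))-\beta\, c_1(\mathcal{O}(1))^2$, up to sign conventions.
\item The left-hand side realizes to $0$, because the real points of $\mathcal{O}(2)$ form a trivial line bundle on $\R P^2$.
\item Let $w$ be the realization of $c_1(\mathcal{O}(1))$. With $2=0$ this gives $\rR(\beta)\,w^2=0$, hence $w^2=0$.
\item By the projective bundle formula, $w^2$ is the pullback of a unit along the collapse $\R P^2\to S^2$. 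That collapse is injective on $\rR\KGL$-cohomology once $2=0$, so $1=0$.
\end{itemize}
Alternatively, combine $\KGL\simeq\kgl[\beta^{-1}]$ with \Cref{Prop:rRbetakgl}, whose proof rests on the $\rho$-dependent identity $\Sq^3(\tau^2)=\rho^3$.
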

\begin{proof}
 Let us show that $\pi_n(\rR \KGL) = 0$ for all $n\in \Z$. We want to use \Cref{Prop:levelwiserR} about computing real realization of spectra levelwise. Since $\KGL = (K,K,\dots)$ where $K$ is the motivic localization of $\Z\times BGL$ \cite{MV}, we first compute
 \begin{align*}
     \rR(K) &= \rR(\mathsf{L_{mot}}(\Z\times BGL)) \simeq \rR\left(\bigsqcup_{\Z} BGL\right) \simeq \bigsqcup_{\Z} \rR(BGL) \underset{(\star)}{\simeq} \Z \times B\left(\rR\left(\bigcup_{n\geq 1} GL_n\right)\right) \\
     &\simeq \Z \times B\left(\bigcup_{n\geq 1} GL_n(\R)\right) \underset{(\star\star)}{\simeq} \Z \times B\left(\bigcup_{n\geq 1} O_n\right) \simeq \Z \times BO.
 \end{align*}
Here $(\star\star)$ follows from the fact that $GL_n(\R)$ deformation retracts onto the orthogonal group $O_n(\R)$ by Gram-Schmidt orthogonalization, and $(\star)$ from the fact that $\rR$ commutes with colimits and finite products, in particular with geometric realizations and the bar construction. Then, by \Cref{Prop:levelwiserR}, $\pi_n(\rR\KGL) \cong \colim_m\, \pi_{n+m}(\Z\times BO)$ for all $n\in\Z$. The homotopy groups of $\Z\times BO$ are given by $\Z,\Zmod,\Zmod,0,\Z,0,0,0,\dots$ repeating with period 8 \cite{Bott}. In particular, there is a cofinal subsequence of zeroes appearing in the colimit, so all homotopy groups of $\rR\KGL$ vanish.
\end{proof}

\begin{Prop}\label{Prop:rRkgl} There is an equivalence of $\E{1}$-maps
    $$\rR(\kgl \to \HZ) \simeq (\HZmod[t^4] \to \HZmod[t^2])$$
where $\kgl \to \HZ$ is the cofiber of the map $\beta_\kgl: \mathbb{P}^1\Smash \kgl \to \kgl$ induced by the periodicity generator $\beta_\KGL$ for $\KGL$, and $\HZmod[t^4] \to \HZmod[t^2]$ is the map of free $\E{1}$-$\HZmod$-algebras (see \Cref{Def:freeE1HA}) inducing the inclusion $\Zmod[t^4]\subseteq \Zmod[t^2]$ in homotopy.
\end{Prop}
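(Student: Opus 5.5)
We realize the cofiber sequence $\Sigma^{2,1}\kgl \xrightarrow{\beta_\kgl} \kgl \to \HZ$ and compute homotopy inductively, mirroring Step 3 of \Cref{Prop:rRHZHZmod}. Since $\rR(\mathbb{P}^1)=\rR(T)\simeq\Sph^1$, real realization yields a cofiber sequence of spectra
$$\Sigma^1\rR\kgl \xrightarrow{\ \rR\beta\ } \rR\kgl \longrightarrow \rR\HZ,$$
where $\rR\HZ\simeq\HZmod[t^2]$ by \Cref{Prop:rRHZHZmod}. Here $\kgl$ is very effective, so $\rR\kgl$ is connective by \Cref{Prop:rRconnective}. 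The first key input is that $\rR\beta$ is nilpotent on homotopy. Since $\KGL=\kgl[\beta^{-1}]$, i.e. $\KGL\simeq\colim(\kgl\xrightarrow{\beta}\Sigma^{-2,-1}\kgl\xrightarrow{\beta}\cdots)$, and $\rR$ commutes with colimits, \Cref{Prop:rRKGL} gives
$$0\simeq\rR\KGL\simeq\colim(\rR\kgl\to\Sigma^{-1}\rR\kgl\to\cdots).$$
Hence every homotopy class of $\rR\kgl$ is eventually killed by iterates of $\rR\beta$.

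Next we compute $\pi_\ast\rR\kgl$ by induction on degree, using the long exact sequence
$$\cdots\to\pi_{n-1}\rR\kgl\xrightarrow{\rR\beta}\pi_n\rR\kgl\to\pi_n\rR\HZ\to\pi_{n-2}\rR\kgl\to\cdots$$
with $\pi_n\rR\HZ=\Zmod$ for $n$ even and nonnegative, and $0$ otherwise. In degree $0$, connectivity gives $\pi_0\rR\kgl\cong\Zmod$. In degree $1$, we have $\pi_1\rR\kgl=\mathrm{im}(\rR\beta)$ coming from $\pi_0$; degree $2$ then maps onto $\pi_0$ via the boundary, and so on. The constraint that $\rR\beta$ is nilpotent, combined with the small groups, should force the pattern $\Zmod$ in degrees $\equiv 0 \bmod 4$ and $0$ otherwise. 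Heuristically, $\rR\beta$ acting on the generator of $\pi_0$ lands in $\pi_1$, and the boundary $\pi_2\rR\HZ\to\pi_0\rR\kgl$ must be surjective, which kills the generator in the colimit. I expect this bookkeeping to be the main obstacle, because nilpotence alone does not pin down the extensions. If needed, I would supplement it by comparing with the very effective slices of $\kgl$, which are $\Sigma^{2n,n}\HZ$ and realize to $\Sigma^n\HZmod[t^2]$. Counting the possible size of $\pi_n$ from that slice filtration, together with the exact sequence, should then give the answer. The outcome is that $\pi_{4k}\rR\kgl\to\pi_{4k}\rR\HZ$ is an isomorphism, while in degrees $4k+2$ the map $\pi_{4k+2}\rR\HZ\to\pi_{4k}\rR\kgl$ is an isomorphism.

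Multiplicativity then follows. The map $\rR\kgl\to\rR\HZ$ is an $\Einfty$-map, hence a ring map on homotopy, and it is injective onto $\Zmod\{t^{4k}\}\subseteq\Zmod[t^2]$. Therefore $\pi_\ast\rR\kgl\cong\Zmod[t^4]$ as rings, with generator $x$ mapping to $t^4$. Since $2=0$ in $\pi_0$, the Hopkins--Mahowald theorem gives an $\E{2}$-map $\HZmod\to\rR\kgl$, exactly as in Step 2 of \Cref{Prop:rRHZHZmod}. By \Cref{Prop:freeHA}$(iii)$, the element $x$ then gives an $\E{1}$-$\HZmod$-algebra map $\HZmod[t^4]\to\rR\kgl$, which is an equivalence by \Cref{Prop:freeHA}$(ii)$.

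For the map itself, $\HZmod\to\rR\kgl\to\rR\HZ$ is the Hopkins--Mahowald map for $\rR\HZ$, by uniqueness of $\E{2}$-maps out of $\HZmod$ (the space of such maps is contractible or connected). So the composite $\HZmod[t^4]\to\rR\kgl\to\rR\HZ\simeq\HZmod[t^2]$ is the $\E{1}$-$\HZmod$-algebra map sending $t^4\mapsto t^4$. By freeness, this map is determined by that class, which identifies it with the inclusion map.
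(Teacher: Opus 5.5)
\textbf{Verdict: genuine gap.} You never prove that the boundary map $\partial\colon\pi_2\rR\HZ\to\pi_0\rR\kgl$ is nonzero. This is exactly the step you call ``bookkeeping''. It is equivalent to $\pi_1\rR\kgl=0$, i.e.\ to $\rR(\beta_\kgl)=0$, and that is the whole content of the paper's \Cref{Prop:rRbetakgl}. The paper proves it by identifying the mod $2$ reduction of the motivic boundary $\HZ\to\Sigma^{3,1}\HZ$ with $\Sq^3$ (complex realization pins it down to $Q_1$), and then computing $\Sq^3(\tau^2)=\rho^3$.

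The inputs you list do not force this. There is an exact pattern with $\partial_2=0$ that satisfies the long exact sequence, connectivity, and nilpotence of $\rR\beta$:
\begin{itemize}
\item take $\pi_n\rR\kgl$ to be $\Zmod$, $\Zmod$, a group of order $4$, $\Zmod$, $\Zmod$, $0$, $0$, $0$ for $n\equiv 0,\ldots,7 \bmod 8$;
\item take the boundaries out of $\pi_{8k+4}\rR\HZ$ and $\pi_{8k+6}\rR\HZ$ injective, and those out of $\pi_{8k}\rR\HZ$ and $\pi_{8k+2}\rR\HZ$ zero;
\item then every class is killed by $(\rR\beta)^3$.
\end{itemize}
Your fallback does not help either. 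Since $\tilde f_n\kgl\simeq\Sigma^{2n,n}\kgl$ via $\beta^n$, the realized very effective slice spectral sequence is just the $\rR\beta$-Bockstein spectral sequence. Its $d^1\colon\pi_2\rR\HZ\to\pi_0\rR\HZ$ is the same unknown boundary (see the remark after \Cref{Prop:rRbetakgl}).

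Your route can still be completed if you use multiplicativity \emph{before} the additive computation rather than after it.
\begin{itemize}
\item Suppose $\partial(t^2)=0$. Then $t^2$ lifts to some $y\in\pi_2\rR\kgl$. Since $\pi_\ast\rR\kgl\to\pi_\ast\rR\HZ\cong\Zmod[t^2]$ is a ring map, $y^k$ lifts $t^{2k}$ for every $k$.
\item Hence every boundary map vanishes, so $\rR\beta$ is injective on homotopy in all degrees. Then $1\in\pi_0\rR\kgl$ survives to a nonzero class in $\pi_0\rR\KGL=\mathrm{colim}_k\,\pi_k\rR\kgl$, contradicting $\rR\KGL\simeq 0$.
\item So $\partial(t^2)=1$ and $\pi_1\rR\kgl=0$. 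Since $\beta_\kgl$ is multiplication by the class $\beta\in\pi_{2,1}\kgl$, $\rR\beta_\kgl$ is multiplication by $\rR(\beta)\in\pi_1\rR\kgl=0$, so it vanishes on homotopy.
\item The long exact sequence then splits into short exact sequences, and the induction goes through as in the paper.
\end{itemize}
This replaces the Steenrod-square computation with the input $\rR\KGL\simeq 0$.

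The rest of your argument is fine: Hopkins--Mahowald, freeness, and your observation that $\E{2}$-maps $\HZmod\to\rR\HZ$ form a connected space because $\pi_1\rR\HZ=0$. On identifying the map, you are in fact more careful than the paper.
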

\begin{proof}
We consider the cofiber sequence $T\Smash \kgl \to \kgl \to \HZ$ of the statement \cite[Prop.\ 7]{ARO}. By \Cref{Prop:rRHZHZmod}, $\rR(\HZ) \simeq \HZmod[t^2]$. We defer to \Cref{Prop:rRbetakgl} below the proof that $\rR(\beta_\kgl)=0$. Using this, the long exact sequence induced in homotopy after real realization splits as short exact sequences
% https://q.uiver.app/#q=WzAsNSxbMywwLCJcXHBpX3tuLTJ9KFxcclJcXGtnbCkiXSxbMSwwLCJcXHBpX3tufShcXHJSXFxrZ2wpIl0sWzIsMCwiXFxwaV9uKFxcclJcXEhaKSJdLFswLDAsIjAiXSxbNCwwLCIwIl0sWzEsMl0sWzIsMF0sWzMsMV0sWzAsNF1d
\[\begin{tikzcd}
	0 & {\pi_{n}(\rR\kgl)} & {\pi_n(\rR\HZ)} & {\pi_{n-2}(\rR\kgl)} & 0
	\arrow[from=1-1, to=1-2]
	\arrow[from=1-2, to=1-3]
	\arrow[from=1-3, to=1-4]
	\arrow[from=1-4, to=1-5]
\end{tikzcd}\]
for all $n\in \Z$. For $n$ odd, the middle term vanishes, so all odd homotopy groups of $\rR\kgl$ are trivial. By \Cref{Prop:rRconnective}, $\rR\kgl$ is connective, so for $n=0$, we obtain an isomorphism $\pi_0(\rR\kgl) \to \pi_0(\rR\HZ) \cong \Zmod$. For $n=2$, the map on the right of the sequence is therefore a surjection $\Zmod \to \Zmod$, so it is an isomorphism, and $\pi_2(\rR\kgl)=0$. For $n=4$ we obtain an isomorphism $\pi_n(\rR\kgl) \to \pi_n(\rR\HZ) \cong \Zmod$. Continuing this argument inductively, $\kgl \to \HZ$ induces after real realization isomorphisms in homotopy in degrees divisible by 4, and the other homotopy groups of $\rR\kgl$ vanish. The fact that $\rR\kgl \simeq \HZmod[t^4]$ as $\E{1}$-rings is then proven exactly as in \Cref{Prop:rRHZHZmod} for $\rR\HZ$. In this construction, using the Hopkins--Mahowald theorem, the realization of the $\Einfty$-map $\kgl \to \HZ$ becomes a map of $\E{2}$-rings under $\HZmod$, in particular a map of $\E{1}$-$\HZmod$-algebras (by the proof of \Cref{Prop:freeE1Ralgebra}$(iii)$). Then, $\HZmod[t^4]$ being a free object, the real realization of $\kgl \to \HZ$ is exactly the map of $\E{1}$-$\HZmod$-algebras in the statement. 
\end{proof}

\begin{Lemma}\label{Prop:rRbetakgl} In the notation of \Cref{Prop:rRkgl}, we have $\rR(\beta_\kgl) = 0$.
\end{Lemma}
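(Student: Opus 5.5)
The idea is to reduce the vanishing of $\rR(\beta_\kgl)$ to the vanishing of a single homotopy class in $\pi_1(\rR\kgl)$, and then show that this class comes from $\pi_1$ of unoriented cobordism, which is zero.

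\textbf{Step 1: reduce to one class.} The map $\beta_\kgl\colon T\Smash\kgl\to\kgl$ is a map of $\kgl$-modules. Precisely, it is the composite
\[T\Smash\kgl\xrightarrow{\ \beta\Smash\id{}\ }\kgl\Smash\kgl\xrightarrow{\ \mu\ }\kgl,\]
where $\beta\in\pi_{2,1}(\kgl)=[T,\kgl]$ is the Bott class, i.e.\ the lift of $\beta_\KGL$ to the very effective cover. Since $\rR$ is symmetric monoidal (\Cref{Prop:rR}) and $\rR(T)\simeq\Sph^1$, the realization $\rR(\beta_\kgl)$ is the composite
\[\Sph^1\Smash\rR\kgl\to\rR\kgl\Smash\rR\kgl\to\rR\kgl,\]
that is, multiplication by the class $\rR(\beta)\in\pi_1(\rR\kgl)$ in the $\Einfty$-ring $\rR\kgl$. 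So it suffices to show $\rR(\beta)=0$.

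\textbf{Step 2: avoid circularity.} One cannot simply read off $\pi_1(\rR\kgl)$ from the cofiber sequence $T\Smash\kgl\to\kgl\to\HZ$. Using connectivity (\Cref{Prop:rRconnective}) and \Cref{Prop:rRHZHZmod}, that sequence shows $\pi_0(\rR\kgl)\cong\Zmod$ and that $\pi_1(\rR\kgl)$ is the image of multiplication by $\rR(\beta)$ on $\pi_0$. Whether this image vanishes is exactly the question. Likewise, $\rR\KGL\simeq0$ (\Cref{Prop:rRKGL}) together with $\KGL\simeq\kgl[\beta^{-1}]$ only shows that $\rR(\beta)$ is nilpotent, not that it is zero. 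External input is needed.

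\textbf{Step 3: factor through algebraic cobordism.} Use the $\Einfty$-orientation $\MGL\to\kgl$, which classifies the multiplicative formal group law. Obtain it from the orientation $\MGL\to\KGL$; it factors through $\kgl=\tilde f_0\KGL$ because $\MGL$ is very effective. Under this map, the generator $x_1=[\mathbb{P}^1]\in\pi_{2,1}(\MGL)\cong L_1\cong\Z$ (Hopkins--Morel) goes to $\pm\beta$, since the formal group law of $\kgl$ is $x+y-\beta xy$. Equivalently, one can check this directly: $\beta$ is represented by the class $[\mathcal{O}(-1)]-1$ on $\mathbb{P}^1$, which comes from the Thom class of the tautological bundle.

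\textbf{Step 4: conclude.} Naturality gives that $\rR(\beta)$ is the image of $\rR(x_1)\in\pi_1(\rR\MGL)$ under the ring map $\rR\MGL\to\rR\kgl$. By \cite[Cor.\ 4.7]{BH-eta}, $\rR\MGL\simeq\MO$, and $\pi_1(\MO)\cong\mathfrak{N}_1=0$, since every closed $1$-manifold bounds. Hence $\rR(\beta)=0$, and therefore $\rR(\beta_\kgl)=0$.

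\textbf{Main obstacle.} The delicate point is Step 3: identifying $\beta$, up to sign and up to lifting from $\KGL$ to $\kgl$, as the image of an element of $\pi_{2,1}(\MGL)$. Lifting along $\kgl\to\KGL$ in weight $1$ on $\pi_{2,1}$ should follow from the very effective cover being an equivalence on these groups, since $T$ is very $1$-effective. If that identification is awkward, a fallback is to factor $\beta$ through $\Sigma^{-2,-1}\Sigma^\infty\mathbb{P}^\infty$, or through $\Sigma^\infty T\to\MGL$ via the Thom class of $\mathcal{O}(1)$. This realizes to a map $\Sph^1\to\MO$, which is null because $\pi_1(\MO)=0$.
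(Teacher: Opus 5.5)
Your proof is correct. Both you and the paper first reduce to showing that the single class $\rR(\beta)\in\pi_1(\rR\kgl)$ vanishes, but after that you take a genuinely different route.

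The paper argues computationally:
\begin{itemize}
\item Via the octahedral axiom it builds the cofiber sequence $\Sigma^{2,1}\HZ\to\kgl/\beta^2\to\HZ$.
\item It reduces to surjectivity of the realized boundary map $\pi_2(\rR\HZ)\to\pi_0(\rR\HZ)$.
\item It identifies the mod $2$ reduction of the motivic connecting map $d\colon\HZ\to\Sigma^{3,1}\HZ$ with $\Sq^3$. This uses the computation of $[\HZ,\Sigma^{3,1}\HZmod]$ and complex realization to the first $k$-invariant of $\mathsf{ku}$.
\item Finally it checks $\Sq^3(\tau^2)=\rho^3$ with the motivic Cartan formula.
\end{itemize}

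You instead factor $\beta$ through $\MGL$ and use $\pi_1(\rR\MGL)\cong\pi_1(\MO)=0$. Your inputs are all standard:
\begin{itemize}
\item the orientation $\MGL\to\KGL$ with multiplicative formal group law, so that $a_{11}\mapsto\pm\beta$;
\item very effectiveness of $\MGL$, together with $[T,\kgl]\cong[T,\KGL]$, to lift to $\kgl$;
\item $\rR\MGL\simeq\MO$ from \cite[Cor.\ 4.7]{BH-eta}.
\end{itemize}
You need neither an $\Einfty$-orientation nor Hopkins--Morel. A homotopy ring orientation suffices to see that $a_{11}\mapsto\pm\beta$. After that, any map of spectra through which $\beta$ factors into a spectrum whose realization has vanishing $\pi_1$ does the job.

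Your argument is shorter and more conceptual. It is in the same spirit as the paper's later use of $\MSL\to\ko$ and $\rR\MSL\simeq\MSO$. What the paper's computation buys in exchange is finer information. It identifies the connecting map (mod $2$) with $\Sq^3$, i.e.\ the nonzero $d^1$-differential $E^1_{2,0}\to E^1_{1,1}$ in the realized very effective slice spectral sequence of $\kgl$. That is the template for the slice spectral sequence computations for $\ko$ later in the paper, where an exact value ($\rR(\beta_\ko)=\pm 16x$) is needed rather than a vanishing statement.
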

\begin{proof}
Let us abbreviate $\beta:= \beta_\kgl$. After real realization, the map $T\Smash \kgl \to \kgl$ induces multiplication by $\rR(\beta)$ in homotopy, viewing $\rR(\beta)$ as an element in $\pi_1(\rR\kgl)$ (because $\beta : T \to \kgl$ and $\rR(T) = \Sph^1$). We want to show that $\rR(\beta) \cdot 1 = 0$ where $1\in \pi_0(\rR\kgl)$, i.e., that the map $\pi_0(\rR\kgl) \to \pi_1(\rR\kgl)$ induced by $\beta$ is the zero map. Consider the cofiber sequence appearing as the last row in the following diagram (obtained by the octahedral axiom)
% https://q.uiver.app/#q=WzAsOSxbMCwwLCJcXFNpZ21hXns0LDJ9XFxrZ2wiXSxbMCwxLCJcXFNpZ21hXnsyLDF9XFxrZ2wiXSxbMCwyLCJcXFNpZ21hXnsyLDF9XFxIWiJdLFsxLDAsIlxcU2lnbWFeezQsMn1cXGtnbCJdLFsxLDEsIlxca2dsIl0sWzIsMCwiXFxTaWdtYV57MiwxfVxca2dsIl0sWzIsMSwiXFxrZ2wiXSxbMSwyLCJcXGtnbC9cXGJldGFeMiJdLFsyLDIsIlxcSFouIl0sWzAsMywiIiwwLHsibGV2ZWwiOjIsInN0eWxlIjp7ImhlYWQiOnsibmFtZSI6Im5vbmUifX19XSxbMyw0LCJcXGJldGFeMiIsMl0sWzAsMSwiXFxTaWdtYV57MiwxfVxcYmV0YSIsMl0sWzUsNiwiXFxiZXRhIiwyXSxbMyw1LCJcXFNpZ21hXnsyLDF9XFxiZXRhIl0sWzQsNiwiIiwwLHsibGV2ZWwiOjIsInN0eWxlIjp7ImhlYWQiOnsibmFtZSI6Im5vbmUifX19XSxbMSw0LCJcXGJldGEiXSxbMSwyLCJcXFNpZ21hXnsyLDF9XFxnYW1tYSIsMl0sWzQsN10sWzYsOCwiXFxnYW1tYSIsMl0sWzIsNywiIiwxLHsic3R5bGUiOnsiYm9keSI6eyJuYW1lIjoiZGFzaGVkIn19fV0sWzcsOCwiIiwxLHsic3R5bGUiOnsiYm9keSI6eyJuYW1lIjoiZGFzaGVkIn19fV1d
\[\begin{tikzcd}[column sep = 2em, row sep = 2em]
	{\Sigma^{4,2}\kgl} & {\Sigma^{4,2}\kgl} & {\Sigma^{2,1}\kgl} \\
	{\Sigma^{2,1}\kgl} & \kgl & \kgl \\
	{\Sigma^{2,1}\HZ} & {\kgl/\beta^2} & {\HZ.}
	\arrow[equals, from=1-1, to=1-2]
	\arrow["{\Sigma^{2,1}\beta}"', from=1-1, to=2-1]
	\arrow["{\Sigma^{2,1}\beta}", from=1-2, to=1-3]
	\arrow["{\beta^2}"', from=1-2, to=2-2]
	\arrow["\beta"', from=1-3, to=2-3]
	\arrow["\beta", from=2-1, to=2-2]
	\arrow["{\Sigma^{2,1}\gamma}"', from=2-1, to=3-1]
	\arrow[equals, from=2-2, to=2-3]
	\arrow[from=2-2, to=3-2]
	\arrow["\gamma"', from=2-3, to=3-3]
	\arrow[dashed, from=3-1, to=3-2]
	\arrow[dashed, from=3-2, to=3-3]
\end{tikzcd}\]
Then we have a commutative diagram
% https://q.uiver.app/#q=WzAsNyxbMSwxLCJcXHBpXzIoXFxyUihcXGtnbC9cXGJldGEpKSJdLFsyLDEsIlxccGlfMChcXHJSKFxca2dsL1xcYmV0YSkpIl0sWzMsMSwiXFxwaV8xKFxcclIoXFxrZ2wvXFxiZXRhXjIpKSJdLFs0LDEsIlxcZG90cyJdLFsyLDAsIlxccGlfMChcXHJSXFxrZ2wpIl0sWzMsMCwiXFxwaV8xKFxcclJcXGtnbCkiXSxbMCwxLCJcXGRvdHMiXSxbMCwxLCJcXHBhcnRpYWwiXSxbMSwyXSxbMiwzXSxbNCw1LCJcXGJldGEiXSxbNSwyLCJcXGNvbmciXSxbNCwxLCJcXGNvbmciXSxbNiwwXV0=
\begin{equation}\label{Diag:rRbetakgl}\begin{tikzcd}[column sep = 2em, row sep = 2em]
	&& {\pi_0(\rR\kgl)} & {\pi_1(\rR\kgl)} \\
	\cdots & {\pi_2(\rR(\kgl/\beta))} & {\pi_0(\rR(\kgl/\beta))} & {\pi_1(\rR(\kgl/\beta^2))} & \cdots
	\arrow["\beta", from=1-3, to=1-4]
	\arrow[from=1-3, to=2-3]
	\arrow[from=1-4, to=2-4]
	\arrow[from=2-1, to=2-2]
	\arrow["\partial", from=2-2, to=2-3]
	\arrow[from=2-3, to=2-4]
	\arrow[from=2-4, to=2-5]
\end{tikzcd}\end{equation}
where the bottom row is the long exact sequence induced by this cofiber sequence after realization, and the square comes from the left bottom square in the previous diagram.\\

\textbf{Step 1:} \emph{It suffices to show that $\partial$ is surjective; in other terms, that it hits $1 \in \pi_0(\rR(\kgl/\beta)) \cong \Zmod$.} Indeed, note that the proof of \Cref{Prop:rRkgl} did not use that $\rR(\beta)=0$ to obtain the isomorphisms $\pi_{0}(\rR\kgl) \cong \pi_{0}(\rR(\kgl/\beta)) \cong \pi_{0}(\rR(\HZ))\cong \Zmod$. This fact implies that the vertical map on the left-hand side in Diagram (\ref{Diag:rRbetakgl}) is an isomorphism. The vertical map on the right-hand side is an isomorphism, because in the long exact sequence induced by the real realization of the cofiber sequence $\Sigma^{4,2}\kgl \xrightarrow{\beta^2} \kgl \to \kgl/\beta^2$, the groups $\pi_i(\rR(\Sigma^{4,2}\kgl)) \cong \pi_{i-2}(\rR\kgl)$ vanish for $i=0,1$ by connectivity of $\rR\kgl$. Thus, in Diagram (\ref{Diag:rRbetakgl}), by exactness, if $\partial$ is surjective then $\beta$ is the zero map.\\
    
\textbf{Step 2:} \emph{Reduction to a computation in the homotopy of $\HZmod \in\SHk$.} By Step 1, it suffices to show that $\partial(t^2)=1$, where $t^2$ is viewed as an element in $\pi_\ast(\rR(\kgl/\beta)) \cong \pi_\ast(\rR(\HZ)) \cong \Zmod[t^2]$. By \Cref{Prop:rRHZHZmod}, the quotient map $\HZ \to \HZmod$ induces isomorphisms in even homotopy groups after real realization. Let $d: \HZ \to \Sigma^{3,1}\HZ$ be the boundary map representing $\partial$ before real realization. We claim: $(1)$ that the diagram
    % https://q.uiver.app/#q=WzAsNCxbMCwwLCJcXEhaIl0sWzEsMCwiXFxTaWdtYV57MywxfVxcSFoiXSxbMCwxLCJcXEhabW9kIl0sWzEsMSwiXFxTaWdtYV57MywxfVxcSFptb2QiXSxbMCwxLCJkIl0sWzAsMiwiXFxtYXRoc2Z7bW9kfVxcLCAyIiwyXSxbMSwzLCJcXG1hdGhzZnttb2R9XFwsIDIiXSxbMiwzLCJcXFNxXjMiXV0=
\[\begin{tikzcd}
	\HZ & {\Sigma^{3,1}\HZ} \\
	\HZmod & {\Sigma^{3,1}\HZmod}
	\arrow["d", from=1-1, to=1-2]
	\arrow["{\mathsf{mod}\, 2}"', from=1-1, to=2-1]
	\arrow["{\mathsf{mod}\, 2}", from=1-2, to=2-2]
	\arrow["{\Sq^3}", from=2-1, to=2-2]
\end{tikzcd}\]
is commutative, where $\Sq^3$ is the third motivic Steenrod square (see \cite{Voevodksy-reducedpowers}); and $(2)$ that $\Sq^3(\tau^2) = \rho^3$. Claims $(1)$ and $(2)$ imply that $\partial(t^2)=1$. Indeed, $t^2 \in \pi_2(\rR\HZmod) \cong \colim_n\ \homsh{2}{n}{\HZmod}(\R)$ corresponds to $\rho^{-2}\tau^2 \in \homsh{2}{0}{\HZmod}(\R)$ (see \Cref{Prop:Milnorsconjecture}), or equivalently, in the colimit, to $\tau^2 \in \homsh{2}{2}{\HZmod}(\R)$. It is therefore mapped to $\rho^3 \in \homsh{0}{3}{\HZmod}(\R)$, corresponding in $\colim_n\ \homsh{0}{n}{\HZmod}(\R) \cong \pi_0(\rR\HZmod)$ to $1\in \homsh{0}{0}{\HZmod}(\R)$ and thus a generator of $\Zmod \cong \pi_0(\rR\HZmod)$. This suffices because in the commutative square of Claim $(1)$, the vertical maps induce isomorphisms on the even homotopy groups after real realization.\\

    \textbf{Step 3:} \emph{We prove Claim $(1)$.} A similar statement for $d \Smash \HZmod$ instead of $(\mathsf{mod}\,2) \circ d$
	can be found in \cite[Lemma 9]{ARO}. To prove the result in our situation, by \cite[Lem.\ A.4]{RO}, we have 
    $$[\HZ,\Sigma^{3,1}\HZmod] = H^{0,0}(\Spec(\R),\Zmod)\{\Sq^3 \circ (\HZ\to\HZmod)\}= \Zmod\{\Sq^3 \circ (\HZ\to\HZmod)\}.$$
    Indeed, $H^{0,0}(\Spec(\R),\Zmod) \cong \underline{K}_0^M/2(\R) \cong \Zmod$ by \Cref{Prop:Milnorsconjecture}. Then, $(\mathsf{mod}\,2) \circ d$ is either $\Sq^3 \circ (\mathsf{mod}\,2)$ or $0$. To distinguish between them, we take complex realizations. The complex realization of the sequence $\Sigma^{2,1}\kgl \to \kgl \xrightarrow{\gamma} \HZ$ is the sequence $\Sigma^2\mathsf{ku} \to \mathsf{ku} \to \HZ$ (by \cite[in particular Thm.\ 5.5]{Levine-comparison} for $\HZ$ and \cite[Lem.\ 3]{ARO} for $\kgl$). Therefore, the complex realization of $d$ is exactly the suspension of the map $\Sigma^{-1}\HZ \to \Sigma^2\HZ$ called the first $k$-invariant of $\mathsf{ku}$. This map is the non-zero operation $Q_1$ (whose reduction $\mathsf{mod}\, 2$ is non-zero as well), see \cite[Section 2]{Bruner} (based on \cite[\S III.16]{Adams-blue-book}) for a proof and more details.\\
    
    \textbf{Step 4:} \emph{We prove Claim $(2)$.} By \cite{Voevodksy-reducedpowers}, we have the following properties of the motivic Steenrod squares: firstly, $\Sq^2(\tau)=0$ since $\Sq^{2n}(u) = 0$ for any $u$ of bidegree $(p,q)$ with $n>p-q$ and $n\geq q$, and then $\Sq^3(\tau) = \Sq^1\Sq^2(\tau) =0$; secondly, $\Sq^1=\beta$ and $\beta \tau = \rho$, and finally, we have the motivic Cartan formula
    $$ \Sq^{2i+1}(u\smile v) = \sum_{r=0}^{2i+1} \Sq^r(u) \smile \Sq^{2i+1-r}(v) + \rho\sum_{r=0}^{i-1} \Sq^{2r+1}(u) \smile \Sq^{2i-2r-1}(v). $$
    Using these facts, we compute
    \begin{align*}
        \Sq^3(\tau^2) &= \Sq^0(\tau)\Sq^3(\tau) + \Sq^1(\tau)\Sq^2(\tau) + \Sq^2(\tau)\Sq^1(\tau) + \Sq^3(\tau)\Sq^0(\tau) + \rho \Sq^1(\tau)\Sq^1(\tau) \\
        &= \rho (\beta\tau)^2 = \rho^3,
    \end{align*}
    which proves Claim $(2)$.
\end{proof}

\begin{Rmk} The long exact sequence argument in the proof of \Cref{Prop:rRbetakgl} could have been replaced by a spectral sequence argument using the real realization of the very effective slice filtration on $\kgl$ and its multiplicative structure, as we will do later in the proof of \Cref{Prop:rRbetako} for $\ko$ instead. Then, computing the differential $d^1: E^1_{2,0} = \pi_2(\rR\tilde{s}_0\kgl) \to \pi_1(\rR\tilde{s}_1\kgl) = E^1_{1,1}$ boils down to our proof of \Cref{Prop:rRbetakgl} (actually $d^1=(\pi_2\circ\rR)(d)$ where $d:\HZ \to \Sigma^{3,1}\HZ$ is the connecting map in the proof of the latter). 
\end{Rmk}
\vspace{0.2cm}
\subsection{The real realization of \texorpdfstring{$\HZtilde$}{HZ tilde}}\label{Subsect:rRHZtilde}\hfill\vspace{0.2cm}

The motivic spectrum $\HZtilde := f_0\underline{K}^{MW}_\ast$ appears in the computation of the very effective slices of $\ko$, which we will need for the proof of our main result about $\rR(\ko)$ in \Cref{Sect:rRko}. It is endowed with the structure of a motivic $\Einfty$-ring in the same way as $\HZ$ and $\HZmod$ in \Cref{Rmk:HZHZmodEinfty}.

\begin{Prop}\label{Prop:rRHZtilde} On the homotopy rings, the real realization of the quotient map $$\HZtilde := f_0\underline{K}^{MW}_\ast \longrightarrow f_0(\underline{K}^{MW}_\ast/\eta) = f_0\underline{K}^{M}_\ast = \HZ$$ identifies with the quotient map $\Z[t^2]/(2t^2) \to \Z[t^2]/2$.
\end{Prop}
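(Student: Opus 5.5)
We would compute $\pi_\ast(\rR\HZtilde)$ additively by comparing $\HZtilde$ with motivic spectra whose realizations we already know, and then read off the ring structure and the map to $\rR\HZ$ from the ring homomorphism $\pi_\ast(\rR\HZtilde)\to\pi_\ast(\rR\HZ)\cong\Zmod[t^2]$ of \Cref{Prop:rRHZHZmod}. The comparison we would use is the fundamental square of homotopy modules: $\underline{K}^{MW}_\ast$ is the pullback of $\underline{K}^M_\ast \to \underline{K}^M_\ast/2 \leftarrow \underline{I}^\ast$. Here $\underline{I}^\ast$ denotes the graded sheaf of powers of the fundamental ideal, which we would have to introduce with a definition since the paper has not defined it; the quotient $\underline{I}^\ast/\underline{I}^{\ast+1}$ is $\underline{K}^M_\ast/2$ by Milnor's conjecture, \Cref{Prop:Milnorsconjecture}. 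This is a short exact sequence in $\SHk^\heartsuit$. Applying the t-exact functor $f_0$ (\Cref{Prop:smheart}) gives a Cartesian square of very effective spectra
\[\HZtilde \simeq \HZ \times_{\HZmod} f_0\underline{I}^\ast,\]
and real realization, being exact, preserves it. The Cartesian square yields a Mayer--Vietoris long exact sequence in $\pi_\ast\circ\rR$.

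The first thing to check is degree $0$, directly via \Cref{Prop:homotopyofrR}. In weights $n\geq 1$, $\underline{K}^{MW}_n(\R)\cong I^n(\R)\cong 2^n\Z$, and multiplication by $\rho=[-1]$ is an isomorphism there, because $\langle\langle -1\rangle\rangle^n$ generates. Hence $\pi_0(\rR\HZtilde)\cong\Z$, and it maps onto $\pi_0(\rR\HZ)=\Zmod$ by reduction. For the positive degrees we would compute $\rR(f_0\underline{I}^\ast)$ in the same way: determine its homotopy sheaves on $\Spec(\R)$ in the relevant weights and take the colimit along $\rho$. Then we would feed the answer, together with $\rR\HZ\simeq\HZmod[t^2]$ and $\rR\HZmod\simeq\HZmod[t]$, into the Mayer--Vietoris sequence. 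Connectivity (\Cref{Prop:rRconnective}) kills negative degrees. The target is
\[\pi_{2k}(\rR\HZtilde)\cong\Zmod \ (k\geq 1), \qquad \pi_{\mathrm{odd}}(\rR\HZtilde)=0,\]
with the projection to $\rR\HZ$ an isomorphism in positive even degrees.

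Granting this additive answer, the ring statement is formal. The map $\pi_\ast(\rR\HZtilde)\to\pi_\ast(\rR\HZ)\cong\Zmod[t^2]$ is a ring homomorphism. It is an isomorphism in positive degrees and reduction mod $2$ in degree $0$. So choose the generator $s\in\pi_2(\rR\HZtilde)$ mapping to $t^2$. Its powers $s^k$ map to $t^{2k}\neq 0$, so $s^k$ generates $\pi_{2k}$. Moreover $2s=0$ because $\pi_2\cong\Zmod$. Together with $\pi_0=\Z$ this gives $\pi_\ast(\rR\HZtilde)\cong\Z[s]/(2s)$, and the realized quotient map is $\Z[t^2]/(2t^2)\to\Z[t^2]/2$, $s\mapsto t^2$.

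The main obstacle is the computation of $\rR(f_0\underline{I}^\ast)$, together with identifying the maps in the Mayer--Vietoris sequence, in particular $\rR(f_0\underline{I}^\ast)\to\rR\HZmod$. A careless guess (for example $\rR f_0\underline{I}^\ast\simeq\HZ$) produces positive-degree classes that come from the boundary and die in $\rR\HZ$, contradicting the claim. So one must genuinely control the homotopy sheaves of $f_0\underline{I}^\ast$ in negative weights, where $f_0$ differs from the identity. What we would try first is Bachmann's computation of the effective covers of $\underline{I}^\ast$ and $\underline{K}^{MW}_\ast$ in \cite{Bachmann-genslices}. If that proves unwieldy, the fallback is to work with $\eta$ directly. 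Since $\rho\eta=-2$ after inverting $\rho$, and $\rho$ acts invertibly in the colimit, one can compare $\HZtilde$ with $\HZtilde[\eta^{-1}]$ and its quotient $\HZ$, reducing to the $\eta$-periodic (Witt) part, which realizes rationally and $2$-adically in a controlled way.
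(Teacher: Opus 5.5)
Your proposal has a genuine gap: everything in positive degrees is deferred. You state the target, namely $\pi_{2k}(\rR\HZtilde)\cong\Zmod$ for $k\geq 1$, the odd groups vanish, and the projection to $\rR\HZ$ is an isomorphism in positive degrees. But the step meant to produce it, computing $\rR(f_0\underline{I}^\ast)$ and the maps in the Mayer--Vietoris sequence, is left open, as you acknowledge.

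Moreover, the fundamental square does not reduce the difficulty. In your Cartesian square, the fiber of $\HZtilde\to\HZ$ agrees with the fiber of $f_0\underline{I}^\ast\to\HZmod$. Both are $f_0(\underline{I}^{\ast+1})$, where $\underline{I}^{\ast+1}\cong\eta\underline{K}^{MW}_{\ast+1}$ is the kernel of $\underline{K}^{MW}_\ast\to\underline{K}^M_\ast$. So controlling $\rR(f_0\underline{I}^\ast)\to\rR\HZmod$ is the same problem as controlling the fiber of $\rR\HZtilde\to\rR\HZ$, which is what you set out to compute.

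The missing input is \cite[Lem.\ 17]{Bachmann-genslices}, which the paper applies directly: the quotient map $\HZtilde\to\HZ$ induces isomorphisms on $\underline{\pi}_n(-)_\ast$ for all $n\neq 0$. In other words, $f_0(\underline{I}^{\ast+1})$ lies in the heart of the homotopy t-structure, unlike $f_0\underline{K}^M_\ast=\HZ$. Combined with $\pi_n(\rR E)\cong\colim_m\underline{\pi}_n(E)_m(\R)$ from \Cref{Prop:homotopyofrR}, this gives $\pi_n(\rR\HZtilde)\cong\pi_n(\rR\HZ)$ for every $n\neq 0$ at once. No Mayer--Vietoris sequence and no computation of $\rR(f_0\underline{I}^\ast)$ is needed. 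With that lemma your square would also work, but it becomes superfluous. Your $\eta$-periodic fallback is too vague to substitute for it.

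Your degree-$0$ argument also needs two repairs.
\begin{enumerate}
\item Using $\underline{\pi}_0(\HZtilde)_n\cong\underline{K}^{MW}_n$ for $n\geq 1$ is not automatic. The map $f_0E\to E$ is only guaranteed to be an isomorphism on $\underline{\pi}_\ast(-)_n$ for $n\leq 0$, while the $\rho$-colimit runs through $n\to+\infty$. This identification is part of what \cite[Lem.\ 17]{Bachmann-genslices} provides: the unit $\unit\to\HZtilde$ is an isomorphism on $\underline{\pi}_0(-)_\ast$.
\item The isomorphism $K^{MW}_n(\R)\cong I^n(\R)$ is false for $n\geq 1$. The surjection $K^{MW}_n(\R)\to I^n(\R)$ has nonzero kernel $hK^{MW}_n(\R)$; for example $[b^2]=h[b]\neq 0$ for $b\neq\pm 1$. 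Since $\rho h=0$, this kernel dies in the colimit, so your answer $\pi_0(\rR\HZtilde)\cong\Z$ survives. The paper sidesteps the issue by identifying $\pi_0(\rR\HZtilde)\cong\pi_0(\rR\calS)=\Z$ through the unit map.
\end{enumerate}
Your final step, reading off the ring structure from the ring map to $\pi_\ast(\rR\HZ)\cong\Zmod[t^2]$, is fine and matches the paper.
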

\begin{proof}
   The homotopy ring of $\rR\HZ$ was computed in \Cref{Prop:rRHZHZmod}. By \cite[Lem.\ 17]{Bachmann-genslices}, the quotient map in the statement induces isomorphisms on homotopy sheaves $\underline{\pi}_n(-)_\ast$ for all $n\neq 0$. In particular, for all $n\neq 0$, $\pi_n(\rR \HZtilde) \cong \colim_m \underline{\pi}_n(\HZtilde)_m(\R) \cong \colim_m \underline{\pi}_n(\HZ)_m(\R) \cong \pi_n(\rR\HZ)$. Since $\HZtilde$ is very effective by \Cref{Prop:HZHZmodveryeff}, its real realization is connective (\Cref{Prop:rRconnective}), and so we only have to deal with the zeroth homotopy group. Consider the unit map $\unit \to \HZtilde$. By \cite[Lem.\ 17]{Bachmann-genslices} and \cite[after Thm.\ 6.4.7]{Morel-A1intro}, this map induces isomorphisms on the homotopy sheaves $\underline{\pi}_0(-)_\ast$, which agree with those of the sphere spectrum. In particular, arguing as above, $\pi_0(\rR\HZtilde) \cong \pi_0(\rR\underline{K}^{MW}_\ast) \cong \pi_0(\rR\calS) = \pi_0(\Sph) = \Z$. We therefore have in degree 0 a ring map $\Z \to \Zmod$, which must be the quotient by 2 map. This suffices to conclude our proof, because we have determined $\pi_\ast(\rR \HZtilde) \to \pi_\ast(\rR\HZ)$ in every degree, we know the ring structure on the right hand-side, and we know that the map is a ring homomorphism.
\end{proof}
\vspace{0.2cm}
\subsection{The real realization of \texorpdfstring{$\KO$}{KO}}\label{Subsect:rRKO}\hfill\vspace{0.2cm}

We now compute the real realization of $\KO$ as an $\Einfty$-ring. The result was already proven at the level of spectra (see \cite[Lem.\ 3.9]{BH-eta}); the proof here is very similar, with more care to the multiplicative structure. In the proof, we will encounter the motivic $\Einfty$-ring $\KW := \KO[\eta^{-1}]$ representing Balmer--Witt K-theory, see for example \cite[Appendix A]{Hornbostel}. Let us fix once and for all the $\Einfty$-structures under consideration. Recall that we assume $\mathsf{char}(k) \neq 2$.

\begin{Def}\label{Def:KO}
	Let $\KO \in \SH(k)$ be the Hermitian $K$-theory motivic $\Einfty$-ring and $\KW := \KO[\eta^{-1}]\in \SH(k)$ be the Balmer--Witt K-theory motivic $\Einfty$-ring, both with $\Einfty$-structure as constructed in \cite[Def.\ 8.1.1]{CHN} (where they are denoted by $\mathsf{KQ}_{\Spec(k)}$ and $\mathsf{KW}_{\Spec(k)}$).
\end{Def}
\begin{Rmk}\label{Rmk:Einfty-stc-KO}
We will later need the following fact (for \Cref{Prop:rRko(2)}) : this $\Einfty$-structure on $\KO$ agrees with the one considered in \cite[\S7, p.\ 129]{HJNY}. Indeed, by \cite[Lem.\ 7.4]{HJNY}, this $\Einfty$-structure is unique with the property of lifting that of a certain presheaf of $\Einfty$-rings, namely $\mathcal{GW}$. The $\Einfty$-structure from \cite[Def.\ 8.1.1]{CHN} satisfies this condition (the different $\mathcal{GW}$'s used in the two references agree by \cite[\S 4.6, Prop.\ 4.6.1]{CHN}).
\end{Rmk}
\begin{Rmk}\label{Rmk:map-KO-KW}
	With the $\Einfty$-structures from \Cref{Def:KO}, the canonical $\eta$-localization map $\KO \to \KW$ is then a map of $\Einfty$-rings. 
	Indeed, \cite[below Cor.\ 8.1.8]{CHN} provides an $\Einfty$-map $w_\R: \mathsf{KQ} \to \KW$. 
	We claim that the underlying map is giving by $\eta$-localization, 
	which can be described as the natural map to the colimit of the telescope for multiplication by $\eta$. 
	By construction, $w_\R$ is obtained by applying the lax symmetric monoidal, colimit-preserving 
	functor $\mathcal{M}^s_\R$ \cite[Lem.\ 7.4.10]{CHN} to the map of $\mathbb{GW}$-modules $\mathbb{GW} \to \mathbb{L}$ 
	between stabilized Grothendieck-Witt theory and stabilized L-theory. By \cite[\S 3]{Schlichting-2025}, the latter is
	also given by $\eta'$-localization, where $\eta'$ is the image of the unit in $GW^{[0]}_0(\Z)$ by the boundary map 
	from \cite[Rmk.\ 8.1.11]{CHN}. It is shown there that after applying the functor $\mathcal{M}^s_\R$, this boundary 
	map is the usual $\eta$, namely the motivic Hopf map. 
\end{Rmk}

\begin{Prop}\label{Prop:rRKO} There is an $\Einfty$-map of ring spectra $\gamma: \mathsf{L}(\R) \longrightarrow \rR\KW$,
    where $\mathsf{L}$ denotes the L-theory spectrum (see \cite[\S 2.2]{LNS}), inducing an equivalence of $\Einfty$-rings $\mathsf{L}(\R)[1/2] \simeq \rR\KW$.
	In particular, there is an equivalence of $\Einfty$-rings $\rR\KO \simeq \rR\KO[1/2] \simeq \rR\KW \simeq \KOtop[1/2]$.
\end{Prop}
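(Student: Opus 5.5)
We would split the argument into three parts: first show $\rR\KO \simeq \rR\KW$ and that $2$ is invertible there, then construct $\gamma$, and finally identify it after inverting $2$.

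\textbf{Step 1: $\eta$ and $2$ become invertible after realization.} By \Cref{Prop:rRinvertingrho}, $\rR$ factors through $\SHR[\rho^{-1}]$. In $\SHR$ we have $\rho\eta = -(1+\langle -1\rangle) + \ldots$; more precisely, in $\underline{K}^{MW}_\ast$ one has $\eta[-1] = \langle -1\rangle - 1$, so $\rho\eta \simeq \langle -1\rangle - 1$. After inverting $\rho$, $\langle -1\rangle = 1 + \eta\rho$ realizes to the degree of the antipodal map on $\rR(\Gm) \simeq \R^\times \simeq \Sph^0$, which is $-1$. Hence $\rho\eta \simeq -2$ in $\SHR[\rho^{-1}]$.

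Since $\rho$ is invertible there, this relation says $\eta$ is invertible if and only if $2$ is. We then use the Wood cofiber sequence $\Sigma^{1,1}\KO \xrightarrow{\eta} \KO \to \KGL$ together with \Cref{Prop:rRKGL}. These show that $\rR(\eta)$ is an equivalence on $\rR\KO$, so $2$ is invertible on $\rR\KO$. Since $\rR$ commutes with colimits, the telescope for $\eta$ is preserved, and the $\Einfty$-map $\KO \to \KW$ of \Cref{Rmk:map-KO-KW} realizes to an equivalence of $\Einfty$-rings $\rR\KO \simeq \rR\KW$. Combined with \Cref{Prop:rRcommuteswithloc}, this also gives $\rR\KO \simeq \rR\KO[1/2]$.

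\textbf{Step 2: construct $\gamma$.} We would use the construction of $\KW$ in \cite{CHN} as $\mathcal{M}^s_\R(\mathbb{L})$. Evaluating on $\Spec(\R)$ and comparing with the real realization should give an $\Einfty$-map $\mathsf{L}(\R) \to \rR\KW$. Concretely, we take the composite of the unit map $\mathsf{L}(\R) \to \Gamma(\Spec \R, \KW)$, namely the $\Einfty$-map from L-theory of $\R$ to the global sections of the motivic spectrum (the $(0,0)$-level spectrum), with the natural map $\Gamma(\Spec\R, E) = \map(\unit, E) \to \map(\Sph, \rR E)$ induced by the symmetric monoidal functor $\rR$. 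This latter map is lax symmetric monoidal in $E$, so the composite is an $\Einfty$-map.

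\textbf{Step 3: check that $\gamma[1/2]$ is an equivalence on homotopy.} This is the main obstacle. By \Cref{Prop:homotopyofrR}, $\pi_m(\rR\KW) = \colim_n \underline{\pi}_m(\KW)_n(\R)$ along $\rho$. For $\KW$, these groups are Witt groups: $\underline{\pi}_m(\KW)_n(\R) \cong W^{n-m}(\R)$, with $\eta$-periodicity in the weight. The Witt groups of $\R$ are $W(\R) \cong \Z$ in degrees $\equiv 0 \bmod 4$ and vanish otherwise after inverting $2$. Multiplication by $\rho$ corresponds, via $\rho\eta = -2$, to $-2\eta^{-1}$, which is an isomorphism after inverting $2$. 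So the colimit gives $\Z[1/2]$ in degrees divisible by $4$ and $0$ otherwise. This matches $\pi_\ast \mathsf{L}(\R)[1/2] = \Z[1/2][x^{\pm 1}]$ with $|x| = 4$, and also $\pi_\ast\KOtop[1/2]$.

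It then remains to check that $\gamma$ sends the generator of $\pi_0$ to the unit, which is automatic for a ring map, and the $4$-periodicity generator to a unit multiple of the generator. For the latter we would compare signatures: the generator of $L_4(\R)$ realizes to a form whose image in $W(\R) = \Z$ has signature $\pm 1$. The delicate point is tracking this through the colimit, up to powers of $2$.

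Finally, $\mathsf{L}(\R)[1/2] \simeq \KOtop[1/2]$ as $\Einfty$-rings is classical (Sullivan), so the chain of equivalences follows.
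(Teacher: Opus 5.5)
Steps 1 and 2 are sound and agree with the paper. Your direct telescope argument for $\rR\KO\simeq\rR\KW$ is a slight shortcut of the paper's zigzag through $\KW[1/2]$. Your $\gamma$ is the paper's composite $\mathsf{L}(\R)\to\Gamma(\R,\KW)\to\Gamma(\R,\KW[1/\rho])\simeq\rR\KW$. Your justification of $\rR\langle -1\rangle=-1$ via $\rR(\Gm)$ is loose, since $x\mapsto -x$ is not a pointed map. Instead, realize $[x:y]\mapsto[-x:y]$ on $\mathbb{P}^1$, which is a reflection of $S^1$, or argue algebraically from $h\rho=0$ as the paper does.

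The problem is Step 3, which you leave unfinished, although the point you call delicate is automatic. Since $b\in\pi_4\mathsf{L}(\R)$ (your $x$) is invertible and $\gamma$ is a ring map, $\gamma(b)$ is a unit of degree $4$ in $\pi_\ast(\rR\KW)$. So $\gamma$ intertwines multiplication by $b^k$ with multiplication by $\gamma(b)^k$, which is an isomorphism $\pi_j(\rR\KW)\to\pi_{j+4k}(\rR\KW)$ for every $k\in\Z$. It therefore suffices to check degrees $0,\dots,3$. In degree $0$, $\gamma$ is a unital ring map $\Z[1/2]\to\pi_0(\rR\KW)\cong\Z[1/2]$, hence an isomorphism. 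In degrees $1,2,3$ the groups vanish on both sides. No signature tracking through the colimit is needed.

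You should also fix the indexing. Because $\eta$ acts invertibly, $\underline{\pi}_m(\KW)_n(\R)$ does not depend on $n$; it is $W^{-m}(\R)$ up to convention. With $W^{n-m}(\R)$ as you wrote it, the $\rho$-colimit would pass through cofinally many zero groups and vanish.

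The real difference from the paper is that the paper does no homotopy computation at all. It notes that $\rho\eta=-2$ holds already in $\KW$, because $\eta h=0$ with $\eta$ invertible forces $h=0$. Hence $\KW[\rho^{-1}]\simeq\KW[1/2]$ as motivic spectra, and
$\rR\KW\simeq\Gamma(\R,\KW[1/2])\simeq\Gamma(\R,\KW)[1/2]\simeq\mathsf{L}(\R)[1/2]$.
This uses that $\Gamma(\R,-)$ commutes with filtered colimits, together with the $\Einfty$-equivalence $\Gamma(\R,\KW)\simeq\mathsf{L}(\R)$ of \cite[Cor.\ 8.1.8]{CHN}.

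Your route only needs $\mathsf{L}(\R)\to\Gamma(\R,\KW)$ to be an $\Einfty$-map. You pay for this with the Witt-group identification of $\pi_{\ast,\ast}\KW(\R)$, which is essentially the homotopy-level content of that same theorem. The paper's argument is shorter and identifies $\gamma[1/2]$ directly.

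Finally, Sullivan's classical theorem does not by itself give $\mathsf{L}(\R)[1/2]\simeq\KOtop[1/2]$ as $\Einfty$-rings. The paper instead takes the $\Einfty$-map $\tau_\R\colon\kotop\to\mathsf{L}(\R)_{\geq 0}$ of \cite{LNS}, which is an equivalence after inverting $2$. It then inverts $\lambda_8$ on the source and $16b^2$ on the target.
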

\begin{proof}
		By \cite[Cor.\ 8.1.8]{CHN}, we have $\Gamma(\R,\KW) \simeq \mathsf{L}(\R)$, 
		where $\Gamma(\R,-)$ is the global sections functor (i.e., the right adjoint 
		to the unique colimit-preserving symmetric monoidal functor $\Sp \to \SH(\R)$).
		Note that the latter functor is lax symmetric monoidal and preserves filtered colimits
		(its left adjoint preserves compact objects). We have to argue that this equivalence is indeed one of $\Einfty$-rings. 
		Using the notation from \emph{loc.\ cit.,} we have $\KW := \mathcal{M}^s_\R(\mathbb{L}) \in \CAlg(\SH(\R))$,
		and this equivalence is obtained by applying the lax monoidal functor $\Gamma(-,\R)$ 
		to the map of $\Einfty$-algebras $L^s \to \mathbb{L}^s \to \mathsf{Loc}_\Affl \mathbb{L}^s$
		and is therefore an $\Einfty$-map.\\

We now claim that $\KW[1/\rho]\simeq \KW[1/2]$. Since $\eta$ is by definition invertible on $\KW$, the relation $\eta h = 0$ in Milnor--Witt K-theory becomes $0 = h = 2 + \rho\eta$ and thus $\rho\eta = -2$. Therefore, inverting $\rho$ on $\KW$ is equivalent to inverting $2$. Then, by \Cref{Prop:rRinvertingrho}, we have 
$$\rR\KW \simeq \Gamma(\R,\KW[1/\rho]) \simeq \Gamma(\R,\KW[1/2]) \simeq \Gamma(\R,\KW)[1/2] \simeq \mathsf{L}(\R)[1/2].$$
By \cite[Ex.\ 9.2]{LNS}, there is an $\Einfty$-map $\tau_\R: \kotop \to L(\mathbb{R})_{\geq 0}$, inducing an equivalence after inverting 2 (note that in \emph{loc.\ cit.,} the definition of the $\Einfty$ $L$-theory that is used is the same as the one used in \cite{CHN}, see \cite[\S 2.2]{LNS} and \cite[\S 2.3, p.\ 9]{CHN}). Moreover, on the eighth homotopy groups, this maps sends $\lambda_8$ (a generator for $\pi_8(\kotop) \cong \Z$) to $16b^2$ (where $b$ is a generator of $\pi_{4}(\mathsf{L}(\R)) \cong \Z$). Therefore, there is an equivalence 
$$\kotop[1/2][\lambda_8^{-1}] \simeq \mathsf{L}(\R)_{\geq 0}[1/2][(16b^2)^{-1}] \simeq \mathsf{L}(\R)_{\geq 0}[1/2][b^{-1}].$$

The left-hand side is equivalent to $\KOtop[1/2]$ since $\kotop[\lambda_8^{-1}] \simeq \KOtop$ by periodicity (see for example \cite[Cor.\ 5.1]{LN}), and similarly the right-hand side is $\mathsf{L}(\R)[1/2]$ because $\mathsf{L}(\R)_{\geq 0}[b^{-1}] \simeq \mathsf{L}(\R)$ (same reference). This proves the last equivalence in the statement. \\

To prove the first equivalence, consider the Wood cofiber sequence $\Sigma^{1,1}\KO \xrightarrow{\ \eta\ } \KO \longrightarrow \KGL$ \cite[Thm.\ 3.4]{RO}. In Milnor--Witt K-theory, we have $h\rho = 0$. Indeed, $h\rho = (2 + \rho\eta)\rho = 2\rho + \rho^2\eta = [(-1)(-1)] = [1] = 0$ (since $\rho = [-1]$, see \cite[Thm.\ 6.3.3 and 6.4.1]{Morel-A1intro}). Therefore, similarly as above, after inverting $\rho$, or equivalently, after real realization (\Cref{Prop:rRinvertingrho}), we have $0 = h = 2 + \rho\eta$. Then the real realization of $\eta$ is equivalent to multiplication by $-2$. But also, we have seen in \Cref{Prop:rRKGL} that $\rR\KGL\simeq 0$. This means that the cofiber of the multiplication by 2 map on $\rR\KO$ is zero, in other terms $\cdot 2$ is an equivalence, and thus $\rR\KO \simeq \rR(\KO[1/2])$ via the canonical localization map. Also, the above implies that the canonical localization maps $ \KO[1/2] \longrightarrow \KO[\eta^{-1},1/2] \simeq \KW[1/2] \longleftarrow \KW $ (see \Cref{Rmk:map-KO-KW}) are sent to equivalences under $\rho$-localization, namely under real realization.
Thus, $\rR(\KO[1/2]) \simeq \rR(\KW)$. This establishes the first and second equivalences in the statement.
\end{proof}    

Let $\kw := \KW_{\geq 0}$ be the connective cover of $\KW$ in the homotopy t-structure. It inherits the structure of a motivic $\Einfty$-ring (\Cref{Rmk:smstconheart}). Also let $\kotop := \KOtop_{\geq 0}$ be the connective cover of the real topological K-theory spectrum. 

\begin{Prop}\label{Prop:rRko1/2} The equivalences $\rR\KO[1/2] \simeq \rR\KW \simeq \KOtop[1/2]$ of \Cref{Prop:rRKO} induce equivalences of $\Einfty$-rings 
   $\kotop[1/2] \simeq \rR\kw \simeq \rR\ko[1/2]$.
\end{Prop}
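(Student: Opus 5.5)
We compare connective covers on both sides of the equivalences of \Cref{Prop:rRKO}. The plan has two halves: first $\rR\kw \simeq \kotop[1/2]$, then $\rR\ko[1/2] \simeq \rR\kw$. Throughout we use that $\rR$ is symmetric monoidal and commutes with localization away from $2$ (\Cref{Prop:rRcommuteswithloc}).

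\textbf{First half.} The connective cover $\kw \to \KW$ is an $\Einfty$-map (\Cref{Rmk:smstconheart}), so it realizes to an $\Einfty$-map $\rR\kw \to \rR\KW \simeq \KOtop[1/2]$. Since $\kw \in \SH(\R)_{\geq 0}$, I would first check that $\rR$ sends $\SH(\R)_{\geq 0}$ to connective spectra. The generators $\Sigma^{p,q}\Sigma^\infty_+X$ with $p-q\geq 0$ realize to $\Sigma^{p-q}\Sigma^\infty_+\rR(X)$, and $\rR$ preserves colimits and extensions, so this holds.

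Next I would show that $\pi_n(\rR\kw)\to\pi_n(\rR\KW)$ is an isomorphism for $n\geq 0$. For this I use \Cref{Prop:homotopyofrR}: $\pi_n(\rR E)=\colim_m \homsh{n}{m}{E}(\R)$ along multiplication by $\rho$. For $n\geq 0$ the homotopy sheaves of $\kw$ and $\KW$ agree, because $\kw$ is the connective cover in the homotopy t-structure. Hence the colimits agree. It follows that $\rR\kw$ is the connective cover of $\rR\KW$, that is, $\rR\kw \simeq (\KOtop[1/2])_{\geq 0} \simeq \kotop[1/2]$. Connective covers are lax monoidal, so the resulting factorization through $\tau_{\geq 0}$ is one of $\Einfty$-rings.

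\textbf{Second half.} I would produce an $\Einfty$-map $\ko \to \kw$. The map $\ko = \tilde f_0\KO \to \KO \to \KW$ is $\Einfty$. The claim is that it factors through $\kw$: $\ko$ is very effective, hence lies in $\SH(k)_{\geq 0}$ (very effective objects are connective for the homotopy t-structure, since the generators $\calS^n\Smash\Sigma^\infty_+X$ with $n\geq0$ are). Because $\tau_{\geq 0}$ is right adjoint to the symmetric monoidal inclusion, $\Einfty$-maps from a connective $\Einfty$-ring to $\KW$ factor through $\kw$. Realizing and inverting $2$ gives an $\Einfty$-map $\rR\ko[1/2]\to\rR\kw[1/2]=\rR\kw$.

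To see that this map is an equivalence, I would note that it is a map between connective spectra (by \Cref{Prop:rRconnective} and the first half), and that after composing to $\rR\KW$ it agrees with $\rR\ko[1/2]\to\rR\KO[1/2]\simeq\rR\KW$. It therefore suffices to show that $\rR\ko[1/2]\to\rR\KO[1/2]$ induces isomorphisms on $\pi_n$ for $n\geq 0$. For this I would use $\homsh{n}{m}{\ko}\to\homsh{n}{m}{\KO}$ together with the colimit formula of \Cref{Prop:homotopyofrR}.

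\textbf{Main obstacle.} The comparison of $\ko$ and $\KO$ on $\rho$-inverted homotopy sheaves in nonnegative degrees is the hard step, because the very effective cover is not the homotopy t-structure connective cover. My first attempt would be to use $\eta$-periodicity: after inverting $2$ and $\rho$ we have $\eta=-2\rho^{-1}$, so $\rR\ko[1/2]\simeq\rR(\ko[\eta^{-1}])[1/2]$. I would then invoke the known identification $\ko[\eta^{-1}]\simeq\kw$ (Bachmann's computation, with $\ko\to\kw$ becoming an equivalence after $\eta$-inversion). If that identification is not available, the fallback is to compute $\pi_\ast$ of both sides via the very effective slices, modulo $2$-torsion. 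With $2$ inverted, the slices $\HZmod$ and the torsion in $\HZtilde$ vanish after realization by \Cref{Prop:rRHZHZmod} and \Cref{Prop:rRHZtilde}. One is left with copies of $\Z[1/2]$ in degrees $\equiv 0 \bmod 4$, which should match $\pi_\ast\kotop[1/2]$.
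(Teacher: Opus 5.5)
Your primary route is the paper's proof. Exactness of $\rR$ for the homotopy t-structure, via the $\rho$-colimit formula, gives $\rR\kw\simeq\kotop[1/2]$. Then $\rR(\eta)=-2$, together with the Bachmann--Hopkins identification $\ko[\eta^{-1}]\simeq\kw$ induced by $\ko\to\KO\to\KW$ (your factored map), gives $\rR\ko[1/2]\simeq\rR\kw$.

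Your fallback, as stated, would only match the homotopy groups abstractly; it would not show that this specific map is an isomorphism. So it is the $\eta$-inversion step that closes the argument.
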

\begin{proof}
By \cite[Lem.\ 6.9]{BH-eta}, the composition $\ko\to\KO\to\KO[\eta^{-1}] =: \KW$ of the canonical maps induces an equivalence ${\ko[\eta^{-1}] \simeq \KW_{\geq 0} =: \kw}$. Then, as in the proof of \Cref{Prop:rRKO}, localization away from $\eta$ and away from 2 induce equivalences $\rR\kw \simeq \rR\kw[1/2] \simeq \rR\ko[1/2]$. Note that the functor $\rR$ is $t$-exact: by \Cref{Def:homotopytstc}, $E\in\SHk$ belongs to $\SHk_{\geq 0}$ if and only if $\homsh{i}{\ast}{E}=0$ for all $i<0$ (respectively, $\SHk_{\leq 0}$ and $i>0$), and by \Cref{Prop:homotopyofrR} we have $\pi_i(\rR(E)) \cong \colim_n \homsh{i}{n}{E}(\R)$ for all $i\in\Z$. Therefore, $\rR\kw \simeq \rR(\KW)_{\geq 0} \simeq \KOtop[1/2]_{\geq 0} \simeq \kotop[1/2]$ by \Cref{Prop:rRKO}. 
\end{proof}  

\vspace{0.5em}
\section{The real realization of \texorpdfstring{$\ko$}{ko}}\label{Sect:rRko}

The motivic spectrum $\ko$ inherits an $\Einfty$-ring structure from that of $\KO$, because the very effective cover functor is lax symmetric monoidal by \Cref{Prop:smheart}. Thus, $\rR\ko$ is also an $\Einfty$-ring. In this section, we identify the underlying $\E{1}$-structure by an explicit 2-local fracture square. In \Cref{Subsect:pistarrRko}, we will compute the homotopy ring of $\rR\ko$, using the Wood cofiber sequence $\Sigma^{1,1}\ko \to \ko \to \kgl$ \cite[Prop.\ 11]{ARO} and the knowledge of the very effective slices of $\ko$ (computed in \cite{Bachmann-genslices}, recalled as \Cref{Prop:veffslicesko}). The description of these slices involves only motivic spectra whose real realizations we have already computed in \Cref{Sect:examplesrealization}. We will show that the homotopy ring $\pi_\ast\rR(\ko)$ is a polynomial ring over $\Z$ with a single generator in degree 4.\\

But this is not enough to identify $\rR\ko$ as an $\E{1}$-ring. Indeed, by \Cref{Prop:freeE1Ralgebra}, the free $\E{1}$-$\HZ$-algebra $\HZ[t^4]$ has the same homotopy ring, but they are not equivalent (see \Cref{Rmk:kinvariants}). Inverting 2 makes things easier; we have already identified $\rR(\ko)[1/2]$ in \Cref{Prop:rRko1/2}. This might be a good reason to consider a 2-local fracture square, expressing $\rR\ko$ as the pullback of $\rR(\ko)[1/2]$ and $\rR(\ko)_{(2)}$ over $\rR(\ko)_\Q$. Our approach is to identify these terms and the maps in the square more explicitly, taking into account the $\E{1}$-structures. This will use the description of free $\E{1}$-$\HZ_{(2)}$-algebras and their homotopy rings from \Cref{Prop:freeHA}; this is one reason for which we identify the $\E{1}$-structure only (and not the $\Einfty$-one). The resulting 2-local fracture square coincides with the one obtained in \cite{HLN} for the connective L-theory spectrum of $\R$. As a corollary, we get an equivalence of $\E{1}$-rings $\rR(\ko)\simeq\mathsf{L}(\R)_{\geq 0}$.
\vspace{0.2cm}

\subsection{The homotopy ring of \texorpdfstring{$\rR(\ko)$}{rR(ko)}}\label{Subsect:pistarrRko}\hfill\vspace{0.2cm}

\begin{Prop}\label{Prop:homgprRko} The homotopy ring of $\rR\ko$ is the polynomial ring $\pi_\ast(\rR\ko) \cong \Z[x]$, where $x$ is a generator in degree 4, mapping to a generator of $\pi_4(\rR\kgl)\cong \Zmod$ (see \Cref{Prop:rRkgl}) in the real realization of the Wood cofiber sequence $\Sigma^{1,1}\ko \to \ko \to \kgl$.
\end{Prop}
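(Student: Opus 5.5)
The plan is to run the real realization of the Wood cofiber sequence $\Sigma^{1,1}\ko \xrightarrow{\eta} \ko \to \kgl$ and combine it with the knowledge of $\rR\kgl$ (\Cref{Prop:rRkgl}) and of $\rR\ko[1/2]$ (\Cref{Prop:rRko1/2}). No spectral sequence should be needed for the additive structure.

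\textbf{Step 1: realize the Wood sequence.} Since $\rR(\Gm)\simeq \Sph^0$, the motivic spectrum $\Sigma^{1,1}\ko$ realizes to $\rR\ko$ itself. Exactly as in the proof of \Cref{Prop:rRKO}, we have $h\rho=0$ in $\underline{K}^{MW}_\ast$, hence $2+\rho\eta=0$ after inverting $\rho$. Since $\ko$ is a module over the sphere, $\rR(\eta)$ acts on $\rR\ko$ as multiplication by $-2$. So we obtain a cofiber sequence $\rR\ko\xrightarrow{-2}\rR\ko\to\rR\kgl\simeq\HZmod[t^4]$, and hence short exact sequences
$$0\to \pi_n(\rR\ko)/2\to \pi_n(\rR\kgl)\to \pi_{n-1}(\rR\ko)[2]\to 0,$$
where $[2]$ denotes $2$-torsion. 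By \Cref{Prop:rRkgl}, the middle term is $\Zmod$ for $n\in 4\N$ and $0$ otherwise. \Cref{Prop:rRko1/2} gives $\pi_n(\rR\ko)[1/2]\cong\pi_n(\kotop)[1/2]$, which is $\Z[1/2]$ for $n\in4\N$ and $0$ otherwise. By \Cref{Prop:rRconnective}, $\pi_n(\rR\ko)=0$ for $n<0$.

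\textbf{Step 2: the additive structure.} Write $A_n:=\pi_n(\rR\ko)$.
\begin{itemize}
\item For $n\not\equiv 0,1 \pmod 4$, the sequence gives $A_n/2=0$, and the sequence for $n+1$ gives $A_n[2]=0$. So multiplication by $2$ is bijective on $A_n$, whence $A_n\cong A_n[1/2]=0$.
\item For $n\equiv 1\pmod 4$, the sequence gives $A_n/2=0$ directly. The sequence for $n+1\equiv 2$ gives $A_n[2]=0$, so again $A_n=0$.
\item For $n=4k\ge 0$, the sequence for $n+1$ gives $A_{4k}[2]=0$, and $A_{4k-1}=0$ gives $A_{4k}/2\cong\Zmod$. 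Since $A_{4k}[1/2]\cong\Z[1/2]$ is torsion free and there is no $2$-torsion, $A_{4k}$ embeds into $\Z[1/2]$. A subgroup $A\subseteq\Z[1/2]$ with $A[1/2]=\Z[1/2]$ is either cyclic or all of $\Z[1/2]$, and the condition $A/2\cong\Zmod$ excludes the latter. Hence $A_{4k}\cong\Z$, and the map $A_{4k}\to\pi_{4k}(\rR\kgl)$ is reduction mod $2$.
\end{itemize}

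\textbf{Step 3: the ring structure.} Let $x$ generate $A_4$; it maps to the generator $t^4$ of $\pi_4(\rR\kgl)$. The map $\ko\to\kgl$ is a ring map, so $x^k\mapsto t^{4k}\neq 0$. Writing $g$ for a generator of $A_{4k}$ and $x^k=mg$, this forces $m$ to be odd. After inverting $2$, the ring $\pi_\ast(\kotop[1/2])\cong\Z[1/2][y]$ with $|y|=4$, via the ring equivalence of \Cref{Prop:rRko1/2}. There $x\mapsto u\,y$ with $u\in\Z[1/2]^\times$, since $x$ generates $A_4$. Hence $x^k$ generates $A_{4k}[1/2]$, so $m$ is a unit of $\Z[1/2]$. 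Together with $m$ odd, this gives $m=\pm1$, and therefore $\pi_\ast(\rR\ko)\cong\Z[x]$.

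The main point needing care is Step 1: the identification $\rR(\eta)\simeq -2$ on $\rR\ko$ compatibly with the suspension $\rR(\Sigma^{1,1}\ko)\simeq\rR\ko$. I would argue it via \Cref{Prop:rRinvertingrho}: realization factors through $\rho$-localization, where $\rho\eta=-2$ holds in $\pi_{0,0}$ of the sphere, and $\rR(\rho)$ is the identification $\Sph^0\simeq\rR(\Gm)$. The remaining steps are elementary algebra.
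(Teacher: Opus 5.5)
\textbf{There is a genuine gap in Step 2, in degrees $n\equiv 3 \pmod 4$.} Your first bullet claims that the sequence for $n+1$ gives $A_n[2]=0$. But for $n\equiv 3$ you have $n+1\equiv 0\pmod 4$, so the middle term $\pi_{n+1}(\rR\kgl)$ is $\Zmod$, not $0$. The sequence $0\to A_{4k}/2\to\Zmod\to A_{4k-1}[2]\to 0$ only says that exactly one of $A_{4k}/2$ and $A_{4k-1}[2]$ is $\Zmod$. (The residues where both halves of your argument work are $n\equiv 1,2$.)

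What you can actually conclude is that $A_{4k-1}$ is $2$-divisible and $2$-power torsion (since $A_{4k-1}[1/2]=0$), with $2$-torsion of order at most $2$. The Prüfer group $\Z/2^\infty$ satisfies all of this. Your third bullet then uses $A_{4k-1}=0$ to get $A_{4k}/2\cong\Zmod$, which is circular for $k\ge 1$; only $k=0$ is saved by connectivity.

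The alternative $A_{4k}\cong\Z[1/2]$, $A_{4k-1}\cong\Z/2^\infty$ is compatible with every constraint you use. It also survives Step 3: the ring map $\pi_\ast(\rR\ko)\to\pi_\ast(\rR\kgl)$ would simply vanish in positive degrees, and $\Z[1/2]\otimes\Z/2^\infty=0$, so module structures give no obstruction either. This is not a formal artifact. Glue $(\rR\ko)^\wedge_2$ and $\kotop[1/2]$ in the arithmetic fracture square along a rational map that is the unit in degree $0$ and zero in positive degrees. The result is a connective spectrum with the same $2$-completion (hence the same mod-$2$ reduction $\rR\kgl$) and the same localization away from $2$, but with exactly these homotopy groups. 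So the Wood sequence plus $\rR\ko[1/2]$ cannot by themselves determine $\pi_\ast(\rR\ko)$.

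\textbf{The missing input is degreewise finite generation of $\pi_\ast(\rR\ko)$.} This is the paper's Step 1, and it needs no spectral sequence, only long exact sequences. One inducts along the very effective slice tower $\tilde{f}_{n+1}\KO\to\tilde{f}_n\KO\to\tilde{s}_n\KO$, using:
\begin{itemize}
\item Bachmann's description of the slices (\Cref{Prop:veffslicesko});
\item the computed realizations of $\HZ$, $\HZmod$ and $\HZtilde$;
\item the fact that $\rR$ takes $\SHR^\veff(n)$ to $n$-connective spectra.
\end{itemize}
A finitely generated $2$-divisible abelian group has no $2$-torsion, so $A_{4k-1}[2]=0$. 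Hence $A_{4k-1}$ is uniquely $2$-divisible and equals $A_{4k-1}[1/2]=0$.

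With this in place, the rest of your argument is correct. Your subgroup-of-$\Z[1/2]$ argument in degree $4k$ validly replaces the paper's count modulo $3$ (and avoids needing finite generation there), and your Step 3 is essentially the paper's Step 4.
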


\begin{proof}
    We will proceed in four steps:
\begin{enumerate}
    \item We first show that these homotopy groups are finitely generated in every degree, using the description of the very effective slices (\Cref{Def:effcover}) of $\ko$.
    \item Using the Wood cofiber sequence, we show that these groups have no 2-torsion.
    \item We then show that they have no torsion at the other primes, and that they are actually free abelian groups of rank 1 in every non-negative degree divisible by 4, and 0 else. To do so, we reduce to a statement after inverting 2, and use the computation of $\rR\ko[1/2]$ in \Cref{Prop:rRko1/2}.
    \item Finally, we describe the ring structure, studying again the Wood cofiber sequence. 
\end{enumerate}
\vspace{0.2cm}
    \textbf{Step 1:} \emph{The homotopy groups of $\rR\ko$ are finitely generated abelian groups.} We claim that: $(\star)$ the real realizations of the very effective slices of $\ko$ have degreewise finitely generated homotopy groups. Then, we show by induction on $m$ the following statement: for all $n\in \N$, $\pi_{n+m}(\rR(\tilde{f}_n \KO))$ is finitely generated. This holds for $m\leq -1$ since real realization on $\SHR^\veff(n)$ takes values in $n$-connective spectra as seen in \Cref{Prop:rRconnective}. This implies our result because for $n=0$, we get that for all $m\in\N$, $\pi_m(\rR(\tilde{f}_0 \KO)) = \pi_m(\rR\ko)$ is finitely generated. Assume that the induction hypothesis holds for some fixed $m\geq -1$. Then for all $n\in \N$, the cofiber sequence $\tilde{f}_{n+1}\KO \to \tilde{f}_{n}\KO \to \tilde{s}_{n}\KO$ defining the very effective slices induces a long exact sequence
% https://q.uiver.app/#q=WzAsNyxbMSwwLCJcXHBpX3tuKzErbX0oXFxyUihcXHRpbGRle2Z9X3tuKzF9IFxcS08gKSkiXSxbMiwwLCJcXHBpX3tuK20rMX0oXFxyUihcXHRpbGRle2Z9X24gXFxLTykpIl0sWzMsMCwiXFxwaV97bittKzF9KFxcclIoXFx0aWxkZXtzfV9uIFxcS08pKSJdLFswLDAsIlxcZG90cyJdLFsyLDEsIlxccGlfe24rbX0oXFxyUihcXHRpbGRle2Z9X3tufSBcXEtPICkpIl0sWzEsMSwiXFxwaV97bittfShcXHJSKFxcdGlsZGV7Zn1fe24rMX0gXFxLTyApKSJdLFszLDEsIlxcZG90cyJdLFswLDFdLFsxLDJdLFsyLDUsImIiLDFdLFszLDAsImEiXSxbNSw0XSxbNCw2XV0=
\[\begin{tikzcd}[ampersand replacement=\&]
	\cdots \& {\pi_{n+1+m}(\rR(\tilde{f}_{n+1} \KO ))} \& {\pi_{n+m+1}(\rR(\tilde{f}_n \KO))} \& {\pi_{n+m+1}(\rR(\tilde{s}_n \KO))} \\
	\& {\pi_{n+m}(\rR(\tilde{f}_{n+1} \KO ))} \& {\pi_{n+m}(\rR(\tilde{f}_{n} \KO ))} \& \cdots
	\arrow["a", from=1-1, to=1-2]
	\arrow[from=1-2, to=1-3]
	\arrow[from=1-3, to=1-4]
	\arrow["b"{description}, from=1-4, to=2-2]
	\arrow[from=2-2, to=2-3]
	\arrow[from=2-3, to=2-4]
\end{tikzcd}\]
and therefore $\pi_{n+m+1}(\rR(\tilde{f}_{n}\KO))$ is an extension of $\mathsf{ker}(b)$ and $\mathsf{coker}(a)$. The former is finitely generated because it is a subgroup of $\pi_{n+m+1}(\rR(\tilde{s}_n \KO))$, which is finitely generated by $(\star)$. The latter is also finitely generated since it is a quotient of $\pi_{n+1+m}(\rR(\tilde{f}_{n+1} \KO) )$, which is finitely generated by the induction hypothesis. Therefore, $\pi_{n+m+1}(\rR(\tilde{f}_{n}\KO))$ is finitely generated, finishing our proof by induction. To finish Step 1, it remains to prove $(\star)$.

\begin{thm}[\protect{\cite[Thm.\ 16]{Bachmann-genslices}}]\label{Prop:veffslicesko}
The very effective slices of $\KO$ are given by $\tilde{s}_n\KO \simeq \Sigma^{2n,n}\tilde{S}_{n\, \mathsf{mod}\, 4}$ where
    $$\tilde{S}_{0} = \tilde{s}_0\KO,\quad\tilde{S}_{1} = \HZmod,\quad\tilde{S}_2 =\HZ, \quad\tilde{S}_3 = 0,$$
and there is a cofiber sequence $ \Sigma^{1,0}\HZmod \longrightarrow \tilde{s}_0\KO \longrightarrow \HZtilde$.
\end{thm} 

By the computation of the real realizations of $\HZ$, $\HZmod$ and $\HZtilde$ in \Cref{Subsect:rRHZHZmod,Subsect:rRHZtilde}, we know that their homotopy groups are finitely generated in every degree. Thus, that of the realizations of very effective slices of $\ko$ are too (for $\tilde{s}_0$, use the same inductive argument as we just did).\\

    \textbf{Step 2:} \emph{There is no 2-torsion in $\pi_\ast(\rR\ko)$.} The Wood cofiber sequence seen in the proof of \Cref{Prop:rRKO} refines to one on the very effective covers: by \cite[Prop.\ 11]{ARO}, there is a cofiber sequence $\Sigma^{1,1}\ko \xrightarrow{\eta} \ko \xrightarrow{c} \kgl$, where $\eta$ is induced by the Hopf map and $c$ is induced by the forgetful map from Hermitian K-theory to algebraic K-theory. As we computed in the case of $\KO$ (\Cref{Prop:rRKO}), the real realization of $\eta$ is the multiplication by -2 map on the sphere spectrum, and by \Cref{Prop:rRKGL}, the $\E{1}$-ring $\rR\kgl$ is the free $\E{1}$-$\HZmod$-algebra $\HZmod[t^4]$. Assume for a contradiction that $\pi_m(\rR\ko)$ has 2-torsion for some $m\geq 0$. Then, in the long exact sequence induced by the Wood cofiber sequence
% https://q.uiver.app/#q=WzAsNixbMSwwLCJcXHBpX3tuKzF9KFxcclJcXGtnbCApIl0sWzIsMCwiXFxwaV97bn0oXFxyUlxca28pIl0sWzMsMCwiXFxwaV97bn0oXFxyUlxca28pIl0sWzAsMCwiXFxkb3RzIl0sWzUsMCwiXFxkb3RzIl0sWzQsMCwiXFxwaV97bn0oXFxyUlxca2dsKSJdLFswLDEsImEiXSxbMiw1LCJjIl0sWzMsMF0sWzUsNF0sWzEsMiwiXFxjZG90IDIgPSBiIl1d
\[\begin{tikzcd}
	\cdots & {\pi_{m+1}(\rR\kgl )} & {\pi_{m}(\rR\ko)} & {\pi_{m}(\rR\ko)} & {\pi_{m}(\rR\kgl)} & \cdots,
	\arrow[from=1-1, to=1-2]
	\arrow["a", from=1-2, to=1-3]
	\arrow["{\cdot (-2) = b}", from=1-3, to=1-4]
	\arrow["c", from=1-4, to=1-5]
	\arrow[from=1-5, to=1-6]
\end{tikzcd}\]
the kernel of the map $\cdot (-2) = b$ is non-trivial, so $a$ is non-zero and $\pi_{m+1}(\rR\kgl) \neq 0$, i.e., $m\equiv 3 \mod 4$. Then $\pi_{m}(\rR\kgl) = 0$, meaning that multiplication by 2 is surjective on $\pi_{m}(\rR\ko)$. In Step 1, we saw that this group was finitely generated abelian. Using the classification of such groups, we see that this is impossible for non-zero groups with 2-torsion.\\
    
    \textbf{Step 3:} \emph{The groups $\pi_m(\rR\ko)$ are torsion-free of rank 1 in degrees $m\in 4\N$, and zero otherwise.} By \Cref{Prop:rRko1/2} and since real realization and homotopy groups commutes with localization away from $2$ (\Cref{Prop:rRcommuteswithloc}), we have isomorphisms of graded abelian groups
    \begin{align*}
        \pi_\ast(\rR\ko)[1/2] &\cong \pi_\ast((\rR\ko)[1/2]) \cong \pi_\ast(\rR(\ko[1/2])) \cong \pi_\ast(\kotop[1/2]) \\
        &\cong \left(\factor{\Z[\alpha_1,\beta_4,\lambda_8]}{(\alpha_1^3, 2\alpha_1,\alpha_1\beta_4, \beta_4^2 - 4\lambda_8)}\right)[1/2] \cong \Z[1/2][\beta_4]
    \end{align*}
    (where $|\alpha_1|=1$, $|\beta_4|=4$, and $|\lambda_8|=8$) by a classical result in topology, see for instance \cite[\S 1.1]{Strickland}. In particular, the homotopy groups of $\rR\ko$ cannot have any $p$-torsion for $p\neq 2$ either. Otherwise, their localizations away from 2 would still have torsion, but $\Z[1/2]$ does not. Therefore, $\pi_\ast(\rR\ko)$ is degreewise free. Comparing with the localization away from 2, we deduce that it consists of a copy of $\Z$ in every degree divisible by 4, and the trivial group else. Indeed, if $\Z^{\oplus m}[1/2] \cong \Z[1/2]$ for some $m\in\N$, then the quotients by the $\Z$-module map $\cdot 3$ on both sides are isomorphic as well. They are respectively $(\Z/3)^{\oplus m}$ and $\Z/3$, whence $m=1$ for cardinality reasons.\\
    
    \textbf{Step 4:} \emph{We describe the ring structure.} The Wood cofiber sequence induced a long exact sequence
 % https://q.uiver.app/#q=WzAsNSxbMSwwLCJcXHBpX3s0bX0oXFxyUlxca28gKSJdLFsyLDAsIlxccGlfezRtfShcXHJSXFxrZ2wpIFxcY29uZyBcXFptb2QiXSxbMywwLCJcXHBpX3s0bS0xfShcXHJSXFxrbyk9MCJdLFswLDAsIlxcZG90cyJdLFs0LDAsIlxcZG90cyJdLFswLDEsImNfezRtfSJdLFsyLDRdLFszLDBdLFsxLDJdXQ==
\[\begin{tikzcd}
	\cdots & {\pi_{4m}(\rR\ko )} & {\pi_{4m}(\rR\kgl) \cong \Zmod} & {\pi_{4m-1}(\rR\ko)=0} & \cdots.
	\arrow[from=1-1, to=1-2]
	\arrow["{c_{4m}}", from=1-2, to=1-3]
	\arrow[from=1-3, to=1-4]
	\arrow[from=1-4, to=1-5]
\end{tikzcd}\]
Let $x$ be a generator of $\pi_4(\rR\ko)\cong \Z$. Then since $c_4$ is surjective, $c_4(x)$ generates $\pi_{4}(\rR\kgl) \cong \Z/2$. The forgetful map $c: \ko \to \kgl$ in the Wood cofiber sequence is a map of $\Einfty$-rings, and thus induces a ring homomorphism in homotopy. Therefore, for $m\geq 0$, $c_{4m}(x^m) = c_4(x)^m$ generates $\pi_{4m}(\rR\kgl) \cong \Z/2$ as well (since $\pi_\ast(\rR\kgl)$ is a polynomial ring by \Cref{Prop:rRkgl}). We claim that $x^m$ generates $\pi_{4m}(\rR\ko)$. Indeed, write $x^m = a\cdot g_m$ with $a\in \Z$ and $g_m$ is a generator of $\pi_{4m}(\rR\ko)$. By \Cref{Prop:rRko1/2}, we have ring isomorphisms $\pi_\ast(\rR\ko) \otimes \Z[1/2] \cong \pi_\ast(\kotop[1/2]) \cong \Z[1/2][\beta_4]$. In degree 4, since $x$ generates $\pi_4(\rR\ko)$, under this isomorphism $x$ must be sent to a generator of $\Z[1/2]\{\beta_4\}$ as a $\Z[1/2]$-module, i.e., $c\beta_4$ with $c\in\Z[1/2]^\times$. In particular, $x^m$ corresponds to a generator $c^m\beta_4^m$ of the degree $4m$ part. Therefore, there exists $b\in\Z[1/2]$ with $g_m = b\cdot x^m = ba\cdot g_m$, so $a\in\Z$ is a unit in $\Z[1/2]$. Thus, if $a\neq \pm 1$, then it must be even. In the latter case, we would have $c_{4m}(x^m) = a \cdot c_{4m}(g_m) = 0 \in \Z/2$, which is a contradiction. We conclude that $a=\pm 1$, whence our claim follows.
\end{proof}

\subsection{A 2-local fracture square for the \texorpdfstring{$\E{1}$}{E1}-ring structure on \texorpdfstring{$\rR(\ko)$}{rR(ko)}}\label{Subsect:E1ringstc}

\subsubsection{Identifying the localization at \texorpdfstring{$(2)$}{(2)}}\label{SubSubsect:rRko(2)}

Using that $\pi_\ast(\rR\ko) \cong \Z[x]$ by \Cref{Prop:homgprRko}, we have ${\pi_\ast((\rR\ko)_{(2)}) \cong \pi_\ast(\rR\ko) \otimes_\Z \Z_{(2)} \cong \Z_{(2)}[x]}$. We want to use this to identify $(\rR\ko)_{(2)}$ with the free $\E{1}$-$\mathsf{H}\Z_{(2)}$-algebra on one generator in degree 4. To do so, we want to produce an $\E{2}$-map $\mathsf{H}\Z_{(2)} \to (\rR\ko)_{(2)}$. Inspired by the computation of a 2-local fracture square for $\mathsf{L}(\R)_{\geq 0}$ in \cite[p.\ 3]{HLN}, we use the Thom spectrum $\MSO \in \Sp$ and the motivic Thom spectrum $\MSL \in \SH(\R)$ as intermediate steps. To do so, we need to show that $\rR(\MSO) \simeq \MSL$ as $\E{2}$-rings. They are actually equivalent as $\Einfty$-rings, as we prove in \cite{rRThom-arXiv} by comparing the motivic and topological Thom spectrum functors under real realization.

\begin{thm}[\protect{\cite{rRThom-arXiv}}]\label{Prop:MSL} There is an equivalence of $\Einfty$-rings $\rR\MSL \simeq \MSO$
    between the real realization of the special linear motivic cobordism spectrum, and the oriented cobordism spectrum. 
    Here, the $\Einfty$-structure on $\MSL$ and $\MSO$ come respectively from their descriptions as a motivic 
    or topological Thom spectrum (see \cite[Def.\ 16.1 and Ex.\ 16.22]{BH-norms} and \cite[Thm.\ 1.6]{ABG}, \cite{rRThom-arXiv} respectively).
\end{thm}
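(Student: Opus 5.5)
The plan is to compare the two Thom spectrum constructions under real realization. $\MSL$ is the motivic Thom spectrum of the composite
$$BSL \to K^\circ \to \mathsf{Pic}(\SH(-)),$$
a map of motivic $\Einfty$-spaces (more precisely, of presheaves of $\Einfty$-groupoids, as in \cite[\S 16]{BH-norms}). $\MSO$ is the topological Thom spectrum of
$$BSO \to BO \to \mathsf{Pic}(\Sp),$$
the $J$-homomorphism, as in \cite[Thm.\ 1.6]{ABG}. The strategy is to show that real realization intertwines the two Thom functors, and then to compute the real realization of the input map.

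\emph{Step one: a natural comparison of Thom functors.} The motivic Thom spectrum of $f\colon X\to \mathsf{Pic}(\SH)$ is computed as a colimit over $X$ of the associated spheres. Since $\rR$ is symmetric monoidal and colimit preserving (\Cref{Prop:rR}), it induces a map
$$\rR^{\mathsf{Pic}}\colon \rR(\mathsf{Pic}\,\SH) \to \mathsf{Pic}(\Sp).$$
I would then prove a natural equivalence
$$\rR(\mathsf{Th}(f)) \simeq \mathsf{Th}^{\mathsf{top}}\bigl(\rR X \to \rR\,\mathsf{Pic} \to \mathsf{Pic}(\Sp)\bigr)$$
that is symmetric monoidal in $f$. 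The cleanest route is to use that both Thom functors are the unique colimit-preserving symmetric monoidal extensions from representable or sphere data, or to phrase the statement via parametrized spectra, with $\rR$ commuting with the relevant left Kan extensions. Being symmetric monoidal, this equivalence upgrades to $\Einfty$-maps $X\to\mathsf{Pic}$.

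\emph{Step two: identify the realized input.} By the same argument as in \Cref{Prop:rRKGL}, with $GL_n(\R)\simeq O_n$, we get $\rR(BGL)\simeq BO$. For the determinant-one subgroup, $SL_n(\R)$ deformation retracts onto $SO_n$, so $\rR(BSL)\simeq BSO$ as $\Einfty$-spaces, using the direct sum structure. It then remains to check that the realized motivic $J$-homomorphism is the classical one. On the sphere of a vector bundle $V$, real realization gives the one-point compactification of the real points $V(\R)$, so on generators the two maps agree. One then extends this agreement to an $\Einfty$-identification using the group completion and universality of $K$-theory, i.e. the fact that $ko^{\mathsf{top}}$-type $\Einfty$-maps out of $\Z\times BO$ are determined by $\mathsf{Vect}_\R$.

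\emph{Main obstacle.} I expect the hardest step to be the $\Einfty$-coherent compatibility of the Thom functors with $\rR$. Real realization is only defined after Nisnevich and $\Affl$-localization, and $\mathsf{Pic}(\SH(-))$ is a sheaf of large categories rather than a motivic space. I would first try to work at the level of presheaves on $\Sm_\R$: define the Thom spectrum there by left Kan extension, and check that the localizations are inverted compatibly with the comparison. Failing that, I would use \Cref{Prop:rRinvertingrho} to identify $\rR$ with $\rho$-localization followed by $\SH(\mathsf{Shv}(RS))\simeq\Sp$, where Thom spectra correspond more transparently.
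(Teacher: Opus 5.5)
Your overall strategy (compare the motivic and topological Thom spectrum functors under $\rR$, then identify the realized input map) matches what the paper relies on. The paper does not reprove the statement; it cites \cite{rRThom-arXiv}, where such a comparison is carried out. As written, though, your proposal has two genuine gaps, and they sit exactly where the content of that comparison lies.

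\textbf{Step one.} The map $\rR(\mathsf{Pic}\,\SH(-))\to\mathsf{Pic}(\Sp)$ is not induced by the absolute functor $\rR\colon\SH(\R)\to\Sp$ being symmetric monoidal and colimit-preserving.
\begin{itemize}
\item The presheaf $\mathsf{Pic}\,\SH(-)$ takes the value $\mathsf{Pic}(\SH(X))$ at every $X\in\Sm_\R$. By the adjunction between $\rR$ and $Y\mapsto\map((-)(\R),Y)$, a map out of its realization is a family $\mathsf{Pic}(\SH(X))\to\map(X(\R),\mathsf{Pic}(\Sp))$ natural in $X$. So you need relative realizations $\rR_X\colon\SH(X)\to\mathsf{Fun}(X(\R),\Sp)$, symmetric monoidal and compatible with pullback.
\item The same ingredient is what computes Thom spectra. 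The motivic Thom spectrum of \cite{BH-norms} is not a colimit over $X$ of spheres in $\SH(\R)$. It is the colimit-preserving extension to presheaves of $(U,E\in\SH(U))\mapsto p_{U\sharp}E$. You therefore need $\rR(p_{U\sharp}E)\simeq\colim_{U(\R)}\rR_U(E)$ for smooth $p_U\colon U\to\Spec(\R)$, coherently in $U$ and symmetric monoidally.
\end{itemize}
For a single Thom space, $\rR(V/(V-0))\simeq V(\R)/(V(\R)-0)$ is clear; the coherent version is the actual theorem, and you only name it as the main obstacle. Going through \Cref{Prop:rRinvertingrho} is a possible route. You would still have to compare sheaves on the real spectrum of $X$ with parametrized spectra over $X(\R)$, at least on the invertible objects in play.

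\textbf{Step two.} The $\Einfty$-identification of the realized $J$-homomorphism fails as argued.
\begin{itemize}
\item Group completion does reduce $\Einfty$-maps $\Z\times BO\to\mathsf{Pic}(\Sp)$ to symmetric monoidal functors $\coprod_n BO(n)\to\mathsf{Pic}(\Sp)$.
\item But such a functor is not determined by its values on objects. $\coprod_n BO(n)$ is not free. The maps $BO(n)\to B\mathrm{GL}_1(\Sph)$ and the symmetric monoidal coherences carry information that objectwise agreement, or even agreement as maps of spaces, does not see.
\end{itemize}
The comparison has to come from naturality instead. The relative realizations should carry the motivic Thom functor $V\mapsto\Sigma^\infty_X V/(V-0)$ on vector bundles over $X$ to fiberwise one-point compactification of $V(\R)\to X(\R)$, naturally in $X$ and compatibly with $\oplus$. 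You then have to recognize the topological $J$ defining the $\Einfty$-structure on $\MSO$ as the classifying map of this same symmetric monoidal transformation. Your identification $\rR(BSL)\simeq BSO$ as $\Einfty$-spaces is fine, since $\rR$ inverts motivic equivalences and commutes with group completion.
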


\begin{Prop}\label{Prop:rRko(2)} There is an equivalence of $\E{1}$-rings $\HZ_{(2)}[t^4] \lsimeq{} (\rR\ko)_{(2)}$
sending $t^4\in\pi_4(\HZ_{(2)}[t^4])$ to (the image in the localization of) the generator $x \in \pi_4(\rR\ko)$.
\end{Prop}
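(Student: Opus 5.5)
The plan is to build an $\E{2}$-map $\HZ_{(2)} \to (\rR\ko)_{(2)}$, apply \Cref{Prop:freeHA}$(iii)$ to get an $\E{1}$-$\HZ_{(2)}$-algebra map $\HZ_{(2)}[t^4] \to (\rR\ko)_{(2)}$ sending $t^4$ to $x$, and then check it is an equivalence on homotopy rings.

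\textbf{The $\E{2}$-map.} I would construct it as the composite
$$\HZ_{(2)} \longrightarrow \MSO_{(2)} \simeq (\rR\MSL)_{(2)} \longrightarrow (\rR\ko)_{(2)}.$$
The first map is the $\E{2}$-map of \cite[Cor.\ 3.7]{HLN}. The middle equivalence is the $(2)$-localization of the $\Einfty$-equivalence of \Cref{Prop:MSL}; localization at $(2)$ is a smashing localization and so preserves $\Einfty$-structures. The last map is the real realization of the $\E{2}$-map $\MSL \to \ko$ from \cite[Thm.\ 10.1]{HJNY}, followed by localization, using \Cref{Prop:rRcommuteswithloc}. Since $\rR$ is symmetric monoidal (\Cref{Prop:rR}), it sends $\E{2}$-maps to $\E{2}$-maps.

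Here I must use that the $\Einfty$-structure on $\KO$ in \cite{HJNY} agrees with ours (\Cref{Rmk:Einfty-stc-KO}). I must also check that the map of \cite{HJNY} lands in the very effective cover $\ko$ with its induced $\Einfty$-structure. If their map only lands in $\KO$, I would factor it through $\ko=\tilde f_0\KO$ multiplicatively. This works because $\MSL$ is very effective and $\tilde f_0$ is lax symmetric monoidal and right adjoint to the symmetric monoidal inclusion. So $\E{2}$-maps from a very effective $\E{2}$-ring into $\KO$ correspond to $\E{2}$-maps into $\ko$.

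\textbf{Equivalence on homotopy.} By \Cref{Prop:freeHA}$(iii)$, with $\mathsf{A}=\Z_{(2)}$ and $E=(\rR\ko)_{(2)}$, the class $x\in\pi_4$ yields an $\E{1}$-$\HZ_{(2)}$-algebra map $\varphi:\HZ_{(2)}[t^4]\to(\rR\ko)_{(2)}$ with $t^4\mapsto x$. On homotopy, $\varphi$ is a ring homomorphism $\Z_{(2)}[t^4]\to\Z_{(2)}[x]$ by \Cref{Prop:freeHA}$(ii)$ and \Cref{Prop:homgprRko}. It sends the polynomial generator to the polynomial generator, so it is an isomorphism, and $\varphi$ is an equivalence of $\E{1}$-rings.

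\textbf{Main obstacle.} The delicate step is the $\E{2}$-map $\HZ_{(2)}\to(\rR\ko)_{(2)}$, in particular matching multiplicative structures across the three sources \cite{HLN}, \cite{HJNY} and \cite{rRThom-arXiv}, and the factorization through $\ko$. The homotopy computation itself is immediate from earlier results; note that any $\E{2}$-map $\HZ_{(2)}\to E$ suffices there, and no compatibility of the map with $x$ is needed.
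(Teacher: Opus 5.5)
Your proposal is correct and follows the paper's proof essentially step for step. You compose the $\E{2}$-map $\HZ_{(2)}\to\MSO_{(2)}$ of \cite[Cor.\ 3.7]{HLN} with the real realization of the $\Einfty$-map $\MSL\to\ko$ of \cite[Thm.\ 10.1]{HJNY} via $\rR\MSL\simeq\MSO$, and then apply \Cref{Prop:freeHA}$(ii)$--$(iii)$ to the class $x$. The structure-matching you flag is settled in the paper by \Cref{Rmk:Einfty-stc-KO} for $\KO$, and by observing that \cite{HJNY}, through \cite{EHKSY}, uses the same Thom-spectrum $\Einfty$-structure on $\MSL$ as \Cref{Prop:MSL}; your fallback factorization through $\tilde f_0$ is valid but unnecessary, since the map of \cite{HJNY} already lands in $\ko$.
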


\begin{proof}
    By \cite[Thm.\ 10.1]{HJNY}, there is an $\Einfty$-ring morphism $\mathsf{MSL} \to \ko$. Note that the reference uses the $\Einfty$-structure on $\MSL$ coming from the motivic Thom spectrum functor of \cite{BH-norms}, i.e., the same as in \Cref{Prop:MSL}. Indeed, it relies on \cite{EHKSY}, which in turn uses this $\Einfty$-structure, see \cite[\S 3.3, p.\ 24]{EHKSY}. As for the $\Einfty$-structure on $\KO$, it agrees with the one we chose to consider by \Cref{Rmk:Einfty-stc-KO}. 

    The real realization of this morphism is an $\Einfty$-map $\MSO \simeq \rR(\MSL) \to \rR\ko$ (\Cref{Prop:MSL}). Now, by \cite[Cor.\ 3.7]{HLN}, $\MSO$ admits after localization at $(2)$ a map of $\E{2}$-rings $\HZ_{(2)} \to \mathsf{MSO}_{(2)}$ (where the ring structure on $\mathsf{MSO}$ also comes from its description as a (topological) Thom spectrum, as in \Cref{Prop:MSL}). Therefore, by composition, we obtain an $\E{2}$-map $\HZ_{(2)} \to (\rR\ko)_{(2)}$. By \Cref{Prop:freeHA}$(iii)$, to obtain a map of $\E{1}$-rings $\HZ_{(2)}[t] \to (\rR\ko)_{(2)}$, it then suffices to specify an element of $\pi_4((\rR\ko)_{(2)})$. We choose the image in the localization of $x\in \pi_4(\rR\ko)$. Using that $\pi_\ast((\rR\ko)_{(2)}) \cong \pi_\ast(\rR\ko)\otimes_\Z \Z_{(2)} \cong \Z_{(2)}[x]$ and \Cref{Prop:freeHA}$(ii)$, we deduce that this map induces isomorphisms on the homotopy groups (note that $\Z_{(2)}$ is a module over itself in a unique way), whence the desired equivalence $\HZ_{(2)}[t^4]\lsimeq{} (\rR\ko)_{(2)}$.  
\end{proof}

\begin{Rmk}\label{Rmk:kinvariants}
    Localization at $(2)$ is crucial here, because $\mathsf{MSO}$ is not an $\HZ$-module, since it has non-trivial $k$-invariants. The $(n-1)$-th $k$-invariant of $E\in\Sp$ is the horizontal composition in the diagram
% https://q.uiver.app/#q=WzAsNyxbMSwwLCJFX3tcXGxlcSBuKzJ9Il0sWzEsMSwiRV97XFxsZXEgbisxfSJdLFsxLDIsIkVfe1xcbGVxIG59Il0sWzIsMiwiXFxTaWdtYV57bisyfVxcbWF0aHNme0h9KFxccGlfe24rMX1FKSIsWzExNSw3OCwzNiwxXV0sWzAsMSwiXFxTaWdtYV57bisxfVxcbWF0aHNme0h9KFxccGlfe24rMX1FKSIsWzExNSw3OCwzNiwxXV0sWzAsMCwiXFxTaWdtYV57bisyfVxcbWF0aHNme0h9KFxccGlfe24rMn1FKSIsWzI0MCw2MCw2MCwxXV0sWzIsMSwiXFxTaWdtYV57biszfVxcbWF0aHNme0h9KFxccGlfe24rMn1FKSIsWzI0MCw2MCw2MCwxXV0sWzEsMiwiIiwwLHsiY29sb3VyIjpbMTE1LDc4LDM2XX1dLFswLDEsIiIsMCx7ImNvbG91ciI6WzI0MCw2MCw2MF19XSxbMiwzLCIiLDAseyJjb2xvdXIiOlsxMTUsNzgsMzZdfV0sWzQsMSwiIiwwLHsiY29sb3VyIjpbMTE1LDc4LDM2XX1dLFs1LDAsIiIsMCx7ImNvbG91ciI6WzI0MCw2MCw2MF19XSxbMSw2LCIiLDAseyJjb2xvdXIiOlsyNDAsNjAsNjBdfV1d
\[\begin{tikzcd}[row sep = 0.85em]
	{\Sigma^{n+2}\mathsf{H}(\pi_{n+2}E)} & {E_{\leq n+2}} \\
	{\Sigma^{n+1}\mathsf{H}(\pi_{n+1}E)} & {E_{\leq n+1}} & {\Sigma^{n+3}\mathsf{H}(\pi_{n+2}E)} \\
	& {E_{\leq n}} & {\Sigma^{n+2}\mathsf{H}(\pi_{n+1}E),}
	\arrow[BrickRed, dashed, from=1-1, to=1-2]
	\arrow[BrickRed, dashed, from=1-2, to=2-2]
	\arrow[MyGreen, from=2-1, to=2-2]
	\arrow[BrickRed, dashed, from=2-2, to=2-3]
	\arrow[MyGreen,from=2-2, to=3-2]
	\arrow[MyGreen,from=3-2, to=3-3]
\end{tikzcd}\]
    where the right-down-right zigzags (one showed in dashed arrows, the other one in solid arrows) are distinguished triangles, or the maps defined in the same way using the Whitehead tower instead of the Postnikov tower. In particular, the strategy above cannot be used to identify $\rR\ko$ as an $\E{1}$-ring with $\HZ[t^4]$ directly. Actually, we can even prove that this is wrong. Indeed, this would imply that $\rR\ko$ and thus $(\rR\ko)[1/2] \simeq \kotop[1/2]$ are $\HZ$-modules. Since $\HZ$ is $K(1)$-acyclic (it is truncated), we deduce that $\kotop[1/2]\simeq \KOtop[1/2]_{\geq 0}$ is $K(1)$-acyclic (for $p=3$, say), and thus so is $\KOtop[1/2]$. The topological Wood cofiber sequence $\Sigma \KOtop \to \KOtop \to \KU$ (\cite{Wood}, see the version of \cite{Mathew-proof-of-Wood} cited in the bibliography for a modern proof) then induces a cofiber sequence
    $ \Sigma \KOtop[1/2] \Smash K(1) \to \KOtop[1/2] \Smash K(1) \to \KU[1/2] \Smash K(1)$
    whose first two terms vanish. Whence $0 \simeq \KU[1/2] \Smash K(1) \simeq (\KU \Smash \KU)/3$. The latter admits $\KU/3 \simeq K(1)$ as a retract, so $K(1) \simeq 0$, which is a contradiction.
\end{Rmk}
\vspace{0.2cm}

\subsubsection{Identifying the rationalization}\label{SubSubsect:rRkoQ}\hfill\vspace{0.2cm}

Using our knowledge of the localization at $(2)$ of $\rR\ko$, we can identify its rationalization.

\begin{Prop}\label{Prop:rRkoQ} There is an equivalence of $\E{1}$-rings $\HQ[t^4] \lsimeq{} (\rR\ko)_\Q$
sending $t^4\in\pi_4(\HQ[t^4])$ to (the image in the localization of) the generator $x \in \pi_4(\rR\ko)$.
\end{Prop}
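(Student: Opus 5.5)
The plan is to rationalize the equivalence of \Cref{Prop:rRko(2)}. Rationalization is a further localization of localization at $(2)$: $(\rR\ko)_\Q \simeq ((\rR\ko)_{(2)})_\Q$, and both are computed as colimits along multiplication by integers (\Cref{Prop:rRcommuteswithloc}). So from the $\E{1}$-equivalence $\HZ_{(2)}[t^4] \lsimeq{} (\rR\ko)_{(2)}$ we get an $\E{1}$-equivalence $(\HZ_{(2)}[t^4])_\Q \simeq (\rR\ko)_\Q$. Here we use that rationalization is a smashing localization, $E \mapsto E\Smash \HQ$ (or $E\Smash\Sph_\Q$). It is therefore symmetric monoidal on local objects and preserves $\E{1}$-algebras and $\E{1}$-maps. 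Under this equivalence, $t^4$ is sent to the image of $x$, since the map from \Cref{Prop:rRko(2)} sends $t^4$ to $x$ and localization is compatible with this.

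It then remains to identify $(\HZ_{(2)}[t^4])_\Q$ with $\HQ[t^4]$ as $\E{1}$-rings, compatibly with $t^4$. I would do this by exhibiting a map rather than computing abstractly. The composite $\E{2}$-map $\HZ_{(2)} \to \HQ \to (\rR\ko)_\Q$ is not the natural route, since we want a map out of $\HQ$. Instead, there is an $\Einfty$-map $\HZ_{(2)} \to \HQ$, and $\HQ \simeq \Sph_\Q$ is the rationalization of the sphere. Since $(\rR\ko)_\Q$ is an $\Einfty$-ring that is rational, the unit $\Sph \to (\rR\ko)_\Q$ extends uniquely to an $\Einfty$-map $\HQ = \Sph_\Q \to (\rR\ko)_\Q$, because rationalization is a localization of $\CAlg(\Sp)$. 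In particular we obtain an $\E{2}$-map $\HQ \to (\rR\ko)_\Q$.

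By \Cref{Prop:freeHA}$(iii)$, choosing the image of $x$ in $\pi_4((\rR\ko)_\Q)$ yields an $\E{1}$-$\HQ$-algebra map $\HQ[t^4] \to (\rR\ko)_\Q$ sending $t^4 \mapsto x$. On homotopy this is a ring homomorphism $\Q[t^4] \to \pi_\ast(\rR\ko)\otimes\Q \cong \Q[x]$, using \Cref{Prop:homgprRko} and the fact that rational homotopy is homotopy tensored with $\Q$. It sends the polynomial generator to the polynomial generator, hence is an isomorphism by \Cref{Prop:freeHA}$(ii)$, and the map is an equivalence of $\E{1}$-rings.

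The main obstacle is the existence of a suitable $\E{2}$-map $\HQ \to (\rR\ko)_\Q$ that makes $(\rR\ko)_\Q$ an $\E{1}$-$\HQ$-algebra. It must be justified that $\Sph_\Q \simeq \HQ$ and that $\Sph_\Q$ is initial among rational $\Einfty$-rings (equivalently, that $\Einfty$-algebras in rational spectra are automatically $\HQ$-algebras). This relies on Serre's finiteness of stable stems and on the smashing nature of rationalization. For later use in the fracture square, one should also record that this map is compatible with the rationalization of the map $\HZ_{(2)} \to (\rR\ko)_{(2)}$, since both are $\E{2}$-maps out of rational rings and maps out of $\HQ$ are unique up to contractible choice.
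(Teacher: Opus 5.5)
Your argument is correct, but the part that does the work takes a different route from the paper. The paper rationalizes the $(2)$-local equivalence of \Cref{Prop:rRko(2)}. It then compares $\HZ_{(2)}[t^4]$ with $\HQ[t^4]$ via the map of $\E{1}$-$\HZ_{(2)}$-algebras $\HZ_{(2)}[t^4]\to\HQ[t^4]$ determined by $\HZ_{(2)}\to\HQ$ and $t^4$, which becomes an equivalence after inverting $2$. You instead build $\HQ[t^4]\to(\rR\ko)_\Q$ directly, from the canonical $\Einfty$-map $\HQ\simeq\Sph_\Q\to(\rR\ko)_\Q$ and \Cref{Prop:freeHA}$(iii)$. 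You then check it on homotopy using only \Cref{Prop:homgprRko}.

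Consequently your first paragraph is never actually used, and your route bypasses \Cref{Prop:rRko(2)} altogether. It therefore avoids the inputs $\MSL\to\ko$, $\rR\MSL\simeq\MSO$, and the $\E{2}$-map $\HZ_{(2)}\to\MSO_{(2)}$. It needs only the standard facts that $\Sph_\Q\simeq\HQ$ and that $\Sph_\Q$ is initial among rational $\E{n}$-rings, because rationalization is a smashing, symmetric monoidal localization.

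What the paper's route buys is that its rational identification is, by construction, the rationalization of the $(2)$-local equivalence. Your closing remark correctly recovers this compatibility for your map. The space of $\E{2}$-maps $\Sph_\Q\to R$ is contractible for rational $R$. Base change along $\HZ_{(2)}\to\HQ$ preserves free $\E{1}$-algebras, carrying $\HZ_{(2)}[t^4]$ to $\HQ[t^4]$. So both maps are $\E{1}$-$\HQ$-algebra maps out of $\HQ[t^4]$ sending $t^4\mapsto x$, for the same $\HQ$-algebra structure on $(\rR\ko)_\Q$.

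A small slip: \Cref{Prop:rRcommuteswithloc} concerns $\rR$ commuting with localizations. It plays no role in the identification $(\rR\ko)_\Q\simeq((\rR\ko)_{(2)})_\Q$.
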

\begin{proof} By \Cref{Prop:rRko(2)}, we have $(\rR\ko)_\Q \simeq (\rR\ko)_{(2)}[1/2] \cong \HZ_{(2)}[t^4][1/2]$. We claim that the latter is equivalent to the free $\E{1}$-$\HQ$-algebra $\HQ[t^4]$. Indeed, the unique ring map $\Z_{(2)} \to \Q$ (which induces an $\Einfty$-map $\HZ_{(2)} \to \HQ$ and thus a map of $\HZ_{(2)}$-modules $\HZ_{(2)} \to \HQ[t^4]$) and the element $t^4\in\pi_4(\HQ[t^4])$ induce a map of $\E{1}$-$\HZ_{(2)}$-algebras $\HZ_{(2)}[t^4] \to \HQ[t^4]$. Comparing the homotopy rings (\Cref{Prop:freeHA}$(ii)$), we see that after localization away from 2, this map induces isomorphisms in homotopy, as desired.
\end{proof}
\vspace{0.2cm}

\subsubsection{Main result: identifying the maps and assembling the fracture square}\label{SubSubsect:mapsinsquare}\hfill\vspace{0.2cm}

As the title indicates, we now combine the results of the previous subsection to identify the advertised explicit 2-local fracture square for $\rR\ko$. We first define the Chern character map; it will appear in our fracture square.

\begin{Def}\label{Def:Cherncharacter} Let $\HQ[t^4]$ and $\HQ[u^2]$ be the free $\E{1}$-$\HQ$-algebras on one generator in degrees 4 and 2 respectively. We call \emph{Chern character} any of the following:
\begin{itemize}
    \item the map $\mathsf{ku} := \mathsf{KU}_{\geq 0} \to \HQ[u^2]$ given by the (connective cover of the) classical Chern character as defined in \cite[\S 9.1]{BH-charclasses}. In homotopy, it induces the inclusion $\Z[u^2] \to \Q[u^2]$ where $u^2 \in \pi_2(\Z\times BU)\cong \Z$ is a generator.
    \item the map $\kotop \to \HQ[t^4]$ obtained by pre-composition of the previous map with the connective cover the complexification map $\KOtop \to \mathsf{KU}$ (induced by complexification of vector bundles) (their composition factors through the inclusion $\HQ[t^4] \xhookrightarrow{} \HQ[u^2] $ given by $t^4\mapsto (u^2)^2$). In homotopy, it induces the map $\Z[\alpha_1,\beta_4,\lambda_8]/ (\alpha_1^3, 2\alpha_1,\alpha_1\beta_4, \beta_4^2 - 4\lambda_8) \to \Z[u^2]$ sending $\alpha_1$ to 0, $\beta_4$ to $2(u^2)^2$, and thus $\lambda_8$ to $(u^2)^4$ (see for example \cite[\S 5.3]{Rognes}).
    \item the maps induced by the two previous ones after localization away from 2 of the source.
\end{itemize}
\end{Def}

\begin{Rmk}\label{Rmk:chQ} The Chern character $\mathsf{ch}:\kotop \to \HQ[t^4]$ factors through the rationalization, and the map $\mathsf{ch}_\Q: \kotop_\Q \to \HQ[t^4]$ obtained in this way is an equivalence. Indeed, $\pi_\ast(\kotop_\Q) \cong \pi_\ast(\kotop)\otimes_\Z \Q \cong \Q[\beta_4]$ and $\pi_\ast(\HQ[t^4]) \cong \Q[t^4]$ by \Cref{Prop:freeHA}$(ii)$, with $\mathsf{ch}_\Q$ sending $\beta_4$ to $2t^4$: it induces isomorphisms in homotopy. 
\end{Rmk}

\begin{thm}\label{Prop:rRko} There is a Cartesian square of $\mathcal{E}_1$-rings (and thus also of spectra)
    \[\begin{tikzcd}[row sep = 2em, column sep = 2em]
        {\rR\ko} & {\ko^\mathsf{top}[1/2]} \\
        {\HZ_{(2)}[t^4]} & {\mathsf{H}\Q[t^4].}
        \arrow["x\mapsto \beta_4/2", from=1-1, to=1-2]
        \arrow["x\mapsto t^4"', from=1-1, to=2-1]
        \arrow["\lrcorner"{anchor=center, pos=0.125}, draw=none, from=1-1, to=2-2]
        \arrow["{\mathsf{ch}}"', "\beta_4/2\mapsto t^4", from=1-2, to=2-2]
        \arrow["{t^4\mapsto t^4}"', from=2-1, to=2-2]
    \end{tikzcd}\]
    where $\HZ_{(2)}[t^4]$ is the free $\E{1}$-$\HZ_{(2)}$-algebra on a generator of degree 4 (\Cref{Def:freeE1HA}), and similarly for $\mathsf{H}\Q[t^4]$; and the map $\mathsf{ch}$ is the \emph{Chern character} from \Cref{Def:Cherncharacter}. The assignments labeling the arrows describe the maps induced on the fourth homotopy groups.
\end{thm}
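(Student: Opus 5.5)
The plan is to start from the canonical arithmetic fracture square for the $\Einfty$-ring $\rR\ko$, namely the Cartesian square
\[\begin{tikzcd}
	{\rR\ko} & {(\rR\ko)[1/2]} \\
	{(\rR\ko)_{(2)}} & {(\rR\ko)_{\Q}}
	\arrow[from=1-1, to=1-2]
	\arrow[from=1-1, to=2-1]
	\arrow[from=1-2, to=2-2]
	\arrow[from=2-1, to=2-2]
\end{tikzcd}\]
whose maps are the localization maps. These are smashing localizations, so the maps are maps of $\Einfty$-rings, and in particular of $\E{1}$-rings. Since the forgetful functor $\Alg_{\E{1}}(\Sp)\to\Sp$ creates limits, this square is Cartesian in $\E{1}$-rings as soon as it is Cartesian in spectra. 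That holds because its total fiber is $2$-local and has rationally trivial, hence zero, homotopy.

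Next I replace each corner by the model in the statement, using the equivalences already established. The top right corner is identified with $\kotop[1/2]$ by \Cref{Prop:rRko1/2}. The bottom left corner is identified with $\HZ_{(2)}[t^4]$ by \Cref{Prop:rRko(2)}, and the bottom right corner with $\HQ[t^4]$ by \Cref{Prop:rRkoQ}. The two maps out of $\rR\ko$ then automatically take the form stated. The left vertical map sends $x\mapsto t^4$ by construction in \Cref{Prop:rRko(2)}. The top map sends $x$ to a generator of $\pi_4(\kotop[1/2])\cong\Z[1/2]$ that is congruent to a unit multiple of $\beta_4$. To pin this multiple down to $\beta_4/2$, I would use integrality: both $\rR\ko$ and $\kotop$ have $\Z$ in degree $4$ and everything is compatible with rationalization. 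The cleanest route is to compare with the Chern character, as I describe below.

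For the bottom map, the equivalence $\HZ_{(2)}[t^4][1/2]\simeq\HQ[t^4]$ in the proof of \Cref{Prop:rRkoQ} was built as the $\E{1}$-$\HZ_{(2)}$-algebra map determined by $t^4\mapsto t^4$. Since both identifications send $t^4$ to the image of $x$, the bottom map is the one in the statement, by freeness (\Cref{Prop:freeHA}$(iii)$).

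The main obstacle is the right vertical map. I must show that, under the chosen identifications, the composite
\[\kotop[1/2]\simeq(\rR\ko)[1/2]\to(\rR\ko)_\Q\simeq\HQ[t^4]\]
agrees with $\mathsf{ch}$ as an $\E{1}$-map. Its source is $\kotop[1/2]$, which is not free, so freeness cannot be invoked directly. My approach is to observe that any $\E{1}$-map $\kotop[1/2]\to\HQ[t^4]$ factors uniquely through the rationalization $\kotop_\Q$, because the target is rational and rationalization is a smashing localization. Then $\kotop_\Q\simeq\HQ[t^4]$ via $\mathsf{ch}_\Q$ (\Cref{Rmk:chQ}), so the composite becomes an $\E{1}$-endomorphism of $\HQ[t^4]$. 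Such an endomorphism is an $\E{1}$-$\HQ$-algebra map, since $\HQ$ is idempotent: $\Alg_{\E{1}}(\Sp)$ restricted to rational objects agrees with $\Alg_{\E{1}}(\Mod_{\HQ})$. By freeness, it is therefore determined by the image of $t^4$, which is a rational multiple $\lambda t^4$.

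It thus suffices to compute $\lambda$ on $\pi_4$, where it reduces to knowing where $x$ goes in $\pi_4(\kotop[1/2])$. I would pin this down by fixing the identification in \Cref{Prop:rRko1/2} so that $x\mapsto\beta_4/2$. One way is to rescale the generator $x$: it is only defined up to sign, and the identification $\rR\ko[1/2]\simeq\kotop[1/2]$ can be composed with an Adams-type automorphism if needed. The more honest way is to compute the image of $x$ through $\rR\ko\to\rR\KO\simeq\KOtop[1/2]$ using the Wood sequence and \Cref{Prop:homgprRko}, checking that it is $\pm\beta_4/2$ up to the sign convention. Then $\mathsf{ch}(\beta_4/2)=t^4$ forces $\lambda=1$, and the square of the statement is equivalent to the fracture square, hence Cartesian.
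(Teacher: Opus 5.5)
Your outline matches the paper's: take the arithmetic fracture square of $\rR\ko$, substitute the models from \Cref{Prop:rRko1/2,Prop:rRko(2),Prop:rRkoQ}, and use freeness and rationality to control the maps. Everything then reduces to one number, the image $g(x)$ under the equivalence $g\colon(\rR\ko)[1/2]\simeq\kotop[1/2]$ of \Cref{Prop:rRko1/2}. That number carries the whole content of the theorem, and you do not compute it.

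A priori $g(x)=\pm 2^k\beta_4$ for some $k\in\Z$, and nothing you invoke determines $k$. Integrality gives no constraint, because $g$ exists only after inverting $2$. The generator $x$ is fixed only up to sign. Adams operations act on $\pi_4$ by squares ($\psi^2$ by $4$), so composing $g$ with them changes $k$ only by even integers. So $\beta_4/2$ is reachable only if $k$ is odd, and you must decide this parity.

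Your ``more honest'' route cannot decide it. The Wood sequence and \Cref{Prop:homgprRko} tell you only that $\pi_\ast(\rR\ko)\cong\Z[x]$ and that $x$ maps to the generator of $\pi_4(\rR\kgl)\cong\Z/2$. They say nothing about how $x$ compares with $\beta_4$ under the specific equivalence $g$, which is built from $\tau_\R\colon\kotop\to\mathsf{L}(\R)_{\geq 0}$ and $\Gamma(\R,\KW)\simeq\mathsf{L}(\R)$. Your expected outcome is also wrong: with the identification of \Cref{Prop:rRko1/2} one finds $g(x)=\pm\beta_4/8$, not $\pm\beta_4/2$.

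The missing ingredient is the paper's computation of $g(x)$, which has four inputs:
\begin{itemize}
\item $\tau_\R$ sends $\beta_4$ to $8b$, where $b$ generates $\pi_4\mathsf{L}(\R)$ \cite[Ex.\ 9.2]{LNS}.
\item By periodicity, $b=\pm\eta^{-4}\beta_\KO$ in $\pi_4\Gamma(\R,\KW)$.
\item Since $\rR(\eta)=-2$, this gives $\rR(b)=\pm\rR(\beta_\KO)/16$.
\item The decisive and hardest input is \Cref{Prop:rRbetako}, $\rR(\beta_\ko)=\pm 16x$. By \Cref{Prop:aiscardinality} the coefficient is $\pm|\pi_4\rR(\ko/\tilde{f}_4\ko)|$. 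This order is shown to be $16$ in \Cref{Prop:pi4C}, via the multiplicative spectral sequence of the very effective slice filtration, using the auxiliary computation of $\pi_\ast\rR(\tilde{s}_0\ko)$ in \Cref{Prop:pistilde0ko}.
\end{itemize}
Together these give $\rR(b)=\pm x$, hence $\beta_4\mapsto\pm 8x$ and $g(x)=\pm\beta_4/8$. So $k=-3$ is odd. Only now does composing $g$ with $\psi^2$ and adjusting signs give $x\mapsto\beta_4/2$, and hence your $\lambda=1$.

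A smaller point concerns why the fracture square is Cartesian. A $2$-local spectrum with rationally trivial homotopy need not vanish; $\HZmod$ is an example. What you need is that the total fiber is $2$-local and vanishes after localization at $(2)$, where both vertical maps become equivalences.
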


\begin{proof}
    As mentioned in the introduction to this section, the 2-local fracture square of $\rR\ko$ reads as the pullback square of $\E{1}$-rings
    \[\begin{tikzcd}[row sep = 1.5em, column sep = 1.5em]
	{\rR\ko} & {(\rR\ko)[1/2]} \\
	{(\rR\ko)_{(2)}} & {(\rR\ko)_\Q.}
	\arrow[from=1-1, to=1-2]
	\arrow[from=1-1, to=2-1]
    \arrow["\lrcorner"{anchor=center, pos=0.125}, draw=none, from=1-1, to=2-2]
	\arrow[from=1-2, to=2-2]
	\arrow[from=2-1, to=2-2]
\end{tikzcd}\]
This holds in general for $\rR\ko$ replaced with any spectrum or $\E{n}$-ring for $1\leq n\leq \infty$, and 2 replaced by any prime $p$. 

Using the descriptions of the $\E{1}$-rings $(\rR\ko)[1/2]$, $(\rR\ko)_{(2)}$, and $(\rR\ko)_\Q$ in \Cref{Prop:rRko1/2,Prop:rRko(2),Prop:rRkoQ} respectively, we obtain a commutative diagram
% https://q.uiver.app/#q=WzAsOCxbMCwwLCJcXHJSXFxrbyJdLFsyLDAsIlxca29eXFxtYXRoc2Z7dG9wfVsxLzJdIl0sWzAsMiwiXFxIWl97KDIpfVt0XjRdIl0sWzAsMSwiKFxcclJcXGtvKV97KDIpfSJdLFsxLDAsIihcXHJSXFxrbylbMS8yXSJdLFsxLDEsIihcXHJSXFxrbylfXFxRIl0sWzIsMSwiKFxca29eXFxtYXRoc2Z7dG9wfVsxLzJdKV9cXFEiXSxbMywxLCJcXEhRW3TigbRdIl0sWzAsNF0sWzAsM10sWzMsNV0sWzQsNV0sWzIsMywiZiJdLFs0LDEsImciLDJdLFs1LDYsImdfXFxRIiwyXSxbMSw2XSxbNiw3LCJcXG1hdGhzZntjaH1fXFxRIiwyXSxbMSw3LCJcXG1hdGhzZntjaH0iLDFdLFsyLDcsImsiLDEseyJjdXJ2ZSI6Miwic3R5bGUiOnsiYm9keSI6eyJuYW1lIjoiZGFzaGVkIn19fV1d
\[\begin{tikzcd}
	{\rR\ko} & {(\rR\ko)[1/2]} & {\ko^\mathsf{top}[1/2]} \\
	{(\rR\ko)_{(2)}} & {(\rR\ko)_\Q} & {(\ko^\mathsf{top}[1/2])_\Q} & {\HQ[t^4]} \\
	{\HZ_{(2)}[t^4]}
	\arrow[from=1-1, to=1-2]
	\arrow[from=1-1, to=2-1]
	\arrow["g"', "\simeq", from=1-2, to=1-3]
	\arrow[from=1-2, to=2-2]
	\arrow[from=1-3, to=2-3]
	\arrow["{\mathsf{ch}}"{description}, from=1-3, to=2-4]
	\arrow[from=2-1, to=2-2]
	\arrow["{g_\Q}"',"\simeq", from=2-2, to=2-3]
	\arrow["{\mathsf{ch}_\Q}"',"\simeq", from=2-3, to=2-4]
	\arrow["f", "\simeq"', from=3-1, to=2-1]
	\arrow["\ell"{description}, curve={height=20pt}, dashed, from=3-1, to=2-4]
\end{tikzcd}\]
where $f$ is the equivalence of \Cref{Prop:rRko(2)}, $g$ is the equivalence of \Cref{Prop:rRko1/2}, $\mathsf{ch}_\Q$ is the map from \Cref{Rmk:chQ}, and $\ell$ is chosen so that the diagram commutes. In particular, the outer quadrilateral provides a pullback square
\[\begin{tikzcd}
        {\rR\ko} & {\ko^\mathsf{top}[1/2]} \\
        {\HZ_{(2)}[t^4]} & {\mathsf{H}\Q[t^4].}
        \arrow["x\mapsto g(x)", from=1-1, to=1-2]
        \arrow["x\mapsto t^4"', from=1-1, to=2-1]
        \arrow["\lrcorner"{anchor=center, pos=0.125}, draw=none, from=1-1, to=2-2]
        \arrow["{\mathsf{ch}}", from=1-2, to=2-2]
        \arrow["{t^4\mapsto \ell(t^4)}"', from=2-1, to=2-2]
    \end{tikzcd}\]
We thus have to compute $\ell(t^4) = \mathsf{ch}(g(x))$. Looking back at the proof of \Cref{Prop:rRko1/2}, $g$ is the inverse of the map $g'$ induced on the connective covers by the top row in the diagram
% https://q.uiver.app/#q=WzAsNixbMCwwLCJcXEtPdG9wWzEvMl0iXSxbMSwwLCJcXG1hdGhzZntMfShcXFIpWzEvMl0gXFxzaW1lcSBcXEdhbW1hKFxcUixcXEtXWzEvMl0pIl0sWzIsMCwiXFxHYW1tYShcXFIsXFxLV1sxL1xccmhvLDEvMl0pIFxcc2ltZXEgXFxyUlxcS09bMS8yXSJdLFsyLDEsIlxcR2FtbWEoXFxSLFxcS1dbMS9cXHJob10pIFxcc2ltZXEgXFxyUlxcS1ciXSxbMSwxLCJcXG1hdGhzZntMfShcXFIpIFxcc2ltZXEgXFxHYW1tYShcXFIsXFxLVykiXSxbMCwxLCJcXEtPdG9wIl0sWzAsMSwiXFx0YXVfXFxSIl0sWzEsMl0sWzMsMiwiXFx3ciIsMl0sWzQsM10sWzQsMV0sWzUsNCwiXFx0YXVfXFxSIl0sWzUsMF1d
\[\begin{tikzcd}[column sep = 3em]
	{\KOtop[1/2]} & {\mathsf{L}(\R)[1/2] \simeq \Gamma(\R,\KW[1/2])} & {\Gamma(\R,\KW[1/\rho,1/2]) \simeq \rR\KO[1/2]} \\
	\KOtop & {\mathsf{L}(\R) \simeq \Gamma(\R,\KW)} & {\Gamma(\R,\KW[1/\rho]) \simeq \rR\KW}
	\arrow["{\tau_\R}", from=1-1, to=1-2]
	\arrow[from=1-2, to=1-3]
	\arrow[from=2-1, to=1-1]
	\arrow["{\tau_\R}", from=2-1, to=2-2]
	\arrow[from=2-2, to=1-2]
	\arrow[from=2-2, to=2-3]
	\arrow["\simeq"', from=2-3, to=1-3]
\end{tikzcd}\]
where all maps in the square on the right-hand side are induced by the localizations (away from 2 or $\rho$), and the map $\tau_\R$ is defined in \cite[Thm.\ A and Ex.\ 9.2]{LNS}, where it is also proven that it sends $\beta_4 \in \pi_4(\KOtop)$ to $8b\in\pi_4(\mathsf{L}(\R)) \simeq \Z$. We claim that $b=\pm\eta^{-4}\beta_\KO \in \pi_4(\Gamma(\R,\KW)) \cong \pi_4(\mathsf{L}(\R))$. Indeed, $\KW = \KO[\eta^{-1}]$ is $\beta_\KO$- and $\eta$-periodic, and thus multiplication by $\eta^{-4}\beta_\KO$ is an equivalence $\Sigma^4\KW \lsimeq{} \KW$, and induces an isomorphism 
$$\Z \cong \pi_0(\Gamma(\R,\KW)) \cong [\calS^4,\Sigma^4\KW] \xrightarrow{\ \ \cong\ \ } [\calS^4,\KW] \cong \pi_4(\Gamma(\R,\KW)) \cong \Z\{b\}.$$
So $\eta^{-4}\beta_\KO = \pm b$. Using the fact that $\rR(\eta) = -2$ (see the proof of \Cref{Prop:rRKO}), it follows that $\rR(b) = \pm\rR(\beta_\KO)/16 \in \pi_4(\Gamma(\R,\KW[1/\rho]))$. We defer to \Cref{Prop:rRbetako} the proof that $\rR(\beta_\ko) = \pm 16x$. It follows that $g'(\beta_4) = \pm\rR(8b) = \pm 8x$, and thus $g(x) = \pm\beta_4/8$. Then $\ell(x) = \mathsf{ch}(g(x)) = \pm t^4/4$ (see \Cref{Rmk:chQ}).

In order to be able to compare our fracture square with the one in \cite[p.\ 3]{HLN} (\Cref{Prop:compwithLtheory}), let us replace $g: \rR\ko[1/2] \to \kotop[1/2]$ by $g'':= \psi^2 \circ g$, where $\psi^2: \kotop[1/2] \to \kotop[1/2]$ is the second Adams operation. By \cite[Section IV, 7.13, 7.19 and 7.25]{Karoubi-Ktheory}, the latter is an $\Einfty$-map, and it induces multiplication by 4 on the fourth homotopy groups. Then, repeating the same proof, we obtain ${g''(x) = \psi^2(\pm\beta_4/8) = \pm\beta_4/2}$ and $\ell(t^4) = \pm t^4$. We can further replace $g''$ by $-g''$ if needed to obtain $g''(x) = \beta_4/2$ and $\ell(t^4) = t^4$. Thus, the lower horizontal map $\ell$ in our pullback square is the map of $\E{1}$-$\HZ_{(2)}$-algebras $\HZ_{(2)}[t^4] \to \HQ[t^4]$ sending $t^4$ to $t^4$ in the fourth homotopy groups, as desired.
\end{proof}

\begin{Prop}\label{Prop:rRbetako} In the notation of \Cref{Prop:homgprRko}, we have $\rR(\beta_\ko) = \pm 16x \in \pi_4(\rR\ko)$.
\end{Prop}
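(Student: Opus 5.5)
The plan is to compare $\rR(\beta_\ko)$ with the generator $x$ through the very effective slice filtration. The Bott element $\beta_\ko$ is a map $T^{\Smash 4} \to \ko$, so it factors through the very $4$-effective cover, $\beta_\ko: T^{\Smash 4}\to \tilde{f}_4\KO \to \tilde{f}_0\KO = \ko$. The proof then splits into three steps: identify $\pi_4(\rR\tilde{f}_4\KO)$, identify the cokernel of the induced map to $\pi_4(\rR\ko)$, and compute that cokernel.

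\textbf{Step 1: the source.} By \Cref{Prop:rRconnective}, $\rR\tilde{f}_5\KO$ is $5$-connective. The cofiber sequence $\tilde{f}_5\KO \to \tilde{f}_4\KO\to \tilde{s}_4\KO$ therefore gives
$$\pi_4(\rR\tilde{f}_4\KO)\cong \pi_4(\rR\tilde{s}_4\KO) \cong \pi_4(\Sigma^4\rR\tilde{s}_0\KO)\cong \pi_0(\rR\tilde{s}_0\KO),$$
using \Cref{Prop:veffslicesko}. From the cofiber sequence $\Sigma^{1,0}\HZmod\to\tilde{s}_0\KO\to\HZtilde$, \Cref{Prop:rRHZHZmod} and \Cref{Prop:rRHZtilde}, this group is $\pi_0(\rR\HZtilde)\cong\Z$. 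Under the slice periodicity of \cite{Bachmann-genslices}, $\beta_\ko$ maps to the unit of $\tilde{s}_4\KO\simeq \Sigma^{8,4}\tilde{s}_0\KO$. So $\rR(\beta_\ko)$ lifts to a generator of $\pi_4(\rR\tilde{f}_4\KO)\cong\Z$.

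\textbf{Step 2: reduction to a finite group.} Since $\pi_3(\rR\tilde{f}_4\KO)=0$, the long exact sequence of $\tilde{f}_4\KO\to\ko\to \ko/\tilde{f}_4\KO$ gives an exact sequence
$$\Z\xrightarrow{\ \rR(\beta_\ko)\ }\pi_4(\rR\ko)\cong\Z\{x\}\longrightarrow \pi_4(\rR(\ko/\tilde{f}_4\KO))\longrightarrow 0 .$$
The first map is injective, since after inverting $2$ the element $\beta_\ko$ realizes to a nonzero multiple of $\beta_4$ (\Cref{Prop:rRko1/2}). Hence $\rR(\beta_\ko)=\pm N x$, where $N$ is the order of $\pi_4(\rR(\ko/\tilde{f}_4\KO))$, and it remains to show $N=16$.

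\textbf{Step 3: computing $N$.} The spectrum $\ko/\tilde{f}_4\KO$ has a three-step filtration with graded pieces $\rR\tilde{s}_0\KO$, $\rR\Sigma^{2,1}\HZmod\simeq\Sigma\HZmod[t]$ and $\rR\Sigma^{4,2}\HZ\simeq\Sigma^2\HZmod[t^2]$, since $\tilde{s}_3=0$. In degree $4$ these contribute the following.
\begin{itemize}
\item $\tilde{s}_0$: an extension of $\pi_4(\rR\HZtilde)\cong\Z/2$ by $\pi_4(\Sigma\HZmod[t])\cong\Z/2$, so order $4$.
\item $\tilde{s}_1$: a group $\Z/2$.
\item $\tilde{s}_2$: a group $\Z/2$.
\end{itemize}
This gives the upper bound $N\le 16$. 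The actual order is the product of the $E_\infty$-terms in total degree $4$. So one must show that every $d^1$ and $d^2$ differential into or out of these positions, from total degrees $5$ and $3$, is zero in the relevant spots. Equivalently, one shows that the boundary maps in the long exact sequences for the filtration pieces of $\rR(\ko/\tilde{f}_4\KO)$ vanish in degrees $4\to 3$ and $5\to 4$.

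\textbf{Main obstacle.} Controlling these differentials is where the difficulty lies. I would first use the global constraints already available to pin them down. We know $\pi_3(\rR\ko)=0$, $\pi_5(\rR\ko)=0$ and $\pi_4(\rR\ko)=\Z$, and $\rR\ko/\tilde{f}_4$ is bounded in the relevant range, so the ranks of all the groups involved are forced by exactness. In particular, in degree $3$ the slices must cancel exactly. This bookkeeping should force the differentials from total degree $4$ to vanish and those from total degree $5$ to be injective onto predetermined classes. For any ambiguity that remains, I would identify the $d^1$'s as motivic operations $\HZmod\to\Sigma^{3,1}\HZmod$ or $\HZ\to\Sigma^{3,1}\HZmod$, using \cite[Lem.\ A.4]{RO} as in Step 3 of \Cref{Prop:rRbetakgl}. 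These are combinations of $\Sq^2$, $\Sq^3$ and $\rho$-multiples, and I would evaluate them on $\tau$-powers via the Cartan formula as there. As a cross-check, mapping to $\kgl$ via $c$ and using $\rR(\beta_\kgl)=0$, together with the Wood sequence, should confirm that the answer is a power of $2$ that is at least $4$, consistent with $N=16$.
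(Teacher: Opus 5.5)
Your Steps 1--2 are fine and amount to \Cref{Prop:aiscardinality}, and Step 3 sets up the same slice spectral sequence as \Cref{Prop:pi4C}. But the content of the proposition is that the differentials touching total degree $4$ vanish, and your proposal does not establish this. Two inputs are missing.

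First, you assert $|\pi_4(\rR\tilde s_0\ko)|=4$. From $\Sigma^{1,0}\HZmod\to\tilde s_0\ko\to\HZtilde$, this needs the connecting map $\pi_4(\rR\HZtilde)\cong\Z/2\to\pi_3(\rR\Sigma^{1,0}\HZmod)\cong\Z/2$ to vanish. This is the map $g_4$ of \Cref{Prop:pistilde0ko}, killed there by the Leibniz rule $d^1(t^2)=2t\,d^1(t)=0$. Without it you only get $N\le 16$.

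Second, you need $d^1\colon\pi_5(\rR\tilde s_1\ko)\to\pi_4(\rR\tilde s_2\ko)$ to vanish, and the constraints you invoke do not decide this. $\pi_4(\rR\ko)\cong\Z$ puts no restriction on the index $N$. A nonzero value of this $d^1$ (giving $N=8$, with $\pi_6(\rR\tilde s_0\ko)\to\pi_5(\rR\tilde s_1\ko)$ then zero) is just as compatible with $\pi_3=\pi_5=0$ as the true configuration. Your prediction that the differentials leaving total degree $5$ are injective also points the wrong way: for $N=16$ they must all vanish, with the degree-$5$ classes killed from degree $6$. The paper decides this by multiplicativity. The relation $d^1(t^2\cdot s')=t^2\cdot a$ makes $\pi_6(\rR\tilde s_0\ko)\to\pi_5(\rR\tilde s_1\ko)$ an isomorphism, and $d^1\circ d^1=0$ then kills the next map. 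Your Steenrod-square fallback is not carried out. Moreover, the relevant connecting maps are $\tilde s_0\ko\to\Sigma^{3,1}\HZmod$ and $\HZmod\to\Sigma^{3,1}\HZ$, and the first has a source that is not an Eilenberg--MacLane spectrum.

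Your counting idea can nevertheless be completed without any multiplicative input, provided you go two degrees further. Refine the filtration by splitting $\tilde s_0\ko$ and $\tilde s_4\ko\simeq\Sigma^{8,4}\tilde s_0\ko$ along the cofiber sequence above. Then every $E^1$-term is known from \Cref{Sect:examplesrealization}. Let $D_p$ be $\log_2$ of the total order of the images of all differentials leaving total degree $p$; the copies of $\Z$ in degrees $0,4,8$ neither support nor receive nonzero differentials. The torsion parts of $E^1$ in degrees $1,\dots,7$ have orders $2^2,2^4,2^2,2^4,2^4,2^8,2^4$. Strong convergence to $\pi_\ast(\rR\ko)\cong\Z[x]$ then gives:
\begin{itemize}
\item in low degrees, $D_1=0$, $D_2=D_3=2$ and $D_4=0$;
\item in degrees $5$ to $7$, $D_5+D_6=4$, $D_6+D_7=8$ and $D_7\le 4$, whence $D_5=0$.
\end{itemize}
So all the torsion of order $2^4$ in degree $4$ coming from $\tilde s_0,\tilde s_1,\tilde s_2$ survives, and $N=16$. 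The indispensable extra input is $\pi_6(\rR\ko)=\pi_7(\rR\ko)=0$, not just the vanishing of $\pi_3$ and $\pi_5$.
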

\begin{proof}
    The proof is divided in several lemmas proven in \Cref{Subsubsect:usedinproof}. We explain here how they combine to prove the proposition.\\

    \textbf{Step 1:} \emph{Reducing the problem to computing the cardinality of a certain homotopy group.} We want to understand the action of $\rR\beta_\ko$ on the homotopy groups of $\rR\ko$, by considering a cofiber sequence where this map appears together with only a small number of easily computable groups. We will thus consider slices. By \Cref{Prop:aiscardinality} below, if we write $\rR(\beta_\ko) = ax$ with $a\in\Z$ (recall that $x$ generates $\pi_4(\rR\ko) \cong \Z$), then $a=\pm|\pi_4(\rR C)|$, where $C$ is the cofiber of the natural map $\tilde{f}_4\ko \to \ko$. This result holds because, as we will see in its proof, the natural map $\tilde{f}_4\ko \to \ko$ corresponds to $\beta_\ko$ under the identification $\tilde{f}_4\ko \simeq \tilde{f}_4\KO \simeq \tilde{f}_4(\Sigma^{8,4}\KO) \simeq T^{\Smash 4} \Smash \tilde{f}_0\KO \simeq T^{\Smash 4} \Smash \ko$, where the first equivalence is induced by $\beta_\KO$ (periodicity of $\KO$) and the second one is \cite[Lem.\ 8]{Bachmann-genslices}.\\
    
    \textbf{Step 2:} \emph{Setting up a spectral sequence that computes $|\pi_4(\rR C)|$.}  Under sufficiently nice assumptions, any (co)filtered object in a stable category has an associated strongly convergent spectral sequence computing the values of a fixed homological functor on this object. We recall the general theory in \Cref{Subsect:multss}. We will consider the very effective filtration on $C = \ko/\tilde{f}_4\ko$, and after applying real realization, this will give us a filtration on $\rR C$. The $E^1$-page of such a spectral sequence involves the homotopy groups of the successive cofibers of the maps in the filtered object. In our case, these will be the homotopy groups of the real realizations of the very effective slices of $C$, which are the first four very effective slices of $\ko$. These slices have already been computed in \cite[Thm.\ 16]{Bachmann-genslices} (see \Cref{Prop:veffslicesko}).\\
            
    \textbf{Step 3:} \emph{Making explicit computations with this spectral sequence.} We computed the realizations of all the motivic spectra appearing as slices of $\ko$ in \Cref{Sect:examplesrealization}, except for $\tilde{s}_0\ko$. The only grasp we have on the latter is given by its decomposition as a cofiber sequence in \Cref{Prop:veffslicesko}. To compute the homotopy groups of $\rR(\tilde{s}_0\ko)$, we will use another spectral sequence, this time with respect to the filtration given by the effective homotopy $t$-structure. This is the content of \Cref{Prop:pistilde0ko}. This allows us to prove $|\pi_4(\rR C)|=16$ in \Cref{Prop:pi4C}, using the first spectral sequence we mentioned.
\end{proof}
\vspace{0.2cm}

\subsubsection{Digression: spectral sequences with a multiplicative structure}\label{Subsect:multss}\hfill\vspace{0.2cm}

The proofs of \Cref{Prop:pistilde0ko,Prop:pi4C} needed for \Cref{Prop:rRbetako} will require us to use spectral sequences arguments. We therefore recall the necessary machinery.

\begin{thm}\label{Prop:spectralsequence} Let $X := \cdots \to X_{-1} \to X_0 \to X_1 \to \cdots$ be a tower in $\Sp$. Then, there is a spectral sequence $E^1_{p,q} = \pi_p(\mathsf{cof}(X_{-q-1} \to X_{-q}))$ converging conditionally to $\pi_p(\mathsf{cof}(\lim_n X_n \to \colim_n X_n))$. Moreover, if the spectral sequence is concentrated in the upper half-plane and each term is the source of only finitely many non-trivial differentials, then it converges strongly. Namely, for all $p,q\in\Z$, we have $E^\infty_{p,q} = \mathsf{coker}(\pi_p(X_{-q}) \to \pi_p(X_{-q+1}))$ and $\pi_p(\colim_n X_n) = \colim_q\, \mathsf{coker}(\pi_p(X_{-q}) \to \pi_p(\colim_n X_n))$.

We call the cofibers $\mathsf{cof}(X_{-q-1} \to X_{-q})$ the \emph{subquotients} associated with the cofiltered object $(X_n)_{n\in\Z}$.
\end{thm}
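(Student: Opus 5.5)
This is the standard spectral sequence of a tower, built from the exact couple of long exact sequences attached to the cofiber sequences $X_{-q-1} \to X_{-q} \to \mathsf{cof}(X_{-q-1}\to X_{-q})$. I would follow Boardman's treatment of conditionally convergent spectral sequences, or equivalently \cite[\S 1.2.2]{Lurie-HA} for spectral sequences of filtered objects in a stable $\infty$-category.

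\textbf{Step 1: reduce to a filtration by cofibers.} Set $Y_n := \mathsf{cof}(\lim_m X_m \to X_n)$. Then $\lim_n Y_n \simeq 0$ and $\colim_n Y_n \simeq \mathsf{cof}(\lim X \to \colim X)$, using that $\colim_n$ is exact in $\Sp$. The successive cofibers $\mathsf{cof}(Y_{n-1}\to Y_n)$ agree with $\mathsf{cof}(X_{n-1}\to X_n)$, so both towers have the same subquotients. We may therefore assume $\lim_n X_n \simeq 0$.

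\textbf{Step 2: the exact couple.} Put $D^1_{p,q} = \pi_p(X_{-q})$ and $E^1_{p,q} = \pi_p(\mathsf{cof}(X_{-q-1}\to X_{-q}))$. The maps are the ones induced by $X_{-q-1}\to X_{-q}$, by $X_{-q}\to\mathsf{cof}$, and by the connecting map $\mathsf{cof} \to \Sigma X_{-q-1}$. Deriving this exact couple gives pages $E^r$ with differentials $d^r$ whose bidegree is fixed by this indexing.

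\textbf{Step 3: conditional convergence.} Using the Milnor sequence
\[0 \to \lim\nolimits^1_n \pi_{p+1}(X_n) \to \pi_p(\lim_n X_n) \to \lim_n \pi_p(X_n) \to 0,\]
the vanishing $\lim_n X_n\simeq 0$ gives $\lim \pi_p(X_n) = 0$ and $\lim^1 \pi_p(X_n)=0$. This is exactly Boardman's condition for conditional convergence to the colimit $\colim_n\pi_p(X_n) = \pi_p(\colim X_n)$. The colimit commutes with $\pi_p$ because homotopy groups preserve filtered colimits.

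\textbf{Step 4: strong convergence.} Suppose only finitely many differentials leave each term and the sequence lives in a half-plane. Then the $E^r$ stabilise, and since the spectral sequence is conditionally convergent, Boardman's theorem yields strong convergence. Concretely, $E^\infty_{p,q}$ is identified with the quotient
\[\mathsf{im}(\pi_p X_{-q}\to\pi_p\colim X)\big/\mathsf{im}(\pi_p X_{-q-1}\to\pi_p\colim X),\]
and the filtration is exhaustive and Hausdorff. The displayed formulas in the statement, where the $\mathsf{coker}$'s should be read as these filtration quotients, are then bookkeeping.

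The main obstacle is the indexing and sign conventions: checking that the bidegrees of the $d^r$ and the half-plane hypothesis match Boardman's "half-plane with exiting differentials" case. Once the exact couple is written down, the homological algebra is standard.
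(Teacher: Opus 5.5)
Your proposal is correct and is essentially the paper's argument written out. The paper simply cites Lurie's construction of the spectral sequence, Bachmann--Hopkins for conditional convergence (which amounts to your reduction to $\lim_n X_n\simeq 0$ plus the Milnor sequence), and the remark after Boardman's Theorem 7.1 for strong convergence.

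One correction to your last paragraph: this is Boardman's case of \emph{entering} differentials, not exiting ones. Since $d^r\colon E^r_{p,q}\to E^r_{p-1,q+r}$ moves up into the half-plane $q\geq 0$, each term is hit by only finitely many differentials. In this case, conditional convergence together with $RE_\infty=0$ gives strong convergence, and $RE_\infty=0$ is guaranteed by your finiteness hypothesis on outgoing differentials. That is exactly what your Step 4 argues.
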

\begin{proof} Such a spectral sequence for cofiltered objects in a general stable category $\calC$ is defined in \cite[Construction 1.2.2.6]{Lurie-HA}. The conditional convergence is proven in \cite[Lem.\ 6.16]{BH-eta}. Then, the statement about strong convergence is \cite[Remark below Thm.\ 7.1]{Boardman}.
\end{proof}

If the tower of spectra we started with admits some kind of multiplicative structure, the spectral sequence also inherits a multiplicative structure, which can help to determine certain differentials in such spectral sequences. More precisely, assume that $(X_n)_{n\in\Z}$ admits a pairing taking the form of compatible maps $X_n \Smash X_m \to X_{n+m}$ for all $n,m\in \Z$. This descends to a pairing on the subquotients, given by the maps 
$\gamma_{n,m}$ induced between the cofibers of $\alpha$ and the zero map in the bottom face of the following diagram (where $S_n := \mathsf{cof}(X_{n-1}\to X_n)$ for all $n\in\Z$):

%https://q.uiver.app/#q=WzAsMTQsWzQsNiwiU19tIFxcd2VkZ2UgU19uIl0sWzIsNSwiWF9tIFxcd2VkZ2UgU19uIl0sWzAsNCwiWF97bS0xfSBcXHdlZGdlIFNfbiJdLFs4LDYsIlNfe20rbn0iXSxbNiw1LCJTX3ttK259Il0sWzQsNCwiU197bStuLTF9Il0sWzQsMiwiWF97bStuLTF9Il0sWzQsMCwiWF97bStuLTJ9Il0sWzYsMSwiWF97bStuLTF9Il0sWzYsMywiWF97bStufSJdLFsyLDMsIlhfbSBcXHdlZGdlIFhfbiJdLFswLDIsIlhfe20tMX0gXFx3ZWRnZSBYX24iXSxbMCwwLCJYX3ttLTF9IFxcd2VkZ2UgWF97bi0xfSJdLFsyLDEsIlhfe219IFxcd2VkZ2UgWF97bi0xfSJdLFsyLDUsIlxcbGFtYmRhX3ttLTEsbn0iLDAseyJsYWJlbF9wb3NpdGlvbiI6NzAsImNvbG91ciI6WzI0MCw2MCw2MF0sInN0eWxlIjp7ImJvZHkiOnsibmFtZSI6ImRhc2hlZCJ9fX0sWzI0MCw2MCw2MCwxXV0sWzEsNCwiXFxsYW1iZGFfe20sbn0iLDAseyJjb2xvdXIiOlsyNDAsNjAsNjBdfSxbMjQwLDYwLDYwLDFdXSxbMCwzLCJcXGdhbW1hX3ttLG59Il0sWzIsMSwiXFxhbHBoYSIsMSx7ImNvbG91ciI6WzI0MCw2MCw2MF19LFsyNDAsNjAsNjAsMV1dLFs1LDQsIjAiLDEseyJjb2xvdXIiOlsyNDAsNjAsNjBdLCJzdHlsZSI6eyJib2R5Ijp7Im5hbWUiOiJkYXNoZWQifX19LFsyNDAsNjAsNjAsMV1dLFs2LDksIiIsMSx7InN0eWxlIjp7ImJvZHkiOnsibmFtZSI6ImRhc2hlZCJ9fX1dLFs3LDhdLFs4LDldLFs5LDRdLFs3LDYsIiIsMSx7InN0eWxlIjp7ImJvZHkiOnsibmFtZSI6ImRhc2hlZCJ9fX1dLFs2LDUsIiIsMSx7InN0eWxlIjp7ImJvZHkiOnsibmFtZSI6ImRhc2hlZCJ9fX1dLFsxMiw3XSxbMTIsMTNdLFsxMiwxMV0sWzExLDIsIlxcYWxwaGEnIiwxXSxbMTAsMV0sWzExLDEwXSxbMTMsMTBdLFsxMSw2LCIiLDEseyJzdHlsZSI6eyJib2R5Ijp7Im5hbWUiOiJkYXNoZWQifX19XSxbMTAsOV0sWzEzLDhdLFsxLDBdLFs0LDNdXQ==
\begin{equation}\label{Diagram:pairing}
    \begin{tikzcd}[column sep = 0.8em, row sep = 0.6em]
	{X_{m-1} \wedge X_{n-1}} &&&& {X_{m+n-2}} \\
	&& {X_{m} \wedge X_{n-1}} &&&& {X_{m+n-1}} \\
	{X_{m-1} \wedge X_n} &&&& {X_{m+n-1}} \\
	&& {X_m \wedge X_n} &&&& {X_{m+n}} \\
	{X_{m-1} \wedge S_n} &&&& {S_{m+n-1}} \\
	&& {X_m \wedge S_n} &&&& {S_{m+n}} \\
	&&&& {S_m \wedge S_n} &&&& {S_{m+n}.}
	\arrow[from=1-1, to=1-5]
	\arrow[from=1-1, to=2-3]
	\arrow[from=1-1, to=3-1]
	\arrow[from=1-5, to=2-7]
	\arrow[dashed, from=1-5, to=3-5]
	\arrow[from=2-3, to=2-7]
	\arrow[from=2-3, to=4-3]
	\arrow[from=2-7, to=4-7]
	\arrow[dashed, from=3-1, to=3-5]
	\arrow[from=3-1, to=4-3]
	\arrow["{\alpha'}"{description}, from=3-1, to=5-1]
	\arrow[dashed, from=3-5, to=4-7]
	\arrow[dashed, from=3-5, to=5-5]
	\arrow[from=4-3, to=4-7]
	\arrow[from=4-3, to=6-3]
	\arrow[from=4-7, to=6-7]
	\arrow[MyGreen, "{\lambda_{m-1,n}}"{pos=0.7}, dashed, from=5-1, to=5-5]
	\arrow[MyGreen, "\alpha"{description}, from=5-1, to=6-3]
	\arrow[MyGreen, "0"{description}, dashed, from=5-5, to=6-7]
	\arrow[MyGreen, "{\lambda_{m,n}}", from=6-3, to=6-7]
	\arrow[from=6-3, to=7-5]
	\arrow[from=6-7, to=7-9]
	\arrow["{\gamma_{m,n}}", from=7-5, to=7-9]
\end{tikzcd}
\end{equation}
All vertical composites are cofiber sequences; the maps $\alpha$, $\lambda_{m-1,n}$, $\lambda_{m,n}$, and $0$ in the bottom face are the maps induced on the cofibers by the top two layers of horizontal maps.

\begin{Prop}[\protect{\cite[\S 6, Thm.\ 6.2]{Dugger}}]\label{Prop:multss} In the situation described above, the pairing on the subquotients itself descends to a pairing $E^r_{p,q} \otimes E^r_{s,t} \to E^r_{p+s,q+t}$ (for all $r\geq 1$ and $p,q,s,t\in\Z$) on the spectral sequence; and the differentials $d^r$ satisfy the (graded) Leibniz rule with respect to this pairing.
\end{Prop}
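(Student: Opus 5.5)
The plan is to follow Dugger's approach and describe every page through the finite quotients of the tower, so that the pairing on $E^r$ is induced by honest maps of spectra. The first step is to make the input precise. The maps $X_m\Smash X_n\to X_{m+n}$ should form a pairing of filtered spectra, i.e.\ they should be compatible up to coherent homotopy with the transition maps. Granting this (as the diagram (\ref{Diagram:pairing}) implicitly does), the restriction of the pairing to $X_{m-a}\Smash X_n$ and to $X_m\Smash X_{n-b}$ factors through $X_{m+n-\min(a,b)}$. It therefore extends over the pushout $P=(X_{m-a}\Smash X_n)\cup_{X_{m-a}\Smash X_{n-b}}(X_m\Smash X_{n-b})$. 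Since $X_m/X_{m-a}\Smash X_n/X_{n-b}\simeq \mathsf{cof}(P\to X_m\Smash X_n)$, we get induced maps
\[\mu_{a,b}\colon X_m/X_{m-a}\Smash X_n/X_{n-b}\longrightarrow X_{m+n}/X_{m+n-\min(a,b)}.\]
For $a=b=1$ this map is exactly $\gamma_{m,n}$. Moreover, the $\mu$'s are compatible with the evident projections between quotients.

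Next I would use the standard description of the pages. Write $E^1_{p,q}=\pi_p(S_n)$ with $n=-q$. Let $Z^r\subseteq \pi_p(S_n)$ be the image of $\pi_p(X_n/X_{n-r})\to\pi_p(S_n)$. Let $B^r$ be the image of the connecting map $\pi_{p+1}(X_{n+r-1}/X_n)\to\pi_p(S_n)$. Then $E^r=Z^r/B^r$, and $d^r[z]$ is obtained by lifting $z$ to $\tilde z\in\pi_p(X_n/X_{n-r})$, applying the connecting map $\delta\colon X_n/X_{n-r}\to\Sigma X_{n-r}$, and projecting to $\Sigma S_{n-r}$. The pairing on $E^r$ is then defined as follows: given $z\in Z^r$ and $w\in Z^r$ with lifts $\tilde z,\tilde w$, the product $\mu_{r,r}(\tilde z\Smash\tilde w)$ lifts $\gamma(z\Smash w)$, so $Z^r\cdot Z^r\subseteq Z^r$.

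To see that $B^r\cdot Z^r\subseteq B^r$, I would use naturality of $\mu$ with respect to the cofiber sequences $X_n/X_{n-1}\to X_{n+r-1}/X_{n-1}\to X_{n+r-1}/X_n$. Concretely, a boundary times a cycle is the boundary of (a preimage) times a lift, pushed through $\mu_{r,r}$ for the shifted quotients. This makes the pairing $E^r\otimes E^r\to E^r$ well defined. For $r=1$ it is the pairing induced by the $\gamma_{m,n}$.

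The Leibniz rule is the heart of the argument. Consider the connecting map of the smash of two cofiber sequences $A'\to A\to A/A'$ and $B'\to B\to B/B'$. The boundary $A/A'\Smash B/B'\to\Sigma P$, followed by $\Sigma P\to\Sigma(A'\Smash B/B'\vee A/A'\Smash B')$ (by collapsing), equals $\delta\Smash 1\pm 1\Smash\delta$. The sign $(-1)^{|z|}$ comes from commuting the suspension coordinate past the first factor. This is the classical fact that the boundary of a product of pairs is a derivation, and it is the step I expect to be the main obstacle. It requires carefully producing the pushout square and checking the sign via the symmetry of $\Sph^1\Smash(-)$, rather than merely checking it on homotopy groups.

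Applying this identity with $A=X_m/X_{m-2r}$, $A'=X_{m-r}/X_{m-2r}$ (and similarly for $B$), then pushing forward along $\mu$ to $\Sigma X_{m+n-r}/X_{m+n-r-1}$, gives $d^r(zw)=d^r(z)w\pm z\,d^r(w)$. Finally, I would check that the pairing on $E^{r+1}=H(E^r,d^r)$ induced by Leibniz agrees with the one defined directly via lifts to $X/X_{-(r+1)}$. This holds by compatibility of $\mu_{r,r}$ and $\mu_{r+1,r+1}$ under projection.
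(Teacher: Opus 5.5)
Your outline is correct and is essentially the standard proof of the cited result. The paper gives no argument of its own beyond the reference to Dugger's Theorem~6.2, which proceeds along the same lines: the pages are described through the relative terms $X_n/X_{n-r}$, pairings on these are induced by a coherent pairing of the tower, and the Leibniz rule is deduced from the boundary formula for a smash product of cofiber sequences, sign included.
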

\vspace{0.2cm}

\subsubsection{Results used in the proof of \Cref{Prop:rRbetako}}\label{Subsubsect:usedinproof}\hfill\vspace{0.2cm}

\begin{Lemma}\label{Prop:aiscardinality} Let $a\in\Z$ with $\rR(\beta_\ko) = ax$ (where $x$ generates $\pi_4(\rR\ko)\cong\Z$). Then $a=\pm|\pi_4(\rR C)|$, where $C$ is the cofiber of the natural map $\tilde{f}_4\ko \to \ko$.
\end{Lemma}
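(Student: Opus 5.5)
We will identify the natural map $\tilde{f}_4\ko \to \ko$ with $\beta_\ko$ (up to equivalence of the source) and then read off $a$ from the long exact sequence of the realized cofiber sequence.

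\textbf{Identifying the map.} By periodicity of $\KO$, multiplication by $\beta_\KO$ gives an equivalence $\Sigma^{8,4}\KO \simeq \KO$. Very effective covers commute with $T$-suspension up to shift, by \cite[Lem.\ 8]{Bachmann-genslices}, so $\tilde{f}_4(\Sigma^{8,4}\KO) \simeq T^{\Smash 4}\Smash \tilde{f}_0\KO = T^{\Smash 4}\Smash\ko$. Combining these gives an equivalence $T^{\Smash 4}\Smash \ko \simeq \tilde{f}_4\KO$. Since $\tilde{f}_4\ko \simeq \tilde{f}_4\KO$ canonically (as $\tilde{f}_4\tilde{f}_0 = \tilde{f}_4$), we obtain an equivalence $T^{\Smash 4}\Smash\ko\simeq\tilde{f}_4\ko$.

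Under this equivalence, the composite $T^{\Smash 4}\Smash\ko \to \tilde{f}_4\ko\to\ko$ is the map induced by $\beta_\KO$. More precisely, it is the unique lift through $\ko\to\KO$ of $T^{\Smash 4}\Smash\ko\to T^{\Smash 4}\Smash\KO\xrightarrow{\beta_\KO}\KO$, which exists because the source is very effective. This unique lift is by definition the multiplication by $\beta_\ko\in\pi_{8,4}\ko$. Checking this naturality is the step needing most care: it follows from the universal property of the counit $\tilde{f}_0\to\id{}$ and the compatibility of $\beta_\KO$ with the $\ko$-module structure.

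\textbf{Computing $a$.} Realizing, $\rR(T^{\Smash 4})=\Sph^4$, so we get a cofiber sequence $\Sigma^4\rR\ko \xrightarrow{\cdot \rR\beta_\ko} \rR\ko \to \rR C$. On homotopy, the first map is multiplication by $ax$, since $\rR$ is symmetric monoidal and the map is $\rR(\ko)$-linear. By \Cref{Prop:homgprRko}, $\pi_\ast(\rR\ko)\cong\Z[x]$, so the long exact sequence contains
$$\pi_0(\rR\ko)\cong\Z\xrightarrow{\cdot a}\pi_4(\rR\ko)\cong\Z\to\pi_4(\rR C)\to\pi_{-1}(\rR\ko)=0,$$
where $\pi_{-1}(\rR\ko)=0$ by connectivity (\Cref{Prop:rRconnective}). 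Hence $\pi_4(\rR C)\cong\Z/a$. Provided $a\neq0$, this has cardinality $|a|$, giving $a=\pm|\pi_4(\rR C)|$.

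\textbf{Ruling out $a=0$.} Inverting $2$, $\rR(\beta_\ko)$ corresponds via \Cref{Prop:rRko1/2} to a unit multiple of the realization of the periodicity element of $\KW$. This is invertible on $\rR\KW$, so its image in $\pi_4(\kotop[1/2])$ is nonzero. Alternatively, if one prefers, read the statement with the convention that $|\pi_4(\rR C)|$ is infinite when $a=0$.
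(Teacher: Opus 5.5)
Your proposal is correct and follows essentially the same route as the paper, which takes the lift $\alpha\colon\Sigma^{8,4}\ko\to\tilde{f}_4\ko$ of $\beta_\ko$ and shows it is an equivalence by factoring it as Bachmann's equivalence $T^{\Smash 4}\Smash\tilde{f}_0\KO\simeq\tilde{f}_4(T^{\Smash 4}\Smash\KO)$, then $\tilde{f}_4(\beta_\KO)$, then $\tilde{f}_4\KO\simeq\tilde{f}_4\ko$; it then reads off $\pi_4(\rR C)\cong\Z/a$ from the long exact sequence using $4$-connectivity of $\rR\tilde{f}_4\ko$. Your extra argument that $a\neq 0$ is valid and is a point the paper leaves implicit there; it also follows a posteriori from $|\pi_4(\rR C)|=16$ in Lemma~\ref{Prop:pi4C}.
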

\begin{proof}
    The cofiber sequence $\tilde{f}_4\ko \to \ko \to C$ induces a long exact sequence
% https://q.uiver.app/#q=WzAsNixbMSwwLCJcXHBpXzQoXFxyUlxcdGlsZGV7Zn1fNFxca28pIl0sWzIsMCwiXFxwaV80KFxcclJcXGtvKSJdLFszLDAsIlxccGlfNChcXHJSQykiXSxbNCwwLCJcXHBpXzMoXFxyUlxcdGlsZGV7Zn1fNFxca28pIl0sWzUsMCwiXFxkb3RzIl0sWzAsMCwiXFxkb3RzIl0sWzUsMF0sWzAsMV0sWzEsMl0sWzIsM10sWzMsNF1d
\[\begin{tikzcd}
	\cdots & {\pi_4(\rR\tilde{f}_4\ko)} & {\pi_4(\rR\ko)} & {\pi_4(\rR C)} & {\pi_3(\rR\tilde{f}_4\ko)} & \cdots.
	\arrow[from=1-1, to=1-2]
	\arrow[from=1-2, to=1-3]
	\arrow[from=1-3, to=1-4]
	\arrow[from=1-4, to=1-5]
	\arrow[from=1-5, to=1-6]
\end{tikzcd}\]
Since by definition $\Sigma^{8,4}\ko \in \SH(k)^\veff(4)$, there is a lift
% https://q.uiver.app/#q=WzAsMyxbMSwwLCJcXHRpbGRle2Z9XzQoXFxrbykiXSxbMSwxLCJcXGtvIl0sWzAsMSwiXFxTaWdtYV57OCw0fVxca28iXSxbMCwxXSxbMiwxLCJcXGJldGFfXFxrbyIsMl0sWzIsMCwiXFxhbHBoYSIsMCx7InN0eWxlIjp7ImJvZHkiOnsibmFtZSI6ImRhc2hlZCJ9fX1dXQ==
\[\begin{tikzcd}
	& {\tilde{f}_4\ko} \\
	{\Sigma^{8,4}\ko} & \ko
	\arrow[from=1-2, to=2-2]
	\arrow["\alpha", dashed, from=2-1, to=1-2]
	\arrow["{\beta_\ko}"', from=2-1, to=2-2]
\end{tikzcd}\]
where $\tilde{f}_4\ko \to \ko$ is the natural map. We claim that $\alpha$ is an equivalence. In particular, by \Cref{Prop:homgprRko}, applying real realization and taking the fourth homotopy groups, the map ${\pi_4(\rR\tilde{f}_4\ko)} \to {\pi_4(\rR\ko)}$ is a morphism $\Z\to \Z$ given by multiplication by $a$. Since $\rR\tilde{f}_4\ko$ is 4-connective by \Cref{Prop:rRconnective}, in the long exact sequence above we have ${\pi_3(\rR\tilde{f}_4\ko)}\cong 0$ and thus $\pi_4(\rR C) \cong \Z/a$. Therefore, to finish the proof we only have to show that $\alpha$ is an equivalence. In the following diagram
% https://q.uiver.app/#q=WzAsNyxbMSwxLCJcXFNpZ21hXns4LDR9XFxLTyJdLFswLDEsIlxcU2lnbWFeezgsNH1cXGtvIl0sWzIsMSwiXFxLTyJdLFsyLDAsIlxcdGlsZGV7Zn1fNChcXEtPKSJdLFsxLDAsIlxcdGlsZGV7Zn1fNChcXFNpZ21hXns4LDR9XFxLTykiXSxbMywxLCJcXGtvIl0sWzMsMCwiXFx0aWxkZXtmfV80KFxca28pIl0sWzEsMF0sWzAsMiwiXFxiZXRhX1xcS08iLDJdLFszLDJdLFs0LDMsIlxcdGlsZGV7Zn1fNChcXGJldGFfXFxLTykiLDJdLFsxLDQsIlxcYWxwaGEnIiwwLHsic3R5bGUiOnsiYm9keSI6eyJuYW1lIjoiZGFzaGVkIn19fV0sWzQsMF0sWzMsNiwiXFxzaW0iXSxbNiw1XSxbNSwyXSxbMSw2LCJcXGFscGhhIiwxLHsic3R5bGUiOnsiYm9keSI6eyJuYW1lIjoiZGFzaGVkIn19fV0sWzEsNSwiXFxiZXRhX1xca28iLDIseyJjdXJ2ZSI6M31dXQ==
\[\begin{tikzcd}[row sep = 3em, column sep = 3em]
	& {\tilde{f}_4(\Sigma^{8,4}\KO)} & {\tilde{f}_4(\KO)} & {\tilde{f}_4(\ko)} \\
	{\Sigma^{8,4}\ko} & {\Sigma^{8,4}\KO} & \KO & \ko
	\arrow["{\tilde{f}_4(\beta_\KO)}"',"\sim", from=1-2, to=1-3]
	\arrow[from=1-2, to=2-2]
	\arrow["\sim", from=1-3, to=1-4]
	\arrow[from=1-3, to=2-3]
	\arrow[from=1-4, to=2-4]
	\arrow["{\alpha'}", dashed, from=2-1, to=1-2]
	\arrow["\alpha"{description}, dashed, from=2-1, to=1-4]
	\arrow[from=2-1, to=2-2]
	\arrow["{\beta_\ko}"', curve={height=18pt}, from=2-1, to=2-4]
	\arrow["{\beta_\KO}"',"\sim", from=2-2, to=2-3]
	\arrow[from=2-4, to=2-3]
\end{tikzcd}\]
the map $\alpha$ factors through the lift $\alpha'$ also obtained by very 4-effectiveness of $\Sigma^{8,4}\ko$. But $\alpha'$ is exactly the equivalence $T^{\Smash 4} \Smash \tilde{f}_0 \KO \simeq \tilde{f}_4(T^{\Smash 4} \Smash \KO)$ of \cite[Lem.\ 8]{Bachmann-genslices}. From the commutativity of the diagram, we deduce that $\alpha'$ is a composition of three equivalences, and thus an equivalence, as needed. 
\end{proof}

\begin{Lemma}\label{Prop:pistilde0ko} The homotopy groups of $\rR(\tilde{s}_0\ko)$ are given for $m\geq 0$ by
$$\pi_m(\rR\tilde{s}_0\ko) = \begin{cases} \Z &\text{ if }m=0 \\ \Zmod &\text{ if }m\equiv 2\, \mathsf{mod}\, 4 \text{ or }m\equiv 3\, \mathsf{mod}\, 4 \\
0 &\text{ if } m\equiv 1\, \mathsf{mod}\, 4 \\
\end{cases}$$
and in positive degrees divisible by 4, $\pi_m(\rR\tilde{s}_0\ko)$ is an extension of two copies of $\Zmod$.
\end{Lemma}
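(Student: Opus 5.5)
Our starting point is the cofiber sequence $\Sigma^{1,0}\HZmod \to \tilde{s}_0\ko \to \HZtilde$ of \Cref{Prop:veffslicesko} (recall $\tilde{s}_0\ko = \tilde{s}_0\KO$, since $\ko=\tilde{f}_0\KO$). Applying $\rR$ gives a cofiber sequence $\Sigma\rR\HZmod \to \rR\tilde{s}_0\ko \to \rR\HZtilde$. The outer terms are known: by \Cref{Prop:rRHZHZmod}, $\pi_\ast(\Sigma\rR\HZmod)$ is $\Zmod$ in every degree $\geq 1$. By \Cref{Prop:rRHZtilde}, $\pi_\ast(\rR\HZtilde)\cong \Z[t^2]/(2t^2)$, i.e.\ $\Z$ in degree $0$, $\Zmod$ in positive even degrees, and $0$ in odd degrees. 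In line with the announcement in the proof of \Cref{Prop:rRbetako}, we package this as the two-step filtration coming from the effective homotopy $t$-structure, with subquotients $\HZtilde$ and $\Sigma^{1,0}\HZmod$. Everything then reduces to the single connecting map
$$\partial_m:\pi_{m}(\rR\HZtilde)\to \pi_{m-1}(\Sigma\rR\HZmod)=\pi_{m-2}(\rR\HZmod),$$
equivalently the $d^1$-differential of the associated spectral sequence (\Cref{Prop:spectralsequence}).

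\textbf{Identifying $\partial$.} Before realization, the boundary is a map $\HZtilde \to \Sigma^{2,0}\HZmod$ in $\SH(\R)$. The first thing to check is that it factors through $\HZtilde\to\HZ\to\HZmod$ followed by a motivic cohomology operation of bidegree $(2,0)$, namely $\Sq^2$. One could try to see this from Bachmann's construction of the slice, or by computing $[\HZtilde,\Sigma^{2,0}\HZmod]$ using \cite[Lem.\ A.4]{RO}. To show $\partial$ is nonzero, I would use complex realization: $\rC(\tilde{s}_0\ko)$ should be the truncation $\tau_{\leq 2}\ko^{\mathsf{top}}$. Its $k$-invariant $\HZ\to\Sigma^3\HZmod$ involves $\Sq^2$, which forces the motivic boundary to be $\Sq^2\circ(\mathsf{mod}\,2)$. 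This identification is the main obstacle. If it fails, I would fall back on a multiplicative argument: the sequence is a module over $\HZtilde$, so the Leibniz rule of \Cref{Prop:multss} lets one propagate $\partial$ from a single degree.

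\textbf{Computing in homotopy.} As in Step 2 of \Cref{Prop:rRbetakgl}, I then evaluate $\Sq^2$ on $\tau^{k}$ in $\pi_{\ast,\ast}(\HZmod)(\R)=\Zmod[\tau,\rho]$, using $\rho$-inversion (\Cref{Prop:homotopyofrR}). The generator $t^{2k}$ of $\pi_{2k}(\rR\HZtilde)$, for $k\geq1$, maps to $\tau^{2k}\rho^{-2k}$. The Cartan formula, together with $\Sq^2\tau=0$ and $\Sq^1\tau=\rho$, gives
$$\Sq^2(\tau^{2k})=\binom{2k}{2}\rho^2\tau^{2k-2}+(\text{$\tau$-terms killed by }\Sq^2\tau=0).$$
This is nonzero exactly when $\binom{2k}{2}=k(2k-1)$ is odd, i.e.\ when $k$ is odd. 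So $\partial_{2k}\neq 0$ for $2k\equiv 2\pmod 4$ and $\partial_{2k}=0$ for $2k\equiv 0\pmod 4$ with $k\geq1$. In degree $0$ the map $\partial_0$ lands in $\pi_{-2}=0$.

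\textbf{Reading off the answer.} Feeding $\partial$ into the long exact sequence gives, degree by degree:
\begin{itemize}
\item $m=0$: $\pi_0(\rR\tilde{s}_0\ko)=\Z$, since the fiber term vanishes in degree $0$.
\item $m=1$: $\pi_1(\Sigma\rR\HZmod)=\Zmod$ is hit by $\partial_2$, which is an isomorphism, and $\pi_1(\rR\HZtilde)=0$; hence $\pi_1=0$.
\item $m\equiv 1\pmod 4$ in general: $\partial_{m+1}\neq0$ kills the fiber contribution and the quotient term vanishes, so $\pi_m=0$.
\item $m\equiv 2\pmod 4$: $\partial_m$ is injective, so only the fiber $\Zmod$ survives, giving $\Zmod$.
\item $m\equiv 3\pmod 4$: $\partial_{m+1}=0$, so the fiber $\Zmod$ survives and the quotient term is $0$, giving $\Zmod$.
\item $m\equiv 0\pmod 4$, $m>0$: $\partial_m=0$ and $\partial_{m+1}=0$ (odd source), giving an extension of $\Zmod$ by $\Zmod$.
\end{itemize}
These are exactly the values claimed in the statement.
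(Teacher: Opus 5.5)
Your architecture is the paper's: the two-step effective filtration with subquotients $\HZtilde$ and $\Sigma^{1,0}\HZmod$, everything reduced to the connecting map $\partial$, then a degree-by-degree reading of the long exact sequence. Your table is correct \emph{provided} $\partial_{2k}\neq 0$ exactly for $k$ odd. The gap is in how you determine $\partial$.

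\textbf{The proposed factorization forces $\partial=0$.} The map $\partial$ cannot factor as $\HZtilde\to\HZ\to\HZmod$ followed by a Steenrod operation. First, $\Sq^2$ has bidegree $(2,1)$, not $(2,0)$. Second, $[\HZ,\Sigma^{2,0}\HZmod]=0$ over $\R$: the only weight-$0$ admissible monomials are $1$ and $\Sq^1$, and $H^{a,b}(\R;\Zmod)$ vanishes unless $0\leq a\leq b$. So any such factorization would make $\partial$ zero.

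\textbf{The Cartan computation has the same mismatch.} Correctly, $\Sq^2(\tau^n)=\binom{n}{2}\rho^2\tau^{n-1}\in H^{2,n+1}$; you lost a factor of $\tau$. After $\rho$-inversion this lowers the real degree by $1$, whereas $\partial\colon\pi_m(\rR\HZtilde)\to\pi_{m-2}(\rR\HZmod)$ lowers it by $2$. That $\binom{2k}{2}$ has the right parity is a coincidence.

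\textbf{The complex check does not help.} $\Sigma^{2,0}$ complex-realizes to $\Sigma^2$, so your check concerns a map $\HZ\to\Sigma^2\HZmod$, not $\Sigma^3\HZmod$. At best it shows $\partial\neq0$ motivically. It says nothing about what $\partial$ does after real realization. The nonzero boundary must use the non-$\HZ$ (Witt) part of $\HZtilde$, and nothing available computes $[\HZtilde,\Sigma^{2,0}\HZmod]$.

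\textbf{The fallback needs a base case you never establish.} The Leibniz-rule argument is what the paper does. On its own it only gives $\partial(t^k)=k\,t^{k-1}\partial(t)$, with $t$ the degree-$2$ generator, so it needs $\partial_2\neq0$ as input. The missing idea is to import global information:
\begin{itemize}
\item $\pi_1(\rR\ko)=0$ by \Cref{Prop:homgprRko}.
\item Since $\rR\tilde{f}_1\ko$ is $1$-connective, the sequence $\tilde{f}_1\ko\to\ko\to\tilde{s}_0\ko$ makes $\pi_1(\rR\ko)\to\pi_1(\rR\tilde{s}_0\ko)$ surjective, so $\pi_1(\rR\tilde{s}_0\ko)=0$. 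The paper phrases this via the very effective slice spectral sequence of $\rR\ko$.
\item Exactness of your long exact sequence then forces $\partial_2\colon\pi_2(\rR\HZtilde)\to\pi_0(\rR\HZmod)$ to be an isomorphism.
\end{itemize}

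You also need to justify, not just assert, that the pairings on the subquotients are the canonical ones, with $\HZtilde$ acting on $\HZmod$ through $\HZtilde\to\HZ\to\HZmod$. The paper does this in \Cref{Prop:pairings}, by showing that maps $\HZtilde\wedge E\to E$ for $E$ in the effective heart are determined by restriction along the unit. Only with both inputs does \Cref{Prop:multss} give your parity pattern.
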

\begin{Rmk}
A posteriori, using the proof of \Cref{Prop:pi4C}, we get that $\pi_4(\rR(\tilde{s}_0\ko))$ is at the same time a quotient of a subgroup of $\pi_4(\rR(C)) \cong \Z/16$ and an extension of two copies of $\Z/2$. Therefore, we have $\pi_4(\rR(\tilde{s}_0\ko)) \simeq \Z/4$.
\end{Rmk}
\begin{proof}
    We apply \Cref{Prop:spectralsequence} with $X_n =\rR((\tilde{s}_0\ko)_{\geq_e -n})$ for all $n\in\Z$. We will see in Step 1 below that the spectral sequence obtained satisfies the additional assumptions for the strong convergence. The colimit $\colim_n X_n$ is eventually constant with value $\rR(\tilde{s}_0\ko)$, whereas the limit $\lim_n X_n$ is trivial because $X_n$ is $(-n)$-connective for all $n\leq 0$. Indeed, $$\pi_r(X_n) = \pi_r(\rR((\tilde{s}_0\ko)_{\geq_e -n})) = \pi_r(\Sigma^{-n}\rR((\Sigma^n\tilde{s}_0\ko)_{\geq_e 0})) = 0$$ for all $r<-n$ (because $(\Sigma^r\tilde{s}_0\ko)_{\geq_e 0} \in \SH(\R)^\eff_{\geq 0} = \SH(k)^\veff$ by \Cref{Prop:propertiesofSHkeff}, and very effective spectra real realize to connective ones by \Cref{Prop:rRconnective}). Since the limit of a tower of $k$-connective spectra is $(k-1)$-connective \cite[Prop.\ VI.2.15]{GJ}, $\lim_n X_n$ is $\infty$-connective and thus trivial. Thus, the spectral sequence converges strongly to $\pi_p(\rR(\tilde{s}_0\ko))$.\\
    
    \textbf{Step 1:} \emph{In the $E^1$-page, only the zeroth and first line have non-trivial terms, i.e., only the zeroth and first subquotients are non-trivial.} Indeed, we have by definition $\tilde{s}_0\ko \in \SHR^\veff = \SHR^\eff_{\geq 0}$ (by \Cref{Prop:propertiesofSHkeff}$(i)$), so that $(\tilde{s}_0\ko)_{\geq_e 0} = \tilde{s}_0\ko$. By Claim (1) in the proof of \cite[Lem.\ 11]{Bachmann-genslices}, we have $(\tilde{s}_0\ko)_{\geq_e 1} \simeq s_0(\ko_{\geq 1})$, and the latter is the first term in the cofiber sequence of \Cref{Prop:veffslicesko}, namely $\Sigma^{1,0}\HZmod$. The natural map $(\tilde{s}_0\ko)_{\geq_e 1} \to \tilde{s}_0\ko$ is exactly the map $\Sigma^{1,0}\HZmod \to \tilde{s}_0$ appearing in this cofiber sequence. In particular, the $0$-th subquotient is nothing but the third term of this cofiber sequence, $\HZtilde$. Let us denote $s'_1\ko := (\tilde{s}_0\ko)_{\geq_e 1} \simeq \Sigma^{1,0}\HZmod$ and $s'_0\ko := (\tilde{s}_0\ko)/((\tilde{s}_0\ko)_{\geq_e 1}) \simeq \HZtilde$. The first subquotient is actually $s'_1\ko$ since $(\tilde{s}_0\ko)_{\geq_e k} \simeq 0$ for all $k\geq 2$. Indeed, in the cofiber sequence
$$ (\Sigma^{1,0}\HZmod)_{\geq_e k} \to (\tilde{s}_0\ko)_{\geq_e k} \to (\HZtilde)_{\geq_e k},$$
the two outer terms are zero, because by \Cref{Prop:HZHZmodveryeff} we have $\HZmod, \HZtilde \in \SHR^{\eff, \heartsuit}$, in particular $(\HZtilde)_{\geq_e k} \simeq 0$ for all $k\geq 1$ and $(\Sigma^{1,0}\HZmod)_{\geq_e k} \simeq \Sigma^{1,0}(\HZmod_{\geq_e k-1}) \simeq 0$ for all $k\geq 2$.\\

\textbf{Step 2:} \emph{We write down the $E^1$-page.} By \Cref{Prop:rRHZHZmod,Prop:rRHZtilde}, we know the homotopy groups of $\rR(s'_0\ko)$ and $\rR(s'_1\ko)$. Therefore, the $E^1$-page of our spectral sequence for $\pi_\ast(\rR\tilde{s}_0\ko)$ reads as follows:

\begin{figure}[h]
\centering
\begin{tikzpicture}
  \matrix (m) [matrix of math nodes,
    nodes in empty cells,nodes={minimum width=5ex,
    minimum height=5ex,outer sep=-5pt},
    column sep=3.5ex,row sep=3ex]{
          {\color{gray} 1}     &  0 &  \Zmod  & \Zmod & \Zmod & \Zmod &  \Zmod  & \Zmod & \Zmod & \Zmod \\
          {\color{gray} 0}    &  \Z  & 0 &  \Zmod  & 0& \Zmod  & 0 &  \Zmod  & 0 & \Zmod \\
    \quad\strut &  {\color{gray} 0}  &  {\color{gray} 1}  &  {\color{gray} 2}  & {\color{gray} 3} & {\color{gray} 4}  &  {\color{gray} 5}  &  {\color{gray} 6}  & {\color{gray} 7} & {\color{gray} 8} \\};
    \draw [draw= BrickRed, thick, -stealth] (m-2-4.north west) -- (m-1-3.south east) node[midway,inner sep = 1pt,above right] {\footnotesize \color{BrickRed} $g_2$} node[midway,inner sep = 1pt,below left] {\footnotesize \color{BrickRed} $\cong$};
    \draw [draw= BrickRed, thick, dashed, -stealth] (m-2-6.north west) -- (m-1-5.south east) node[midway,inner sep = 1pt,above right] {\footnotesize \color{BrickRed} $g_4$};
    \draw [draw= BrickRed, thick, -stealth] (m-2-8.north west) -- (m-1-7.south east) node[midway,inner sep = 1pt,above right] {\footnotesize \color{BrickRed} $g_6$} node[midway,inner sep = 1pt,below left] {\footnotesize \color{BrickRed} $\cong$};
    \draw [draw= BrickRed, thick, dashed, -stealth] (m-2-10.north west) -- (m-1-9.south east) node[midway,inner sep = 1pt,above right] {\footnotesize \color{BrickRed} $g_8$};
    \draw ([yshift=-5pt]m-1-3.south) node{\footnotesize \color{Cerulean} $s$};
    \draw ([yshift=-5pt]m-1-4.south) node{\footnotesize \color{Cerulean} $s'$};
    \draw ([yshift=-5pt]m-1-5.south) node{\footnotesize \color{Cerulean} $t\smash\cdot\smash s$};
    \draw ([yshift=-5pt]m-1-7.south) node{\footnotesize \color{Cerulean} $t^2\smash\cdot \smash s$};
    \draw ([yshift=-5pt]m-1-9.south) node{\footnotesize \color{Cerulean} $t^3\smash\cdot \smash s$};
    \draw ([yshift=-5pt]m-2-4.south) node{\footnotesize \color{Cerulean} $t$};
    \draw ([yshift=-5pt]m-2-6.south) node{\footnotesize \color{Cerulean} $t^2$};
    \draw ([yshift=-5pt]m-2-8.south) node{\footnotesize \color{Cerulean} $t^3$};
    \draw ([yshift=-5pt]m-2-10.south) node{\footnotesize \color{Cerulean} $t^4$};
\draw[thick, -stealth] ([xshift=5pt]m-3-1.east) -- ([xshift=5pt,yshift=20pt]m-1-1.east) node [left] {$q$} ;
\draw[thick, -stealth] ([yshift=5pt]m-3-1.north) -- ([xshift=20pt,yshift=5pt]m-3-10.north) node [below] {$p$} ;
\end{tikzpicture}
\caption{\footnotesize $E^1$-page for the spectral sequence for $\rR\tilde{s}_0\ko$ with respect to the effective homotopy $t$-structure.}\label{Fig:smallsseq}
\end{figure}

The arrows are some $d^1$ differentials, we will explain how to determine them just below. Dotted arrows mean that the differential is trivial. The elements written under some groups in the $E^1$-page are generators of these groups; we will see in Step 4 why it is indeed the case.\\

\textbf{Step 3:} \emph{We identify the differential $g_2$.} To begin with, note that we must have $\pi_1(\rR\tilde{s}_0\ko) \cong 0$. Indeed, this group is the $E^1_{1,0}$-term in the spectral sequence given by \Cref{Prop:spectralsequence} for $X'_n = \rR(\tilde{f}_{-n}\ko)$, where we set $\tilde{f}_{n}\ko = \ko$ for $n<0$. The assumptions of the theorem are still verified in this case by the same arguments as above. We obtain a spectral sequence concentrated in the first quadrant with $E^1_{p,q} = \pi_p(\tilde{s}_q\ko)$ for all $p,q\in\N$, converging strongly to $\pi_p(\rR\ko)$. In particular, the $E^1_{1,0}$-term will never receive a non-trivial differential. Moreover, since the first column of this same spectral contains only zeroes except in position $(0,0)$ (we have $\pi_0(\rR\tilde{s}_i\ko) \cong 0$ for all $i\geq 1$), it will never be the source of a non-trivial differential either, and will survive to the $E^\infty$-page. But since $\pi_1(\rR\ko) \cong 0$ by \Cref{Prop:homgprRko}, nothing in the first column can survive. This means that $\pi_1(\rR\tilde{s}_0\ko) \cong 0$. In particular, in the spectral sequence for $\rR(\tilde{s}_0\ko)$ and the effective homotopy t-structure, $E^1_{1,1} \cong \Zmod$ must eventually die. Since it cannot be the source of any non-trivial differential, the only possibility is that it receives a non-trivial differential $g_2$ from $E^1_{2,0} \cong \Zmod$, which must then be an isomorphism.\\

\textbf{Step 4:} \emph{We use the multiplicative structure on this spectral sequence to compute the next differentials.} In the situation of \Cref{Prop:multss}, let $E = \tilde{s}_0\ko$ and $X_n := \rR(E_{\geq_e -n})$ for all $n\in\Z$. The pairings $X_{-n} \Smash X_{-m} \to X_{-n-m}$ are given by the real realizations of the compositions $ E_{\geq_e n} \Smash E_{\geq_e m} \to E\Smash E \to E$ of the natural maps with the multiplication on $E$ (the latter is itself induced by the pairing in the tower for the very effective slice filtration on $\ko$, as we will see below), which lifts to $E_{\geq_e n+m}$. Indeed, we have
$$\SHR^\eff_{\geq n} \Smash \SHR^\eff_{\geq m} = \Sigma^{n,0}\SHR_{\geq 0}^\eff \Smash \Sigma^{m,0}\SHR_{\geq 0}^\eff \subseteq \Sigma^{m+n,0}\SHR_{\geq 0}^\eff = \SHR^\eff_{\geq m+n}$$ because the effective homotopy t-structure is compatible with the symmetric monoidal structure (in the sense of \cite[Def. A.10]{AN}). In particular, we have pairings 
\begin{align*}\HZtilde \Smash \HZtilde &\simeq s'_0\ko \Smash s'_0\ko \to s'_0\ko = \HZtilde,\\
\HZtilde \Smash \Sigma^{1,0}\HZmod &\simeq s'_0\ko \Smash s'_1\ko \to s'_1\ko = \Sigma^{1,0}\HZmod.
\end{align*}
We defer to \Cref{Prop:pairings} below the proof that these are exactly the pairings induced, respectively, by the ring structure on $\HZtilde$ and the composition $\HZtilde \Smash \HZmod \to \HZ \Smash \HZmod \to \HZmod$ induced by the natural map from Milnor--Witt K-theory to Milnor K-theory and the $\HZ$-algebra structure on $\HZmod$. 

Then, the pairing $\rR(s'_0\ko) \Smash \rR(s'_0\ko) \to \rR(s'_0\ko)$ on the subquotients is given in homotopy by the ring structure of $\pi_\ast(\rR\HZtilde) = \Z[t^2]/(2t^2)$ (\Cref{Prop:rRHZtilde}), with $\HZtilde \to \HZmod$ killing $2$. In a similar way, the pairing $\rR(s'_0\ko) \Smash \rR(s'_1\ko) \to \rR(s'_1\ko)$ is given in homotopy, after desuspension, by 
$\Z[t^2]/(2t^2) \otimes \Z/2[t] \to \Z/2[t^2] \otimes \Z/2[t] \to \Z/2[t]$,
 where the first map is the quotient and the second one is multiplication (viewing $\Z/2[t^2]\subseteq \Z/2[t]$ by \Cref{Prop:rRHZHZmod}). This explains why the generators are the ones displayed in the spectral sequence in Figure \ref{Fig:smallsseq}. Since $g_2$ is an isomorphism, we have $d^1(t)=s$. It follows that $g_{2k}$ is zero for $k$ even and an isomorphism for $k$ odd. Indeed, by the Leibniz rule $g_{2k}(t^k) = d^1(t^k)=kt^{k-1}d^1(t) \in \Zmod$ is 0 if $k$ is even and equals $t^{k-1}\cdot s$ otherwise (i.e., a generator in the target). \\

\textbf{Step 5:} \emph{We conclude.} The previous steps show that the $E^1$-page of our spectral sequence has the form displayed in Figure \ref{Fig:smallsseq}. Therefore, the $E^2$-page reads as follows:
\[ \begin{tikzpicture}
      \matrix (m) [matrix of math nodes,
        nodes in empty cells,nodes={minimum width=5ex,
        minimum height=5ex,outer sep=-5pt},
        column sep=3.5ex,row sep=3ex]{
              {\color{gray} 1}     &  0 &  0  & \Zmod & \Zmod & \Zmod &  0  & \Zmod & \Zmod & \Zmod \\
              {\color{gray} 0}    &  \Z  & 0 &  0  & 0& \Zmod  & 0 &  0  & 0 & \Zmod \\
        \quad\strut &  {\color{gray} 0}  &  {\color{gray} 1}  &  {\color{gray} 2}  & {\color{gray} 3} & {\color{gray} 4}  &  {\color{gray} 5}  &  {\color{gray} 6}  & {\color{gray} 7} & {\color{gray} 8} \\};
        \draw ([yshift=-5pt]m-1-4.south) node{\footnotesize \color{Cerulean} $s'$};
        \draw ([yshift=-5pt]m-1-5.south) node{\footnotesize \color{Cerulean} $t\smash\cdot\smash s$};
        \draw ([yshift=-5pt]m-1-9.south) node{\footnotesize \color{Cerulean} $t^3\smash\cdot \smash s$};
        \draw ([yshift=-5pt]m-2-6.south) node{\footnotesize \color{Cerulean} $t^2$};
        \draw ([yshift=-5pt]m-2-10.south) node{\footnotesize \color{Cerulean} $t^4$};
    \draw[thick, -stealth] ([xshift=5pt]m-3-1.east) -- ([xshift=5pt,yshift=20pt]m-1-1.east) node [left] {$q$} ;
    \draw[thick, -stealth] ([yshift=5pt]m-3-1.north) -- ([xshift=20pt,yshift=5pt]m-3-10.north) node [below] {$p$} ;
    \end{tikzpicture} \]
    and for degree reasons, no non-trivial differential can exist (on the $E^2$-page, differentials have degree (-1,2)). Thus, the spectral sequence collapses, and this finishes the proof.
\end{proof}

\begin{Lemma}\label{Prop:pairings} For any $E\in \SHR^{\eff,\heartsuit}$, there is an equivalence 
    $$\map_{\SH(\R)}(\HZtilde \Smash E, E) \lsimeq{} \map_{\SH(\R)}(E, E)$$
    induced by precomposition by the map $E \simeq \unit \Smash E \to \HZtilde \Smash E$ (given by the unit of the $\Einfty$-ring $\HZtilde$).
    
    In particular, the pairings $\HZtilde \Smash \HZtilde  \to  \HZtilde$ and $\HZtilde \Smash \Sigma^{1,0}\HZmod \to \Sigma^{1,0}\HZmod$ for the multiplicative spectral sequence of Figure \ref{Fig:smallsseq} (before taking real realization and homotopy group) are given, respectively, by the multiplication of $\HZtilde$ and the composition $\HZtilde \Smash \HZmod \to \HZ \Smash \HZmod \to \HZmod$ induced by the natural map from Milnor--Witt K-theory to Milnor K-theory and the $\HZ$-algebra structure on $\HZmod$. 
\end{Lemma}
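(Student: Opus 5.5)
We prove the equivalence by identifying $\HZtilde \Smash E \to E$ with a truncation in the effective homotopy t-structure. Let $u\colon \unit \to \HZtilde$ be the unit, and set $E':=\mathsf{cof}(\unit\Smash E \to \HZtilde\Smash E)$. It suffices to show that $\map(E',E)\simeq 0$. The precomposition map in the statement then has contractible fiber, so it is an equivalence.

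\textbf{Step 1: connectivity of the cofiber.} By \cite[Lem.\ 17]{Bachmann-genslices} and the definition $\HZtilde = f_0\underline{K}^{MW}_\ast$, the unit $\unit \to \HZtilde$ exhibits $\HZtilde$ as the truncation $(f_0\unit)_{\leq_e 0}$ in the effective homotopy t-structure. Here we use $\underline{\pi}_0(\unit)_\ast = \underline{K}^{MW}_\ast$ (as in the proof of \Cref{Prop:rRHZtilde}), and $f_0$ is t-exact (\Cref{Prop:smheart}). Hence $F:=\mathsf{cof}(f_0\unit\to\HZtilde)$ lies in $\SHR^\eff_{\geq 2}$, i.e.\ the fiber lies in $\SHR^\eff_{\geq 1}$. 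Since $E$ is effective, the map $\unit\Smash E\to \HZtilde\Smash E$ agrees with $f_0\unit\Smash E \to \HZtilde \Smash E$, because $f_0\unit\Smash E\simeq E$ for $E$ effective. Its cofiber $F\Smash E$ lies in $\SHR^\eff_{\geq 2}\Smash\SHR^\eff_{\geq 0}\subseteq \SHR^\eff_{\geq 1}$, using $E\in\SHR^{\eff,\heartsuit}\subseteq\SHR^\eff_{\geq0}$ and the compatibility of the effective t-structure with the smash product already used in Step 4 of \Cref{Prop:pistilde0ko}. One has to be careful to work inside $\SHR^\eff$, where the mapping spaces agree with those of $\SHR$ because the inclusion is fully faithful.

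\textbf{Step 2: orthogonality.} Since $E\in\SHR^\eff_{\leq 0}$ and $\Sigma^{-1}E'\in\SHR^\eff_{\geq 0}$, we get $\pi_k\map(E',E)=[\Sigma^kE',E]=0$ for all $k\geq -1$. This only controls $\pi_{\geq -1}$ of the mapping spectrum. For the mapping space it suffices, because a mapping space only sees $\pi_{\geq 0}$. Therefore $\map(\HZtilde\Smash E,E)\to\map(E,E)$ is an equivalence of spaces (the fiber is $\Omega^\infty$ of a spectrum with vanishing $\pi_{\geq -1}$, and $\pi_0$-surjectivity follows from $[\Sigma^{-1}E',E]=0$). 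The main obstacle is checking that $F$ really is $1$-connective and not merely $0$-connective, i.e.\ that $\underline{\pi}_0(f_0\unit)\to\underline{\pi}_0(\HZtilde)$ is an isomorphism in the effective heart. This is exactly the content of \cite[Lem.\ 17]{Bachmann-genslices}, so I would cite it carefully.

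\textbf{Step 3: the pairings.} Apply the equivalence with $E=\HZtilde$ and with $E=\Sigma^{1,0}\HZmod$. The latter is in the heart after shifting: $\Sigma^{1,0}$ is the shift in the effective t-structure, and the argument is unchanged for maps $\HZtilde\Smash\Sigma E\to\Sigma E$. A pairing $\HZtilde\Smash E\to E$ is then determined by its restriction along the unit. Both the spectral-sequence pairing and the candidate pairings (the multiplication on $\HZtilde$, and $\HZtilde\Smash\HZmod\to\HZ\Smash\HZmod\to\HZmod$) are unital. For the spectral-sequence pairing this holds because it is induced from the unital multiplication on $\tilde{s}_0\ko$, and the unit of $\tilde s_0\ko$ maps to the unit of $s'_0\ko\simeq\HZtilde$. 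Hence both restrict to the identity, and the two pairings coincide.
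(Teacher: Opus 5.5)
Your proof is correct and is essentially the paper's argument. The paper phrases the first part through the universal property of the $\leq_e 0$ truncation and the identity $(F\Smash G)_{\leq_e 0}\simeq(F_{\leq_e 0}\Smash G_{\leq_e 0})_{\leq_e 0}$, which it proves by exactly your observation that the fiber of $\unit\Smash E\to\HZtilde\Smash E$ is $\unit_{\geq_e 1}\Smash E\in\SHR^\eff_{\geq 1}$; it then reduces the identification of the pairings to unitality just as you do.

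There is one indexing slip to repair. Step 1 actually gives $E'=F\Smash E\in\SHR^\eff_{\geq 2}$, hence $\Sigma^{-1}E'\in\SHR^\eff_{\geq 1}$, and that is what you need for $[\Sigma^{-1}E',E]=0$ and thus for $\pi_0$-surjectivity. From $\Sigma^{-1}E'\in\SHR^\eff_{\geq 0}$, as you wrote it, this vanishing would not follow.
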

\begin{proof} First, we claim that, for any $F,G \in \SHR^\eff_{\geq 0}$, we have $(F\Smash G)_{\leq_e 0} \simeq (F_{\leq_e 0} \Smash G_{\leq_e 0})_{\leq_e 0}$ via the natural maps $F\to F_{\leq_e 0}$ and $G\to G_{\leq_e 0}$. Indeed, almost by definition of a t-structure, we have in $\SH(\R)^\eff_{\geq 0}$ a cofiber sequence $F_{\geq_e 1} \Smash G \to F \Smash G  \to F_{\leq_e 0} \Smash G $. The associated long exact sequence then gives an equivalence $(F \Smash G)_{\leq_e 0} \lsimeq{} (F_{\leq_e 0} \Smash G)_{\leq_e 0}$ (since $F_{\geq_e 1} \Smash G \in \SH(\R)^\eff_{\geq_e 1} \Smash \SH(\R)^\eff_{\geq_e 0} \subseteq \SH(\R)^\eff_{\geq_e 1}$). Applying the same argument while exchanging the roles of $F$ and $G$, we obtain our claim.

    Let $E\in \SH(\R)^{\eff,\heartsuit}$. Using the previous claim and $\HZtilde = \unit_{\leq_e 0}$ \cite[Lem.\ 12]{Bachmann-genslices}, we have
\begin{align*}
    \map_{\SHR}(\HZtilde \Smash E, E) &\simeq \map_{\SHR}((\HZtilde \Smash E)_{\leq_e 0}, E) \simeq \map_{\SHR}((\unit_{\leq_e 0} \Smash E_{\leq_e 0})_{\leq_e 0}, E)\\
    &\simeq \map_{\SHR}((\unit \Smash E)_{\leq_e 0}, E) \simeq \map_{\SHR}(\unit \Smash E, E).
\end{align*}
This equivalence of mapping spaces is induced by precomposition with $\unit \to \unit_{\leq_e 0} \simeq \HZtilde$ (tensored with $E$). Indeed, there is a commutative diagram
%  https://q.uiver.app/#q=WzAsNSxbMCwwLCJcXHVuaXQgXFx3ZWRnZSBFIl0sWzEsMCwiXFx1bml0X3tcXGxlcV9lIDB9IFxcd2VkZ2UgRV97XFxsZXFfZSAwfSBcXHNpbWVxXFxIWnRpbGRlIFxcd2VkZ2UgRSJdLFsyLDAsIkVfe1xcbGVxX2UgMH0gXFxzaW1lcSBFIl0sWzAsMSwiKFxcdW5pdCBcXHdlZGdlIEUpX3tcXGxlcV9lIDB9Il0sWzEsMSwiKFxcdW5pdF97XFxsZXFfZSAwfSBcXHdlZGdlIEVfe1xcbGVxX2UgMH0pX3tcXGxlcV9lIDB9Il0sWzEsMiwiXFx2YXJ0aGV0YSJdLFswLDEsIlxcdGF1XzEiXSxbMSw0XSxbMCwzLCJcXHRhdV8yIiwyXSxbMyw0LCJcXHRhdV8zIl0sWzQsMiwiXFx2YXJ0aGV0YSciLDIseyJzdHlsZSI6eyJib2R5Ijp7Im5hbWUiOiJkYXNoZWQifX19XV0=
\[\begin{tikzcd}
	{\unit \wedge E} & {\unit_{\leq_e 0} \wedge E_{\leq_e 0} \simeq\HZtilde \wedge E} & {E_{\leq_e 0} \simeq E} \\
	{(\unit \wedge E)_{\leq_e 0}} & {(\unit_{\leq_e 0} \wedge E_{\leq_e 0})_{\leq_e 0}.}
	\arrow["{\tau_1}", from=1-1, to=1-2]
	\arrow["{\tau_2}"', from=1-1, to=2-1]
	\arrow["\vartheta", from=1-2, to=1-3]
	\arrow[from=1-2, to=2-2]
	\arrow["{\tau_3}", from=2-1, to=2-2]
	\arrow["{\vartheta'}"', dashed, from=2-2, to=1-3]
\end{tikzcd}\]
where all maps in the square are induced by the natural maps in the truncation. The equivalence above sends a map $\vartheta$ to the composite $\vartheta'\circ\tau_3 \circ\tau_2 \simeq \vartheta \circ\tau_1$. 

We now apply this result to $E = \HZtilde$, or $E = \HZmod$ (recall that $\HZtilde,\HZmod \in\SHR^{\eff,\heartsuit}$ by \Cref{Prop:HZHZmodveryeff}). Since $\SHR(\HZtilde \Smash \Sigma^{1,0}\HZmod,\Sigma^{1,0}\HZmod) \simeq \SHR(\HZtilde \Smash \HZmod,\HZmod)$, we only have to show that the pairings induced on the subquotients of the tower correspond to the unit after precomposition by the natural map $\unit \to \unit_{\leq_e 0} \simeq \HZtilde$.
This can be proven by chasing through the construction of the pairing induced on the subquotients; which is ultimately obtained from the multiplication on $\ko$ (and the latter is unital). More precisely, the unit $\unit \to \unit_{\leq_e 0} = \HZtilde$ is exactly the one induced by 
$$\unit \longrightarrow \ko \longrightarrow \tilde{s}_0\ko = f_0(\tilde{s}_0\ko) \longrightarrow f_0((\tilde{s}_0\ko)_{\leq 0}) = f_0(\underline{\pi}_0(\tilde{s}_0\ko)) = s'_0\ko.$$
To check that $\unit \Smash s'_0\ko \to s'_0\ko\Smash s'_0\ko \to s'_0\ko$ is the unit, for example, using Diagram (\ref{Diagram:pairing}), we want to show that it holds true for $\unit \Smash (\tilde{s}_0\ko)_{\geq_e i} \to (\tilde{s}_0\ko)_{\geq_e i}$ for $i=0,1$. Since these pairings are induced by one we had on $\tilde{s}_0$, which is induced from that on the very effective filtration, we want to show it for the maps $\unit \Smash \tilde{f}_i\ko \to \tilde{f}_i\ko$ for $i=0,1$, and these where ultimately induced by $\unit \Smash \ko \to \ko\Smash \ko \to \ko$, whence the claim follows.
\end{proof}

\begin{Lemma}\label{Prop:pi4C} With the same notation as in \Cref{Prop:aiscardinality}, we have $|\pi_4(\rR C)|=16$.
\end{Lemma}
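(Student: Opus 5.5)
We compute $\pi_4(\rR C)$ with the spectral sequence of \Cref{Prop:spectralsequence} for the real realization of the very effective filtration on $C = \ko/\tilde{f}_4\ko$. Set $X_n = \rR(\tilde{f}_{-n}\ko/\tilde{f}_4\ko)$, with $\tilde{f}_{-n}\ko=\ko$ for $n>0$. The tower is finite, so convergence is automatic: the filtration has only four nonzero stages. The nonzero subquotients are $\rR\tilde{s}_q\ko$ for $q=0,1,2,3$. By \Cref{Prop:veffslicesko} and $\rR(T)=\Sph^1$, they are
\[
\rR\tilde{s}_0\ko,\quad \Sigma^{2}\rR\HZmod,\quad \Sigma^{4}\rR\HZ,\quad 0 .
\]
Hence $E^1_{p,q}=\pi_p(\rR\tilde{s}_q\ko)$, and by \Cref{Prop:rRHZHZmod} (and \Cref{Prop:pistilde0ko} for $q=0$) it reads:
\begin{itemize}
\item $q=0$: the groups of \Cref{Prop:pistilde0ko}; in particular $\pi_4$ has order $4$ and $\pi_5=0$.
\item $q=1$: $\Zmod$ in every degree $p\geq 2$.
\item $q=2$: $\Zmod$ in degrees $4,6,8,\dots$.
\item $q=3$: zero.
\end{itemize}
Total degree $4$ therefore has $E^1$-contributions $\pi_4(\rR\tilde{s}_0\ko)$ (order $4$), $\pi_2(\rR\HZmod)=\Zmod$ from $q=1$, and $\pi_0(\rR\HZ)=\Zmod$ from $q=2$. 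Their orders multiply to $4\cdot 2\cdot 2=16$. It therefore suffices to show that none of these three groups supports or receives a nonzero differential.

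For the $q=2$ term $E^1_{4,2}=\pi_4(\rR\Sigma^{4,2}\HZ)$, outgoing differentials land in $q\geq 3$, which is zero. Incoming differentials come from degree $5$: from $E^1_{5,1}=\pi_3(\rR\HZmod)=\Zmod$ via $d^1$, and from $E^1_{5,0}=\pi_5(\rR\tilde{s}_0\ko)=0$. The $d^1\colon E^1_{5,1}\to E^1_{4,2}$ is induced by the connecting map $\Sigma^{2,1}\HZmod\to\Sigma^{5,2}\HZ$ of the slice tower. I expect it to be zero, and I will compare with the spectral sequence for $\rR\ko$ itself (slices $q\geq 0$, including $q=4,5,\dots$). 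That spectral sequence converges to $\Z[x]$, so total degree $4$ must yield exactly one $\Z$ and total degrees $3$ and $5$ must yield $0$. The terms computed above, together with $E^1_{4,4}=\pi_0(\rR\Sigma^{8,4}\tilde{s}_0\ko)$, and torsion-freeness of the abutment force the relevant differentials. This is the same type of argument as Step~3 of \Cref{Prop:pistilde0ko}.

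For the $q=1$ term $E^1_{4,1}=\pi_2(\rR\HZmod)=\Zmod[t]$ in degree $2$, the outgoing $d^1$ to $E^1_{3,2}$ is zero because that group vanishes ($\rR\HZ$ has no odd homotopy). The incoming $d^1$ comes from $E^1_{5,0}=0$. For the $q=0$ term $E^1_{4,0}$, the outgoing $d^1$ goes to $E^1_{3,1}=\pi_1(\rR\HZmod)=\Zmod$ and $d^2$ goes to $E^1_{2,2}=0$. The $d^1\colon \pi_4(\rR\tilde{s}_0\ko)\to\pi_1(\rR\HZmod)$ should be handled with the multiplicative structure (\Cref{Prop:multss}), after first computing the degree-$1$ and degree-$2$ differentials. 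By Step~3 of \Cref{Prop:pistilde0ko}, $\pi_1(\rR\ko)=0$ forces $d^1\colon E^1_{2,0}\to E^1_{1,1}$ to be nonzero, which pins down the generators.

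The main obstacle is showing that $d^1$ vanishes on $E^1_{4,0}$ and into $E^1_{4,2}$. I would try the following consistency argument first. In the full spectral sequence for $\rR\ko$, total degree $3$ must die, and its candidates are $E^1_{3,0}=\Zmod$ and $E^1_{3,1}=\Zmod$. Degree $4$ must yield a group whose localization is $\Z$ and which is torsion-free. Counting orders, with the surviving $\Z$ in filtration $4$ mapping via $a$, gives $\pi_4(\rR C)\cong\Z/a$ of order exactly the product of surviving torsion. I would pair off the degree-$3$ classes against degree-$2$ classes: $E^1_{2,0}=\Zmod$ is killed by the nonzero $d^1$ above, and $E^1_{2,1}=\pi_0(\rR\HZmod)=\Zmod$ supports a $d^1$ into $E^1_{1,2}=0$. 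Running these counts should force all differentials touching total degree $4$ to vanish, giving $|\pi_4(\rR C)|=16$.
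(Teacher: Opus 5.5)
Your overall plan matches the paper's. You run the spectral sequence of the real-realized very effective slice tower and treat the one for $\rR C$ as its truncation to rows $q\le 3$. You then note that column $p=4$ has orders $4\cdot 2\cdot 2=16$ and aim to show no differential touches it. You also correctly isolate $d^1$ on $E^1_{4,0}$ and $d^1$ into $E^1_{4,2}$ as the two real issues.

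Your $E^1$-page is miscomputed, however. Since $\rR(\Sigma^{p,q}E)\simeq\Sigma^{p-q}\rR E$, we have $\rR\tilde{s}_1\ko\simeq\Sigma\rR\HZmod$ and $\rR\tilde{s}_2\ko\simeq\Sigma^{2}\rR\HZ$, not $\Sigma^2$ and $\Sigma^4$. So $E^1_{p,1}\cong\Zmod$ for all $p\geq 1$ and $E^1_{p,2}\cong\Zmod$ for all even $p\geq 2$. The orders in column $4$ agree with yours only by coincidence. Your table has $E^1_{1,1}=E^1_{2,2}=0$, which is incompatible with $\pi_2(\rR\ko)=0$: the class $E^1_{2,0}\cong\Zmod$ would have nowhere to go. It also contradicts the differential $E^1_{2,0}\to E^1_{1,1}$ you yourself invoke, so any counting must be redone on the corrected page.

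On the corrected page, the vanishing of $d^1$ on $E^1_{4,0}$ does follow by counting, exactly as in the paper. The maps $E^1_{2,0}\to E^1_{1,1}$ and $E^1_{3,0}\to E^1_{2,1}$ are forced to be isomorphisms, so $E^2_{3,0}=0$. Hence $E^1_{2,2}=\pi_0(\rR\HZ)\cong\Zmod$ can only die by being hit by $d^1$ from $E^1_{3,1}$, so this $d^1$ is injective. Then $d^1\circ d^1=0$ forces $d^1=0$ on $E^1_{4,0}$; no multiplicative input is needed for this step.

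The genuine gap is $d^1\colon E^1_{5,1}\to E^1_{4,2}$, which you only ``expect'' to vanish; counting and torsion-freeness cannot decide it. The class $E^1_{5,1}\cong\Zmod$ must die, but it can do so either by supporting this $d^1$ or by being hit by $d^1$ from $E^1_{6,0}\cong\Zmod$. Torsion-freeness of $\pi_4(\rR\ko)\cong\Z$ imposes nothing. What filtrations $0$ to $3$ detect is the finite cyclic quotient $\pi_4(\rR\ko)/\mathrm{im}(\pi_4\rR\tilde{f}_4\ko)\cong\Z/a$, and $a$ is precisely the unknown. Nothing in columns $p\leq 5$ excludes the pattern where $d^1$ vanishes on $E^1_{6,0}$ and $d^1\colon E^1_{5,1}\to E^1_{4,2}$ is an isomorphism, which would give $|\pi_4(\rR C)|=8$.

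The paper supplies the missing input through the multiplicative structure (\Cref{Prop:multss}, with the $\HZtilde$-module structures identified in \Cref{Prop:pairings}). The Leibniz rule gives $d^1(t^2\cdot s')=t^2\cdot d^1(s')+d^1(t^2)\cdot s'=t^2\cdot a\neq 0$, using $d^1(s')=a$ and $d^1(t^2)=0$ from the previous step. So $d^1\colon E^1_{6,0}\to E^1_{5,1}$ is an isomorphism, and $d^1\circ d^1=0$ kills $d^1\colon E^1_{5,1}\to E^1_{4,2}$.

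Alternatively, the periodicity $\tilde{f}_{q+4}\ko\simeq T^{\wedge 4}\wedge\tilde{f}_q\ko$ identifies $d^1\colon E^1_{6,4}\to E^1_{5,5}$ with a shift of the isomorphism $E^1_{2,0}\to E^1_{1,1}$. Then $E^1_{6,0}$, which must die, has $E^1_{5,1}$ as its only remaining target, giving the same conclusion. Some structural input of this kind, beyond order counting, is needed to finish your proof.
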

\begin{proof} 
Applying \Cref{Prop:spectralsequence} again, this time with $X_n = \rR(\tilde{f}_{-n}\ko)$ for all $n \in \Z$ (by convention $\tilde{f}_{n}\ko = \ko$ for $n<0$), we obtain a spectral sequence concentrated in the first quadrant, with $E^1_{p,q} = \pi_p(\tilde{s}_q\ko)$ for all $p,q\in\N$. As we will see in Figure \ref{Fig:bigsseq}, the assumptions for strong convergence are satisfied. We have $\colim_n X_n \simeq \rR\ko$, and $\lim_n X_n \simeq 0$ by the same argument as in the beginning of the proof of \Cref{Prop:pistilde0ko}, since a very $n$-effective spectra is in particular $n$-connective in the effective homotopy t-structure. Indeed, we have $\SH(k)^\veff(n) := T^{\Smash n} \Smash \SH(k)^\veff \subseteq \Sigma^{n}\SH(k)^\veff = \SH(k)^\eff_{\geq n}$ by \Cref{Prop:propertiesofSHkeff}$(i)$. Therefore, our spectral sequence converges strongly to $\pi_p(\rR\ko)$. We know the latter groups by \Cref{Prop:homgprRko}, and we will use this to compute several differentials in the $E^1$-page. Then, since the same spectral sequence for $\ko$ replaced with $C$ is given by the truncation above the third line, this will give us information about the homotopy groups of $\rR(C)$.\\

Using the explicit description of the very effective slices of $\ko$ (\Cref{Prop:veffslicesko}) and the computation of the real realizations of these slices (\Cref{Prop:rRHZHZmod,Prop:rRHZtilde,Prop:pistilde0ko}), the $E^1$-page of our spectral sequence reads as in Figure \ref{Fig:bigsseq} below. The notation is as in Figure \ref{Fig:smallsseq}. We will explain just below how to determine the differentials, but let us first see how this implies the result. The spectral sequence for $\rR C=\rR(\ko/\tilde{f}_4\ko)$ is the truncation of that for $\rR\ko$ above the third horizontal line (indeed, $\tilde{s}_i C = \tilde{s}_i\ko$ for $0\leq i \leq 3$, and 0 otherwise). In particular, the differentials must be the same and in the fourth column, all terms on the $E^1$-page survive to the $E^\infty$-page. Therefore, $\pi_4(\rR C)$ is an extension of four copies of $\Z/2$; it has cardinality 16 (we can even identify it as $\Z/16$ because we saw in the proof of \Cref{Prop:aiscardinality} that it was cyclic), as desired.

\begin{figure}[h]
\centering
\begin{tikzpicture}
\matrix (m) [matrix of math nodes,
    nodes in empty cells,nodes={minimum width=5ex,
    minimum height=5ex,outer sep=-5pt},
    column sep=3.5ex,row sep=3ex]{
           {\color{gray} 6}     &  0 &  0  & 0 & 0 & 0 &  0  & \Zmod \\
           {\color{gray} 5}     &  0 &  0  & 0 & 0 & 0 &  \Zmod  & \Zmod \\
           {\color{gray} 4}     &  0 &  0  & 0 & 0 & \Z &  0  & \Zmod \\
           {\color{gray} 3}     &  0 &  0  & 0 & 0 & 0 &  0  & 0 \\
           {\color{gray} 2}     &  0 &  0  & \Zmod & 0 & \Zmod &  0  & \Zmod \\
           {\color{gray} 1}     &  0 &  \Zmod  & \Zmod & \Zmod & \Zmod &  \Zmod  & \Zmod \\
          {\color{gray} 0}    &  \Z  & 0 &  \Zmod  & \Zmod & \mathclap{\substack{\text{Extension}\\\text{of two } \Zmod\text{'s}}}  & 0 &  \Zmod  \\
    \quad\strut &  {\color{gray} 0}  &  {\color{gray} 1}  &  {\color{gray} 2}  & {\color{gray} 3} & {\color{gray} 4}  &  {\color{gray} 5}  &  {\color{gray} 6}  \\
    \quad \strut &  {\color{MyGreen}\Z}  &  {\color{MyGreen}0}  &  {\color{MyGreen}0}  & {\color{MyGreen}0} & {\color{MyGreen}\Z} &  {\color{MyGreen}0}  &  {\color{MyGreen}0}  \\};
    \draw [draw= BrickRed, thick, -stealth] (m-7-4.north west) -- (m-6-3.south east) node[midway,inner sep = 1pt,above right] {\footnotesize \color{BrickRed} $f_1$} node[midway,inner sep = 1pt,below left] {\footnotesize \color{BrickRed} $\cong$};
    \draw [draw= BrickRed, thick, -stealth] (m-7-5.north west) -- (m-6-4.south east) node[midway,inner sep = 1pt,above right] {\footnotesize \color{BrickRed} $f_2$} node[midway,inner sep = 1pt,below left] {\footnotesize \color{BrickRed} $\cong$};
    \draw [draw= BrickRed, thick, -stealth] (m-7-8.north west) -- (m-6-7.south east) node[midway,inner sep = 1pt,above right] {\footnotesize \color{BrickRed} $f_5$} node[midway,inner sep = 1pt,below left] {\footnotesize \color{BrickRed} $\cong$};
    \draw [draw= BrickRed, thick, -stealth] (m-6-5.north west) -- (m-5-4.south east) node[midway,inner sep = 1pt,above right] {\footnotesize \color{BrickRed} $f_3$}node[midway,inner sep = 1pt,below left] {\footnotesize \color{BrickRed} $\cong$};
    \draw [draw= BrickRed, dashed, thick, -stealth] ([yshift=2pt]m-7-6.north west) -- (m-6-5.south east) node[midway,inner sep = 1pt,above right] {\footnotesize \color{BrickRed} $f_4$};
    \draw [draw= BrickRed, dashed,thick, -stealth] (m-6-7.north west) -- (m-5-6.south east) node[midway,inner sep = 1pt,above right] {\footnotesize \color{BrickRed} $f_6$};
    \draw ([yshift=-5pt]m-7-4.south) node{\footnotesize \color{Cerulean} $s'$};
    \draw ([yshift=-5pt]m-6-3.south) node{\footnotesize \color{Cerulean} $a$};
    \draw ([yshift=-5pt]m-7-8.south) node{\footnotesize \color{Cerulean} $t^2\smash\cdot\smash s'$};
    \draw ([yshift=-5pt]m-6-7.south) node{\footnotesize \color{Cerulean} $t^2\smash\cdot\smash a$};
    \draw ([xshift=-15pt]m-9-1.west) node{\color{MyGreen} Converges to:};
\draw[thick, -stealth] ([xshift=5pt,yshift=-35pt]m-7-1.east) -- ([xshift=5pt,yshift=20pt]m-1-1.east) node [left] {$q$} ;
\draw[thick, -stealth] ([xshift=-45pt,yshift=5pt]m-8-2.north) -- ([xshift=20pt,yshift=5pt]m-8-8.north) node [below] {$p$} ;
\draw[dashed] ($([xshift=-45pt]m-4-2)! 0.5!([xshift=-45pt]m-3-2)$) -- ($([xshift=60pt]m-4-7)! 0.5!([xshift=60pt]m-3-7)$);
\draw [thick, decorate,decoration={brace,amplitude=4pt},xshift=0.3cm,yshift=0pt]
      ([xshift=-50pt,yshift=5pt]m-8-2.north) -- ($([xshift=-50pt]m-4-2)! 0.5!([xshift=-50pt]m-3-2)$) node [midway,left,xshift=-5pt] {$\rR C$};
\end{tikzpicture}
\caption{\footnotesize $E^1$-page of the spectral sequence for $\rR\ko$ with respect to the very effective slice filtration.}\label{Fig:bigsseq}
\end{figure}

To conclude the proof, we only have to explain why the differentials are the ones displayed in Figure \ref{Fig:bigsseq}. First, note that by \Cref{Prop:homgprRko}, the spectral sequence for $\rR\ko$ converges to the groups displayed in the last line in Figure \ref{Fig:bigsseq}, i.e., to $\Z$ in every degree divisible by $4$, and $0$ otherwise.
\begin{itemize}
    \item No non-zero differential with source $E^1_{1,1} = \Z/2$ can exist. So it must receive a non-trivial differential on some page; the only possibility is that $d^1: E^1_{2,0} \to E^1_{1,1}$, namely $f_1$, is an isomorphism. 
    \item The same argument shows that $f_2$ is an isomorphism.
    \item The $E^1_{2,2}$-term could a priori also receive a non-trivial differential $d^2$ from $E^2_{3,0}$ on the $E^2$-page; but since $f_2$ is an isomorphism, $E^2_{3,0} = 0$. Thus, $f_3$ must be an isomorphism. 
    \item Since $d^1\circ d^1=0$, we have $f_3\circ f_4 = 0$, so by the previous bullet point $f_4=0$. The $E^1_{4,0}$-term then survives to the $E_\infty$-page. So does the $E^1_{4,1}$-term (it cannot be reached by any non-trivial differential). 
\end{itemize}
\vspace{0.2cm}

The multiplicative structure on the spectral sequence provides the missing information to determine $f_5$ and $f_6$. The multiplicative structure comes this time from the pairing $\tilde{f}_n\KO \Smash \tilde{f}_m \KO \to \tilde{f}_{n+m}\KO$ obtained as follows: for any $n,m\geq 0$, the composition 
$\tilde{f}_n\KO \Smash \tilde{f}_m\KO \to \KO\Smash \KO \to \KO$ given by the natural maps, followed by multiplication, lifts to $\tilde{f}_{n+m}\ko$, because 
\begin{align*}\SHR^\veff(n) \Smash \SHR^\veff(m) &= (T^{\Smash n} \Smash \SHR^\veff) \Smash (T^{\Smash m} \Smash \SHR^\veff) \\
&\subseteq (T^{\Smash (n+m)} \Smash \SHR^\veff) = \SHR^\veff(n+m).
\end{align*}
The compatibility between these pairings come from the $\Einfty$-ring axioms for $\KO$. Then the pairing descends to the slices as Diagram (\ref{Diagram:pairing}). We claim that the pairing $\tilde{s}_0\ko \Smash \tilde{s}_1\ko \to \tilde{s}_1\ko$ factors through $(\tilde{s}_0\ko)/(s'_1\ko)\  \Smash\  \tilde{s}_1\ko \simeq s'_0\ko \Smash \tilde{s}_1\ko$. Indeed, the latter is the cofiber of $s'_1\ko \Smash \tilde{s}_1\ko \to \tilde{s}_0\ko \Smash \tilde{s}_1\ko$, whose post composition with the pairing is zero. This holds because $\Sigma_T^{-1}(s'_1\ko \Smash \tilde{s}_1\ko \to \tilde{s}_1\ko)$ is a map $\Sigma^{1,0}\HZmod \wedge \HZmod \to \HZmod$, so it must be zero by the axioms of a t-structure (the source belongs to $\SHR^\eff_{\geq 1}$ whereas the right-hand side belongs to  $\SHR^\eff_{\leq 0}$ (\Cref{Prop:HZHZmodveryeff})). Moreover, the map $\tilde{s}_0\ko \Smash s'_1\ko \to \tilde{s}_0\ko \Smash \tilde{s}_0\ko \to \tilde{s}_0\ko$ induced by the natural map and the pairing lifts to $s'_1\ko$ on the target, and then by the same argument as above this lift factors through $s'_0\ko \Smash s'_1\ko$. Then, since $\tilde{s}_1\ko \simeq \Sigma^{2,1}\HZmod$, we can use the exact same strategy as we did for $s'_1\ko \simeq \Sigma^{1,0}\HZmod$ to show that $s'_0\ko \Smash \tilde{s}_1\ko \to \tilde{s}_1\ko$ is the $(2,1)$ suspension of the pairing $\HZtilde \Smash \HZmod \to \HZmod$ previously considered. In particular, we deduce the following:
\begin{itemize}
    \item Let us also denote by $s'$ a generator of $\pi_2(\rR\tilde{s}_0\ko) \cong \Zmod$ (it comes from the generator $s'$ of $\pi_2(\rR s'_1\ko)$ in Figure \ref{Fig:smallsseq}). Then, a preimage in $\pi_4(\rR\tilde{s}_0\ko)$ for $t^2 \in \pi_4(\rR s'_0\ko)$ acts upon $s'$ by sending it to the image of $t^2\cdot s' \in \pi_6(\rR s'_1\ko)$ in $\pi_6(\rR \tilde{s}_0\ko) \cong \Zmod$, which is a generator, as we can see from the spectral sequence for $\rR\tilde{s}_0\ko$ (Figure \ref{Fig:smallsseq}).
    \item Let $a$ be a generator of $\pi_1(\rR\tilde{s}_1\ko) \cong \Zmod$. Then a preimage in $\pi_4(\rR\tilde{s}_0\ko)$ for $t^2 \in \pi_4(\rR s'_0\ko)$ acts upon $a$ by sending it to a generator in $\pi_5(\rR\tilde{s}_1\ko) \cong \Zmod$ (this is the action of $\rR\HZtilde$ on $\rR\Sigma^{2,1}\HZmod$ as we have seen it before, for the spectral sequence in Figure \ref{Fig:smallsseq}). 
    \item Therefore, since by the Leibniz rule for $d^1$, we have 
    $$f_5(t^2\cdot s') = d^1(t^2\cdot s') = t^2 \cdot d^1(s') + d^1(t^2)\cdot s'= t^2 \cdot a \in E^1_{5,1}\cong\Zmod $$
    (since $f_1$ is an isomorphism and $f_4$ is trivial), $f_5$ maps a generator to a generator and is an isomorphism. In particular, since $d^1\circ d^1=0$, we also get $f_6=0$, and the $E^1_{4,2}$-term survives to the $E_\infty$-page.
\end{itemize}
This finishes the proof.
\end{proof}
\vspace{0.2cm}
    
\subsection{Comparison with the L-theory of \texorpdfstring{$\R$}{the real numbers}}\label{Subsect:compwithL(R)}\hfill\vspace{0.2cm}

\begin{thm}\label{Prop:compwithLtheory} There is an equivalence of $\E{1}$-rings
    $$\rR\ko \simeq \mathsf{L}(\R)_{\geq 0}$$ between the real realization of the very effective Hermitian K-theory spectrum and the connective cover of the L-theory spectrum of $\R$.
\end{thm}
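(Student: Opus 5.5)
The plan is to compare the Cartesian square of \Cref{Prop:rRko} with the 2-local fracture square for $\mathsf{L}(\R)_{\geq 0}$ of \cite[p.\ 3]{HLN}. That square is a pullback of $\E{1}$-rings
\[\begin{tikzcd}
	{\mathsf{L}(\R)_{\geq 0}} & {\kotop[1/2]} \\
	{\HZ_{(2)}[t^4]} & {\HQ[t^4]}
	\arrow[from=1-1, to=1-2]
	\arrow[from=1-1, to=2-1]
	\arrow[from=1-2, to=2-2]
	\arrow[from=2-1, to=2-2]
\end{tikzcd}\]
with the same three corner terms. First I would read off from \cite{HLN} exactly which maps appear. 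The left vertical map comes from the $\E{2}$-map $\HZ_{(2)}\to\MSO_{(2)}\to\mathsf{L}(\R)_{(2)}$ together with a generator of $\pi_4$. The top map is the inverse of $\tau_\R[1/2]$, possibly twisted by $\psi^2$ and a sign. The right map is the Chern character. Once these are fixed, I would check that on fourth homotopy groups they act as $x\mapsto t^4$, $x\mapsto\beta_4/2$, $\beta_4/2\mapsto t^4$ and $t^4\mapsto t^4$. This is precisely why the proof of \Cref{Prop:rRko} replaced $g$ by $\pm\psi^2\circ g$.

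Since pullbacks of $\E{1}$-rings are determined by the cospan, it then suffices to produce an equivalence of cospans of $\E{1}$-rings between $\HZ_{(2)}[t^4]\to\HQ[t^4]\leftarrow\kotop[1/2]$ as it appears in the two squares.

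\begin{itemize}
\item \textbf{Right leg.} The right corner and the Chern character map are literally the same in both squares, provided \cite{HLN} also uses $\mathsf{ch}$ from \Cref{Def:Cherncharacter}. If their map differs by an $\E{1}$-automorphism of $\kotop[1/2]$ or of $\HQ[t^4]$ acting by a unit on $\pi_4$, I would absorb it as follows. Using freeness, any two $\E{1}$-$\HZ_{(2)}$-algebra maps $\HZ_{(2)}[t^4]\to\HQ[t^4]$ agreeing on $\pi_4$ are homotopic (\Cref{Prop:freeHA}$(iii)$). Here the $\HZ_{(2)}$-algebra structure on $\HQ[t^4]$ is unique, since $\map_{\E{2}}(\HZ_{(2)},\HQ)$ is contractible-enough as rational maps out of $\HZ_{(2)}$.
\item \textbf{Left leg.} Both left legs are maps of free $\E{1}$-$\HZ_{(2)}$-algebras into $\HQ[t^4]$ sending $t^4\mapsto t^4$. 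By the adjunction, the space of such maps is the space of points of $\Omega^{\infty+4}\HQ[t^4]$ in the given component. So the two maps agree up to homotopy, and a homotopy in a cospan suffices to identify the pullbacks.
\end{itemize}

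The main obstacle is the coherence required to identify the cospans. The two squares need not use the same $\E{2}$-map $\HZ_{(2)}\to\HQ[t^4]$ underlying the $\HZ_{(2)}$-algebra structure. I would handle this by noting that $\HQ[t^4]$ is rational, so the space of $\E{2}$-maps $\HZ_{(2)}\to\HQ[t^4]$ is equivalent to $\E{2}$-maps $\HQ\to\HQ[t^4]$. Because $\pi_0$ is $\Q$ and $\HQ$ is an idempotent algebra over $\Sph_\Q$, this space is contractible onto the unit component. Hence the $\HZ_{(2)}$-module structures agree canonically, and the free-algebra argument applies. The cospans are therefore equivalent, and so are their pullbacks, giving $\rR\ko\simeq\mathsf{L}(\R)_{\geq 0}$ as $\E{1}$-rings.
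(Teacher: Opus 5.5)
Your proposal is correct and follows essentially the paper's route. You identify the corners of the 2-local fracture square of $\mathsf{L}(\R)_{\geq 0}$ via $\tau_\R$ twisted by $\psi^2$, the $\E{2}$-map $\HZ_{(2)}\to\MSO_{(2)}\to\mathsf{L}(\R)_{(2)}$, and the Chern character, and then pin down the bottom map $\HZ_{(2)}[t^4]\to\HQ[t^4]$ by its effect $t^4\mapsto t^4$ on $\pi_4$ using freeness; your coherence discussion goes beyond what the paper spells out, and is cleaner phrased without $\E{2}$-maps: since $\HQ[t^4]$ is rational, $\E{1}$-ring maps out of $\HZ_{(2)}[t^4]$ factor through $(\HZ_{(2)}[t^4])_\Q$, the free $\E{1}$-algebra on $\Sigma^4\Sph_\Q$ in rational spectra, so they are classified by $\pi_4$ with no $\E{2}$-structure on $\HQ[t^4]$ needed.
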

\begin{proof} It suffices to prove that the 2-local fracture square for $\mathsf{L}(\R)_{\geq 0}$ is the same as in \Cref{Prop:rRko}. Such a fracture square appears in \cite[p.\ 3]{HLN}. We compute the fracture square for $\mathsf{L}(\R)_{\geq 0}$ following the same strategy as for $\rR\ko$ (\Cref{Prop:rRko}).
    
To identify the localization away from 2, we consider the same map $\kotop \to \mathsf{L}(\R)_{\geq 0}$ as in the proof of \Cref{Prop:rRko}, from \cite{LNS}; recall that this map is an equivalence after localizing at 2, and induces multiplication by 8 on the fourth homotopy groups before localization. We post-compose it with the automorphism of $\kotop[1/2]$ given by the (connective cover of the) second Adams operation $\psi^2$ as in the proof of \Cref{Prop:rRko}, and obtain $\mathsf{L}(\R)_{\geq 0} \to \kotop[1/2]$ sending $b$ to $\beta_4/2$ in the fourth homotopy groups.

To identify the localization at $(2)$, we consider the map of $\Einfty$-rings $\mathsf{MSO} \to \mathsf{L}(\R)$ from \cite[Thm.\ 3.4 and Rmk.\ 3.5]{HLN}. By \cite[Cor.\ 3.7]{HLN}, $\mathsf{MSO}_{(2)}$ receives an $\E{2}$-map from $\HZ_{(2)}$. Therefore, $\mathsf{L}(\R)_{(2)}$ receives an $\E{2}$-map from $\HZ_{(2)}$. By \Cref{Prop:freeE1Ralgebra}$(iii)$, it then also receives a map of $\E{1}$-rings from the free $\E{1}$-$\HZ_{(2)}$-algebra $\HZ_{(2)}[t^4]$, sending $t^4$ to $b$ in $\pi_4$. Since the domain is connective, this map lifts to $\mathsf{L}(\R)_{\geq 0}$. The ring structure on the homotopy groups on both sides implies that this map is an equivalence (indeed, by \cite[Prop.\ 4.1]{HLN}, $\pi_\ast(\mathsf{L}(\R)) \cong \Z[b]$ with $|b|=4$, and by \Cref{Prop:freeHA}$(ii)$, $\pi_\ast(\HZ_{(2)}[t^4]) \cong \Z[t^4]$ is the same ring).  

For the rationalization, by \Cref{Rmk:chQ} we have $\kotop[1/2]_\Q \simeq \HQ[t^4]$ via the Chern character map.

Finally, we have to determine what the map $\HZ_{(2)}[t^4] \to \HQ[t^4]$ in the pullback square is, corresponding to our choices of identifications. Studying the actions of these maps on the fourth homotopy groups, we see that the composite $\mathsf{L}(\R)_{\geq 0} \to \kotop[1/2] \to \HQ[t^4]$ maps $b$ to $t^4$. Since the map $\mathsf{L}(\R)_{\geq 0} \to \HZ_{(2)}[t^4]$ we have chosen sends $b$ to $t^4$, the bottom horizontal map must send $t^4$ to $t^4$.
\end{proof}

\begin{Question} One may ask whether this equivalence can be refined to one of $\mathbb{E}_\infty$-rings. The author does not know whether this is the case.
\end{Question}

\newpage
\section*{Index of notation}\label{Sect:indexofnotation}

\vspace{-1em}
\begin{table}[H]
  \centerline{%
    \begin{tabular}[h]{lll}
 $\Affl$ & Affine line over some base scheme & \\
 $\Alg(\calC^\otimes)$, $\CAlg(\calC^\otimes)$ & Categories of $\E{1}$-, resp.\ $\Einfty$-algebras in a symmetric monoidal category $\calC^\otimes$ & \S \ref{Subsect:freeE1HA} \\
 $\Cat$ & Category of small ($\infty$-)categories \\
 $\eta$ & Motivic Hopf map & \S \ref{Subsect:rRKGLkgl}\\
 $\E{n}$ & $\infty$-operad of little $n$-cubes & \S \ref{Subsect:freeE1HA}\\
 $f_n$, $\tilde{f}_n$ & $n$-effective, respectively very $n$-effective cover functors & \ref{Def:effcover}\\
 $\Gm$ & Group scheme $S\times_{\Spec(\Z)} \Spec(\Z[t,t^{-1}])$ over a base scheme $S$ & \\
  $\HA[t^n]$ & Free $\E{1}$-$\HA$-algebra on one generator in degree $n$ & \ref{Def:freeE1HA}\\
 $\HZ, \HZmod$ & Motivic or topological Eilenberg-Mac Lane spectra & \ref{Def:HZ}\\
 $\HZtilde$ & Effective cover of the Milnor-Witt K-theory sheaf: $f_0\underline{K}^{MW}_\ast$ & \S \ref{Subsect:rRHZtilde} \\
 $\underline{K}^{MW}_\ast$, $\underline{K}^{M}_\ast$, $\underline{K}^{W}_\ast$ & Milnor-Witt, Milnor, and Witt K-theory sheaves & \ref{Def:HZ}\\
 $\KGL, \kgl$ & Algebraic K-theory motivic spectrum and its very effective cover & \ref{Subsect:rRKGLkgl}\\
 $\KO, \ko$ & Hermitian K-theory motivic spectrum and its very effective cover & \ref{Def:KO}\\
 $\KOtop, \kotop$ & Real topological K-theory spectrum and its connective cover & \ref{Subsect:rRKO}\\
 $\KU$ & Complex topological K-theory spectrum & \ref{Def:Cherncharacter} \\
 $\KW, \mathsf{kw}$ & Balmer--Witt K-theory motivic spectrum and its connective cover & \ref{Def:KO}\\
 $\mathsf{L}(-)$ & L-theory spectrum functor & \ref{Prop:rRKO}\\
 $\mathsf{Map}_\calC$ & Mapping space in a category $\calC$ & \\
 $\MSL$ & Special linear cobordism motivic Thom spectrum & \S \ref{SubSubsect:rRko(2)}\\
 $\MSO$ & Oriented cobordism Thom spectrum & \S \ref{SubSubsect:rRko(2)}\\
 $\Omega^\infty, \Omega^\infty_T$ & Infinite loop space functors for classical, resp.\ motivic ($\mathbb{P}^1$-)spectra & \\
 $\homsh{\ast}{\ast}{-}$, $\underline{\pi}_{\ast,\ast}(-)$ & Homotopy sheaves of a motivic space or spectrum & \\
 $\Pre(\calC)$ & Category of presheaves of spaces on a category $\calC$ & \\
 $\rho$ & Inclusion $\calS^0 \to \Gm$ on the points corresponding to 1 and $-1$ & \ref{Prop:rRinvertingrho}\\
 $\rR$, $\rC$ & Real and complex Betti realization functors & \S \ref{Subsect:rR}\\
 $\spaces$ & Category of spaces & \\
 $\calS$ & Motivic sphere spectrum & \\
 $\calS^n$ & (Simplicial) $n$-sphere in the category of motivic spaces or motivic spectra & \\
 $\calS^{p,q}$ & Motivic bigraded spheres, $\calS^{p,q} = \calS^{p-q} \Smash \Gm^{\Smash q}$ & \\
 $s_n$, $\tilde{s}_n$ & $n$-effective, respectively very $n$-effective slices & \ref{Def:effcover}\\
 $\Sph$ & Topological sphere spectrum & \\
 $\Sph^n$ & $n$-sphere in the category of spaces or spectra & \\
 $\SH(S)$ & Category of motivic ($\mathbb{P}^1$-)spectra over $S$ & \\
 $\SH(S)^\eff$, $\SH(S)^\veff$ & Subcategories of effective, respectively very effective spectra over $S$ & \ref{Def:SHkeff}\\
 $\SH(S)^\eff(n)$, $\SH(S)^\veff(n)$ & Subcategories of $n$-effective, respectively very $n$-effective spectra over $S$ & \ref{Def:SHkeff}\\
 $\Sigma_T$ & Functor $ T \Smash - : \SH(S) \to \SH(S)$ & \\
 $\Sigma^\infty$ & Infinite suspension functors for topological or motivic spectra & \\
 $\Sm_S$ & 1-category of smooth schemes of finite type over the base scheme $S$ & \\
 $\Sp$ & Category of (topological) spectra & \\
 $\Spc(S)$ & Category of motivic spaces over the base scheme $S$ & \\
 $T$ & Pointed motivic space or spectrum corresponding to $(\mathbb{P}^1,\infty)$ & \\
 $\tau_{\geq n}, \tau_{\leq n}$ & Truncation functors for a $t$-structure\\
 $(-)_+$ & Disjoint base point functor (free functor from unpointed to pointed objects) & \\
 \end{tabular}%
  }
\end{table}

\paragraph{\textbf{Acknowledgments:}} The research presented in this article was conducted for my Master's thesis, under the supervision of Prof.\ Tom Bachmann, in JGU Mainz. I would like to express my deepest gratitude to him, and to my second supervisor at EPFL, Prof.\ Jérôme Scherer. Many thanks are also due to Anton Engelmann, Klaus Mattis, Luca Passolunghi, and professors Marc Hoyois and Markus Land for very helpful discussions. Moreover, I wish to thank all the members of the AGTZ group for welcoming me so warmly in Mainz, and for insightful mathematical discussions. I am also grateful to two anonymous referees, and Qi Zhu, for useful comments about an earlier version of this article. Finally, I acknowledge financial by the EPFL Student Support Program, and by the Deutsche Forschungsgemeinschaft (DFG, German Research Foundation) through the Collaborative Research Center TRR 326 Geometry and Arithmetic of Uniformized Structures, project number 444845124.\\

\paragraph{\textbf{Competing interests:}} The author declares none.

\bibliographystyle{abbrv}  
\bibliography{biblio.bib}\label{biblio}
\vspace{1cm}
\end{document}